%
%
%
\documentclass[11pt]{amsart}
\usepackage[T1]{fontenc}
\usepackage{txfonts}
\usepackage{times}
\usepackage{amssymb,verbatim}
\usepackage[all]{xy}
\usepackage{subfig}
\usepackage{tikz}
\usepackage{color}
\usepackage{a4wide}
\usepackage[normalem]{ulem}
\usepackage{hyperref}
\usepackage{wrapfig}
\usetikzlibrary{arrows}

\newcommand\QQQ{\mathcal Q}
\newcommand{\R}{\mathbb{R}}
\newcommand{\C}{\mathbb{C}}
\renewcommand\P{\mathbb{P}}
\newcommand\Z{\mathbb{Z}}
\newcommand\Prym{\mathrm{Prym}}
\newcommand{\N}{\mathbb{N}}
\newcommand{\Q}{\mathbb{Q}}
\renewcommand{\H}{\Omega \mathcal{M}}
\newcommand{\SL}{{\rm SL}}
\newcommand{\GL}{{\rm GL}}
\newcommand{\PrD}{\Omega E_D}


\newcommand{\fl}{{\rm flux}}
\newcommand{\Fl}{{\rm Flux}}
\newcommand{\dist}{\mathbf{d}}
\newcommand{\Ord}{\mathcal{O}}

\newcommand{\eps}{\varepsilon}

\newcommand{\Id}{\mathrm{Id}}

\newcommand{\Alp}{\mathcal{A}}
\newcommand{\lbd}{\lambdaup}
\newcommand{\vr}{\rho}
\newcommand{\inv}{\tauup}
\newcommand{\ol}{\overline}

\newcommand{\ra}{\rightarrow}
\newcommand{\comp}{\star}

\newtheorem{Theorem}{Theorem}[section]
\newtheorem{MainThm}{Theorem}

\newtheorem{Corollary}[Theorem]{Corollary}
\newtheorem{Lemma}[Theorem]{Lemma}
\newtheorem{Proposition}[Theorem]{Proposition}
\newtheorem{Remark}[Theorem]{Remark}
\newtheorem{Definition}[Theorem]{Definition}
\newtheorem{Claim}[Theorem]{Claim}
\newtheorem{Example}[Theorem]{Example}
\setlength{\textheight}{20.0cm} 
\setlength{\headsep}{0.8cm}

\begin{document}
\title{Complete periodicity of Prym eigenforms}
\author{Erwan Lanneau and Duc-Manh Nguyen}
\date{\today}

\address{
Institut Fourier, Universit\'e de Grenoble I, BP 74, 38402 Saint-Martin-d'H\`eres, France
}
\email{erwan.lanneau@ujf-grenoble.fr}
\address{
IMB Bordeaux-Universit\'e Bordeaux 1, 351, Cours de la Lib\'eration, 33405 Talence Cedex, France
}
\email{duc-manh.nguyen@math.u-bordeaux1.fr}
\keywords{Real multiplication, Prym locus, Translation surface}

\renewcommand{\abstractname}{R\'esum\'e}
\begin{abstract}
Dans cet article nous d\'emontrons plusieurs r\'esultats topologiques sur les 
formes propres des lieux Prym, formes differentielles ab\'eliennes d\'ecouvertes par McMullen 
dans des travaux ant\'erieurs. Nous obtenons une propri\'et\'es d\^ites de compl\`ete 
periodicit\'e (introduite par Calta), ainsi que de nouvelles familles de surfaces de translation v\'erifiant la 
dichotomie topologique de Veech.
Comme cons\'equences nous montrons que l'ensemble limite des groupes 
de Veech de formes propres de certaines strates en genre $3,4$, et $5$ est soit vide, soit un point, soit tout le cercle \`a 
l'infini. Ceci nous permet de plus de construire de nouveaux exemples de surfaces 
de translation ayant un groupe de Veech infiniment engendr\'e et de premi\`ere esp\`ece.

Notre preuve repose sur une nouvelle approche de la notion
de feuilletage p\'eriodique par les involutions lin\'eaires.
\end{abstract}
\maketitle

\renewcommand{\abstractname}{Abstract}
\begin{abstract}
This paper deals with Prym eigenforms which are introduced previously by McMullen.
We prove several results on the directional flow on those surfaces, related to complete periodicity 
(introduced by Calta). More precisely we show that any homological direction is algebraically periodic, and
any direction of a regular closed geodesic is a completely periodic direction.   As a consequence we draw that 
the limit set of the Veech group of every Prym eigenform in some Prym loci of genus 
$3,4$, and $5$  is either empty, one point, or the full circle at infinity. We also construct new 
examples of translation surfaces satisfying the topological Veech dichotomy. As a corollary we obtain 
new translation surfaces whose Veech group is infinitely generated and of the first kind.
\end{abstract}

\section{Introduction}

\subsection{Periodicity and Algebraic Periodicity}

In his 1989 seminal work~\cite{Veech1989}, Veech provided first instances of translation surfaces 
whose the directional flows satisfy a remarkable property: for a given direction, the flow is either uniquely ergodic, or 
completely periodic. This property is subsequently called the {\em Veech dichotomy}.
Since then numerous efforts have been made in the study of the linear flows on translation surfaces,
to name a very few:~\cite{Mc1,Smillie2006,Moller2006,Smillie2010,Eskin:preprint}. \medskip

This paper deals with completely periodic linear flows. This aspect has been 
initiated in~\cite{Calta04}, and then developed later in~\cite{CaSm}. 
A useful invariant to detect completely periodic flows, introduced in Arnoux's thesis~\cite{Ar1}, 
is the Sah-Arnoux-Fathi (SAF) invariant. It is well known that the linear flow $\mathcal{F}_\theta$ in a direction 
$\theta\in \R\P^1$ on a translation surface $(X,\omega)$ (equipped with a transversal interval $I$)
provides an interval exchange transformation $T_\theta$, which is the first return map to $I$. 
The invariant of the flow in direction $\theta$ can be informally defined by
$$
SAF(T_\theta) = \int_I 1 \otimes (T_\theta(x)-x)dx \in \R \wedge_\Q \R,
$$
(the integral is actually a finite sum). If $\mathcal F_\theta$ is periodic, that is when every leaf of $\mathcal{F}_\theta$ is 
either a closed curve, or an interval joining two zeros of $\omega$, then $SAF(T_\theta)=0$. However 
the converse is not true in general. 
Following this remark, the direction $\theta$ will be called {\em algebraically periodic} if the 
SAF-invariant of the flow $\mathcal F_\theta$ vanishes. \medskip

A translation surface $(X,\omega)$ is {\em completely periodic} if for every $\theta\in \R\P^1$
for which $\mathcal F_\theta$ has a closed regular orbit, the flow $\mathcal F_\theta$ is completely periodic.
We have the corresponding ``algebraic'' definition: the surface $(X,\omega)$ is {\em completely algebraically periodic} 
if the SAF-invariant of $\mathcal F_\theta$ vanishes in any homological direction 
($\theta\in \R\P^1$ is homological if it is the direction of a vector $\int_c \omega\in \C\simeq \R^2$ for 
some  $c \in H_1(X,\Z)$). These notions are introduced in \cite{Calta04} and \cite{CaSm}.\medskip

Flat tori and their ramified coverings are both completely periodic and  completely algebraically periodic; in this case, up to a 
renormalization by $\GL^+(2,\R)$, the set of homological directions is $\Q\cup\{\infty\}$. In \cite{Calta04}, Calta proved that these two
properties also coincide for genus $2$ translation surfaces, in which case the set of homological directions is $K\P^1$, where $K$ is 
either $\Q$ or a real quadratic field over $\Q$. A surface in $\Omega \mathcal M(2)$ is completely periodic  if and only if it is a Veech 
surface, but there are completely periodic surfaces in $\Omega \mathcal M(1,1)$ that are not Veech surfaces 
(actually, most of them are not Veech surfaces). \medskip

We will say that a quadratic differential is algebraically periodicity (respectively, completely periodic 
in the sense of Calta) if its orientating double covering is. In genus two, there are two strata, $\Omega \mathcal M(2)$ 
and $\Omega \mathcal M(1,1)$, which are equivalent respectively to the strata $\QQQ(-1^5,1)$ and $\QQQ(-1^6,2)$ 
of quadratic differentials over $\C\mathbb{P}^1$. In this paper we prove the following

\begin{MainThm}\label{thm:A}
Let $(Y,q)$ be  quadratic differential in one of the following strata
$$
\QQQ(-1^4,4),\ \QQQ(-1^3,1,2),\ \QQQ(-1,2,3),\ \QQQ(8),\ \QQQ(-1,1,4).
$$
If $(Y,q)$ is completely algebraically periodic then it is completely periodic in the sense of Calta.
\end{MainThm}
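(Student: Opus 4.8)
The plan is to prove the nontrivial implication — algebraic periodicity forces complete periodicity — by passing to the orientating double covering and ruling out minimal components in any direction that carries a closed regular geodesic. By the very definition of algebraic periodicity and complete periodicity for a quadratic differential, the hypothesis and the conclusion transfer to the double cover $(\tilde X,\tilde\omega)$, which lies in one of the Prym loci sitting inside $\Omega\mathcal{M}(2,2)$, $\Omega\mathcal{M}(2,1,1)$, $\Omega\mathcal{M}(4,1,1)$, $\Omega\mathcal{M}(4,4)$, $\Omega\mathcal{M}(2,2,2)$ (of genus $3,3,4,5,4$ respectively), and which carries the Prym involution $\inv$ with $\inv^{*}\tilde\omega=-\tilde\omega$. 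I would fix a direction $\theta$ in which the flow $\mathcal F_\theta$ has a regular closed geodesic $\gamma$; its core curve $c$ satisfies $\int_{c}\tilde\omega\neq 0$ with that period pointing in direction $\theta$, so $\theta$ is a homological direction and the hypothesis yields $SAF(T_\theta)=0$. It therefore suffices to show that on $(\tilde X,\tilde\omega)$ an algebraically periodic direction containing at least one cylinder has no minimal component.

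First I would decompose the surface in the direction $\theta$ into its periodic part (a union of cylinders) and its minimal part $Z$, and record the first-return map of $\mathcal F_\theta$ to a transversal. Because the cylinder decomposition commutes with $\inv$, the minimal part $Z$ is $\inv$-invariant and descends to a minimal half-translation subsurface of $(Y,q)$; the efficient bookkeeping device here is the linear involution attached to the quotient foliation, whose combinatorial length is roughly half that of the interval exchange upstairs, and this is exactly the point of working with linear involutions rather than with interval exchanges. Since the first-return map of any cylinder to a cross-section is the identity, each cylinder contributes $0$ to the SAF invariant; by additivity of $SAF$ over the invariant decomposition this forces the SAF of the restriction of $T_\theta$ to $Z$ to vanish as well. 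The goal then becomes to show that $Z=\emptyset$.

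Suppose $Z\neq\emptyset$. The decisive and hardest step — the one where the restriction to these five strata is indispensable — is to classify the possible topological types of such an $\inv$-invariant minimal component. The idea is that the available singularity data (the partitions listed above, equivalently the low genus of $Y$) leave almost no room once a cylinder has already absorbed part of the boundary, so $Z$ is confined to a short finite list of types, each producing a first-return linear involution of very small combinatorial complexity. For each candidate I would then contradict the conclusion of the previous paragraph by showing that a minimal linear involution of that shape, arising in these strata, cannot have vanishing SAF; the lowest-complexity pieces that occur are handled either directly or by reduction to Calta's genus-two theorem, while the symmetry $\inv$ rigidifies the configuration enough to eliminate the remaining cases. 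Exhausting the finite list gives $Z=\emptyset$, and hence $\theta$ is completely periodic.

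The main obstacle is concentrated entirely in the third paragraph: enumerating the minimal components compatible with each stratum and verifying, case by case, that minimality is incompatible with $SAF=0$ for the resulting linear involutions. The genuine subtlety is that $SAF=0$ is only a measure-zero constraint, so one cannot argue by genericity; instead the argument must exploit the rigid combinatorics of these particular Prym strata together with the involution $\inv$ to close each case, which is precisely what the linear-involution machinery developed in the body of the paper is designed to provide.
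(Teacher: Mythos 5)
Your first two paragraphs reproduce the paper's setup (Section~\ref{sec:prf:CAP:CP}): the core curve of the cylinder makes $\theta$ homological, hence $SAF(T_\theta)=0$, and one studies the cross-section linear involution on the quotient, a $6$-letter irreducible generalized permutation with a periodic orbit. The gap is concentrated in your third paragraph, and it is a genuine one. SAF-additivity only gives you that the \emph{total} SAF of the minimal part $Z$ vanishes, so your plan --- contradict this by showing that each minimal component of the relevant topological type has nonzero SAF --- works only when $Z$ has a single component. It says nothing when $Z$ consists of two minimal components whose SAF-invariants cancel, and the paper shows this configuration genuinely occurs in three of your five strata: Theorem~\ref{theo:d=6}(2) and Figures~\ref{fig:decomposition}, \ref{fig:decomposition:2} exhibit surfaces in $\QQQ(-1^4,4)$, $\QQQ(-1^3,1,2)$ and $\QQQ(-1,1,4)$ splitting into a flat torus and a flat sphere (respectively two flat tori), each irrationally foliated, with opposite SAF-invariants. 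These examples are moreover Prym-symmetric --- in Example~\ref{ex:H22} the double cover decomposes into an invariant torus and two permuted tori with $SAF(X_0)=-2\,SAF(X_{0,1})$ --- so the involution $\inv$ does not ``rigidify away'' the canceling case, contrary to what you hope.

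What saves the theorem is the cylinder, and it must be used for more than contributing $0$ to SAF: your sketch uses it only through additivity, so if your argument worked it would prove that $SAF=0$ in a single direction already forces periodicity, which the examples above refute for those three strata. The paper's proof of Theorem~\ref{thm:CAP:to:CP} instead runs Rauzy induction downward: the periodic orbit forces non-minimality (Proposition~\ref{prop:BL}), so $T$ eventually degenerates or decomposes; irreducibility of the suspended generalized permutation (Theorem~\ref{thm:gen:perm:irrd}) kills the ``cylinder plus two canceling rotations'' combinatorics --- in the $\{2,4\}$ splitting the exceptional $\pi_2=\left(\begin{smallmatrix} C& D& E& F \\ D& C& F& E \end{smallmatrix}\right)$ would make $\pi$ reducible --- and in the $\{3,3\}$ splitting the closed orbit lands in a $3$-letter \emph{generalized} permutation, which Lemma~\ref{lm:periodic:orbit} then shows is completely periodic, cascading $SAF=0$ to the remaining piece, where the small-$d$ lemmas (proved via Proposition~\ref{prop:connection}, i.e.\ Arnoux plus Boshernitzan, not via Calta's genus-two theorem) finish. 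To repair your approach you would need precisely an analogue of these steps: a proof that, in the presence of a cylinder, the two-component canceling configuration is combinatorially impossible in these strata --- which is where essentially all the content of the paper's argument lives.
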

The method developed in this paper also allows us to get new {\em primitive} examples of non-Veech surfaces satisfying 
the topological Veech dichotomy.
\begin{MainThm}\label{thm:B}
On any quadratic differential in the strata $\QQQ(8)$ or $\QQQ(-1,2,3)$ that is stabilized by a pseudo-Anosov
homeomorphism, the directional foliations $\mathcal F_\theta, \, \theta \in \R\P^1,$ satisfy the  topological Veech dichotomy, namely either ${\mathcal F}_\theta$ is either periodic, or minimal.
\end{MainThm}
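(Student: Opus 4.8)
The plan is to derive Theorem B from Theorem A together with the complete algebraic periodicity established earlier in the paper. Working with $(Y,q)$ through its orientating double cover $(X,\omega)$, the pseudo-Anosov homeomorphism lifts to an affine automorphism $\phi$ whose derivative $A\in\SL(2,\R)$ is hyperbolic, with eigenvalues $\lambda^{\pm 1}$, $|\lambda|>1$. The trace field $\Q(\mathrm{tr}\,A)$ is then a real quadratic field $K$, and by the McMullen-type characterization the stable/unstable data of $\phi$ endow the Prym part of $(X,\omega)$ with real multiplication by an order $\Ord_D$, with $K=\Q(\sqrt D)$, placing $(X,\omega)$ in one of the eigenform loci. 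This is precisely the setting in which the surfaces considered here live.

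I would then invoke the two structural facts proved beforehand: on a Prym eigenform every homological direction is algebraically periodic, and the holonomy field equals $K$, so that every saddle connection and every regular closed geodesic points in a direction of $K\P^1$, which is exactly the set of homological directions. Consequently $SAF(\mathcal F_\theta)=0$ in all such directions, so $(X,\omega)$ — and hence $(Y,q)$ — is completely algebraically periodic. Theorem A now applies verbatim to $\QQQ(8)$ and $\QQQ(-1,2,3)$ and yields that $(Y,q)$ is completely periodic in the sense of Calta: every direction carrying a regular closed geodesic is periodic.

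It remains to upgrade this to the full topological dichotomy. Let $\theta\in\R\P^1$ and suppose $\mathcal F_\theta$ is not minimal; then it carries a saddle connection, so $\theta\in K\P^1$ and $\mathcal F_\theta$ is algebraically periodic. By the structure theorem the surface decomposes, along saddle connections in direction $\theta$, into finitely many minimal components and finitely many cylinders. If at least one cylinder occurs, then $\theta$ is the direction of a regular closed geodesic and the previous paragraph forces $\mathcal F_\theta$ to be completely periodic, hence periodic. The only remaining scenario — that $\mathcal F_\theta$ is non-minimal yet contains no cylinder, being a union of two or more minimal components glued along saddle connections — is exactly what must be excluded.

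This exclusion is the main obstacle, since the vanishing of the global SAF-invariant does not by itself forbid minimal components (their individual invariants could cancel). To rule them out I would run the linear-involution analysis of periodic foliations developed in the paper directly on the components of $\mathcal F_\theta$: a putative minimal component would force a proper, $A$-invariant sub-local-system of $H_1(X;K)$ inside the Prym summand, which the Prym involution together with the irreducibility of the $\Ord_D$-action prohibits in these low-complexity loci. The upshot is that on $\QQQ(8)$ and $\QQQ(-1,2,3)$ an algebraically periodic direction carrying a saddle connection must in fact be completely periodic, leaving no minimal components and completing the dichotomy. The bulk of the geometric work sits in this last step; everything preceding it is a formal consequence of Theorem A and of complete algebraic periodicity.
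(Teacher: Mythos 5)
Your reduction follows the paper's route up to the decisive step: pseudo-Anosov stabilizer $\Rightarrow$ Prym eigenform (this is Theorem~\ref{theo:hyp:Prym:eig}), then complete algebraic periodicity and Theorem~\ref{thm:A}, and you correctly identify that Calta-complete periodicity is \emph{not} the topological dichotomy and that the whole content lies in excluding directions whose foliation is non-minimal yet has no cylinder (a union of minimal components glued along saddle connections). Two smaller inaccuracies first: the claim that every saddle connection points in a direction of $K\P^1$ requires all \emph{relative} periods to lie in $K(\imath)$, which does not follow from the trace/holonomy field of absolute periods; the paper gets it from Theorem~9.4 of \cite{Mc2}, which you would need to cite. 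Also $\Q(\mathrm{Tr}\,A)$ may equal $\Q$ ($D$ a square), in which case the surface is square-tiled, hence a lattice surface, and the dichotomy holds for trivial reasons --- harmless, but your assertion that $K$ is quadratic is not automatic.

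The genuine gap is your last step, which is where all the work of the paper sits. The proposed obstruction --- that a minimal component of $\mathcal F_\theta$ would yield a proper $A$-invariant sub-local-system of $H_1(X;K)$, ruled out by irreducibility of the $\Ord_D$-action --- cannot work as stated, for two reasons. First, $\theta$ is an arbitrary direction in $K\P^1$, not an invariant direction of the pseudo-Anosov $\phi$; $\phi$ preserves only its own stable and unstable foliations, so a minimal component of $\mathcal F_\theta$ induces no $\phi_*$-invariant subspace of homology, and there is nothing for the irreducibility of the $\Ord_D$-action to act on. Second, any such soft argument, using only ``eigenform plus irreducibility of real multiplication,'' would apply verbatim to $\Omega E_D(1,1)$, $\Omega E_D(2,2)^{\rm odd}$ and $\Omega E_D(1,1,2)$, where the conclusion is \emph{false}: the paper exhibits eigenform data with all periods in $K(\imath)$ and $SAF=0$ in a direction splitting into irrationally foliated pieces with cancelling SAF-invariants (the Remark following Theorem~\ref{theo:cp:stronger}, Figure~\ref{fig:decomposition}, Example~\ref{ex:H22}). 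The exclusion is therefore stratum-specific, and the paper establishes it combinatorially: Theorem~\ref{theo:d=6}, proved by Rauzy induction on linear involutions over $6$ intervals, shows that for $\QQQ(8)$ and $\QQQ(-1,2,3)$ the degenerate permutations that would permit a minimal splitting (Claims~\ref{claim:1} and~\ref{claim:2}) all land in the other strata, so $SAF(T)=0$ together with rank-two interval lengths forces complete periodicity; Theorem~\ref{thm:B} then follows from Theorem~\ref{theo:cp:stronger}. Without a substitute for that case analysis, your argument does not close.
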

Note that above theorem is false in general for other strata. The last result we show concerns limit set of Veech groups.
\begin{MainThm}\label{thm:C}
For any quadratic differential in the collection of strata $\QQQ(8)$, $\QQQ(-1,2,3)$ or $\QQQ(-1^3,1,2)$ 
the limit set of its Veech group is either the empty set, a single point, or the full circle at infinity.

In particular, there exist infinitely many quadratic differentials in those strata
whose Veech group is infinitely generated and of the first kind.
\end{MainThm}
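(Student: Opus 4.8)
The plan is to deduce Theorem C from Theorems A and B together with a structural dichotomy for limit sets of Fuchsian groups. Recall that the Veech group $\mathrm{SL}(Y,q)$ acts on the circle at infinity $\partial\mathbb{H}^2\cong\R\P^1$, and for any finitely or infinitely generated Fuchsian group the limit set $\Lambda$ is either a \emph{finite} set (containing $0$, $1$, or $2$ points), or else a perfect set that is either nowhere dense (the \emph{second kind}) or all of $\R\P^1$ (the \emph{first kind}). So the heart of the matter is to exclude the Cantor-set possibility: I must show that for these three strata the limit set cannot be a nowhere-dense perfect set, i.e.\ the Veech group is never a lattice-free group of the second kind with nontrivial limit set.

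The key mechanism is that every parabolic fixed point and every hyperbolic fixed point of $\mathrm{SL}(Y,q)$ lies in the limit set, and these correspond to special directions on $(Y,q)$. First I would observe that a parabolic element stabilizes a completely periodic (cylinder) direction, while a hyperbolic element is exactly a pseudo-Anosov stabilizing the form, whose expanding/contracting directions are its fixed points on $\R\P^1$. The strategy is to show that whenever $\Lambda$ contains more than one point, it must in fact be dense, hence all of $\R\P^1$. To do this I would exploit Theorem A: complete algebraic periodicity implies complete periodicity in the sense of Calta for the strata $\QQQ(8),\QQQ(-1,2,3),\QQQ(-1^3,1,2)$. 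Concretely, if $(Y,q)$ admits a single hyperbolic (pseudo-Anosov) element, its stable and unstable directions are algebraically periodic (the SAF invariant vanishes because the pseudo-Anosov scales it by a factor $\neq 1$, forcing it to be $0$), so by Theorem A these directions are genuinely completely periodic, i.e.\ cylinder directions, and hence are fixed by parabolic elements as well. This injects parabolic fixed points into $\Lambda$.

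The decisive step is a density/minimality argument for the action on $\Lambda$. Once I know $(Y,q)$ carries at least one cylinder (parabolic) direction and the group is non-elementary (limit set more than one point), I would argue that the orbit of a single parabolic fixed point under the whole group is dense in $\Lambda$; combined with the fact that parabolic fixed points of completely periodic directions accumulate everywhere a genuine second element of the group forces them to, one concludes $\Lambda$ has no gaps. Theorem B enters here for the strata $\QQQ(8)$ and $\QQQ(-1,2,3)$: the topological Veech dichotomy guarantees that every direction is either periodic or minimal, which rules out the existence of ``exceptional'' non-periodic, non-minimal directions that could otherwise produce gaps in $\Lambda$; this is what pins the limit set down to be either finite or everything. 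I expect the main obstacle to lie precisely in controlling the stratum $\QQQ(-1^3,1,2)$, which is covered by Theorem A but \emph{not} by Theorem B, so there the Veech-dichotomy shortcut is unavailable and I must argue complete periodicity of hyperbolic directions suffices on its own to force density of $\Lambda$.

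For the final ``in particular'' claim, the plan is constructive: I would exhibit an explicit family of surfaces in one of these strata (for instance via a cylinder or polygonal construction) carrying two non-commuting parabolic elements in \emph{transverse} periodic directions but whose Veech group is not a lattice. By the first part, such a surface automatically has limit set equal to the full circle, hence is of the first kind; and a classical argument (a Fuchsian group of the first kind that is not a lattice must be infinitely generated) yields infinite generation. The delicate point in this last step is to verify that infinitely many of the constructed examples genuinely fail to be lattices—this I would ensure by arranging the two parabolic directions to have incommensurable moduli ratios, or by a cusp-counting/Euler-characteristic obstruction, so that the quotient $\mathbb{H}^2/\mathrm{SL}(Y,q)$ has infinite area while the limit set remains all of $\R\P^1$.
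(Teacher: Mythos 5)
Your skeleton (at least two limit points force a hyperbolic element; then show parabolic fixed directions are dense; then first kind plus non-lattice gives infinite generation) matches the paper's, but the central mechanism you propose for producing parabolic points is wrong, and it is exactly where the paper's real work lies. You claim the stable/unstable directions of the pseudo-Anosov are algebraically periodic because the pA ``scales SAF by a factor $\neq 1$'', and that Theorem~\ref{thm:A} then makes them cylinder directions fixed by parabolics. Both halves fail. First, $\R\wedge_\Q\R$ is only $\Q$-bilinear, so the pA acts on the invariant by $x\wedge y\mapsto (x/\lambda)\wedge(y/\lambda)$, which is \emph{not} scalar multiplication; for a quadratic unit with $\lambda\lambda'=1$ this map has a nontrivial fixed subspace, so no vanishing is forced. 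Second, and decisively, the invariant foliations of a pseudo-Anosov are minimal, never periodic: complete periodicity in the sense of Calta only upgrades directions that already carry a closed regular orbit, and the pA eigendirections do not even lie in $K\P^1$ (if they did, they would be periodic, contradicting minimality). Your implicit step ``periodic $\Rightarrow$ parabolic'' is also false in general: a cylinder direction is parabolic only when the moduli are commensurable --- and the examples that make the ``in particular'' clause work are precisely periodic directions with \emph{incommensurable} moduli, so your own final construction contradicts this step. What the paper actually does is: a hyperbolic element forces $(X,\omega)$ to be a Prym eigenform with all periods in $K(\imath)$ (Theorem~\ref{theo:hyp:Prym:eig} and \cite[Thm.~9.4]{Mc2}); Theorem~\ref{theo:cp:stronger} then makes \emph{every} direction in $K\P^1$ periodic; Proposition~\ref{prop:cp:unstab:decomp} shows every \emph{unstable} periodic direction has commensurable moduli, hence is parabolic, via the flux relation ${\rm Im}(\Fl(\omega))=0$ of Corollary~\ref{cor:mod:relation} together with the count that an unstable direction has at most two cylinders up to the Prym involution; finally a Dehn-twist perturbation inside a stable decomposition produces nearby unstable directions, giving density in $\R\P^1$. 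Your density step (``the orbit of one parabolic fixed point is dense in $\Lambda$'') is circular: it proves density in $\Lambda$, not $\Lambda=\R\P^1$; and Theorem~\ref{thm:B}'s periodic/minimal dichotomy alone carries no parabolicity information, so it cannot ``rule out gaps''.

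The gap you concede for $\QQQ(-1^3,1,2)$ is genuine and is handled in the paper by a separate argument you would need to reinvent: there is no analogue of Theorem~\ref{theo:cp:stronger} for $\Prym(1,1,2)$, and instead one uses the conditional periodicity statement of Corollary~\ref{cor:reformulation:d=6}(2) (directions whose spine contains a regular fixed point of the Prym involution), Corollary~\ref{cor:P112:2zero:connect} (directions of $\inv$-invariant saddle connections between the simple zeros are parabolic), and a case analysis of the three topological models of stable five-cylinder decompositions to show such directions accumulate on every periodic direction. Your outline of the last clause is the one place essentially aligned with the paper: first kind plus non-lattice implies infinitely generated, with non-lattice certified through Veech's dichotomy by exhibiting a periodic direction with incommensurable moduli; the paper implements this with explicit prototypes in $\Prym(1,1,2)$ and Remark~\ref{rk:alternative}. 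But as written, the main part of your argument does not go through.
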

First examples of surfaces satisfying  topological Veech dichotomy without being Veech surfaces have been 
constructed in~\cite{Smillie2007} (see also~\cite{HubSch04}). All known examples of such surfaces are  
ramified coverings of ``true'' Veech surfaces. Thus Theorem~\ref{thm:B} provides us with first examples 
which do not arise from this construction. 

\subsection{Prym loci and Prym eigenforms}

From the work of McMullen~\cite{Mc07}, it turns out that all the completely periodic surfaces in genus two belong to the loci of {\em
eigenforms for real multiplication} (see below for the definitions). Later McMullen~\cite{Mc6} proves the existence of similar loci in genus
$3,4$ and $5$. These loci are of interest since they are closed $\GL^+(2,\R)$-invariant sub-manifolds in the moduli spaces of Abelian
differentials. We will call surfaces in these loci {\em Prym eigenforms}.  Let us now  briefly recall the definitions of those objects. \medskip

Let $X$ be a compact Riemann surface, and $\inv: X \rightarrow X$ be an holomorphic involution of $X$. We define
$$
\Prym(X,\inv)=(\Omega^-(X,\inv))^*/H_1(X,\Z)^-,
$$
\noindent where $\Omega^-(X,\inv)=\{\eta \in \Omega(X): \, \inv^*\eta=-\eta\}$.  $\Prym(X,\inv)$ will be called the {\em Prym variety} of $X$, it is a sub-Abelian variety of the Jacobian variety $\mathrm{Jac}(X):=\Omega(X)^*/H_1(X,\Z)$. \medskip

In this paper, for any integer vector $\kappa=(k_1,\dots,k_n)$ with $k_i\geq0$, we denote by $\Prym(\kappa) \subset \H(\kappa)$ 
the subset of pairs 
$(X,\omega)$ such that there exists an involution $\inv:  X \ra X$ satisfying  $\inv^*\omega=-\omega$, and $\dim_\C\Omega^-(X,\inv)=2$. For Riemann surfaces in genus two, one has $\Prym(2) \simeq \H(2)$ and $\Prym(1,1) \simeq \H(1,1)$
(the hyperelliptic involution is by definition a Prym involution, which is actually unique). See
Figure~\ref{fig:prototype} for an example. \medskip

Let $Y$ be the quotient of $X$ by the Prym involution (here $g(Y)=g(X)-2$) and $\pi$ the corresponding (possibly ramified) double covering from 
$X$ to $Y$. By push forward, there exists a meromorphic quadratic differential $q$ on 
$Y$ (with at most simple poles) so that $\pi^*q=\omega^2$. Let $\kappa'=(d_1,\dots,d_r)$ be 
the integer vector that records the orders of the zeros and poles of $q$. Then there is a  $\GL^+(2,\R)$-equivariant bijection
between $\QQQ(\kappa')$ and $\Prym(\kappa)$~\cite[p.~6]{Lanneau04}. \medskip

All the strata of quadratic differentials of dimension $5$ are recorded in Table~\ref{tab:strata:list}. It turns out that  the corresponding Prym varieties have complex  dimension two.  

\begin{table}[htb]
$$
\begin{array}{c|l|l|c}
&\text{Stratum of } Y    & \textrm{Prym loci} & \textrm{genus of } X  \\ 
\hline
1 & \QQQ(-1^6,2)  & \Prym(1,1) \simeq \H(1,1)  &  2 \\
2 & \QQQ(-1^2,6)  & \Prym(3,3) \simeq \H(1,1) &  4 \\
3 & \QQQ(1,1,2)  & \Prym(2,2,1,1) \simeq \H(0^{2},2)  &  4 \\
4 & \QQQ(-1^4,4)  & \Prym(2,2)^\mathrm{odd}  &  3 \\
5 & \QQQ(-1^3,1,2)  & \Prym(1,1,2)  &  3 \\
6 & \QQQ(-1,2,3)  & \Prym(1,1,4)  &  4 \\
7 &  \QQQ(8)  & \Prym(4,4)^\mathrm{even}  &  5 \\
8 & \QQQ(-1,1,4)  & \Prym(2,2,2)^\mathrm{even}  &  4 
\end{array}
$$
\caption{
\label{tab:strata:list}
Complete list of all strata of quadratic differentials having dimension $5$; The corresponding 
Prym variety has complex dimension $2$.
}
\end{table}
We now give the definition of Prym eigenforms. Recall that a quadratic order 
is a ring isomorphic to $\Ord_D = \Z[X]/(X^2+bX+c)$, where $D = b^2-4c>0$
(quadratic orders being classified by their discriminant $D$).
\begin{Definition}[Real multiplication]
\label{def:Real:mult}
Let $A$ be an Abelian variety of dimension $2$. We say that $A$ admits a real 
multiplication by $\Ord_D$ if there exists an injective homomorphism 
$\mathfrak{i}: \Ord_D \ra \mathrm{End}(A)$, such that $\mathfrak{i}(\Ord_D)$ 
is a self-adjoint, proper subring of $\mathrm{End}(A)$ ({\em i.e.} for any $f\in \mathrm{End}(A)$, if there exists 
$n\in \Z\backslash \{0\}$ such that $nf\in \mathfrak{i}(\Ord_D)$ then $f \in \mathfrak{i}(\Ord_D)$).
\end{Definition}

\begin{Definition}[Prym eigenform]
\label{def:Prym:eig:form}
For any quadratic discriminant $D>0$,  we denote by $\PrD(\kappa)$ the set of 
$(X,\omega) \in \Prym(\kappa)$ such that  $\dim_{\C}\Prym(X,\inv)=2$, $\Prym(X,\inv)$ admits a multiplication by $\Ord_D$, 
and $\omega$  is an eigenvector of $\Ord_D$. Surfaces in $\PrD(\kappa)$ are called {\em Prym eigenforms}.
\end{Definition}

Prym eigenforms do exist in each Prym locus described in Table~\ref{tab:strata:list}, as  real multiplications arise naturally with pseudo-Anosov homeomorphisms commuting  with  $\inv$ (see Theorem~\ref{theo:hyp:Prym:eig}).  

\subsection{Statements of results} The main purpose of this paper is to investigate the periodicity of 
linear flows on translation surfaces which are Prym eigenforms in the strata listed in Table~\ref{tab:strata:list}.  Our first results strengthen 
results of Calta and McMullen in genus two (that is to say cases $(1)-(2)-(3)$ of Table~\ref{tab:strata:list}).

\begin{Theorem}
\label{thm:Alg:Periodic}
Any Prym eigenform in the loci $\Prym(\kappa)$ given by the cases $(4)-(5)-(6)-(7)-(8)$ of Table~\ref{tab:strata:list} is completely algebraically periodic.
\end{Theorem}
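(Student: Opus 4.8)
The plan is to reduce the vanishing of every homological SAF-invariant to a single linear-algebraic computation on the anti-invariant homology $H_1^-(X,\Z):=H_1(X,\Z)^-$, exploiting the eigenform structure. Since $\inv^*\omega=-\omega$, the periods $c\mapsto\int_c\omega$ vanish on the invariant part and factor through $H_1^-(X,\Q)$, which is a $\Q$-vector space of dimension $4$ (the Prym has complex dimension $2$ in all of the cases $(4)$--$(8)$), carrying the nondegenerate symplectic intersection form. Real multiplication by $\Ord_D$ provides a self-adjoint endomorphism $T$ of $H_1^-(X,\Q)$, namely the image of a ring generator $\theta_0$ of $\Ord_D=\Z[\theta_0]$, and the eigenform condition says precisely that the real and imaginary parts $[\omega_x],[\omega_y]\in H^1(X,\R)^-$ are both eigenvectors of $T$ for one real eigenvalue $\lambda=\sigma(\theta_0)$. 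Writing $x(c)=\int_c\omega_x$ and $y(c)=\int_c\omega_y$, self-adjointness gives $x\circ T=\lambda\,x$ and $y\circ T=\lambda\,y$; equivalently, $x$ and $y$ are $\sigma$-semilinear for the $K$-vector space structure on $H_1^-(X,\Q)$ induced by $T$, where $K=\Q(\lambda)$.

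First I would record the standard expression of the SAF-invariant as (the transverse component of) the $J$-invariant: if $\{a_1,b_1,a_2,b_2\}$ is a symplectic basis of $H_1^-(X,\Z)$ and $\pi=\sum_i a_i\wedge b_i\in\Lambda^2 H_1^-(X,\Q)$ is the bivector dual to the intersection form, then for the foliation $\mathcal F_0$ in the horizontal direction one has $SAF(\mathcal F_0)=(\Lambda^2 y)(\pi)=\sum_i y(a_i)\wedge y(b_i)\in\R\wedge_\Q\R$, the relevant functional being the one transverse to the flow. The $\GL^+(2,\R)$-action rotates $(x,y)$ by real linear combinations, and since any real combination of the $\lambda$-eigenvectors $[\omega_x],[\omega_y]$ is again a $\lambda$-eigenvector, one may rotate an arbitrary homological direction to the horizontal one while preserving all of the above structure. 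Thus it suffices to prove $SAF(\mathcal F_0)=0$ whenever the horizontal direction is homological.

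The core step is then purely algebraic. Homologicality produces a nonzero $c_0\in H_1^-(X,\Z)$ with $\int_{c_0}\omega$ horizontal, that is $y(c_0)=0$. By $\sigma$-semilinearity $y(Tc_0)=\lambda\,y(c_0)=0$, so the plane $P=\mathrm{span}_\Q\{c_0,Tc_0\}$ (a $K$-line, assuming $\lambda\notin\Q$) lies in $\ker y$; since $y\neq0$ and $\ker y$ is $T$-invariant, hence a $K$-subspace of even $\Q$-dimension, we get $\ker y=P$ exactly. Self-adjointness together with antisymmetry forces $\langle c_0,Tc_0\rangle=\langle Tc_0,c_0\rangle=-\langle c_0,Tc_0\rangle$, so $\langle c_0,Tc_0\rangle=0$ and $P$ is a Lagrangian plane. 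Completing $\{c_0,Tc_0\}$ to a symplectic basis with $a_1=c_0,\ a_2=Tc_0$, the bivector $\pi$ takes the form $c_0\wedge b_1+Tc_0\wedge b_2$, whence $SAF(\mathcal F_0)=y(c_0)\wedge y(b_1)+y(Tc_0)\wedge y(b_2)=0$. Because this argument only uses that the Prym has dimension $2$, that $\Ord_D$ acts by a self-adjoint $T$, and that $\omega$ is an eigenform, it applies uniformly to all the strata $(4)$--$(8)$, with no case analysis.

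I expect the main obstacle to be two points of bookkeeping rather than the central computation. The first is to pin down precisely the dynamical SAF-invariant as the transverse $J$-component $(\Lambda^2 y)(\pi)$ with the correct normalization, so that the vanishing of this wedge genuinely certifies algebraic periodicity of $\mathcal F_\theta$; here one must be careful that it is the functional transverse to the flow (here $y$) that appears, which is exactly the one that homologicality makes vanish on $c_0$. The second is the degenerate case $\lambda\in\Q$ (equivalently $D$ a perfect square): then $K=\Q$, the even-dimension argument is vacuous, but in that case the eigenform splits off a factor and $(X,\omega)$ is a torus cover, for which complete algebraic periodicity is classical; I would dispose of this case at the outset.
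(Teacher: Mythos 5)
Your proposal is correct, but it takes a genuinely different route from the paper's. The paper proves Theorem~\ref{thm:Alg:Periodic} via McMullen's complex flux: Theorem~\ref{thm:zero:flux} gives $\Fl(\omega)=0$ for eigenforms, Proposition~\ref{prop:flux:n:SAF} converts vanishing flux into vanishing SAF \emph{under the extra hypothesis that the relative periods lie in $K(\imath)$}, and since that hypothesis can fail, the bulk of the work (Section~\ref{sec:inv:kernel}, Proposition~\ref{prop:SAF:unchanged}) shows that SAF is unchanged under a small kernel-foliation perturbation $(X,\omega)+w$, proved by the homological cancellation $\sum_{\alpha\in\Alp'}\varepsilon(\alpha)C_\alpha=0$. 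You bypass both flux and the kernel foliation by invoking the Kenyon--Smillie/Calta--Smillie identity expressing $J(X,\omega)$ (up to a factor $2$, irrelevant here) as $\sum_i \omega(a_i)\wedge\omega(b_i)$ over a symplectic basis of \emph{absolute} homology, combined with Arnoux's $SAF=J_{xx}$; this single citation shows at one stroke that the SAF-invariant in every direction depends only on absolute periods, doing for this theorem the job of the paper's Sections~\ref{sect:IET:SAF:CAP}--\ref{sec:inv:kernel}. Your symplectic linear algebra on $H_1(X,\Q)^-$ is then clean and correct: $y\circ T=\lambda\,y$ (note this is the eigenform property; self-adjointness enters only later) makes $\Q c_0+\Q Tc_0$ a $K$-line inside $\ker y$, and the identity $\langle c_0,Tc_0\rangle=\langle Tc_0,c_0\rangle=-\langle c_0,Tc_0\rangle$ forces that plane to be Lagrangian, which kills the $y\wedge y$ component of the dual bivector. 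What each approach buys: yours is shorter, uniform over cases $(4)$--$(8)$ (indeed over all Prym loci in Table~\ref{tab:strata:list}), and makes transparent that complete algebraic periodicity is an absolute-period condition; the paper's detour through Proposition~\ref{prop:SAF:unchanged} is self-contained modulo McMullen's flux results and, importantly, is reused elsewhere (the kernel-foliation arguments reappear in the proof of Theorem~\ref{thm:CAP:eigen}), so it is not wasted effort in the paper's economy.

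Three small repairs, none fatal. First, homologicality gives $c\in H_1(X,\Z)$ with $y(c)=0$, not a priori an anti-invariant class; replace it by $c_0=c-\inv_*c\in H_1(X,\Z)^-$, which has $\omega(c_0)=2\,\omega(c)\neq0$ since periods of $\omega$ kill the invariant part. Second, the intersection form on the lattice $H_1(X,\Z)^-$ need not be unimodular (for $\Prym(1,1,2)$ the paper itself exhibits the matrix $\left(\begin{smallmatrix}J&0\\0&2J\end{smallmatrix}\right)$), so your symplectic basis and the completion of the Lagrangian plane must be taken over $\Q$; this is harmless as all the wedges are $\Q$-bilinear. Third, in passing from $H_1(X,\Q)$ to $H_1(X,\Q)^-$ you should record that the splitting $H_1(X,\Q)=H_1^+\oplus H_1^-$ is symplectically orthogonal, so the dual bivector splits as $\pi_++\pi_-$ and $\Lambda^2$ of the period map kills $\pi_+$. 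Your disposal of the degenerate case ($D$ a square, hence a torus cover with all directions in $\Q\P^1$ periodic) coincides with the paper's own treatment.
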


Conversely, we have

\begin{Theorem}
\label{thm:CAP:eigen}
Let $(X,\omega)$ be a translation surface in one of the Prym loci $\Prym(\kappa)$ given by cases $(4)-(5)-(6)-(7)-(8)$ of Table~\ref{tab:strata:list}. Assume that $(X,\omega)$ is completely algebraically periodic, and the set of homological directions of $(X,\omega)$ is $K\P^1$, where $K$ is either  $\Q$, or a real quadratic field. Then the surface $(X,\omega)$ is a Prym eigenform.
\end{Theorem}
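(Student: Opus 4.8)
The plan is to follow the genus-two template of Calta and McMullen, transported to the rank-four anti-invariant homology of the Prym involution. Throughout, write $A=\Prym(X,\inv)$, a polarized Abelian surface with $H_1(X,\Z)^-$ a symplectic $\Z$-module of rank $4$, and let $\phi\colon H_1(X,\Z)^-\to\C$, $\phi(c)=\int_c\omega$, be the period map. The goal is to produce a self-adjoint, proper embedding $\mathfrak{i}\colon\Ord_D\hookrightarrow\mathrm{End}(A)$ with $\omega$ an eigenvector, where $D$ is the discriminant of $K$ (a perfect square when $K=\Q$); this places $(X,\omega)$ in $\PrD(\kappa)$.

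First I would normalize. By Masur's theorem there exist directions carrying a regular closed geodesic, and by complete algebraic periodicity together with the complete-periodicity statement available in these strata (Theorem~\ref{thm:A}) every such direction is completely periodic; since there are infinitely many of them, I may pick two transverse ones and send them to the horizontal and vertical directions by an element of $\GL^+(2,\R)$. This yields two transverse cylinder decompositions: horizontal cylinders $C_1,\dots,C_k$ with core classes $\gamma_i\in H_1(X,\Z)^-$ and widths $w_i=\int_{\gamma_i}\omega\in\R$, and vertical cylinders with core classes $\delta_j$ and heights $h_j$, the involution $\inv$ permuting the cylinders and sending each core class to the negative of another. In the five strata of Table~\ref{tab:strata:list} one has $\dim_\C A=2$, and I would check that the classes $\{\gamma_i\}\cup\{\delta_j\}$ span $H_1(X,\Z)^-\otimes\Q$.

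Next I would show that $\phi$ lands in $K\oplus Ki$. This is the step where the hypothesis on the homological directions is genuinely used: the vanishing of the horizontal and vertical SAF-invariants produces the $\Q$-linear relations among the $w_i$ (resp.\ the $h_j$), while the assumption that every homological direction lies in $K\P^1$ forces, after a real rescaling, all $w_i$ and all $h_j$ to lie in $K$, with the $w_i$ already spanning $K$ over $\Q$. The slopes of the relative periods joining distinct cylinders, being again in $K\P^1$, then pin their remaining coordinates into $K$ as well. Writing $\lambda\in K$ for a generator with Galois conjugate $\lambda'$, the two transverse decompositions let me assemble a $\Q$-linear endomorphism $T_\Q$ of $H_1(X,\Z)^-\otimes\Q$ realizing multiplication by $\lambda$ on the $\omega$-periods, i.e.\ $\int_{T_\Q c}\omega=\lambda\int_c\omega$; the horizontal data fix its action on real periods and the vertical data on imaginary periods, and the matching of the two is exactly the content of the $K$-rationality just established.

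The construction of the real multiplication then reduces to integrality and compatibility: clear denominators to replace $T_\Q$ by a self-map $T$ of the lattice $H_1(X,\Z)^-$, verify that $T$ commutes with $\inv$ (so descends to $A$), that it is self-adjoint for the intersection form, and that it satisfies a monic quadratic over $\Z$ whose discriminant is a square multiple of $D$; saturating the resulting order yields the proper subring $\Ord_D\subset\mathrm{End}(A)$, and by construction $\omega$ lies in the $\lambda$-eigenspace. The main obstacle I anticipate is precisely this last step. Unlike the Jacobian in genus two, here $H_1(X,\Z)^-$ is a proper symplectic sublattice, and one must confirm that the map built from period data descends to an \emph{integral}, $\inv$-equivariant, self-adjoint endomorphism of $A$ and not merely a $\Q$-endomorphism. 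The dimension count $\dim_\C A=2$ specific to Table~\ref{tab:strata:list}, combined with the fact that the real and imaginary period data arise from two distinct completely periodic decompositions, is what makes the rank-four lattice just large enough for this to close up; the case $K=\Q$ is handled identically, the surface then being a torus cover and $D$ a perfect square.
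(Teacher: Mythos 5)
Your proposal takes a genuinely different route from the paper (which does \emph{not} build the endomorphism from period data: it perturbs along the kernel foliation to produce a hyperbolic element of the Veech group and then invokes Theorem~\ref{theo:hyp:Prym:eig}), and as written it has two real gaps. The first is the claim that ``the slopes of the relative periods joining distinct cylinders, being again in $K\P^1$,'' lie in $K$. This does not follow from the hypothesis: homological directions are by definition directions of absolute periods $\int_c\omega$, $c\in H_1(X,\Z)$, and nothing constrains saddle connections joining distinct zeros -- a completely algebraically periodic surface can have irrational relative periods. This is exactly why the paper's proof begins by observing that both complete algebraic periodicity and the eigenform property are invariant along kernel foliation leaves, moves $(X,\omega)$ within its leaf so that \emph{all} periods lie in $K(\imath)$, and transports the conclusion back; your normalization ``$\phi$ lands in $K\oplus K\imath$'' is only available for absolute periods.

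The second gap is the closing-up step you yourself flag, which is the actual mathematical content and is not settled by the dimension count. Your $T_\Q$ is well defined only if the period map $H_1(X,\Z)^-\otimes\Q\to K(\imath)$ is injective; the substitute you propose -- that the core classes of two transverse cylinder decompositions span $H_1^-\otimes\Q$ -- can simply fail, since each family spans an isotropic subspace (of dimension at most $2$, and of dimension $1$ for a one-cylinder direction), and even when it holds it does not by itself make the assignment $c\mapsto\phi^{-1}(\lambda\phi(c))$ single-valued. More seriously, self-adjointness of $T_\Q$ with respect to the intersection form is not a ``verification'': transported by $\phi$ it amounts to Galois-twisted orthogonality relations of the type $\int_X\omega\wedge\omega'=0$, and establishing these from vanishing of SAF in all $K$-directions is precisely where the full strength of complete algebraic periodicity would have to enter, along the lines of Calta--Smillie~\cite{CaSm}; you give no argument, and without self-adjointness and the quadratic minimal polynomial you also lose $\C$-linearity of $T$ on $S^\perp$, i.e.\ the fact that $T\in\mathrm{End}(\Prym(X,\inv))$ at all. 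The paper sidesteps every one of these issues: using Theorem~\ref{thm:CAP:to:CP}, Calta's relation (Theorem~\ref{theo:Cal:eq}) with at most $n\le 3$ cylinders up to involution, Lemma~\ref{lm:max:nb:cyl}, and a kernel-foliation perturbation $(X,\omega)+v$, $v\in K^2$, making the moduli in two transverse directions commensurable, it obtains two parabolics and hence a hyperbolic element of the Veech group, whereupon $T=\phi_*+\phi_*^{-1}$ in Theorem~\ref{theo:hyp:Prym:eig} is integral, $\inv$-equivariant and self-adjoint for free, because $\phi_*$ is induced by a homeomorphism preserving $H_1(X,\Z)^-$ and the intersection form. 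To salvage your direct approach you would at minimum need to (i) derive the period isomorphism from the hypothesis that the homological directions fill all of $K\P^1$, and (ii) prove the self-adjointness relations from complete algebraic periodicity -- a substantially longer path than the paper's.
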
 

To prove Theorem~\ref{thm:CAP:eigen}, we need the following theorem which relates complete algebraic periodicity and complete periodicity in Prym loci.

\begin{Theorem}\label{thm:CAP:to:CP}
Let $(X,\omega)$ be a translation surface in one of the Prym loci given by the cases  $(4)-(5)-(6)-(7)-(8)$ of Table~\ref{tab:strata:list}. If $(X,\omega)$ is
completely algebraically periodic, then it is completely periodic in the sense of Calta.
\end{Theorem}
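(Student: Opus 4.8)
The plan is to argue by contradiction, using the additivity of the SAF invariant over the components of the directional foliation and transporting the dynamics to the quotient half-translation surface $(Y,q)=(X,\omega)/\inv$, where it is encoded by a \emph{linear involution}. First I would normalize the given direction to be horizontal. A regular closed geodesic in this direction sweeps out a horizontal cylinder $C$ whose core curve $c$ is horizontal with $\int_c\omega\neq 0$; hence the direction is homological, and complete algebraic periodicity yields $SAF(\mathcal{F}_0)=0$. The goal is then to upgrade the vanishing of the SAF invariant to the full cylinder decomposition, which is precisely complete periodicity in the sense of Calta.

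Next I would invoke the structure theorem for measured foliations (Maier): the horizontal foliation decomposes $X$ into finitely many periodic components (maximal cylinders of closed leaves) together with finitely many minimal components $M_1,\dots,M_k$, glued along horizontal saddle connections. Complete periodicity is exactly the assertion $k=0$, so I assume $k\geq 1$ for contradiction. Since the SAF invariant is additive over these components and vanishes identically on cylinders, we get $\sum_i SAF(M_i)=0$. Because $\inv^{*}\omega=-\omega$, the Prym involution preserves the horizontal direction and permutes the $M_i$; each $\inv$-orbit of minimal components descends to a minimal component of the horizontal foliation of $q$ on $Y$, whose first return map to a well-chosen transversal is a \emph{minimal} linear involution $T_i$, with a canonically defined SAF invariant summing to zero.

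The heart of the argument is then a complexity bound followed by a case analysis. Since $(Y,q)$ lies in one of the dimension-$5$ strata $\QQQ(-1^4,4),\ \QQQ(-1^3,1,2),\ \QQQ(-1,2,3),\ \QQQ(8),\ \QQQ(-1,1,4)$, the number of zeros and poles of $q$ is small, and a direction that already carries a cylinder has very little room left: one should be able to show that there is at most one minimal component, and that the associated $T_i$ ranges over a short, explicit list of admissible combinatorial types (bounded number of bands). The crux lemma to establish is that, for each of these five strata, every minimal linear involution of admissible type has \textbf{nonzero} SAF invariant. Combined with the absence of cancellation (there being essentially a single minimal component, so no two SAF contributions to annihilate one another in $\R\wedge_\Q\R$), this contradicts $\sum_i SAF(T_i)=0$ and forces $k=0$.

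I expect the crux lemma to be the main obstacle. Ruling out a minimal component is genuinely stratum-dependent: one must enumerate the combinatorial configurations of minimal linear involutions that can coexist with a transverse cylinder in each stratum, and verify in every case that the SAF invariant is a nonzero element of $\R\wedge_\Q\R$ (equivalently, that the defining length and translation data cannot satisfy the $\Q$-linear relations forcing SAF to vanish). The subtle points will be keeping precise track of the half-translation (flip) structure when passing to $Y$, so that the linear-involution SAF is correctly identified with the downstairs contribution, and confirming the ``no second minimal component'' reduction from the singularity data of $\QQQ(\kappa')$ in Table~\ref{tab:strata:list}; this is exactly where the linear-involution formalism is designed to do the bookkeeping cleanly.
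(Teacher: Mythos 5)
Your skeleton coincides with the paper's actual strategy: normalize the cylinder direction, note that the core curve makes it homological so $SAF(\mathcal F_\theta)=0$, pass to the quotient $(Y,q)$ and encode the foliation by a linear involution $T$ over $6$ letters with a periodic orbit, then rule out minimal pieces by a finite case analysis. But your two load-bearing claims are left as conjectures, and the first one --- that a transverse cylinder leaves room for \emph{at most one} minimal component, so that no two SAF contributions can cancel in $\R\wedge_\Q\R$ --- is precisely the danger the paper works hardest to exclude, not something that follows from counting zeros and poles. The paper's own Theorem~\ref{theo:d=6}(2) and Figure~\ref{fig:decomposition} exhibit surfaces in $\QQQ(-1^4,4)$, $\QQQ(-1^3,1,2)$ and $\QQQ(-1,1,4)$ that split into \emph{two} irrationally foliated pieces (torus plus sphere, or two tori) with opposite nonzero SAF-invariants: nothing in the singularity data of these strata forbids two-piece cancellation; only the presence of the periodic orbit does, and establishing that is exactly the combinatorial content of the proof (the exceptional sets $\mathcal E_1,\mathcal E_2,\mathcal E_3$ of Lemma~\ref{lm:d=5}, the pathological permutation $\left(\begin{smallmatrix} A& B& C& D \\ B& A& D& C \end{smallmatrix}\right)$ of Lemma~\ref{lm:d=4}, and Lemma~\ref{lm:periodic:orbit}). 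So ``at most one minimal component'' is essentially as hard as the theorem itself, and assuming it begs the question.

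The second gap is that your crux lemma comes with no mechanism. For a fixed combinatorial type the SAF-invariant depends on the continuous data, and ``minimal $\Rightarrow SAF\neq 0$'' is false for interval exchanges in general (the Arnoux--Yoccoz IET is minimal with vanishing SAF), so any proof must show that $SAF(T)=0$ \emph{forces a connection} for the specific admissible types. The only available tool for this is a rank argument: $SAF(T)=0$ together with lengths spanning a rank-two $\Q$-vector space yields a connection via Boshernitzan (Proposition~\ref{prop:connection}), and the rank bound itself must be extracted type by type, as in the wedge computations in the proof of Lemma~\ref{lm:d=4}; note that Theorem~\ref{thm:CAP:to:CP} carries no field-of-definition hypothesis, so rank two is not automatic. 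The paper sidesteps all of this by running the SAF-preserving Rauzy induction downward from $d=6$: the periodic orbit destroys the Keane property, so by Proposition~\ref{prop:BL} either the induction degenerates --- dropping to $5$ letters, where Lemma~\ref{lm:rauzy:sing} and geometric irreducibility force a genuine generalized permutation and exclude $\mathcal E_1$ --- or $T$ decomposes, and irreducibility (Theorem~\ref{thm:gen:perm:irrd}) pins down the possible pieces, with Lemmas~\ref{lm:d=3}--\ref{lm:d=5} and Lemma~\ref{lm:periodic:orbit} finishing each branch. Without this induction engine your ``short, explicit list of admissible minimal types'' has no finite enumeration principle behind it: the bookkeeping you defer to the end is not a technical point but the entire proof.
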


As a consequence of Theorems~\ref{thm:Alg:Periodic} and~\ref{thm:CAP:to:CP}, we draw
\begin{Corollary}
\label{cor:eig:form:CP}
Every Prym eigenforms in the loci shown in Table~\ref{tab:strata:list} is completely periodic in the sense of Calta.
\end{Corollary}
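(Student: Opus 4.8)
The plan is to deduce Corollary~\ref{cor:eig:form:CP} directly from the two theorems already established, handling separately the five new loci $(4)$--$(8)$ and the three genus-two loci $(1)$--$(3)$ of Table~\ref{tab:strata:list}. For the cases $(4)$--$(8)$ the argument is purely formal: given a Prym eigenform $(X,\omega)$ in one of these loci, Theorem~\ref{thm:Alg:Periodic} asserts that it is completely algebraically periodic, and since $(X,\omega)$ is in particular a translation surface lying in the corresponding Prym locus, Theorem~\ref{thm:CAP:to:CP} then upgrades this to complete periodicity in the sense of Calta. No genuine obstacle remains here, the entire content having been absorbed into Theorems~\ref{thm:Alg:Periodic} and~\ref{thm:CAP:to:CP}.

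It remains to treat the genus-two loci $(1)$--$(3)$, which lie outside the hypotheses of those theorems and must instead be reduced to the existing genus-two theory. For case $(1)$ this is immediate: since $\Prym(1,1)\simeq\H(1,1)$ with Prym involution equal to the hyperelliptic one, one has $\Omega^-(X,\inv)=\Omega(X)$, so $\Prym(X,\inv)=\mathrm{Jac}(X)$ and a Prym eigenform is exactly a genus-two eigenform; the complete periodicity of such forms is part of the genus-two theory of Calta~\cite{Calta04} and McMullen~\cite{Mc07}. For cases $(2)$ and $(3)$ I would transport this conclusion through the $\GL^+(2,\R)$-equivariant identifications $\Prym(3,3)\simeq\H(1,1)$ and $\Prym(2,2,1,1)\simeq\H(0^{2},2)$ recorded in Table~\ref{tab:strata:list}, under which Prym eigenforms correspond to genus-two eigenforms.

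The one point requiring care — and the only place where I expect any friction — is verifying that these identifications preserve complete periodicity in the sense of Calta, since a priori it is a property of the directional foliations on the genuine (higher-genus) flat surface. The key observation is that the bijections are $\GL^+(2,\R)$-equivariant, so they carry directions to directions and intertwine the renormalizing action on the foliations $\mathcal{F}_\theta$; as complete periodicity is invariant under this action and can be read off from the family of directional data, a direction is completely periodic on one side precisely when the corresponding direction is on the other. Granting this, the genus-two conclusion transfers to loci $(2)$ and $(3)$, and combining the two case analyses yields the corollary for all loci of Table~\ref{tab:strata:list}.
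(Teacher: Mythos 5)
Your proposal is correct and follows essentially the paper's own route: the paper deduces Corollary~\ref{cor:eig:form:CP} immediately by combining Theorem~\ref{thm:Alg:Periodic} with Theorem~\ref{thm:CAP:to:CP}, exactly as in your first paragraph. Your separate treatment of the genus-two loci $(1)$--$(3)$ supplies what the paper leaves implicit (complete periodicity there being the classical genus-two result of Calta~\cite{Calta04} and McMullen~\cite{Mc07}, transported through the identifications of Table~\ref{tab:strata:list}); note only that the transfer of complete periodicity is really justified because those bijections arise from degree-two covering constructions under which directional foliations correspond leaf by leaf, rather than from $\GL^+(2,\R)$-equivariance of the bijection alone.
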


The following theorem involves only Prym eigenforms in the loci given by cases $(6)-(7)$ of Table~\ref{tab:strata:list}.

\begin{Theorem}
\label{theo:cp:stronger}
Let $(X,\omega)$ be  a Prym eigenform in $\Omega  E_D(4,4)^{\rm even}$ or $\Omega
E_D(1,1,4)$ having all the periods (relative and absolutes) in $K(\imath)$, where $K=\Q(\sqrt{D})$. 
Then for any direction $\theta \in K\P^1$ the flow $\mathcal F_\theta$ is completely periodic. In 
particular, $(X,\omega)$ satisfies the  topological Veech dichotomy.
\end{Theorem}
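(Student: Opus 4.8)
The plan is to combine the preceding structural results with the fact that on a Prym eigenform, any direction with a regular closed geodesic is already controlled by the eigenform structure. Let $(X,\omega)$ be a Prym eigenform in $\Omega E_D(4,4)^{\mathrm{even}}$ or $\Omega E_D(1,1,4)$ with all periods in $K(\imath)$, $K=\Q(\sqrt D)$. First I would observe that by Corollary~\ref{cor:eig:form:CP}, every Prym eigenform in these loci is completely periodic in the sense of Calta; so the content to extract here is that the periodic directions are precisely those in $K\P^1$, and that complete periodicity together with this gives the topological Veech dichotomy. The key point is therefore to show that the set of homological directions of such $(X,\omega)$ equals $K\P^1$.

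The first main step is to identify directions of closed geodesics with $K\P^1$. On one side, if $\theta$ is the direction of a regular closed geodesic, then it is a homological direction, and by Theorem~\ref{thm:Alg:Periodic} the SAF-invariant vanishes; moreover, because $(X,\omega)$ is an eigenform for real multiplication by $\Ord_D$, the relative period coordinates lie in $K(\imath)$ and one shows (as in McMullen's genus-two analysis) that every homological direction lies in $K\P^1$. Conversely, for $\theta\in K\P^1$ I would argue that $\theta$ is realized by an actual cylinder: the eigenform condition forces the holonomy field to be $K$, so there is a homology class $c$ with $\int_c\omega$ in direction $\theta$; completeness of the flat structure then yields a closed regular leaf in that direction. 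The equality of these two sets is the heart of the matter.

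Having established that the completely periodic directions are exactly $K\P^1$, the second step is to invoke complete periodicity directly: for $\theta\in K\P^1$, the direction contains a closed geodesic, hence by Corollary~\ref{cor:eig:form:CP} the flow $\mathcal F_\theta$ is completely periodic, i.e. $X$ decomposes into cylinders in direction $\theta$. This gives the first assertion. For the topological Veech dichotomy, I would then show that any direction $\theta\notin K\P^1$ carries no closed regular geodesic at all: if it did, $\theta$ would be a homological direction and thus lie in $K\P^1$, a contradiction. By the Keane/minimality criterion, a direction with no closed regular leaves and no saddle connection splitting the surface is minimal; the algebraic periodicity and the eigenform structure rule out saddle connections in directions outside $K\P^1$ producing a nontrivial decomposition, so such a flow is minimal. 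Combining the two cases ($\theta\in K\P^1$ periodic, $\theta\notin K\P^1$ minimal) yields the dichotomy.

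The hard part will be the converse inclusion in the first step, namely showing that \emph{every} direction in $K\P^1$ is actually realized by a genuine closed geodesic rather than merely being an algebraic direction of some homology class. This requires passing from the existence of a homology class with the right holonomy direction to the existence of an embedded closed leaf, which in general demands control over how saddle connections and cylinders fit together in the Prym locus; the eigenform equation $\inv^*\omega=-\omega$ and the constraint $\dim_\C\Omega^-(X,\inv)=2$ should be exploited to reduce the bookkeeping, much as the low complexity ($5$-dimensional strata, genus $\le 5$) is what makes a case-by-case verification tractable. A secondary technical obstacle is ruling out minimal-but-not-uniquely-ergodic behavior in directions outside $K\P^1$, but since the statement only claims the topological dichotomy (periodic or minimal), this reduces to the no-closed-leaf argument above and does not require the full ergodic-theoretic input.
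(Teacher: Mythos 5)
There is a genuine gap, and it sits exactly where you flagged ``the hard part'': the claim that every $\theta\in K\P^1$ is realized by a closed regular geodesic. Your proposed mechanism --- a homology class $c$ with $\int_c\omega$ in direction $\theta$ plus ``completeness of the flat structure'' --- does not produce a closed leaf: a homological (even SAF-zero) direction can perfectly well carry no closed regular geodesic. The paper itself exhibits this failure in nearby loci: the Remark after Theorem~\ref{theo:cp:stronger} notes that in $\Omega E_D(1,1)$ a direction in $K\P^1$ may split the surface into two irrationally foliated isogenous tori, and Example~\ref{ex:H22} (Figure~\ref{fig:decomposition:2}) gives an SAF-zero, non-periodic direction for $\QQQ(-1^4,4)$. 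So any correct proof must use the specific strata $\QQQ(8)$ and $\QQQ(-1,2,3)$ in an essential way, and your proposal never does --- the remark that low dimension ``should make a case-by-case verification tractable'' is precisely the missing content. Consequently your reduction to Corollary~\ref{cor:eig:form:CP} (complete periodicity in the sense of Calta, which only upgrades directions \emph{already containing} a closed geodesic) cannot close the argument: producing the closed geodesic in every $K\P^1$ direction is equivalent in difficulty to the theorem itself.

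The paper's actual route avoids this entirely. For $\theta\in K\P^1$ it normalizes $\theta$ to the vertical, notes that all periods in $K(\imath)$ force the cross-section linear involution $T$ over $6$ intervals to have lengths in a rank-two $\Q$-vector space with $SAF(T)=0$ (Theorem~\ref{thm:Alg:Periodic}), and then proves Theorem~\ref{theo:d=6}: running the (singular) Rauzy induction, the only obstructions to complete periodicity are degenerations in which $T$ decomposes, and Claims~\ref{claim:1} and~\ref{claim:2} show by enumerating $\mathcal R^{-1}_{sing}\mathcal E_2$ and $\mathcal R^{-1}_{sing}\mathcal E_3$ that such splittings force the underlying quadratic differential into $\QQQ(-1,1,4)$, $\QQQ(-1^3,1,2)$ or $\QQQ(-1^4,4)$ --- never $\QQQ(8)$ or $\QQQ(-1,2,3)$. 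That combinatorial exclusion is the heart of the proof and has no counterpart in your outline. One part of your proposal is salvageable and matches the paper's logic for the dichotomy: since \emph{all} periods (relative included) lie in $K(\imath)$, every saddle-connection direction lies in $K\P^1$, so a direction $\theta\notin K\P^1$ has no saddle connections and is therefore minimal; that step is fine once the first assertion is established.
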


\begin{Remark}
Theorem~\ref{theo:cp:stronger} is not true for every Prym loci. For instance, every completely algebraically periodic surface $(X,\omega)\in \Omega  E_D(1,1)$ with relative and absolute periods in $K(\imath)$, which is not a Veech surface,  admits an irrational splitting into two isogenous tori~\cite{CM2006}. In particular, the topological Veech dichotomy fails for such surfaces.
\end{Remark}

We will show (Section~\ref{sec:limit:sets}) that if $(X,\omega)$ is stabilized by an affine pseudo-Anosov homeomorphism, then the set of 
directions $\theta \in K\P^1$ that are  fixed by parabolic elements in the Veech group is dense in $\R\P^1$. 
As a corollary, we get 

\begin{Theorem}
\label{theo:infinite:limitset}
Let $(X,\omega)$ be a Prym form in $\Prym(4,4)^{\rm even}\sqcup\Prym(1,1,4)\sqcup\Prym(1,1,2)$. Then
the limit set of the Veech group $\SL(X,\omega)$ is either the empty set, a single point, or the full circle at infinity.

Moreover there exist infinitely many surfaces in 
$\Prym(4,4)^{\rm even}\sqcup\Prym(1,1,4)\sqcup\Prym(1,1,2)$ 
whose Veech group is infinitely generated and of the first kind.
\end{Theorem}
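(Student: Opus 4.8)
The plan is to combine the standard classification of limit sets of Fuchsian groups with the density statement for parabolic directions announced for Section~\ref{sec:limit:sets}. Write $\Gamma = \SL(X,\omega)$, a discrete subgroup of $\SL(2,\R)$, and let $\Lambda(\Gamma) \subset \R\P^1$ denote its limit set. Recall that $\Lambda(\Gamma)$ is closed and $\Gamma$-invariant, that the fixed point of every parabolic element of $\Gamma$ lies in $\Lambda(\Gamma)$, and that $\Gamma$ is either elementary --- in which case $\Lambda(\Gamma)$ has at most two points, the genuine two-point case forcing $\Gamma$ to contain a hyperbolic element and all elements to share a common axis --- or non-elementary, in which case $\Lambda(\Gamma)$ is a perfect set, either a Cantor set (second kind) or all of $\R\P^1$ (first kind). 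The asserted trichotomy thus amounts to excluding both the genuine two-point configuration and the Cantor configuration.

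First I would split according to whether $\Gamma$ contains a hyperbolic element, equivalently whether $(X,\omega)$ is stabilized by an affine pseudo-Anosov homeomorphism. If $\Gamma$ contains no hyperbolic element, then it cannot be non-elementary and cannot realize a two-point limit set (such a set requires a hyperbolic axis); hence $\Lambda(\Gamma)$ is empty or a single point. If on the contrary $(X,\omega)$ carries a pseudo-Anosov, I invoke the result of Section~\ref{sec:limit:sets}: the set of directions $\theta \in K\P^1$ fixed by parabolic elements of $\Gamma$ is dense in $\R\P^1$. Since each such fixed point lies in the closed set $\Lambda(\Gamma)$, density forces $\Lambda(\Gamma) = \R\P^1$, so $\Gamma$ is of the first kind. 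This settles the trichotomy, the key input being the density statement rather than any new Fuchsian-group theory.

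For the final assertion I would use the standard fact that a finitely generated Fuchsian group of the first kind has finite covolume, i.e. corresponds to a Veech surface; contrapositively, a non-Veech surface whose Veech group is of the first kind has an infinitely generated Veech group. By the previous paragraph it therefore suffices to produce, in $\Prym(4,4)^{\rm even}\sqcup\Prym(1,1,4)\sqcup\Prym(1,1,2)$, infinitely many surfaces that are simultaneously stabilized by a pseudo-Anosov (hence of the first kind) and not Veech surfaces. Existence of pseudo-Anosov-stabilized eigenforms is guaranteed by Theorem~\ref{theo:hyp:Prym:eig}, and for the loci $\Omega E_D(4,4)^{\rm even}$ and $\Omega E_D(1,1,4)$ one moreover has, by Theorem~\ref{theo:cp:stronger}, that every eigenform with all periods in $K(\imath)$ satisfies the topological Veech dichotomy. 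The construction then proceeds in period coordinates: these eigenform loci carry nontrivial relative-period directions, so they are positive-dimensional, and deforming along such directions while keeping the periods in $K(\imath)$ yields a continuum of completely periodic surfaces.

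The main obstacle, and the step deserving the most care, is the non-Veech conclusion: I must argue that among the surfaces just constructed only countably many can be Veech surfaces. I expect to handle this by a dimension/countability argument --- each Veech surface spans a closed $\SL(2,\R)$-orbit, so along a fixed relative-period deformation only a discrete set of parameters can yield a lattice Veech group --- whence infinitely many of the constructed surfaces are non-Veech yet of the first kind, and therefore have infinitely generated Veech groups. Choosing infinitely many admissible discriminants $D$, or infinitely many parameters within one locus, then produces the required infinite family. The delicate point is to preserve all three properties simultaneously under the deformation (a surviving pseudo-Anosov, periods in $K(\imath)$, and a non-lattice Veech group); this is where the explicit structure of the Prym eigenform loci in cases $(5)$--$(6)$--$(7)$ will be needed.
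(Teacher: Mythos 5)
Your Fuchsian-theoretic frame (limit set with at least two points forces a hyperbolic element, parabolic fixed points lie in the closed limit set, density forces the full circle) is correct and is exactly the frame the paper uses. But your first paragraph then ``invokes the result of Section~\ref{sec:limit:sets}: the set of directions $\theta\in K\P^1$ fixed by parabolic elements is dense in $\R\P^1$'' --- and that density statement \emph{is} the proof of this theorem; it is what Section~\ref{sec:limit:sets} exists to establish, so citing it is circular in a blind attempt. Concretely, what you have skipped is: (i) Proposition~\ref{prop:cp:unstab:decomp}, which shows that any \emph{unstable} periodic direction is parabolic (the cylinders come in at most two involution-orbits, so the flux relation of Corollary~\ref{cor:mod:relation} forces commensurable moduli); (ii) for $\Prym(4,4)^{\rm even}$ and $\Prym(1,1,4)$, the approximation step: take $\theta\in K\P^1$ near an arbitrary direction, use Theorem~\ref{theo:cp:stronger} to get complete periodicity, and if the decomposition is stable, twist inside a cylinder whose two boundaries carry distinct zeros to produce a nearby unstable periodic direction; and (iii) the case $\Prym(1,1,2)$, which your plan cannot reach at all, since Theorem~\ref{theo:cp:stronger} does not cover it --- there the analogue of complete periodicity is only conditional (Corollary~\ref{cor:reformulation:d=6}, assertion~(2), requiring a regular fixed point of $\inv$ on the spine), and the paper must instead combine Masur's density of periodic directions, Corollary~\ref{cor:P112:2zero:connect} (directions of $\inv$-invariant saddle connections between the two simple zeros are unstable periodic), and a case-by-case analysis (a)--(c) of the topological models of stable $5$-cylinder decompositions to locate such saddle connections near any periodic direction. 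None of this is routine, and the $\Prym(1,1,2)$ case in particular cannot be waved through.

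Your second part has a gap you yourself flag but do not close: your deformation scheme needs the deformed surface to retain a pseudo-Anosov (otherwise your own trichotomy gives nothing, since you only obtained first-kind limit sets for pA-stabilized surfaces), yet moving along the kernel foliation generically destroys any affine pseudo-Anosov, and among the countably many parameters where one survives your countability argument cannot certify that infinitely many are non-lattice. The paper sidesteps this entirely: the density argument never actually uses the pseudo-Anosov, only that all periods lie in $K(\imath)$ (the pA enters solely via McMullen's Theorem~9.4 to place the periods there; see Remark~\ref{rk:alternative}), so \emph{every} eigenform with periods in $K(\imath)$ has first-kind Veech group. It then remains only to exhibit eigenforms that are not lattices, and instead of a countability argument the paper writes down explicit prototypes in $\Prym(1,1,2)$ (Figure~\ref{fig:prototype}): for a triple $(w,h,e)$ with $D=e^2+8wh$, $\lbd=\frac{e+\sqrt{D}}{2}$, and a parameter $t\in\Q(\sqrt{D})$ with $t/\lbd\in\Q$, the vertical direction is periodic with cylinder moduli $t/\lbd$, $(\lbd-2t)/(\lbd+h/2)$, $(w/2-(\lbd-2t))/(h/2)$, of which the first two are incommensurable; by Veech's dichotomy the group is then not a lattice, and a first-kind non-lattice Fuchsian group is infinitely generated. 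So the correct repair of your sketch is not a genericity/countability argument but the strengthening ``periods in $K(\imath)$ suffices for first kind'' together with an explicit non-parabolic periodic direction.
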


\subsection{Outline of the paper}

We conclude by sketching the proof of our results. It involves
the dynamics of interval exchange transformations (and linear involutions), 
the SAF-invariant and the kernel foliation in Prym loci.

\begin{enumerate}
\item   To prove Theorem~\ref{thm:Alg:Periodic} we use an invariant introduced by McMullen similar to the SAF-invariant: the Galois flux. 
Let $T$ be an interval exchange transformation (IET), and let $\lbd_\alpha, t_\alpha, \, \alpha \in \Alp,$ be respectively the lengths of the exchanged intervals and their translation lengths. The Galois flux of $T$ is defined {\em only} if  the translation lengths $t_\alpha$  lie in a real quadratic field $K \subset \R$, namely
$$
{\rm flux}(T)=\sum_{\alpha\in \Alp} \lbd_\alpha t'_\alpha, \textrm{ where $t'_\alpha$ is the Galois conjugate of $t_\alpha$}.
$$
\noindent It turns out (see Theorem~\ref{thm:zero:flux}) that if $(X,\omega)\in\PrD(\kappa)$ 
and having all absolute periods in $K(\imath)$ then for any $\theta \in K\P^1, {\rm flux}(T_\theta) = 0$, where $T_\theta$ is the IET defined by the first return map of the flow in direction $\theta$ to a transversal interval in $X$. The two invariants are related by Proposition~\ref{prop:flux:n:SAF}. Namely, under the 
additional assumption: if the relative periods of $\omega$ are also in $K(\imath)$ then  
$\fl(T_\theta)=0$ implies  $SAF(T_\theta)=0$.

Now if the relative periods of $\omega$ are not in $K(\imath)$ then we can
``perturb'' $\omega$ in $\PrD(\kappa)$ to get a new form $\omega'$ (by using the kernel foliation, see Section~\ref{sec:inv:kernel}) 
so that the relative periods of  $\omega'$ belong to $K(\imath)$. Thus by the preceding discussion $SAF(T'_\theta)=0$ ( $T'_\theta$ is the IET defined by flow in direction $\theta$ on $X'$).We then conclude with Proposition~\ref{prop:SAF:unchanged}: If the ``perturbation'' is small enough then
$$
SAF(T'_\theta)=SAF(T_\theta).
$$
\noindent This proves Theorem~\ref{thm:Alg:Periodic}. \medskip

\item It is well known that linear flows on translation surfaces are encoded by interval exchange transformations.
Since we will work with non-orientable measured foliations defined by quadratic differentials, 
it will be more convenient to use the coding provided by {\em linear involutions}.
By~\cite{Lanneau:rauzy} one can still define a ``first return'' of the non-orientable
foliation (that is no longer a flow) to a transverse interval, which gives a linear involution 
defined over $d$ intervals (see Section~\ref{sec:LI}). It turns out that the number $d$ of exchanged 
intervals is related to the dimension of the Prym locus, namely $d=\dim_\C \Prym(\kappa)+1$. 

Obviously complete periodicity for a foliation or for its associated linear involution is the same. In  view  of this,  we  will  deduce  Theorem~\ref{thm:CAP:to:CP}
from results on linear involutions (see Section~\ref{subsec:cp}).
We  briefly  sketch a proof here. Let $(X,\omega)\in \PrD(\kappa)$ be a Prym form which has 
a vertical cylinder (we normalize so that any homological direction belongs to a quadratic field). We will 
consider the cross section $T$ of the vertical {\it foliation} to some full transversal interval
on the quotient $X/<\inv>$. Let us resume the situation: $T$ is a linear involution defined over $d=6$ letters 
(for all the Prym loci in Table~\ref{tab:strata:list}, $\dim_\C\Prym(\kappa)=5$) having a periodic orbit, and $SAF(T)=0$. 
We want to show that $T$ is completely periodic. \medskip
 
\noindent If $T$ is defined over $2$ or $3$ intervals then the proof is immediate. We prove the assertion for $d=6$ by 
induction on the number of intervals, we pass from $d$ intervals to $d-1$ intervals by applying 
the Rauzy induction (which preserves the SAF-invariant, see Section~\ref{subsec:cp}). \medskip

\item Theorem~\ref{theo:cp:stronger} is a refinement of Theorem~\ref{thm:CAP:to:CP} 
by inspecting the possible degenerations of linear involutions (see Section~\ref{sec:d=6}). \medskip

\item For the proof of Theorem~\ref{theo:infinite:limitset}, we first remark that if the limit set of the Veech group $\SL(X,\omega)$ of $(X,\omega)$ contains at least two points, then $(X,\omega)$ is stabilized by an affine pseudo-Anosov homeomorphism $\phi$. It follows that all the relative and absolute periods of $\omega$ belong to $K(\imath)$, where $K=\Q(\mathrm{Tr}(D\phi))$ (see \cite{Mc2} Theorem~9.4).  To show that the limit set of  $\SL(X,\omega)$ is the full circle at infinity, it is sufficient to show that the set of directions which are fixed by  parabolic elements of $\SL(X,\omega)$ is dense in $\R\P^1$. For the cases of $\Prym(1,1,4)$ and $\Prym(4,4)^{\rm even}$, this follows from  Theorem~\ref{theo:cp:stronger} together with a criterion for a periodic direction to be parabolic (that is to be fixed by a parabolic element in $\SL(X,\omega)$, see Proposition~\ref{prop:cp:unstab:decomp}). For the case $\Prym(1,1,2)$, this follows from a similar result to Theorem~\ref{theo:cp:stronger} (see Corollary~\ref{cor:reformulation:d=6}), and a careful inspection of topological models for cylinders decompositions of surfaces in $\Prym(1,1,2)$ (see Section~\ref{sec:limit:sets}). \medskip

\item We can now give the proof of the main theorems. Theorem~\ref{thm:A} and Theorem~\ref{thm:C} are  reformulations of Theorem~\ref{thm:CAP:to:CP} and Theorem~\ref{theo:infinite:limitset} respectively. For Theorem~\ref{thm:B}, we first use Theorem~\ref{theo:hyp:Prym:eig} to derive that the orienting double covering of a quadratic differential in $\QQQ(-1,2,3)\sqcup\QQQ(8)$ which is stabilized by an affine pseudo-Anosov homeomorphism is a Prym eigenform for some quadratic order $\mathcal{O}_D$. Moreover, by Theorem~9.4 in \cite{Mc2}, we know that all the periods of this Abelian differential lie in $K(\imath)$, where $K=\Q(\sqrt{D})$. Thus Theorem~\ref{thm:B} follows from Theorem~\ref{theo:cp:stronger}.
\end{enumerate}

\subsection*{Acknowledgments}

We would like to thank Corentin Boissy, Pascal Hubert, and Pierre Arnoux for useful discussions.  We would also 
like to thank the  Centre de Physique  Th\'eorique in Marseille, 
the Universit\'e Bordeaux 1 and Institut Fourier in Grenoble for the hospitality during 
the preparation of this work.
Some of the research visits which made this collaboration possible were supported by 
the ANR Project GeoDyM. The authors are partially supported by the ANR Project GeoDyM.
We also thank~\cite{sage} for computational help.

\section{Interval exchange, Sah-Arnoux-Fathi invariant and McMullen's flux}
\label{sect:IET:SAF:CAP}

In this section we recall necessary background on interval exchange transformations
and we will make clear the relations between the SAF-invariant introduced by Arnoux in his thesis~\cite{Ar1},
the $J$-invariant introduced by Kenyon-Smillie~\cite{KS} and Calta~\cite{Calta04}, and the 
flux introduced by McMullen~\cite{Mc1}.

\subsection{Interval exchange transformation and SAF-invariant}
\label{sect:IET:SAF}

An interval exchange transformation (IET) is a map $T$ from an interval $I$ into itself defined 
as follows: we divide $I$ into finitely many subintervals of the form $[a,b)$. On each of 
such interval, the restriction of $T$ is a translation: $x \mapsto x+t$. By convention, the map $T$ is continuous from the right at the endpoints of the subintervals.  Any IET can
be encoded by a combinatorial data $(\Alp,\pi)$, where $\Alp$ is a finite alphabet,
$\pi=(\pi_0,\pi_1)$ is a pair of one-to-one maps   $\pi_\eps: \Alp \rightarrow \{1,\dots,d\}, \; d=|\Alp|$, together with a vector in the positive cone
$\lbd=(\lbd_\alpha)_{\alpha \in \Alp} \in \R_{>0}^{|\Alp|}$. The permutations  $(\pi_0,\pi_1)$ encodes how the intervals are exchanged,  and the vector $\lbd$ encodes the lengths of the intervals.
Following Marmi, Moussa, Yoccoz~\cite{Marmi:Moussa:Yoccoz}, we denote these intervals by 
$\{I_\alpha, \ \alpha\in \Alp\}$, the length of the interval $I_\alpha$ is $\lbd_\alpha$. 
Hence the restriction of $T$ to $I_\alpha$ is $T(x)=x+t_\alpha$ for some translation length $t_\alpha$.
Observe that $t_\alpha$ is uniquely determined by $\pi$ and $\lbd$. \medskip

A useful tool to detect periodic IET is given by the Sah-Arnoux-Fathi invariant
(SAF-invariant). It is defined by (see~\cite{Ar1}):
$$
SAF(T)=\sum_{\alpha\in \Alp} \lbd_\alpha \wedge_\Q t_\alpha.
$$
It turns out that if $T$ is periodic then $SAF(T)=0$. However the  converse is not true in general.

\subsection{$J$-invariant, SAF, and algebraic periodic direction}
\label{subsec:J-inv}
Let $(X,\omega)$ be a translation surface.
If $\mathbf{P}$ is a polygon in $\R^2$ with vertices $v_1,\dots,v_n$ which are numbered in 
counterclockwise order about the boundary of $P$, then the
$J$-invariant of $\mathbf{P}$ is $J(\mathbf{P})=\sum_{i=1}^n v_i \wedge v_{i+1}$ (with
the dummy condition $v_{n+1}=v_1$). Here $\wedge$ is taken to mean
$\wedge_\Q$ and $\R^2$ is viewed as a $\Q$-vector space. $J(\mathbf{P})$ is a
translation invariant ({\em e.g.} $J({\bf P}+\overrightarrow{v})=J({\bf P})$), thus this
permits to define $J(X,\omega)$ by $\sum_{i=1}^k J({\bf P}_i)$ where ${\bf P}_1 \cup \dots
\cup {\bf P}_k$ is a cellular decomposition of $(X,\omega)$ into planar
polygons (see~\cite{KS}).

The SAF-invariant of an interval exchange is related to the $J$-invariant as 
follows. We define a linear projection $J_{xx}:\R^2
\wedge_\Q \R^2 \rightarrow \R \wedge_\Q \R$ by
$$
J_{xx} \left( \left( \begin{smallmatrix} a \\ b\end{smallmatrix}\right)
  \wedge  \left( \begin{smallmatrix} c \\ d\end{smallmatrix}\right)
    \right) = a \wedge c.
$$
If $T$ is an interval exchange transformation induced by the first
return map of the vertical foliation on $(X,\omega)$ (on a transverse
interval $I$) then  $SAF(T) = J_{xx}(X,\omega)$. Note that the 
definition does not depend of the choice of $I$ if the interval meets
every vertical leaf (see~\cite{Ar1}). Hence this allows us to define
$$
SAF(X,\omega) = SAF(T),
$$
and we will say that $SAF(X,\omega)$ is the SAF-invariant of $(X,\omega)$ in 
the {\it vertical direction}. \medskip

Following Calta~\cite{Calta04}, one also defines the SAF-invariant of $(X,\omega)$ in any 
direction $k \in \R\P^1$ ($k \neq \infty = \left(\begin{smallmatrix} 0 \\ 1 \end{smallmatrix}\right)$)
as follows. Let $g\in GL^+(2,\R)$ be a matrix that sends the vector $\left(\begin{smallmatrix} 1 \\ k \end{smallmatrix}\right)$
to the vector $\left(\begin{smallmatrix} 0 \\ 1 \end{smallmatrix}\right)$. Then we define 
the SAF-invariant of $(X,\omega)$ {\it in direction $k$} to be $J_{xx}(g\cdot(X,\omega))$.

\subsection{Galois flux}

For the remaining of this section  $K$ will be a real quadratic field. There is a unique positive square-free integer $f$ such that $K=\Q(\sqrt{f})$. The {\em Galois conjugation} of $K$ is given by $u+v\sqrt{f} \mapsto u-v\sqrt{f}, \; u,v \in \Q$. For any $x\in K$, 
we denote by $x'$ its Galois conjugate. An interval exchange transformation $T$ is {\em defined over } $K$ if its translation lengths $t_\alpha$ are all in $K$. In \cite{Mc2}, McMullen defines the {\em Galois flux} of an IET $T$ defined over $K$ to be
$$
{\rm flux}(T)=\sum_{\alpha\in \Alp} \lbd_\alpha t'_\alpha.
$$
Observe that for all $n\in \N$, $\fl(T^n)=n\fl(T)$. In particular $\fl(T)=0$ if $T$ is periodic.
The flux is closely related to the SAF-invariant as we will see. 

\subsection{Flux of a measured foliation}

Let $(X,\omega)$ be a translation surface. The real form $\vr={\rm Re}(\omega)$ defines a measured foliation $\mathcal{F}_\vr$ on 
$X$: the leaves of $\mathcal{F}_\vr$ are vertical geodesics of the flat metric defined by $\omega$. 
For any interval $I$, transverse to $\mathcal{F}_\vr$, the cross section of the flow is an IET. We say that $I$ is 
{\em full transversal} if it intersects all the leaves of $\mathcal{F}_\vr$. If all the absolute periods of 
$\vr$ belong to the field $K$, that is $[\vr] \in H^1(X,K) \subset H^1(X,\R)$, then the first return map to 
$I$ is defined over $K$, and we have

\begin{Theorem}[McMullen~\cite{Mc2}]
\label{thm:fol:flux}
Let $T$ be the first return map of $\mathcal{F}_\vr$ to a full transversal interval. 
If $[\vr] \in H^1(X,K)$, then we have
$$
\fl(T)=-\int_X \vr\wedge \vr',
$$

\noindent where $\vr'\in H^1(X,K)$ is defined by $\vr'(c)=(\vr(c))'$, for all $c \in H_1(X,\Z)$. In particular, the flux 
is the same for any full transversal interval. In this case, we will call the quantity $-\int_X\vr\wedge \vr'$ the 
{\em flux} of the measured foliation $\mathcal{F}_\vr$, or simply the flux of $\vr$, and denote it by $\fl(\vr)$.
\end{Theorem}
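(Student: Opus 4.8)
The plan is to prove that $\fl(T) = -\int_X \vr \wedge \vr'$ whenever $T$ is the first return map of $\mathcal{F}_\vr$ to a full transversal interval $I$, under the assumption $[\vr]\in H^1(X,K)$. The strategy is to compute both sides from an explicit geometric decomposition of $X$ induced by the interval exchange, and to recognize the integral $\int_X \vr\wedge\vr'$ as a sum over the rectangles of that decomposition. First I would use the IET structure: the full transversal interval $I$, together with the vertical foliation $\mathcal{F}_\vr$, decomposes $X$ into finitely many \emph{flow rectangles} (or \emph{towers}) $R_\alpha$, one for each $\alpha\in\Alp$. Each $R_\alpha$ has horizontal width $\lbd_\alpha$ (the $\vr$-length of the base subinterval $I_\alpha$) and some vertical height $h_\alpha$ determined by the first-return time; the return map sends $I_\alpha$ to its image by a translation of horizontal displacement $t_\alpha$. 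The key point is that $X$ is (up to measure zero) the disjoint union $\bigcup_\alpha R_\alpha$, so $\int_X \vr\wedge\vr' = \sum_\alpha \int_{R_\alpha}\vr\wedge\vr'$.

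\medskip

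Next I would evaluate the contribution of each rectangle. On $R_\alpha$ the form $\vr$ is the pullback of $dx$ (the horizontal coordinate is constant along vertical leaves), so $\vr$ and $\vr'$ are both closed forms whose periods are governed by the horizontal displacements across $R_\alpha$. Writing the rectangle as a cylinder swept out by the vertical flow, the integral $\int_{R_\alpha}\vr\wedge\vr'$ should reduce by Stokes/Fubini to a product of the width $\lbd_\alpha$ against the Galois-conjugated horizontal displacement, i.e. to $\lbd_\alpha\, t'_\alpha$ up to a uniform sign. Here I would exploit that $\vr'$ is defined cohomologically by $\vr'(c)=(\vr(c))'$, so that the relevant period of $\vr'$ across $R_\alpha$ is exactly the Galois conjugate $t'_\alpha$ of the horizontal period $t_\alpha$ of $\vr$. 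Summing over $\alpha$ then yields
$$
-\int_X \vr\wedge\vr' = \sum_{\alpha\in\Alp}\lbd_\alpha\, t'_\alpha = \fl(T),
$$
which is the desired identity. The independence of the flux on the choice of full transversal $I$ is then automatic, since the left-hand side $-\int_X\vr\wedge\vr'$ is intrinsic to $\vr$ and does not reference $I$ at all.

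\medskip

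The main obstacle I anticipate is making the per-rectangle computation rigorous while tracking orientations and signs, and in particular justifying that one may identify the wedge pairing $\int_{R_\alpha}\vr\wedge\vr'$ (a \emph{real-valued} integral of a $2$-form) with the algebraic expression $\lbd_\alpha t'_\alpha$. The subtlety is that $\vr\wedge\vr'$ mixes a form and its Galois conjugate, so one must be careful that $\vr'$ is a genuine closed $1$-form on $X$ with the prescribed periods (this requires $[\vr]\in H^1(X,K)$ so that applying Galois conjugation to periods yields a well-defined cohomology class) and that the intersection pairing on $H^1(X,\R)$ computes the sum of base-times-displacement contributions. A clean way around the bookkeeping is to pass to the $J$-invariant formalism of Section~\ref{subsec:J-inv}: the polygonal decomposition into rectangles $R_\alpha$ gives $J(X,\omega)=\sum_\alpha J(R_\alpha)$, and the cohomological pairing $\int_X\vr\wedge\vr'$ is the image of $J(X,\omega)$ under an appropriate $\Q$-bilinear projection, paralleling the relation $SAF(T)=J_{xx}(X,\omega)$. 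Identifying that projection with Galois conjugation on the second factor then converts the geometric sum directly into $\sum_\alpha\lbd_\alpha t'_\alpha$, giving the result with the signs controlled once and for all by the orientation convention on $J$.
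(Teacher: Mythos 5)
The paper itself contains no proof of this statement---it is quoted verbatim from McMullen~\cite{Mc2}---so your proposal must be measured against McMullen's argument, which is indeed the rectangle-decomposition computation you outline. Your overall route is therefore the right one, but the pivotal step is stated in a form that is false: for an arbitrary closed $1$-form representing the class $\vr'$, the per-rectangle identity $\int_{R_\alpha}\vr\wedge\vr'=-\lbd_\alpha t'_\alpha$ does \emph{not} hold, because $R_\alpha$ has boundary and the integral depends on the representative. Concretely, with $\vr=dx$ on $R_\alpha$, Fubini gives $\int_{R_\alpha}\vr\wedge\vr'=\int_{I_\alpha}\bigl(\int_{x}^{T(x)}\vr'\bigr)\,dx$, where the inner integral is over the \emph{vertical segment} from $x$ to $T(x)$; this segment is not closed, so its $\vr'$-integral is not a period. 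To compare it with $t'_\alpha=\vr'(C_\alpha)$ you must close up along $I$, which produces a correction term $F(T(x))-F(x)$, with $F$ a primitive of $\vr'$ restricted to $I$. The missing idea is how to kill this correction, and there are two standard devices: either choose the representative of $\vr'$ to vanish on a neighborhood of $I$ (possible since $I$ is contractible: subtract $dg$ for a suitable function $g$), after which your per-rectangle identity becomes exact; or keep a general representative and observe that $\sum_{\alpha\in\Alp}\int_{I_\alpha}\bigl(F(T(x))-F(x)\bigr)dx=\int_I F\circ T\,dx-\int_I F\,dx=0$ because $T$ is a Lebesgue-measure-preserving bijection of $I$. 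Either way the sum telescopes to $\sum_\alpha \lbd_\alpha t'_\alpha$, with the sign fixed once by the orientation $dx\wedge dy>0$ and the chosen orientation of the loops $C_\alpha$. Without one of these devices your ``Stokes/Fubini'' step does not go through.

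Your proposed fallback via the $J$-invariant formalism would fail and should be dropped. The hypothesis $[\vr]\in H^1(X,K)$ constrains only the \emph{absolute} periods; the widths $\lbd_\alpha$ (relative periods) and the heights $h_\alpha$ of the flow rectangles need not lie in $K$---this is exactly why Proposition~\ref{prop:flux:n:SAF} must \emph{assume} in addition that relative periods lie in $K(\imath)$. Consequently $J(X,\omega)$ lives in $\R^2\wedge_\Q\R^2$ with entries that are not all in $K$, and ``Galois conjugation on the second factor'' is not a well-defined $\Q$-linear operation there: conjugation does not extend canonically to all of $\R$, and the antisymmetry of $\wedge_\Q$ makes the notion of a distinguished second factor meaningless. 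This is in contrast with $SAF(T)=J_{xx}(X,\omega)$, which uses only the honest $\Q$-bilinear projection $a\wedge c$; the flux is an invariant of the cohomology class $[\vr]$ (as the theorem asserts) but is not computed by any such naive projection of $J$. The representative-based argument of the first paragraph is the correct repair, and it is McMullen's.
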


\subsection{Complex flux} 
Let $K(\imath)$ be the extension of $K$ by $\imath =\sqrt{-1}$. Elements of $K(\imath)$ have the form 
$k=k_1+\imath k_2$, $k_1,k_2\in K$. We define $(k_1+\imath k_2)'=k'_1+\imath k'_2$, and 
$\ol{k_1+\imath k_2}=k_1-\imath k_2$. Suppose that $\omega \in \Omega(X)$ satisfies $[\omega] \in H^1(X,K(\imath))$ and
$$
\int_X\omega\wedge \omega'=0,
$$
($\omega'$ is an element of $H^1(X,K(\imath))$). The {\em complex flux} of $\omega$ is defined by
$$
\Fl(\omega)=-\int_X \omega\wedge\ol{\omega}'.
$$
Note that we always assume that $\int_X\omega\wedge\omega'=0$ when we 
consider $\Fl(\omega)$. This condition holds, for example, if $[\omega']$ is represented by a holomorphic 1-form.

In the following proposition, we collect  the important properties of the complex flux: 

\begin{Proposition}[McMullen~\cite{Mc2}]\label{prop:flux:property} \hfill
\begin{itemize}
 \item[a)] For any $k\in K(\imath)$, $\Fl(k\omega)=k\ol{k}'\Fl(\omega)$.

 \item[b)] If $\vr={\rm Re}(\omega)$, then
$$
\fl(\vr)=-\frac{1}{4}\int_X(\omega+\ol{\omega})\wedge (\omega'+\ol{\omega}')=\frac{1}{2}{\rm Re}(\Fl(\omega)).
$$
(here we used the condition $\int_X\omega\wedge \omega'=0$).

\item[c)] Assume that $\Fl(\omega)=0$. Let $k=k_2/k_1 \in  K \P^1, \; k_1,k_2 \in K$, and 
$\vr={\rm Re}((k_1+\imath k_2)\omega)$. Then $\mathcal{F}_\vr$ is the foliation by geodesic of slope 
$k$ in $(X,\omega)$, and we have
$$
\fl(\mathcal{F}_\vr)=\fl(\vr)=\frac{1}{2}{\rm Re}((k_1+\imath k_2)\omega)=\frac{1}{2}{\rm Re}((k_1+\imath k_2)(k'_1-\imath k'_2)\Fl(\omega))=0.
$$
\end{itemize}
\end{Proposition}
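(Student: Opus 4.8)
The three assertions are formal consequences of the $K(\imath)$-bilinearity of the cup-product pairing $\int_X \cdot \wedge \cdot$ on $H^1(X,K(\imath))$, together with two elementary compatibilities that I would record first: Galois conjugation $x\mapsto x'$ and complex conjugation $x\mapsto \overline{x}$ commute with each other and with integration over $X$, so that $\overline{\int_X\eta}=\int_X\overline{\eta}$, and for a form with $[\omega]\in H^1(X,K(\imath))$ one has $\overline{\omega'}=\overline{\omega}'$. In particular the defining hypothesis $\int_X\omega\wedge\omega'=0$ conjugates to $\int_X\overline{\omega}\wedge\overline{\omega}'=0$, a fact I will use repeatedly in part b).

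For part a) the plan is simply to pull scalars out of the pairing. Since $\overline{k\omega}=\overline{k}\,\overline{\omega}$ and hence $(\overline{k\omega})'=\overline{k}'\,\overline{\omega}'$, bilinearity gives $\Fl(k\omega)=-\int_X k\omega\wedge\overline{k}'\overline{\omega}'=k\overline{k}'\Fl(\omega)$. Along the way I would verify that $\Fl(k\omega)$ is even defined, namely that $\int_X k\omega\wedge(k\omega)'=kk'\int_X\omega\wedge\omega'=0$, so the normalization survives scaling.

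For part b), write $\vr=\mathrm{Re}(\omega)=\frac12(\omega+\overline{\omega})$. Because the periods of $\omega$ lie in $K(\imath)$ and those of $\vr$ in $K$, one gets $\vr'=\frac12(\omega'+\overline{\omega}')$, which already yields the first equality $\fl(\vr)=-\int_X\vr\wedge\vr'=-\frac14\int_X(\omega+\overline{\omega})\wedge(\omega'+\overline{\omega}')$. Expanding into four terms, the hypothesis $\int_X\omega\wedge\omega'=0$ and its conjugate annihilate the two ``pure'' terms; the two surviving ``mixed'' terms are $\int_X\omega\wedge\overline{\omega}'=-\Fl(\omega)$ and $\int_X\overline{\omega}\wedge\omega'$, and conjugating the first (using $\overline{\overline{\omega}'}=\omega'$) identifies the second as $-\overline{\Fl(\omega)}$. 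Assembling gives $\frac14(\Fl(\omega)+\overline{\Fl(\omega)})=\frac12\mathrm{Re}(\Fl(\omega))$. I expect this sign-and-conjugation bookkeeping to be the only genuine pitfall of the whole proposition: one must confirm that complex conjugation sends $\int_X\omega\wedge\overline{\omega}'$ to $\int_X\overline{\omega}\wedge\omega'$ and not to its negative, and that antisymmetry of the wedge introduces no spurious sign.

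Finally, part c) is a corollary of a) and b). Setting $\eta=(k_1+\imath k_2)\omega$, so that $\mathrm{Re}(\eta)=\vr$, part b) applied to $\eta$ gives $\fl(\vr)=\frac12\mathrm{Re}(\Fl(\eta))$, while part a) with $k=k_1+\imath k_2$ (noting $\overline{k}'=k_1'-\imath k_2'$) gives $\Fl(\eta)=(k_1+\imath k_2)(k_1'-\imath k_2')\Fl(\omega)$, which vanishes since $\Fl(\omega)=0$. The remaining, non-formal, point is the geometric identification of $\mathcal{F}_\vr$ with the slope-$k$ foliation of $(X,\omega)$; this I would treat as standard, since multiplication by the complex scalar $k_1+\imath k_2$ realizes a rotation and scaling of the flat structure carrying the prescribed direction to the vertical, so that the vertical foliation of $\eta$ is precisely the directional foliation of $\omega$ determined by $k$.
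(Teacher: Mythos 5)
Your proof is correct. The paper states this proposition without proof (it is quoted from McMullen~\cite{Mc2}), and your argument --- bilinearity of the cup product together with the commuting Galois and complex conjugations, part b) by expanding $-\frac14\int_X(\omega+\ol{\omega})\wedge(\omega'+\ol{\omega}')$ and killing the pure terms via $\int_X\omega\wedge\omega'=0$ and its conjugate, and part c) by applying a) and b) to $(k_1+\imath k_2)\omega$ --- is exactly the computation the paper's own displayed chain in part c) encodes (where the expression $\frac{1}{2}{\rm Re}((k_1+\imath k_2)\omega)$ there is a typo for $\frac{1}{2}{\rm Re}\bigl(\Fl((k_1+\imath k_2)\omega)\bigr)$, as your derivation makes clear).
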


\subsection{Periodic foliation}
Given a cylinder  $C$ in $(X,\omega)$, we denote its width and height by $w(C)$ and $h(C)$ respectively.
If the vertical foliation is completely periodic, then $X$ is decomposed into cylinders in the this direction. 
It turns out that the imaginary part of $\Fl(\omega)$ provides us with important information on 
the cylinders. Namely the following is true:

\begin{Theorem}[McMullen~\cite{Mc2}]
\label{thm:Im:CFlux}
Assume that $[\omega] \in H^1(X,K(\imath))$, $\int \omega\wedge\omega'=0$, and the foliation $\mathcal{F}_\vr$ is periodic, where $\vr={\rm Re}(\omega)$. Let $\{C_j\}_{1\leq j\leq m}$ be the the vertical cylinders of $X$. Then we have
$$
\sum_{1\leq j \leq m} h(C_j)w(C_j)' =\frac{1}{2}{\rm Im}(\Fl(\omega)).
$$
\end{Theorem}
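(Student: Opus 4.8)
The plan is to reduce everything to the real and imaginary parts $\vr=\mathrm{Re}(\omega)$ and $\sigma=\mathrm{Im}(\omega)$ and to extract the cylinder quantity from a single Stokes computation. Since $[\omega]\in H^1(X,K(\imath))$, both $[\vr]$ and $[\sigma]$ lie in $H^1(X,K)$, so their Galois conjugates $\vr',\sigma'$ are well-defined closed $1$-forms with $\int_c\vr'=(\int_c\vr)'$ and $\int_c\sigma'=(\int_c\sigma)'$ for every $c\in H_1(X,\Z)$. Expanding the wedge products and separating real and imaginary parts, I would first record
\[
\int_X\omega\wedge\omega'=\Big(\int_X\vr\wedge\vr'-\int_X\sigma\wedge\sigma'\Big)+\imath\Big(\int_X\vr\wedge\sigma'+\int_X\sigma\wedge\vr'\Big),
\]
\[
\mathrm{Im}(\Fl(\omega))=-\mathrm{Im}\int_X\omega\wedge\ol{\omega}'=\int_X\vr\wedge\sigma'-\int_X\sigma\wedge\vr'.
\]
The hypothesis $\int_X\omega\wedge\omega'=0$ forces its imaginary part to vanish, so $\int_X\sigma\wedge\vr'=-\int_X\vr\wedge\sigma'$; substituting into the second identity collapses it to $\mathrm{Im}(\Fl(\omega))=2\int_X\vr\wedge\sigma'$. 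Thus the theorem reduces to proving $\int_X\vr\wedge\sigma'=\sum_j h(C_j)w(C_j)'$.

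Second, I would establish the key cylinder lemma: for \emph{any} closed $1$-form $\beta$ on $X$,
\[
\int_X\vr\wedge\beta=\sum_{j=1}^m h(C_j)\int_{\gamma_j}\beta,
\]
where $\gamma_j$ is the vertical core curve of $C_j$. Because $\mathcal F_\vr$ is periodic, $X$ is the union of the closed cylinders $C_j$ glued along vertical saddle connections, and on each $C_j$ the transverse coordinate $x$ (with $\vr=dx$) is \emph{single-valued}: along a core curve the motion is vertical so $x$ returns to its value, while crossing the cylinder changes $x$ by exactly the height $h(C_j)$. Hence on $C_j$ one has $\vr\wedge\beta=d(x\beta)$ (as $\beta$ is closed), and Stokes gives $\int_{C_j}\vr\wedge\beta=\int_{\partial C_j}x\beta$. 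The boundary component on which $x=0$ contributes nothing, while the opposite component, on which $x\equiv h(C_j)$, is homologous to $\gamma_j$; with the boundary orientation this yields $h(C_j)\int_{\gamma_j}\beta$. Summing over $j$ proves the lemma. Applying it to $\beta=\sigma'$ and using $\int_{\gamma_j}\sigma'=(\int_{\gamma_j}\sigma)'=w(C_j)'$ (the circumference being $w(C_j)=\int_{\gamma_j}\sigma$) gives $\int_X\vr\wedge\sigma'=\sum_j h(C_j)w(C_j)'$, which combined with the first paragraph completes the proof.

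The main obstacle is the bookkeeping in the Stokes step: one must verify that $x$ is genuinely single-valued on each cylinder (this is exactly where periodicity of $\mathcal F_\vr$ enters, guaranteeing that the cores are vertical closed geodesics and that the two boundary components sit at constant values of $x$), and that each boundary component, being a concatenation of vertical saddle connections, carries the boundary orientation under which the $x=h(C_j)$ part is homologous to $+\gamma_j$. Once these orientation and homology matchings are pinned down, no global representative for $\sigma'$ needs to be chosen, since the lemma handles an arbitrary closed $\beta$; the potentially delicate cancellation of contributions along shared saddle connections is then automatic. I would also double-check the convention that $w(C_j)$ denotes the circumference (length of the vertical core) and $h(C_j)$ the transverse height, to be sure the final formula reads $\sum_j h(C_j)w(C_j)'$ rather than its transpose.
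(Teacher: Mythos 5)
Your proposal is correct, but note first that the paper contains no proof to compare against: Theorem~\ref{thm:Im:CFlux} is imported from McMullen~\cite{Mc2} and used as a black box, so your argument can only be judged on its own merits and against McMullen's. On those terms it checks out completely. The algebraic reduction is right: with $\omega=\vr+\imath\sigma$ one gets $\omega\wedge\ol{\omega}'=(\vr\wedge\vr'+\sigma\wedge\sigma')+\imath(\sigma\wedge\vr'-\vr\wedge\sigma')$, so ${\rm Im}(\Fl(\omega))=\int_X\vr\wedge\sigma'-\int_X\sigma\wedge\vr'$, and the vanishing of ${\rm Im}\int_X\omega\wedge\omega'=\int_X\vr\wedge\sigma'+\int_X\sigma\wedge\vr'$ collapses this to $2\int_X\vr\wedge\sigma'$ — this is exactly where the standing hypothesis $\int_X\omega\wedge\omega'=0$ is needed, and you invoke it at the right spot. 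The cylinder lemma is also sound, and isolating it for an \emph{arbitrary} closed $1$-form $\beta$ is a good move, since it makes the choice of representative for the class $\sigma'\in H^1(X,K)$ irrelevant and reduces everything to the periods $\int_{\gamma_j}\sigma'=(\int_{\gamma_j}\sigma)'=w(C_j)'$, which are well defined because $\gamma_j\in H_1(X,\Z)$. Your Stokes bookkeeping is the only delicate point and you resolve it correctly: on a vertical cylinder the transverse coordinate $x$ with $\vr=dx$ is single-valued (the gluing holonomy is a vertical translation), the $x=0$ boundary contributes nothing, and with the orientation $dx\wedge dy$ the $x=h(C_j)$ boundary, a concatenation of vertical saddle connections, is a cycle homologous to $+\gamma_j$ when $\gamma_j$ is oriented so that $\int_{\gamma_j}\sigma=w(C_j)>0$; also, since each cylinder's Stokes computation is self-contained, no cancellation along shared saddle connections needs to be orchestrated. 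Your conclusion equivalently says that $[\vr]$ is Poincar\'e dual to $\sum_j h(C_j)[\gamma_j]$, which is the conceptual content of the periodicity hypothesis; this is in substance the same cylinder-decomposition computation that underlies McMullen's proof, so your route is the expected one, carried out with the conventions ($w=$ circumference, $h=$ transverse height, conjugation falling on $w$ because only $w$ is an absolute period) correctly matched to the statement.
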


Recall that we have $N(k)=kk'\in \Q$, for any $k\in K$. For any cylinder $C$, 
the modulus of $C$ is defined by $\mu(C)=h(C)/w(C)$. A direct consequence of 
Theorem~\ref{thm:Im:CFlux} is the following useful corollary.

\begin{Corollary}
\label{cor:mod:relation}
If $\mathcal{F}_\vr$ is periodic, and the complex flux of $\omega$ vanishes, then the 
moduli of the vertical cylinders satisfy the following rational linear  relation
$$
\sum_{1 \leq j \leq m} \mu(C_j)N(w(C_j))=0.
$$
\end{Corollary}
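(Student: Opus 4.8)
The plan is to obtain this relation as a one-line algebraic consequence of Theorem~\ref{thm:Im:CFlux}. First I would check that the standing hypotheses match: whenever we speak of $\Fl(\omega)$ we are assuming $[\omega]\in H^1(X,K(\imath))$ and $\int_X\omega\wedge\omega'=0$, and here we additionally assume that $\mathcal{F}_\vr$ is periodic; these are precisely the assumptions under which Theorem~\ref{thm:Im:CFlux} applies to the vertical cylinder decomposition $\{C_j\}_{1\le j\le m}$. Before invoking it, I would record the small well-definedness point that each width $w(C_j)$ lies in $K$ (not merely in $\R$): the circumference of a vertical cylinder is the imaginary part of an absolute period $\int_{\gamma_j}\omega\in K(\imath)$ along its core curve $\gamma_j$, hence belongs to $K$. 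This is what makes $w(C_j)'$ and $N(w(C_j))=w(C_j)w(C_j)'$ meaningful.

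The key step is the elementary identity that rewrites the summands of the claimed relation as the summands appearing in Theorem~\ref{thm:Im:CFlux}. Expanding the definitions $\mu(C_j)=h(C_j)/w(C_j)$ and $N(w(C_j))=w(C_j)w(C_j)'$ gives
$$
\mu(C_j)\,N(w(C_j))=\frac{h(C_j)}{w(C_j)}\,w(C_j)\,w(C_j)'=h(C_j)\,w(C_j)',
$$
so that $\sum_{j}\mu(C_j)N(w(C_j))=\sum_{j}h(C_j)w(C_j)'$ term by term.

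It then remains only to feed in the vanishing of the complex flux. Since $\Fl(\omega)=0$ we have $\mathrm{Im}(\Fl(\omega))=0$, and Theorem~\ref{thm:Im:CFlux} yields $\sum_{j}h(C_j)w(C_j)'=\tfrac12\mathrm{Im}(\Fl(\omega))=0$. Combining this with the identity above gives $\sum_{j}\mu(C_j)N(w(C_j))=0$, which is the asserted relation. There is no substantial obstacle here: the content is entirely carried by Theorem~\ref{thm:Im:CFlux}, and the only point requiring a moment's care is the remark that $w(C_j)\in K$, so that the norm and the Galois conjugate in the statement make sense.
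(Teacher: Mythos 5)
Your proof is correct and follows exactly the route the paper intends: the paper offers no separate argument, calling the corollary ``a direct consequence'' of Theorem~\ref{thm:Im:CFlux}, and your computation $\mu(C_j)N(w(C_j))=h(C_j)w(C_j)'$ together with $\mathrm{Im}(\Fl(\omega))=0$ is precisely that deduction. Your added remark that $w(C_j)\in K$ (as the real part of an absolute period in $K(\imath)$, so that the Galois conjugate and norm are defined) is a sensible touch of care that the paper leaves implicit.
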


\subsection{Prym eigenform and complex flux}

A  remarkable property of Prym loci is that the complex flux of 
a Prym eigenform (of a real quadratic order) vanishes. 

\begin{Theorem}[\cite{Mc2} Theorem $9.7$]
\label{thm:zero:flux}
Let $(X,\omega)$ be  a Prym eigenform  belonging to some locus $\PrD(\kappa)$. After   replacing  $(X,\omega)$   by  $A\cdot(X,\omega)$ for a suitable $A\in \GL^+(2,\R)$, we can assume that all the absolute periods of $\omega$ are in $K(\imath)$, where $K=\Q(\sqrt{D})$. We have
$$
\int_X \omega\wedge\omega'=0 \qquad \textrm{and} \qquad \Fl(\omega)=0.
$$
\end{Theorem}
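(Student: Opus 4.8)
The plan is to prove Theorem~\ref{thm:zero:flux} by exploiting the defining algebraic structure of a Prym eigenform, namely that $\omega$ is an eigenvector for the real multiplication $\Ord_D$ acting on the two-dimensional Prym variety. Since this is cited from \cite{Mc2}, I would reconstruct McMullen's argument as follows. First I would set up the intersection pairing: on $H^1(X,\R)^-$, the $(-1)$-eigenspace of $\inv^*$, the cup product $\int_X \alpha \wedge \beta$ defines a symplectic (or, after complexification, a Hermitian) form. The endomorphism $\mathfrak{i}(\lambda)$ for $\lambda \in \Ord_D$ is self-adjoint with respect to this pairing by the very definition of real multiplication (Definition~\ref{def:Real:mult}), and this self-adjointness is the crux of the whole computation.

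Next I would make precise how Galois conjugation interacts with the eigenform condition. Because $\Prym(X,\inv)$ has complex dimension $2$ and carries a real multiplication by $\Ord_D \cong \Z[T]/(T^2+bT+c)$, the space $\Omega^-(X,\inv)$ splits under $\mathfrak{i}(\Ord_D)\otimes\R$ into two one-dimensional eigenspaces with eigenvalues $\lambda, \lambda'$ that are Galois conjugate in $K=\Q(\sqrt D)$. The form $\omega$ lies in the $\lambda$-eigenspace. Working with periods in $K(\imath)$, the cohomology class $[\omega']$ obtained by applying Galois conjugation to periods is, up to a scalar, the image of $[\omega]$ under $\mathfrak{i}$ evaluated at the conjugate eigenvalue; concretely $[\omega']$ spans the $\lambda'$-eigenspace. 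The key step is then to observe that $\int_X \omega \wedge \omega'$ pairs a $\lambda$-eigenvector with a $\lambda'$-eigenvector: using self-adjointness of $\mathfrak{i}(\lambda)$, one computes $\lambda \int_X \omega\wedge\omega' = \int_X \mathfrak{i}(\lambda)\omega \wedge \omega' = \int_X \omega \wedge \mathfrak{i}(\lambda)\omega' = \lambda' \int_X \omega\wedge\omega'$, and since $\lambda \neq \lambda'$ this forces $\int_X \omega\wedge\omega' = 0$. This simultaneously explains the normalization by $A \in \GL^+(2,\R)$ needed to place absolute periods in $K(\imath)$: one uses the real multiplication to identify the correct eigenform up to the $\GL^+(2,\R)$-action.

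For the vanishing of the complex flux $\Fl(\omega) = -\int_X \omega\wedge\ol{\omega}'$ I would run the same eigenvalue argument but now tracking the complex conjugation. Here $\ol{\omega}'$ is, up to scalar, also an eigenvector for $\mathfrak{i}$, and I would check which eigenvalue it carries: since complex conjugation on $H^1(X,\C)$ sends the $\lambda$-eigenspace to itself (the real multiplication is defined over $\Q$ and commutes with the real structure), $\ol{\omega}$ lies in the $\lambda$-eigenspace, hence $\ol{\omega}'$ lies in the $\lambda'$-eigenspace. The self-adjointness computation $\lambda\Fl(\omega) = \lambda'\Fl(\omega)$ then gives $\Fl(\omega)=0$ by the same $\lambda\neq\lambda'$ dichotomy. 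It is worth verifying consistency with Proposition~\ref{prop:flux:property}(a): under $k\in K(\imath)$, $\Fl(k\omega)=k\ol{k}'\Fl(\omega)$, so the vanishing is compatible with rescaling the eigenform, as it must be.

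The main obstacle I anticipate is the careful bookkeeping of how the abstract endomorphism $\mathfrak{i}(\lambda)$ of the Prym variety acts on de Rham cohomology classes, and in particular pinning down precisely which eigenspace each of $[\omega], [\omega'], [\ol\omega'], [\ol\omega]$ occupies. The self-adjointness of $\mathfrak{i}(\lambda)$ relative to the cup-product pairing must be established from the polarization compatibility built into the definition of real multiplication, and one must ensure the Galois conjugation of periods genuinely corresponds, at the level of cohomology, to applying the conjugate endomorphism rather than merely conjugating coordinates. Once that dictionary is fixed the two vanishing statements are immediate consequences of the single algebraic identity "a $\lambda$-eigenvector pairs trivially with a $\lambda'$-eigenvector," so the entire difficulty is concentrated in correctly translating the eigenform condition into a statement about the intersection form.
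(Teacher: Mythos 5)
Your proposal is correct and takes essentially the same route as the paper's proof: the paper picks a generator $T$ of $\Ord_D$, splits $H^1(X,\R)^-$ into two $2$-dimensional eigenspaces $S\oplus S'$ with Galois-conjugate eigenvalues, uses self-adjointness to conclude $S\perp S'$ for the cup product, and observes that Galois conjugation sends $H^1(X,K)\cap S$ into $S'$ --- which is exactly your identity $\lambda\int_X\omega\wedge\omega'=\lambda'\int_X\omega\wedge\omega'$ made explicit, applied also to $\ol{\omega}'$. The only cosmetic difference is that the paper works with the real eigenspace $S$ spanned by $\mathrm{Re}(\omega)$ and $\mathrm{Im}(\omega)$, so that $[\omega']$ and $[\ol{\omega}']$ land in $S'\otimes\C$ simultaneously; accordingly you should say $[\omega']$ \emph{lies in} the $\lambda'$-eigenspace of $T$ on $H^1(X,\C)^-$ rather than that it spans the holomorphic $\lambda'$-line, but this does not affect the argument.
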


\begin{proof}
Let $T$ be a generator of the order $\Ord_D$. We have a pair of $2$-dimensional 
eigenspaces $S\oplus S' = H^{1}(X,\mathbb R)^-$ on which $T$ acts with
eigenvalues $t,t'$ respectively. Since $T$ is self-adjoint, $S$ and $S'$ are
orthogonal with respect to the cup product.

The eigenspace $S$ is spanned by $\mathrm{Re}(\omega)$ and $\mathrm{Im}(\omega)$.
These forms lie in $H^{1}(X,K)$. The Galois conjugate of any form
$\alpha \in H^{1}(X,K) \cap S$ satisfies $T\alpha'=t' \alpha'$, and 
hence belongs to $S'$. In particular $\mathrm{Re}(\omega)'$ and $\mathrm{Im}(\omega)'$ are
orthogonal to $\mathrm{Re}(\omega)$ and $\mathrm{Im}(\omega)$. This shows
$$
\int_X \omega\wedge\omega'=0 \qquad \textrm{and} \qquad 
\Fl(\omega)=-\int_X\omega\wedge \overline{\omega}'=0.
$$
\end{proof}

\begin{Corollary}
\label{cor:eig:flux:nuls}
If $(X,\omega)$ is a Prym eigenform for a quadratic order $\Ord_D$ such that $[\omega]\in H^1(X,K(\imath))$, 
where $K=\Q(\sqrt{D})$, then for any $k \in K\P^1$, the  flux of the foliation by geodesics in direction $k$ vanishes.
\end{Corollary}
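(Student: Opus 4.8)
The plan is to deduce the statement directly from the vanishing of the complex flux established in Theorem~\ref{thm:zero:flux}, combined with the transformation law for the Galois flux under rotation of the foliation direction recorded in Proposition~\ref{prop:flux:property}(c). Since the hypotheses of the corollary are almost exactly the normalized situation of Theorem~\ref{thm:zero:flux}, the argument should be short; the only points requiring care are checking that no $\GL^+(2,\R)$-normalization is needed, and that every element of $K\P^1$ (including the point at infinity) is covered.

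First I would observe that the assumption $[\omega]\in H^1(X,K(\imath))$ means precisely that all absolute periods of $\omega$ lie in $K(\imath)$, equivalently $\mathrm{Re}(\omega),\mathrm{Im}(\omega)\in H^1(X,K)$. This is exactly the normalization that Theorem~\ref{thm:zero:flux} asks one to arrange by applying a suitable $A\in\GL^+(2,\R)$; here it already holds, so I may invoke the theorem with $A=\Id$ and conclude
$$
\int_X\omega\wedge\omega'=0 \qquad\text{and}\qquad \Fl(\omega)=0.
$$
In particular the complex flux $\Fl(\omega)$ is well defined, since its defining condition $\int_X\omega\wedge\omega'=0$ is satisfied, and it vanishes.

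Next I would fix a direction $k\in K\P^1$ and write $k=k_2/k_1$ with $k_1,k_2\in K$, allowing $k_1=0$ so that the value $k=\infty$ (the vertical direction) is also captured. Setting $\vr=\mathrm{Re}((k_1+\imath k_2)\omega)$, the foliation $\mathcal{F}_\vr$ is the foliation by geodesics of slope $k$ on $(X,\omega)$. Applying Proposition~\ref{prop:flux:property}(c) with $\Fl(\omega)=0$ then yields
$$
\fl(\mathcal{F}_\vr)=\frac{1}{2}\mathrm{Re}\big((k_1+\imath k_2)(k_1'-\imath k_2')\Fl(\omega)\big)=0,
$$
which is the desired conclusion for the direction $k$.

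The only genuine obstacle is the bookkeeping just indicated: one must confirm that the hypothesis of the corollary supplies the normalization of Theorem~\ref{thm:zero:flux}, so that its conclusion $\Fl(\omega)=0$ may be used verbatim, and that the parametrization $\vr=\mathrm{Re}((k_1+\imath k_2)\omega)$ with $(k_1:k_2)\in K\P^1$ ranges over all directions in $K\P^1$, the point $k=\infty$ corresponding to $k_1=0$. Once these are in place the vanishing of every $\fl(\mathcal{F}_\vr)$ is immediate from Proposition~\ref{prop:flux:property}(c).
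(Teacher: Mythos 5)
Your proposal is correct and follows exactly the route the paper intends: the hypothesis $[\omega]\in H^1(X,K(\imath))$ supplies the normalization of Theorem~\ref{thm:zero:flux} (so $\int_X\omega\wedge\omega'=0$ and $\Fl(\omega)=0$ hold with $A=\Id$), and Proposition~\ref{prop:flux:property}(c) then gives $\fl(\mathcal{F}_\vr)=0$ for every slope $k=k_2/k_1\in K\P^1$, with $k_1=0$ handling the vertical direction. The paper states the corollary without writing out a proof precisely because this is the immediate argument, and your careful remarks about the normalization and the point at infinity are the right bookkeeping.
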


\subsection{Relation between SAF-invariant and complex flux}

\begin{Proposition}
\label{prop:flux:n:SAF}
Let $(X,\omega)$ be as in Theorem~\ref{thm:zero:flux}. Assume that all the relative periods of $\omega$ are also in $K(\imath)$. Then  $\fl(\omega)=0$ implies $SAF(\omega)=0$. Here, $\fl(\omega)$ and $SAF(\omega)$ denote the corresponding invariants of the vertical flow on $(X,\omega)$.
\end{Proposition}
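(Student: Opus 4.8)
The plan is to reduce the statement to a purely combinatorial fact about the interval exchange transformation $T$ obtained as the first return map of $\mathcal F_\vr$ (with $\vr=\mathrm{Re}(\omega)$) to a full transversal interval $I$, and then to exhibit an explicit linear–algebraic identity between $\fl(T)$ and $SAF(T)$ that holds whenever $T$ is defined over $K$. First I would choose $I$ so that its endpoints, together with the endpoints of the exchanged subintervals $I_\alpha$, are carried to zeros of $\omega$ (equivalently, so that the discontinuities of $T$ and $T^{-1}$ all come from vertical separatrices). With such a choice each length $\lambda_\alpha$ is the real part of a relative period of $\omega$, while each translation length $t_\alpha$ is the real part of an absolute period. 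The hypothesis that \emph{all} periods, relative and absolute, lie in $K(\imath)$ then forces $\lambda_\alpha,t_\alpha\in K$ for every $\alpha$; that is, $T$ is defined over $K$ and $\fl(T)$ is well defined. By Theorem~\ref{thm:fol:flux} this $\fl(T)$ equals $\fl(\omega)$, and by definition $SAF(T)=SAF(\omega)$, so it suffices to prove that $\fl(T)=0$ implies $SAF(T)=0$.

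For the algebraic step, write $K=\Q(\sqrt f)$ and view $K$ as a $2$-dimensional $\Q$-vector space, so that $K\wedge_\Q K$ is $1$-dimensional over $\Q$, generated by $1\wedge\sqrt f$, and the natural map $K\wedge_\Q K\to\R\wedge_\Q\R$ is injective (since $K$ is a $\Q$-linear direct summand of $\R$). As all $\lambda_\alpha,t_\alpha$ lie in $K$, the element $SAF(T)=\sum_{\alpha\in\Alp}\lambda_\alpha\wedge_\Q t_\alpha$ lies in the image of $K\wedge_\Q K$. I would then introduce the $\Q$-linear map $m'\colon K\wedge_\Q K\to K$ determined by $m'(a\wedge b)=ab'-a'b$; this is well defined because Galois conjugation is $\Q$-linear, making $ab'-a'b$ $\Q$-bilinear and antisymmetric in $a,b$. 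A direct expansion in the basis $\{1,\sqrt f\}$ then yields the key identity
$$
m'\big(SAF(T)\big)=\sum_{\alpha\in\Alp}\big(\lambda_\alpha t'_\alpha-\lambda'_\alpha t_\alpha\big)=\fl(T)-\fl(T)',
$$
where I used $\sum_{\alpha}\lambda'_\alpha t_\alpha=\big(\sum_{\alpha}\lambda_\alpha t'_\alpha\big)'=\fl(T)'$, Galois conjugation being a field automorphism.

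To conclude, note that $m'$ is injective: it is $\Q$-linear out of the $1$-dimensional space $K\wedge_\Q K$, and $m'(1\wedge\sqrt f)=(\sqrt f)'-\sqrt f=-2\sqrt f\neq0$. If $\fl(T)=0$ then $\fl(T)'=0$ as well, so the right-hand side of the identity vanishes, giving $m'(SAF(T))=0$ and hence $SAF(T)=0$. The step I expect to require the most care is the first, geometric one: arranging a full transversal interval on which the lengths and translation lengths of $T$ are \emph{literally} real parts of periods of $\omega$, so that the relative-period hypothesis can legitimately place them in $K$. The identity itself is a short computation, and it also explains transparently why the implication is one-directional: $\fl(T)=0$ forces both $\Q$-components (the coefficients of $1$ and of $\sqrt f$) of the element $\fl(T)\in K$ to vanish, whereas $SAF(T)=0$ only records the $\sqrt f$-component, so the vanishing of $SAF(T)$ is strictly weaker than that of $\fl(T)$.
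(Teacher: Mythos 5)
Your proposal is correct and follows essentially the same route as the paper: reduce to the first-return IET $T$ on a full transversal interval, use the relative-period hypothesis (together with the absolute periods being in $K(\imath)$) to get $\lambda_\alpha, t_\alpha \in K$, and then show $\fl(T)=0$ forces $SAF(T)=0$ by a $\Q$-linear computation in the basis $\{1,\sqrt f\}$ of $K$. The only difference is presentational: where the paper expands $\fl(T)=0$ into explicit rational coefficients $A,B$ and evaluates $SAF(T)$ directly, you package the identical computation into the identity $m'\bigl(SAF(T)\bigr)=\fl(T)-\fl(T)'$ for the injective map $m'(a\wedge b)=ab'-a'b$ on the one-dimensional space $K\wedge_\Q K$ --- a slightly cleaner formulation that also makes transparent why the implication is one-directional.
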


\begin{proof}
Let $I$ be a full transversal interval for the vertical flow, and $T$ be the IET induced by the first return map on $I$. We denote the lengths of the permuted intervals $I_\alpha$ by $\lbd_\alpha$ and the 
translation lengths by $t_\alpha$ so that $T(x)=x+t_\alpha$ for any $x\in I_\alpha$. The assumption on relative periods implies that $\forall \alpha\in \Alp$, $\lbd_\alpha \in K$. 
Since $K=\Q(\sqrt{f})$, then we can write $t_\alpha=x_\alpha+y_\alpha\sqrt{f}$ with $x_\alpha,y_\alpha\in \Q$. Then $t'_\alpha=x_\alpha-y_\alpha\sqrt{f}$. The condition ${\rm flux}(T)=0$ is then equivalent to
$$
\sum_{\alpha\in\Alp}\lbd_\alpha x_\alpha=\sqrt{f}\sum_{\alpha\in\Alp}\lbd_\alpha y_\alpha.
$$

\noindent Since $\lbd_\alpha\in K$, it follows
$$
\sum_{\alpha\in\Alp}\lbd_\alpha y_\alpha=A+B\sqrt{f} \quad \text{ and } \quad \sum_{\alpha\in\Alp}\lbd_\alpha x_\alpha=fB+A\sqrt{f} \text{ with } A,B \in \Q.
$$

\noindent By definition of the $SAF$-invariant, we have
$$
SAF(T)=(\sum_{\alpha\in\Alp}\lbd_\alpha x_\alpha)\wedge_\Q1+ 
(\sum_{\alpha\in\Alp}\lbd_\alpha y_\alpha)\wedge_\Q \sqrt{f} =A\sqrt{f}\wedge_\Q 1+ A\wedge_\Q\sqrt{f}=0.
$$
\end{proof}

\begin{Corollary}\label{cor:eigen:rel:CAP}
Let $(X,\omega)$ be a Prym eigenform in $\Omega E_D(\kappa)$. Assume that all the periods of $\omega$ belong to $K(\imath)$, where $K=\Q(\sqrt{D})$. Then $(X,\omega)$ is completely algebraically periodic.
\end{Corollary}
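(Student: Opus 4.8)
The plan is to deduce the corollary directly from the flux machinery assembled above. By definition $(X,\omega)$ is completely algebraically periodic once we show that $SAF$ vanishes in every homological direction. Since all periods of $\omega$ lie in $K(\imath)$, every period $\int_c\omega$ belongs to $K(\imath)\simeq K^2$, so every homological direction has slope in $K\P^1$; it therefore suffices to prove that the $SAF$-invariant of $(X,\omega)$ in direction $k$ vanishes for each $k\in K\P^1$. The idea is to rotate each such direction to the vertical by multiplying $\omega$ by a suitable element of $K(\imath)$, and then invoke Proposition~\ref{prop:flux:n:SAF}, whose hypotheses are tailored exactly to the vertical flow of an eigenform with all periods in $K(\imath)$.

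Concretely, fix $k=k_2/k_1\in K\P^1$ with $k_1,k_2\in K$, set $c=k_1+\imath k_2\in K(\imath)^*$, and put $\omega_c=c\omega$. First I would check that $\omega_c$ still satisfies all the needed hypotheses: multiplication by $c$ is an element of $\C^*\subset\GL^+(2,\R)$, so $(X,\omega_c)$ remains in the same locus $\PrD(\kappa)$ and is again a Prym eigenform (indeed $\mathfrak{i}(T)^*\omega_c=c\,\mathfrak{i}(T)^*\omega=tc\omega=t\omega_c$ has the same eigenvalue), and because $c\in K(\imath)$ all of its periods—both absolute and relative—stay in $K(\imath)$. By part a) of Proposition~\ref{prop:flux:property} together with Theorem~\ref{thm:zero:flux} we then get $\Fl(\omega_c)=c\ol{c}'\Fl(\omega)=0$, hence $\fl({\rm Re}(\omega_c))=\frac{1}{2}{\rm Re}(\Fl(\omega_c))=0$ by part b); alternatively this vanishing is immediate from Corollary~\ref{cor:eig:flux:nuls} (or part c) of Proposition~\ref{prop:flux:property}). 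Applying Proposition~\ref{prop:flux:n:SAF} to $\omega_c$—legitimate precisely because all its periods lie in $K(\imath)$—yields $SAF(\omega_c)=0$.

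It then remains to identify $SAF(\omega_c)$ with the directional $SAF$-invariant of $(X,\omega)$ in direction $k$. By part c) of Proposition~\ref{prop:flux:property}, the vertical foliation $\mathcal F_{{\rm Re}(\omega_c)}$ of $(X,\omega_c)$ is exactly the foliation by geodesics of slope $k$ on $(X,\omega)$, so the first-return IET computing $SAF(\omega_c)$ represents the slope-$k$ flow of $(X,\omega)$. Letting $k$ range over $K\P^1$ sweeps out every homological direction, which proves complete algebraic periodicity. The only genuinely delicate point, and the step I would treat most carefully, is this last identification: the directional $SAF$ in Section~\ref{subsec:J-inv} is defined through a choice of $g\in\GL^+(2,\R)$ sending $\left(\begin{smallmatrix}1\\k\end{smallmatrix}\right)$ to $\left(\begin{smallmatrix}0\\1\end{smallmatrix}\right)$, whereas $\omega_c$ arises from the conformal rotation $c$; two such maps carrying direction $k$ to the vertical differ by an upper-triangular matrix, and I would verify that the vertical foliation and the \emph{vanishing} of its $SAF$-invariant are unaffected by this normalization, so that $SAF(\omega_c)=0$ indeed forces the directional $SAF$ in direction $k$ to vanish.
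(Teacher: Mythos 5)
Your argument is correct and essentially the paper's own proof: the paper likewise normalizes each direction $\theta\in K\P^1$ to the vertical by a matrix $g_\theta\in\GL^+(2,K)$ (your multiplication by $c=k_1+\imath k_2$ is a particular such choice, exactly the one underlying Proposition~\ref{prop:flux:property}~c) and Corollary~\ref{cor:eig:flux:nuls}), observes that the periods of $g_\theta\cdot\omega$ remain in $K(\imath)$, and then combines $\fl(g_\theta\cdot\omega)=0$ with Proposition~\ref{prop:flux:n:SAF} to get $SAF=0$ in direction $\theta$. Your extra care about the normalization is warranted but harmless: two normalizing maps differ by an element $\left(\begin{smallmatrix}a & 0\\ b & d\end{smallmatrix}\right)\in\GL^+(2,K)$ stabilizing the vertical direction (lower, not upper, triangular), which multiplies the $K$-valued invariant $J_{xx}$ by $N(a)=aa'\in\Q^*$ and hence preserves its vanishing.
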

\begin{proof}
 We first remark that the set of homological directions of $(X,\omega)$ is $K\P^1$. For any direction $\theta \in K\P^1$, there exists a matrix $g_\theta \in \GL^+(2,K)$ that maps $\theta$ to the vertical direction. Note that all the periods of $g_\theta\cdot\omega$ are in $K(\imath)$. From the properties of $\fl$, we know that $\fl(g_\theta\cdot\omega)=0$, thus $SAF(g_\theta\cdot\omega)=0$, which implies that the $SAF$-invariant vanishes in direction $\theta$.
\end{proof}

\section{Invariance of SAF along kernel foliation leaves}
\label{sec:inv:kernel}

\subsection{Kernel foliation}

Here we briefly recall the kernel foliation for Prym loci 
(see~\cite{EMZ,Masur:Zorich,Calta04,Lanneau:Mahn:composantes} for  related constructions). 
The kernel  foliation was introduced by Eskin-Masur-Zorich, and was certainly known to Kontsevich.

Let $(X,\omega)$ be a translation surface having several distinct zeros.
Roughly speaking a leaf of the kernel foliation through $(X,\omega)$ is the set 
of surfaces $(X',\omega')$ that belong to the same stratum and share the same  absolute periods as $(X,\omega)$, 
{\em i.e.} for any $c \in H_1(S,\Z)$, $\omega(c)=\omega'(c)$, where $S$ is the base topological surface homeomorphic to 
both $X$ and $X'$. The kernel foliation exists naturally in all strata of the moduli spaces of translation surfaces. 

Choose $\eps>0$ small enough so that, for every zero $P$ of $\omega$, the set
$D(P,\eps)=\{x\in X, \dist(P,x) < \eps\}$ is an embedded disc in $X$. If $P$ is a zero of order $k$, then 
$D(P,\eps)$ can be constructed from $2(k+1)$ half-discs as described in the left part of
Figure~\ref{fig:move:zero}.  Pick a vector $w \in \C$, $0< |w| < \eps$, and cut $D(P,\eps)$ along the rays in direction $\pm w$, we get $2(k+1)$ half-discs which are glued together such that all the centers are identified with $P$. We  modify the metric structure of $D(P,\eps)$ as follows: in the diameter of each half-disc, there is a unique point $P'$ such that $\overrightarrow{PP'}=w$, we can glue the half-discs in such a way that all the points $P'$ are identified. Let us denote by $D'$ the  domain obtained from this gluing. We can glue $D'$ to $X\setminus D(P,\eps)$ along $\partial D'$, which is the same as $\partial D(P,\eps)$. We  then get a translation surface $(X',\omega')$ which has the same absolute periods as $(X,\omega)$, and satisfies the following condition: if $c$ is a path in $X$  joining another zero of $\omega$ to $P$, and $c'$ is the corresponding  path in $X'$, then we have $\omega(c')=\omega(c)+w$. In this situation, we will say that $P$ is moved by $w$. By definition $(X',\omega')$ lies in the kernel foliation leaf through $(X,\omega)$.

\medskip

We also have a kernel foliation in Prym loci in Table~\ref{tab:strata:list}. Let  $(X,\omega)$ be a translation surface in a Prym locus given in Table~\ref{tab:strata:list}. Remark that the Prym involution $\inv$ always exchanges two zeros of $\omega$, say $P_1,P_2$.  Given $\eps$ and $w$ as above, to get a surface $(X',\omega')$ in the same Prym locus, it suffices to move $P_1$ by $w/2$ and move $P_2$ by $-w/2$. Indeed, by assumptions, the Prym involution exchanges $D(P_1,\eps)$ and $D(P_2,\eps)$. Let $D'_1$ and $D'_2$ denote the new domains we obtain from $D(P_1,\eps)$ and $D(P_2,\eps)$ after modifying the metric. It is easy to check that $D'_1$ and $D'_2$ are symmetric, thus the involution in $X\setminus(D(P_1,\eps)\sqcup D(P_2,\eps))$ can be extended to $D'_1 \sqcup D'_2$. Therefore we have an involution $\inv'$  on $X'$ such that ${\inv'}^*\omega'=-\omega'$, which implies that $(X',\omega')$ also belongs to the same Prym locus as $(X,\omega)$. 

In what follows we will denote the surface $(X',\omega')$ obtained from this construction by $(X,\omega)+w$. Let $c$ be a path on $X$ joining two zeros of $\omega$, and $c'$ be the corresponding path in $X'$. Then we have

\begin{itemize}
 \item if  two endpoints of $c$ are exchanged by $\inv$ then $\omega'(c')-\omega(c)=\pm w$,
 
 \item if one endpoint of $c$ is fixed by $\inv$, but the other is not, then $\omega'(c')-\omega(c)=\pm w/2$.

\end{itemize}

\noindent The sign of the difference is determined by the orientation of $c$.

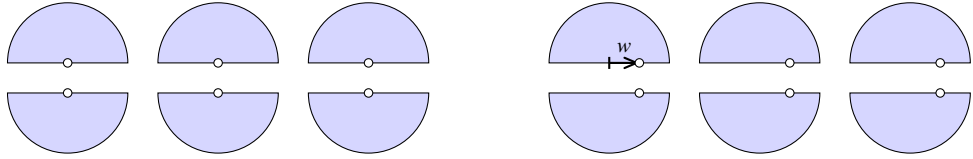
\begin{figure}[htbp]
\centering
\subfloat{
\begin{tikzpicture}[scale=0.4]
\foreach \x in {(-12,0), (-7,0), (-2,0), (6,0), (11,0), (16,0)} \filldraw[fill=blue!20!white!80] \x arc (0:180:2) -- cycle;
\foreach \x in {(-16,-1), (-11,-1), (-6,-1), (2,-1), (7,-1), (12,-1)} \filldraw[fill=blue!20!white!80] \x arc (180:360:2) -- cycle;
\draw[thick] (4,0) +(0,-0.2) -- +(0,0.2); \draw[->, >=angle 45, thick] (4,0) -- (5,0);
\draw (4.5,0) node[above] {$\scriptstyle w$};
\foreach \x in {(-14,0), (-14,-1), (-9,0), (-9,-1), (-4,0),(-4,-1), (5,0), (5,-1), (10,0), (10,-1), (15,0), (15,-1)} \filldraw[fill=white] \x circle (4pt);
\end{tikzpicture}
}
\caption{Moving a zero by a vector $w\in \R^2$}
\label{fig:move:zero}
\end{figure}

\subsection{Neighborhood of a Prym eigenform}

\begin{Lemma}
\label{lm:dim:prym:eig}
For any Prym locus $\Prym(\kappa)$ in Table~\ref{tab:strata:list}, and any discriminant 
$D \in \N, \; D \equiv 0,1 \mod 4$, if $\Omega E_D(\kappa) \neq \varnothing$, then $\dim_\C\Omega E_D(\kappa)=3$.
\end{Lemma}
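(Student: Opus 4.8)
The goal is to show that whenever the Prym eigenform locus $\Omega E_D(\kappa)$ is nonempty for one of the five-dimensional Prym loci in Table~\ref{tab:strata:list}, it has complex dimension exactly $3$. I want to sketch my strategy before reading the authors' argument.

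=== MY PLAN ===

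\textbf{Setting up the two constraints.} The plan is to compute the dimension by comparing two sources of local coordinates on the ambient Prym locus $\Prym(\kappa)$ and then imposing the eigenform condition. All the loci in Table~\ref{tab:strata:list} have $\dim_\C \Prym(\kappa) = 5$, and the Prym variety $\Prym(X,\inv)$ has complex dimension $2$ (this is exactly the content of the table). Locally, period coordinates on $\Prym(\kappa)$ are given by the restriction of the relative period map $[\omega] \in H^1(X,Z(\omega);\C)^-$ to the anti-invariant part for $\inv^*$, and its dimension is $5$. I would split these coordinates into the absolute periods, which live in $H^1(X,\C)^- \cong (\Omega^-)^* $-dual and account for $\dim_\C \Prym(X,\inv) = 2$ complex dimensions, and the purely relative periods, which give the remaining $5 - 2 = 3$ complex dimensions. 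This splitting is exactly the kernel foliation decomposition recalled in Section~\ref{sec:inv:kernel}: moving zeros by a vector $w$ changes relative periods while fixing absolute periods.

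\textbf{Imposing real multiplication.} The eigenform condition is a condition on the absolute periods only: it requires that the $2$-dimensional Prym variety admit real multiplication by $\Ord_D$ with $\omega$ an eigenform. First I would recall that for a fixed discriminant $D$, the locus of Abelian varieties of dimension $2$ admitting real multiplication by $\Ord_D$ together with a choice of eigenform is, up to scale, a well-understood object: in the Prym setting McMullen shows the eigenform condition cuts the $2$ absolute-period dimensions down to $1$ (the eigenform direction is determined up to the $\C^*$-scaling of $\omega$, since once real multiplication is fixed, $\omega$ must lie in one of the two eigenlines $S \subset H^1(X,\R)^-$ for $\Ord_D$). Concretely, self-adjointness of $\Ord_D$ for the symplectic form splits $H^1(X,\R)^-$ into a pair of $2$-dimensional eigenspaces $S \oplus S'$ (as in the proof of Theorem~\ref{thm:zero:flux}), and $\omega$ being an eigenform forces $\mathrm{Re}(\omega), \mathrm{Im}(\omega) \in S$. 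Since $S$ is $2$-real-dimensional, this is $1$ complex dimension of absolute periods. Crucially, the real-multiplication/eigenform condition is \emph{independent} of the relative periods, because it only sees the Prym variety $\Prym(X,\inv)$, which depends on absolute periods alone.

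\textbf{Adding up.} The key structural point is that the eigenform condition and the relative (kernel-foliation) directions are transverse and independent: one constrains only absolute periods and the other moves only relative periods. Therefore I would argue that $\Omega E_D(\kappa)$ is locally a product of the $1$-dimensional eigenform locus in absolute-period space with the full $3$-dimensional relative-period (kernel foliation) space, giving $\dim_\C \Omega E_D(\kappa) = 1 + \ldots$. Wait—this naive count gives $4$, not $3$, so I must be over-counting: the correct statement is that the kernel foliation through an eigenform \emph{stays} inside $\Omega E_D(\kappa)$ (moving zeros does not change the Prym variety, hence preserves real multiplication and the eigenform property), contributing its full relative dimension of $3$, while the eigenform locus is precisely this kernel-foliation leaf. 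So the honest computation is: the eigenform condition restricts the $2$-dimensional absolute-period space to a discrete/$0$-dimensional set of eigenform lines up to the residual $\GL^+(2,\R)$ or scaling already accounted for, and the genuine moduli come from the $3$ relative directions. I would verify that fixing the real multiplication $\Ord_D$ pins down the absolute periods up to the $\GL^+(2,\R)$-action (scaling plus rotation, i.e. the $\C^*$-action on $\omega$), leaving exactly the $3$ relative dimensions.

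\textbf{Main obstacle.} The delicate point — and where I expect to have to be careful — is the bookkeeping of exactly how many absolute-period dimensions survive the eigenform condition, and in particular showing it is $0$ modulo the kernel foliation rather than $1$. This requires knowing that once one normalizes by rescaling, the eigenform locus inside a fixed Prym variety with fixed real multiplication is rigid in the absolute periods (a standard fact for genus-$2$ eigenforms that McMullen establishes, and that transfers to the Prym setting since $\dim_\C \Prym = 2$ in all these cases). I would want to cite McMullen's dimension count for $\Omega E_D$ in genus $2$ and argue that the identical linear-algebra argument applies here because the anti-invariant cohomology $H^1(X,\R)^-$ is $4$-dimensional in every case in Table~\ref{tab:strata:list}, exactly as in genus $2$. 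Granting that, the dimension is $\dim_\C\Omega E_D(\kappa) = 3$, uniformly across all five loci, and the nonemptiness hypothesis only ensures there is a point at which to run this local computation.
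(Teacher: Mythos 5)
Your dimension bookkeeping is wrong at the very first step, and the error propagates through the whole argument. The absolute periods of a Prym form live in $H^1(X,\C)^-$, which has complex dimension $4$, not $2$: the number $2$ is $\dim_\C\Omega^-(X,\inv)$ (equivalently $\dim_\C\Prym(X,\inv)$), and the anti-invariant cohomology is twice that, since $H^1(X,\C)^-\simeq \Omega^-(X,\inv)\oplus\overline{\Omega^-(X,\inv)}$. Consequently the purely relative directions contribute $5-4=1$ complex dimension, not $3$: the kernel of the natural surjection $\rho\colon H^1(X,\Sigma;\C)^-\to H^1(X,\C)^-$ is one-dimensional. This is visible concretely in the kernel foliation of Section~\ref{sec:inv:kernel}: in the Prym loci of Table~\ref{tab:strata:list} a leaf is parametrized by a single vector $w\in\C$ (the two zeros exchanged by $\inv$ move by $\pm w/2$, the fixed zero stays put), so leaves are complex one-dimensional, not three-dimensional. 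Your attempted repair at the end --- that the eigenform condition leaves a discrete set of absolute periods and all $3$ moduli come from relative directions --- is therefore doubly wrong, and it also contradicts your own earlier claim that the eigenform condition cuts the absolute periods ``down to $1$.''

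The paper's proof is exactly the count you were aiming for, but with the correct numbers: the eigenform condition cuts the $4$-dimensional space $H^1(X,\C)^-$ down to the $2$-dimensional eigenspace $\hat S=\C\cdot[\mathrm{Re}(\omega)]\oplus\C\cdot[\mathrm{Im}(\omega)]$ of the generator $T$ of $\Ord_D$ (these two complex dimensions are precisely the $\GL^+(2,\R)$-orbit directions, as $\dim_\R\GL^+(2,\R)=4$), and nearby eigenforms correspond to vectors in $\rho^{-1}(\hat S)$, whence $\dim_\C\Omega E_D(\kappa)=\dim_\C\hat S+\dim_\C\ker\rho=2+1=3$. Compare Corollary~\ref{cor:prym:eig:ker:fol}: a nearby eigenform is $g\cdot(X,\omega)+w$ with $g\in\GL^+(2,\R)$ near $\Id$ and $w\in\R^2$ small, i.e.\ $2+1$ complex parameters, not $0+3$. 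Your final answer of $3$ is reached only because two errors cancel ($0+3$ instead of $2+1$); carried out honestly, your decomposition would assign the wrong dimensions to both factors --- in particular the eigenform locus inside the absolute-period space is not rigid modulo scaling, since $\hat S$ is a full two-dimensional eigenspace.
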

\begin{proof}
Denote by  $\Sigma$ the set of zeros of $\omega$. Let $H^1(X,\C)^-$ and $H^1(X,\Sigma;\C)^-$ denote the eigenspace of $\inv$ with the eigenvalue $-1$ in $H^1(X,\C)$ and $H^1(X,\Sigma;\C)$ respectively. In a local chart which is given by a period mapping, a Prym form in $\Prym(\kappa)$ close to $(X,\omega)$ corresponds to a vector in $H^1(X,\Sigma;\C)^-$. Note that $\dim_\C H^1(X,\C)^-=4$ and $\dim_\C H^1(X,\Sigma;\C)^-=5$, and we have a natural surjective linear map $\rho: H^1(X,\Sigma;\C)^- \rightarrow H^1(X,\C)^-$.

Let $\hat{S}=\C\cdot[{\rm Re}(\omega)]\oplus \C\cdot[{\rm Im}(\omega)]\subset H^1(X,\C)^-$, where $[{\rm Re}(\omega)]$ and $[{\rm Im}(\omega)]$ are the cohomology classes in $H_1(X,\R)$ represented by ${\rm Re}(\omega)$ and ${\rm Im}(\omega)$. Since $\omega$ is an eigenform of a quadratic order ${\mathcal O}_D$, there exists an endomorphism $T$ of $H^1(X; \C)^-$ which generates ${\mathcal O}_D$ such that $\hat{S}$ is an eigenspace of $T$ for some real eigenvalue. A Prym eigenform in $\Omega E_D(\kappa)$ close to $(X,\omega)$ corresponds to a vector in $\rho^{-1}(\hat{S})$. Since $\dim_\C\hat{S}=2$ and $\dim_\C \ker\rho=1$, the lemma follows.
\end{proof}

\begin{Corollary}\label{cor:prym:eig:ker:fol}
For any $(X,\omega)\in \Omega E_D(\kappa)$,  if $(X',\omega')$ is a Prym eigenform in $\Omega E_D(\kappa)$ close enough to $(X,\omega)$, then there exists a unique pair $(g,w)$, where $g\in \GL^+(2,\R)$ close to $\Id$, and $w\in \R^2$ with $|w|$ small, such that $(X',\omega')=g\cdot(X,\omega)+w$.
\end{Corollary}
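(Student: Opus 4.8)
The plan is to combine the dimension count from Lemma~\ref{lm:dim:prym:eig} with a local diffeomorphism argument, treating the pair $(g,w)$ as coordinates on a neighborhood of $(X,\omega)$ inside $\Omega E_D(\kappa)$. Concretely, I would fix $(X,\omega)\in\Omega E_D(\kappa)$ and consider the map
\[
\Phi\colon \GL^+(2,\R)\times\R^2 \longrightarrow \Omega E_D(\kappa), \qquad (g,w)\longmapsto g\cdot(X,\omega)+w,
\]
where $g\cdot(X,\omega)+w$ denotes first applying the linear action of $g$ and then sliding along the kernel foliation leaf by $w$ via the construction $(X,\omega)+w$ recalled in Section~\ref{sec:inv:kernel}. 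The goal is to show that $\Phi$ restricts to a diffeomorphism from a neighborhood of $(\Id,0)$ onto a neighborhood of $(X,\omega)$ in $\Omega E_D(\kappa)$; the statement of the corollary is exactly the local invertibility of $\Phi$ near $(\Id,0)$, with the pair $(g,w)$ being the resulting local coordinates.

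The key steps are as follows. First, I would read off the domain dimension: $\dim_\R\GL^+(2,\R)=4$ and $\dim_\R\R^2=2$, so the source has real dimension $6$, matching $\dim_\R\Omega E_D(\kappa)=6$ from Lemma~\ref{lm:dim:prym:eig} (which gives $\dim_\C=3$). Thus it suffices to prove that $\Phi$ is a local immersion at $(\Id,0)$, i.e.\ that its differential there is injective, since a dimension-matching injective linear map between spaces of equal finite dimension is an isomorphism and the inverse function theorem then yields a local diffeomorphism. Second, I would compute $d\Phi_{(\Id,0)}$ in the period coordinates $H^1(X,\Sigma;\C)^-$ used in the proof of Lemma~\ref{lm:dim:prym:eig}. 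The $\GL^+(2,\R)$-directions span the image of the tangent action on $\hat{S}=\C\cdot[\mathrm{Re}(\omega)]\oplus\C\cdot[\mathrm{Im}(\omega)]$, which is the real $4$-dimensional subspace of $\rho^{-1}(\hat S)$ coming from genuine deformations of the absolute periods, while the kernel-foliation direction $w$ spans the fiber $\ker\rho$ (the relative periods with fixed absolute periods), which is real $2$-dimensional. These two subspaces are complementary precisely because the $\GL^+(2,\R)$-action moves absolute periods nontrivially (the $\SL(2,\R)$-orbit of a nonzero form has the expected tangent space and the scaling/rotation directions act faithfully on $\hat S$) whereas $w$ moves only relative periods, so the two families of tangent vectors are linearly independent and together span $\rho^{-1}(\hat S)\cong T_{(X,\omega)}\Omega E_D(\kappa)$.

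The main obstacle I anticipate is the transversality verification in the second step, namely ruling out a coincidence between a $\GL^+(2,\R)$-direction and a kernel-foliation direction. Since the kernel foliation moves only relative periods while preserving all absolute periods, a nonzero $w$-direction lies in $\ker\rho$; a $\GL^+(2,\R)$-direction could a priori also lie in $\ker\rho$ only if the corresponding infinitesimal $\GL^+(2,\R)$-deformation fixed every absolute period, which forces the underlying Lie algebra element to annihilate $\hat S$ under $\rho$. Because $\rho$ restricted to the $\GL^+(2,\R)$-orbit directions is injective (the action on $\hat S\subset H^1(X,\C)^-$ is faithful as $\omega\neq0$), this cannot happen for a nonzero Lie algebra element, giving the desired transversality. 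Once injectivity of $d\Phi_{(\Id,0)}$ is established, the inverse function theorem provides the local diffeomorphism, and the uniqueness of $(g,w)$ asserted in the corollary is exactly the injectivity of $\Phi$ on the small neighborhood furnished by that theorem.
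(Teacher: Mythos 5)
Your proposal is correct and follows essentially the same route as the paper: both arguments work in the period coordinates $H^1(X,\Sigma;\C)^-$, identify the kernel-foliation directions with $\ker\rho$ and the $\GL^+(2,\R)$-orbit directions with a subspace mapped isomorphically (via the equivariance, equivalently faithfulness of the action on $\hat S$) under $\rho$, and conclude by transversality plus the complementary dimension count from Lemma~\ref{lm:dim:prym:eig}. The paper compresses your inverse-function-theorem step into the single sentence that transversal foliations of complementary dimensions yield the unique local pair $(g,w)$, so your write-up is just a more explicit rendering of the same argument.
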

\begin{proof}
Let $(Y,\eta)=(X,\omega)+w$, with $|w|$ small, be a surface in the leaf of the kernel foliation through $(X,\omega)$. 
We denote by $[\omega]$ and $[\eta]$ the classes of $\omega$ and $\eta$ in $H^1(X,\Sigma;\C)^-$. Then we have 
$$
[\eta]-[\omega] \in \ker\rho,
$$
where $\rho: H^1(X,\Sigma;\C)^- \rightarrow H^1(X,\C)^-$ is the natural surjective linear map.
On the other hand, the action of $g\in \GL^+(2,\R)$ on $H^1(X,\Sigma;\C)^-$ satisfies 
$$
\rho(g\cdot[\tilde{\omega}])=g\cdot\rho([\tilde{\omega}]).
$$
\noindent Therefore the leaves of the kernel foliation and the orbits of $\GL^+(2,\R)$ are transversal. Since their dimensions are complementary, the corollary follows.
\end{proof}

\subsection{Kernel foliation and SAF-invariant}
In the remaining of  this section, $(X,\omega)$ is a translation surface in $\Prym(\kappa)$ where
$$
\kappa \in \left\{ (1,1), (3,3), (2,2)^{\rm odd}, (1,1,2), (4,4)^{\rm even}, (2,2,2)^{\rm even}, (1,1,4) \right\}.
$$

Our goal is to prove the following result on the invariance of the 
SAF-invariant by  moving along a leaf of the kernel foliation by a small vector.

\begin{Proposition}
\label{prop:SAF:unchanged}
For any $(X,\omega)\in\Prym(\kappa)$ there exists $\eps >0$ such that for any  $w \in \C$, with $|w|< \eps$
$$
SAF(X,\omega)=SAF((X,\omega)+w).
$$
\end{Proposition}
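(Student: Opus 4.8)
The plan is to show that the SAF-invariant in the vertical direction is locally constant along a kernel foliation leaf, by exhibiting an explicit transversal interval on which the induced IET changes only in a controlled way as we move by $w$. First I would recall from Subsection~\ref{subsec:J-inv} that $SAF(X,\omega)=J_{xx}(X,\omega)$, and that the $J$-invariant is computed from any cellular decomposition of $(X,\omega)$ into planar polygons via $J(X,\omega)=\sum_i J(\mathbf{P}_i)$, with $J(\mathbf{P})=\sum_j v_j\wedge v_{j+1}$. The strategy is therefore to track how this polygonal sum transforms under the operation $(X,\omega)\mapsto(X,\omega)+w$ described in Section~\ref{sec:inv:kernel}. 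Since the kernel-foliation move only alters the two small embedded discs $D(P_1,\eps)$ and $D(P_2,\eps)$ around the zeros exchanged by $\inv$ (moving $P_1$ by $w/2$ and $P_2$ by $-w/2$), the complement $X\setminus(D(P_1,\eps)\sqcup D(P_2,\eps))$ is carried rigidly, and only the edge vectors incident to the two zeros are modified. The heart of the argument is that the two zeros move by \emph{opposite} vectors $\pm w/2$, so the first-order changes to the $J_{xx}$-sum coming from the two discs should cancel.

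Concretely, the key steps in order are as follows. First, fix a polygonal (or triangulated) presentation of $(X,\omega)$ in which $P_1$ and $P_2$ appear as vertices, and record the edge vectors emanating from each of them. Second, observe that performing $(X,\omega)+w$ replaces each relative period $\omega(c)$ by $\omega(c)\pm w$ or $\omega(c)\pm w/2$ according to the rule listed after the construction in Section~\ref{sec:inv:kernel}, while absolute periods are unchanged; consequently the edge vectors $v_j$ change only by a translate of $\pm w/2$ at the vertices $P_1,P_2$. Third, compute the difference $J_{xx}((X,\omega)+w)-J_{xx}(X,\omega)$ as a sum of terms $\delta v_j\wedge v_{j+1}+v_j\wedge\delta v_{j+1}$ over the edges touching the two moving vertices, where each $\delta v_j\in\{0,\pm w/2\}$. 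Fourth, use the symmetry imposed by the Prym involution: because $\inv$ exchanges the two discs and acts by $-\Id$ on the relative cohomology $H^1(X,\Sigma;\C)^-$, the contributions from the edges around $P_1$ and around $P_2$ are related by a sign and pair up to cancel in $\R\wedge_\Q\R$. This is precisely why the coefficient of the $w$-linear term (and the purely $w$-quadratic term $w_x\wedge(\pm w_x/2)$, which vanishes by antisymmetry of $\wedge$) drops out, leaving the two $J_{xx}$-values equal.

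The main obstacle I expect is the bookkeeping in the third and fourth steps: one must verify that the moving-zero operation can be realized by cutting along the rays $\pm w$ inside each disc \emph{without} changing the combinatorics of the surrounding polygonal decomposition, so that the only change in $J_{xx}$ genuinely comes from the edge vectors at $P_1,P_2$, and then confirm that the involution symmetry makes the surviving terms cancel rather than double. The choice of $\eps$ guarantees the discs are embedded and disjoint and that the cuts stay inside them, which is what makes the edge-by-edge comparison legitimate for all $|w|<\eps$; I would also note that since the endpoints $P'$ with $\overrightarrow{PP'}=\pm w/2$ lie on the diameters of the half-discs, the newly created edges are translates of the old ones plus $\pm w/2$, so no new $\wedge_\Q$-independent directions are introduced at first order. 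Once the cancellation is checked on one explicit model in each of the listed strata (or, better, carried out uniformly using only the $\inv$-antisymmetry of the relevant edge vectors), the equality $SAF(X,\omega)=SAF((X,\omega)+w)$ follows directly.
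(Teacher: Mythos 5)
Your reduction of the problem to the first--order variation of $J_{xx}$ over a fixed (small--$w$--stable) polygonal decomposition is sound, and you are right that the purely quadratic terms vanish since both factors are parallel to $w$. The genuine gap is your cancellation mechanism: the Prym symmetry does \emph{not} make the contributions of the two discs cancel --- it makes them equal. Indeed $\inv$ is orientation--preserving with $\inv^*\omega=-\omega$, so it carries a triangle with developed vertices $v_i$ to a translate of the point--reflected triangle with vertices $-v_i$, and it swaps $P_1$ and $P_2$, which move by the opposite vectors $\pm w/2$; hence every vertex displacement is negated as well, $\delta(-v_i)=-\delta v_i$. The paired first--order contribution is therefore
$$
\sum_i (-\delta v_i)\wedge(-v_{i+1})+(-v_i)\wedge(-\delta v_{i+1})\;=\;\sum_i \delta v_i\wedge v_{i+1}+v_i\wedge \delta v_{i+1},
$$
i.e.\ \emph{equal} to the contribution around $P_1$, not opposite to it: the two signs ($\delta\mapsto-\delta$ and $v\mapsto -v$) compose to $+1$. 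So the terms you hoped would pair off in fact double, which is exactly the alternative you flagged at the end of your plan but did not rule out. No bookkeeping of the cut--and--reglue inside the discs can repair this, because the failure is in the symmetry argument itself.

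The correct mechanism is homological, not symmetry--based, and this is how the paper proceeds: after fixing a transversal $I=[P_0,P)$ and checking that the combinatorics of the induced IET is stable under small $w$ (Lemma~\ref{lm:IET:stable}), the variation is computed as $SAF(T')-SAF(T)={\rm Re}(w)\wedge_\Q\bigl(\sum_{\alpha\in\Alp'}\varepsilon(\alpha)t_\alpha\bigr)$ (Lemmas~\ref{lm:SAF:formula} and~\ref{lm:kernel:SAF:3zeros}), where each $t_\alpha=\vr(C_\alpha)$ is an absolute period; the signed sum vanishes because the cycle $\sum_{\alpha\in\Alp'}\varepsilon(\alpha)C_\alpha$ is null--homologous --- suitably chosen homologous representatives $\hat C_\alpha$ disconnect $X$ and the signed sum is the oriented boundary of the component containing the moving zero (Lemmas~\ref{lm:nul:homology:2zeros} and~\ref{lm:nul:hom:3zeros}). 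Note that $\inv$ plays no role whatsoever in this cancellation; it only guarantees that the deformed surface stays in the Prym locus. If you wish to keep your $J_{xx}$ route, replace the symmetry argument by this homological one, or more efficiently invoke the theorem of Calta--Smillie \cite{CaSm} that $J(X,\omega)$ depends only on the absolute periods (it can be computed as $\sum_i \omega(a_i)\wedge\omega(b_i)$ over a symplectic basis of $H_1(X,\Q)$): since the kernel foliation preserves absolute periods by definition, $J$, hence $J_{xx}=SAF$, is constant along its leaves, which gives the proposition at once and in all strata.
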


As a consequence, we draw our first theorem.

\begin{proof}[Proof of Theorem~\ref{thm:Alg:Periodic}]
We want to show that every Prym eigenform $(X,\omega)$ 
is completely algebraically periodic. Since $\omega$ is an eigenform for 
a real quadratic order $\Ord_D$, up to action of $\GL^+(2,\R)$ we can assume 
that all the absolute periods of $\omega$ are in $K(\imath)$, where $K=\Q(\sqrt{D})$. As a consequence, the set of homological directions 
of $(X,\omega)$ is $K\P^1$. \medskip

If $D$ is a square, then $K=\Q$, in which case, we can assume that all the absolute periods of $\omega$ belong to $\Z\oplus \imath\Z$. Thus $(X,\omega)$ is a ramified covering of the standard torus $\C/(\Z\oplus\imath\Z)$. It follows that for every direction $\theta\in \Q\cup\{\infty\}$, the linear flow in direction $\theta$ is periodic, which means that the SAF-invariant vanishes. Therefore, $(X,\omega)$ is completely algebraically periodic. \medskip

For the case where $K$ is a real quadratic field, given a direction $k\in K\P^1$, as usual we normalize so that $k$ is the vertical direction
$(0:1)$. Let $T$ be the first return map of the vertical flow to a full transversal interval.
All what we need to show is that $SAF(T)=0$. \medskip

Theorem~\ref{thm:zero:flux} ensures that $\fl(\omega)=0$. However, in view of applying
Proposition~\ref{prop:flux:n:SAF} we need  $\omega$ to have relative periods in $K(\imath)$, 
which is not necessary true. To bypass this difficulty we first apply Proposition~\ref{prop:SAF:unchanged}.
One remarks that all the relative periods of $\omega$ are determined by a chosen relative period and 
the absolute ones. Hence we can choose a small suitable vector $w\in \R^2$ such all the relative 
coordinates of $(X,\omega)+w$ are in $K(\imath)$ and Proposition~\ref{prop:SAF:unchanged} applies
{\em i.e.} $SAF(X,\omega) = SAF((X,\omega)+w)$. Since $(Y,\eta) = (X,\omega)+w$ is still an eigenform,
again Theorem~\ref{thm:zero:flux} gives $\fl(\eta)=0$. 
But now by Proposition~\ref{prop:flux:n:SAF}, we draw 
$SAF((Y,\eta))=0$. Hence the SAF-invariant of the vertical flow on $(X,\omega)$ also vanishes 
and Theorem~\ref{thm:Alg:Periodic} is proven.
\end{proof}

\subsection{Proof of Proposition~\ref{prop:SAF:unchanged}}

Recall that one wants to show that for any $(X,\omega)\in\Prym(\kappa)$ there exists 
$\eps >0$ such that for any  $w \in \C$, with~$|w|< \eps$
$$
SAF(X,\omega)=SAF((X,\omega)+w).
$$

We begin by choosing a full transversal interval $I=[P_0,P)$ 
for the vertical flow on $X$ with the following properties:

\begin{itemize}
\item $P_0$ is a zero of $\omega$, in case $\omega$ has three distinct zeros, we choose $P_0$ to be the zero which is fixed by $\inv$.
\item The rightmost endpoint $P$ of $I$ is not a zero of $\omega$.
\item The vertical leaf through $P$ intersects $I$ before reaching any zero of $\omega$ in both positive (upward) and negative (downward) directions.
\end{itemize}
Intervals satisfying those conditions do always exist (see~\cite{Lanneau:rauzy}). Let $T$ be the IET induced by the first 
return map of the vertical flow on $I$. Let $(\Alp,\pi)$ be the combinatorial data of $T$.  This map is described by a (maximal) partition 
$I_\alpha=[a_\alpha, b_\alpha)$, $\alpha \in \Alp$ of $I$, so that $T(x)=x+t_\alpha$ for $x\in I_\alpha$. The length of $I_\alpha$ is $\lbd_\alpha$.

For any $\alpha\in \mathcal A$, let  $C_\alpha$  be the oriented closed curve from a point $x_\alpha\in (a_\alpha,b_\alpha) \subset I_\alpha$ to $T(x_\alpha)$ along $I$, then back to $x_\alpha$ along a vertical leaf.   We have
$$
t_{\alpha} = T(x)-x = \int_{C_\alpha} \mathrm{Re}(\omega)=\vr(C_\alpha),
$$
\noindent where $\vr={\rm Re}(\omega)$. In particular one sees that $t_\alpha$ as the real part of some absolute period of $\omega$. \smallskip

Let $\{R_\alpha, \, \alpha \in \Alp\}$ be the rectangles associated to $I_\alpha$. By the hypothesis on $I$, for any $\alpha \in \Alp$, each vertical side of $R_\alpha$ contains exactly one zero of $\omega$ or the point $P$. We will denote by $\sigma_\alpha$ the segment in $R_\alpha$ (oriented from the left to the right) joining these two points. It follows
$$
\lbd_\alpha=\int_{\sigma_\alpha} \mathrm{Re}(\omega) = \vr(\sigma_\alpha).
$$

\begin{Lemma}
\label{lm:IET:stable}
There exists $\eps>0$ such that if $w\in \C, \, |w|<\eps$, then $(X,\omega)+w$ can be represented 
by a suspension of a new IET $T'$ defined on the same interval $I$, and having the same combinatorial
data $(\Alp,\pi)$ as $T$. 
\end{Lemma}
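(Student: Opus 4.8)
The plan is to treat the passage from $(X,\omega)$ to $(X,\omega)+w$ as a short continuous path inside the Prym locus and to invoke the local constancy of the combinatorial data of a return map to a good transversal. Recall that the flow boxes $R_\alpha$ decompose $X$ into rectangles whose bases are the intervals $I_\alpha$, whose heights are the return times of the vertical flow, and whose vertical sides are the separatrix segments joining a zero of $\omega$ (or the point $P$) down to $I$; the discrete datum $\pi=(\pi_0,\pi_1)$ is exactly the pair of orders in which the bottom partition $\{I_\alpha\}$ and the top partition $\{T(I_\alpha)\}$ appear along $I$. I would first record that $(\Alp,\pi)$, together with the widths $\lbd_\alpha=\vr(\sigma_\alpha)$ and the heights of the $R_\alpha$, recovers $(X,\omega)$, and that this reconstruction is governed by a finite list of strict inequalities: positivity of every width and every height, the three conditions defining a good transversal $I$, and the inequalities expressing that the separatrix segments bounding the $R_\alpha$ meet the interior of $I$, in the prescribed order, without two of them colliding. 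All of these are open conditions in the period coordinates $H^1(X,\Sigma;\C)^-$ of $\Prym(\kappa)$.

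Next I would use that, by the kernel-foliation construction of Section~\ref{sec:inv:kernel}, the family $w\mapsto (X,\omega)+w$ is a continuous (indeed affine) path in $\Prym(\kappa)$ through $(X,\omega)$: the surgery is supported in arbitrarily small discs around the two zeros exchanged by $\inv$, all absolute periods are fixed, and every relative period changes by a vector of norm at most $|w|$ according to the two rules $\pm w$, $\pm w/2$ recalled above. Consequently the holonomies $\vr(\sigma_\alpha)$, $\vr(C_\alpha)$ and the rectangle heights all vary continuously with $w$ and return to their values at $w=0$. Since $(\Alp,\pi)$ is a discrete datum while the finitely many defining inequalities of the previous step are strict and depend continuously on $w$, there is $\eps>0$ such that they all persist for $|w|<\eps$. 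For such $w$ the reshaped rectangles glue according to the same pattern, $I$ remains a good full transversal, and the first-return map $T'$ of the vertical flow on $(X,\omega)+w$ to $I$ is an IET with combinatorial data $(\Alp,\pi)$, as claimed.

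The step I expect to be the main obstacle is the verification, hidden in the first paragraph, that the good-transversal conditions genuinely survive the local surgery near the endpoints of $I$. The right endpoint $P$ is unproblematic since it is not a zero, but the left endpoint $P_0$ is a zero, and when $\omega$ has only two zeros it is one of the moved zeros, so the surgery disc meets $I$ there; one must check that after moving $P_0$ by $w/2$ the interval issuing from the new position of $P_0$ still satisfies the three transversality conditions, and that the upward and downward leaves through $P$ still return to $I$ before meeting a zero. This is a purely local estimate combining the explicit half-disc surgery of Figure~\ref{fig:move:zero} with the continuity of the separatrix first-return map; once it is in place, the remainder is the routine openness argument above.
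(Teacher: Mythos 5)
Your overall strategy --- openness of the combinatorial data of the return map under the continuous path $w \mapsto (X,\omega)+w$, read in period coordinates --- is sound, and in expanded form it is the same continuity argument the paper uses. But the step you defer as ``the main obstacle'' is in fact the entire content of the paper's proof, and the paper dissolves it with a device simpler than the local estimate you sketch. First, in the three-zero case there is nothing to do: the paper chose $P_0$ to be the zero \emph{fixed} by $\inv$ precisely so that the surgery does not touch it, and $I$ is kept as is. In the two-zero case, where $P_0$ is one of the moved zeros, the paper does not keep the right endpoint $P$ fixed; it translates $P$ by the \emph{same} vector $\pm w/2$ as $P_0$, so that the new transversal is the rigid translate of $I$ --- same length, same (horizontal) direction. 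With this choice no estimate near the endpoints is needed: the transversality conditions persist trivially for $|w|$ small, and the combinatorics of the first return map are unchanged.

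Your variant, by contrast, anchors the new interval at the displaced $P_0$ while declaring $P$ ``unproblematic'' and leaving it in place. If $w$ has a nonzero vertical component this tilts the transversal and alters its length, so the resulting return map is not defined ``on the same interval $I$'' in the sense the lemma requires; worse, the downstream computation in Lemma~\ref{lm:SAF:formula}, where $\lbd'_\alpha={\rm Re}(\omega'(\sigma'_\alpha))$ and hence $\lbd'_\alpha=\lbd_\alpha\pm{\rm Re}(w)$, presupposes that the new transversal is horizontal and of the same length as $I$, which your tilted segment is not. Your openness argument would still produce \emph{some} linear data with the combinatorics of $T$ on the tilted segment for small $|w|$, so the defect is repairable --- but the clean repair is exactly the one-line idea you did not find: move $P$ together with $P_0$, and the ``purely local estimate'' you postpone becomes vacuous.
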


\begin{proof}
Recall that the surface $(X,\omega)+w$ is obtained by moving the zeros of 
$\omega$ that are exchanged by the Prym involution by $\pm w/2$. If $P_0$ is fixed, 
we keep the same transversal interval $I$. Otherwise $P_0$ is moved by $\pm w/2$, in this case we move $P$  by the same 
vector. The new transversal interval has the same length and direction as $I$. Thus if $|w|$ is small enough 
then the combinatorial data of the first return map are unchanged.
\end{proof}

We split the proof into two cases, depending whether $\omega$ has two or three zeros. \medskip

\subsubsection{Proof of Proposition~\ref{prop:SAF:unchanged}: case $\omega$ has two zeros}
Let $P_1$ denote the other zero of $\omega$. We define the subset $\Alp'$ of $\Alp$ as follows: 
$\alpha \in \Alp'$ if and only if $P_1$ is one of the endpoints of $\sigma_{\alpha}$, and the other endpoint is either $P_0$ or $P$. Let
$$
\begin{array}{ll}
\varepsilon : & \Alp' \rightarrow \{\pm 1\}, \\
\varepsilon(\alpha) = &  \left\{ \begin{array}{ll}
1 & \text{ if } P_1 \text{ is the right endpoint of } \sigma_\alpha,\\
-1 & \text{ if } P_1 \text{ is the left endpoint of } \sigma_\alpha.\\
\end{array} \right. 
\end{array}
$$
We choose a small vector $w\in \R^2$ given by Lemma~\ref{lm:IET:stable}. 

\begin{Lemma}
\label{lm:SAF:formula} Set $x={\rm Re}(w)$, we have
$$
SAF(T') - SAF(T) = x\wedge_{\Q}(\sum_{\alpha\in \Alp'} \varepsilon(\alpha)t_\alpha).
$$
\end{Lemma}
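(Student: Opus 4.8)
The plan is to separate the two factors in each term of the $SAF$-invariant according to how they transform under the kernel foliation. By Lemma~\ref{lm:IET:stable} the new map $T'$ is defined on the same interval $I$ and has the same combinatorial data $(\Alp,\pi)$ as $T$, so I can index the permuted intervals of $T'$ by the same alphabet $\Alp$ and compare the invariants term by term. The key observation is that the translation length $t_\alpha=\vr(C_\alpha)$ is the real part of an \emph{absolute} period, since $C_\alpha$ is a closed loop; as the kernel foliation fixes all absolute periods by construction, the translation lengths are unchanged, i.e. $t'_\alpha=t_\alpha$ for every $\alpha$. Consequently everything reduces to the variation of the lengths $\lbd_\alpha=\vr(\sigma_\alpha)$, and by bilinearity of $\wedge_\Q$,
$$
SAF(T')-SAF(T)=\sum_{\alpha\in\Alp}(\lbd'_\alpha-\lbd_\alpha)\wedge_\Q t_\alpha.
$$

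Next I would compute $\lbd'_\alpha-\lbd_\alpha$ from the displacement rules of the kernel foliation. The endpoints of each $\sigma_\alpha$ lie in $\{P_0,P_1,P\}$. In passing to $(X,\omega)+w$ the two zeros $P_0,P_1$, which are exchanged by $\inv$, are moved by $\mp w/2$; to keep $I$ a valid transversal, its right endpoint $P$ is moved by the \emph{same} vector as $P_0$ (Lemma~\ref{lm:IET:stable}). Thus $P_0$ and $P$ undergo one common displacement while $P_1$ undergoes the opposite one, and since $\sigma_\alpha$ is oriented from left to right, $\lbd_\alpha={\rm Re}(\omega(\sigma_\alpha))$ changes by the real part of (displacement of the right endpoint) $-$ (displacement of the left endpoint). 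Hence if neither endpoint of $\sigma_\alpha$ is $P_1$, so that both lie in $\{P_0,P\}$, the two displacements coincide and $\lbd'_\alpha=\lbd_\alpha$; whereas if $P_1$ is an endpoint—equivalently $\alpha\in\Alp'$—the two displacements are opposite, and tracking the orientation gives $\lbd'_\alpha-\lbd_\alpha=\varepsilon(\alpha)\,x$ with $x={\rm Re}(w)$, the sign $\varepsilon(\alpha)$ being exactly the one recording whether $P_1$ is the right or left endpoint of $\sigma_\alpha$.

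Substituting and factoring out the common $x$ then yields
$$
SAF(T')-SAF(T)=\sum_{\alpha\in\Alp'}\varepsilon(\alpha)\,x\wedge_\Q t_\alpha = x\wedge_\Q\Big(\sum_{\alpha\in\Alp'}\varepsilon(\alpha)\,t_\alpha\Big),
$$
which is the claimed identity. The routine parts—invariance of absolute periods and bilinearity of $\wedge_\Q$—are immediate; the main obstacle is the bookkeeping in the middle step. Specifically, I must verify carefully that the auxiliary endpoint $P$ really moves together with $P_0$ (so that every $\sigma_\alpha$ joining two points of $\{P_0,P\}$ contributes nothing, and only the segments touching the ``mobile'' zero $P_1$ survive), and I must pin down the global sign so that it is captured precisely by $\varepsilon(\alpha)$. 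This last point forces one to fix once and for all which of the two exchanged zeros is displaced by $+w/2$; with that convention the signs match the statement exactly.
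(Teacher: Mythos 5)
Your proposal is correct and takes essentially the same route as the paper's proof of Lemma~\ref{lm:SAF:formula}: use Lemma~\ref{lm:IET:stable} to keep the combinatorial data, observe that $t'_\alpha=t_\alpha$ because absolute periods are constant along kernel-foliation leaves, compute $\lbd'_\alpha-\lbd_\alpha=\varepsilon(\alpha)\,{\rm Re}(w)$ for $\alpha\in\Alp'$ and $\lbd'_\alpha=\lbd_\alpha$ otherwise, and conclude by $\Q$-bilinearity of $\wedge_\Q$. Your explicit bookkeeping—that $P$ is displaced together with $P_0$ so only segments touching $P_1$ contribute, and that the sign convention for which zero moves by $+w/2$ must be fixed to match $\varepsilon(\alpha)$—is exactly the content of the paper's argument.
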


\begin{proof}
We let $\Alp'_+=\eps^{-1}(1)$ and $\Alp'_-=\eps^{-1}(-1)$. For any $\alpha \in \Alp$, let
$\sigma'_\alpha$ be the segment in $X'$ corresponding to $\sigma_\alpha$, it follows
$$
\left\{\begin{array}{ll}
\omega'(\sigma'_\alpha) = \omega(\sigma_\alpha) + w, & \text{for all } \alpha\in \Alp'_+, \\
\omega'(\sigma'_\alpha) = \omega(\sigma_\alpha) - w, & \text{for all } \alpha\in \Alp'_-.
\end{array} \right.
$$
Hence taking the real part of these complex numbers, we get
$$
\left\{ \begin{array}{ll}
\lbd'_\alpha=\lbd_\alpha &\text{for all } \alpha\in \Alp\setminus \Alp', \\
\lbd'_\alpha=\lbd_\alpha+x, & \text{for all } \alpha\in \Alp'_+, \\
\lbd'_\alpha=\lbd_\alpha-x, & \text{for all } \alpha\in \Alp'_-,
\end{array} \right.
$$
where $\lbd'_\alpha={\rm Re}(\omega'(\sigma'_\alpha))$. \medskip

From Lemma~\ref{lm:IET:stable}, we know that $(X',\omega')$ is a suspension of an IET $T'$ with the same 
combinatorial data as the ones of $T$. The lengths associated to $T'$ are by construction 
$(\lbd'_\alpha)_{\alpha\in\Alp}$. Since the surfaces in the same leaf of the kernel foliation have the 
same absolute periods, the translation lengths $(t_\alpha)_{\alpha\in\Alp}$ of $T$ and 
$(t'_\alpha)_{\alpha\in\Alp}$ of $T'$ are equal. This gives
\begin{multline*}
SAF(T')=\sum_{\alpha\in \Alp} \lbd'_\alpha\wedge_{\Q}t'_\alpha = 
SAF(T)+\sum_{\alpha\in \Alp'_+}x \wedge_{\Q}t_\alpha-\sum_{\alpha\in \Alp'_-}x\wedge_{\Q}t_\alpha = \\
= SAF(T)+x\wedge_{\Q}(\sum_{\alpha\in \Alp'} \varepsilon(\alpha)t_\alpha).
\end{multline*}
The lemma is proved.
\end{proof}

Since $t_\alpha=\rhoup(C_\alpha)$, for proving $SAF(T')=SAF(T)$ 
it is sufficient to show:

\begin{Lemma}
\label{lm:nul:homology:2zeros}
$$
\sum_{\alpha\in\Alp'} \varepsilon(\alpha)C_\alpha=0 \in H_1(X,\Z).
$$
\end{Lemma}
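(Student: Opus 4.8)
We need to show that the 1-cycle
$$\sum_{\alpha\in\Alp'}\varepsilon(\alpha)C_\alpha = 0 \in H_1(X,\Z).$$
Let me unpack what these cycles are. Each $C_\alpha$ is a closed curve: go from $x_\alpha\in I_\alpha$ along $I$ to $T(x_\alpha)$, then back along a vertical leaf. The subset $\Alp'$ consists of those $\alpha$ for which the segment $\sigma_\alpha$ (the horizontal segment at the bottom of rectangle $R_\alpha$) has $P_1$ as one endpoint and $P_0$ or $P$ as the other. The sign $\varepsilon(\alpha)$ records whether $P_1$ is the right or left endpoint.

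**Why this should be true.** The key point is that the special segments $\sigma_\alpha$ for $\alpha\in\Alp'$ are exactly those touching the moving zero $P_1$. These are the rectangles whose bottom edge has $P_1$ as a vertex. Going around the zero $P_1$, the rectangles abutting it from below on one side (where $P_1$ is a right endpoint) and from the other side (where $P_1$ is a left endpoint) should fit together. The signed sum of the corresponding $C_\alpha$ should be homologous to a small loop around $P_1$, which is null-homologous since a single point loop bounds a disc.

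So the plan is: I would interpret $\sum_{\alpha\in\Alp'}\varepsilon(\alpha)C_\alpha$ as a boundary. Specifically, I expect that this signed sum of closed curves, after cancellation along their common vertical segments, equals the boundary of a chain — most naturally a small region (or union of rectangles) surrounding $P_1$, or equivalently a curve that contracts to a point.

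**Key steps.** First I would make precise the geometry near $P_1$: list the rectangles $R_\alpha$ with $\alpha\in\Alp'$ and describe how their boundaries sit relative to $P_1$. Since $P_1$ is a zero of order $k$, there are $2(k+1)$ separatrices emanating from it, hence the rectangles with a vertical side containing $P_1$ are organized cyclically. Second, I would compute the signed sum $\sum\varepsilon(\alpha)C_\alpha$ as a chain, tracking each $C_\alpha$'s contribution along $I$ and along its vertical leaf. The horizontal parts along $I$ should telescope/cancel because the endpoints involving $P_1$ are shared, and the sign $\varepsilon(\alpha)$ is exactly designed so that contributions from the two sides of $P_1$ combine coherently. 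Third, I would identify what remains after cancellation as the boundary of a 2-chain: concretely, the union of the relevant rectangles (or the disc $D(P_1,\eps)$ used in the kernel-foliation construction) provides a 2-chain whose boundary is precisely $\sum_{\alpha\in\Alp'}\varepsilon(\alpha)C_\alpha$. Hence the class vanishes in $H_1(X,\Z)$.

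**Main obstacle.** The hard part will be the bookkeeping of signs and orientations: verifying that the vertical pieces of the $C_\alpha$ cancel in pairs and that the surviving horizontal/vertical contributions assemble exactly into the boundary of an explicit 2-chain around $P_1$. One must be careful that the convention for $\varepsilon(\alpha)$ (right endpoint $\mapsto +1$, left endpoint $\mapsto -1$) matches the induced boundary orientation of the rectangles, and that the curve through $P$ (the non-zero right endpoint of $I$) does not spuriously contribute — here the third bulleted property of $I$ (the vertical leaf through $P$ returns to $I$ before hitting a zero) should guarantee that $P$'s contributions cancel cleanly. I would organize this by drawing the rectangles adjacent to $P_1$ and checking the boundary identity rectangle-by-rectangle, which reduces the homological statement to an elementary cancellation in the chain group.
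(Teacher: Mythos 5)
Your overall strategy---realizing $\sum_{\alpha\in\Alp'}\varepsilon(\alpha)C_\alpha$ as the oriented boundary of a $2$-chain, with $\varepsilon(\alpha)$ chosen to match the induced boundary orientation---is the same as the paper's, and your reading of the sign convention is correct in spirit. But the $2$-chain you propose is misidentified, and that is where the whole proof lives. The signed sum is \emph{not} the boundary of the disc $D(P_1,\eps)$, nor of $\bigcup_{\alpha\in\Alp'}R_\alpha$: each $C_\alpha$ is a global object (it runs along $I$ and crosses an entire rectangle), the horizontal pieces $[x_\alpha,T(x_\alpha)]$ are scattered subsegments of $I$ that do not telescope in the chain group, and the rectangles $R_\beta$ whose $\sigma_\beta$ has \emph{both} endpoints at $P_1$---which are excluded from $\Alp'$---still contribute area on the $P_1$-side of the curves. (Note also that $\sigma_\alpha$ is not the bottom edge of $R_\alpha$; it is the segment joining the two singular points sitting on the two vertical sides of $R_\alpha$, generally at different heights.) In general the bounding $2$-chain is a large subsurface, possibly with genus, not a neighborhood of $P_1$; so ``homologous to a small loop around $P_1$'' is true only in the vacuous sense that both classes vanish, and a rectangle-by-rectangle verification near $P_1$ cannot establish it.

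What is missing is a \emph{global separation} argument, and this is exactly what the paper supplies. It first replaces each $C_\alpha$, $\alpha\in\Alp'$, by a homologous simple closed curve $\hat C_\alpha$, arranged (using a thin embedded rectangle $I\times[0,\delta]$ with center $Q$) so that all the $\hat C_\alpha$ pass through the single common point $Q$. It then cuts the rectangles $R_\alpha$ along the segments $\sigma_\alpha$ and reglues them into a planar polygon $\mathbf{P}$ whose vertices are the copies of $P_0$, $P_1$ and $P$, and whose boundary edges are the $\sigma_\alpha$. In this picture the $\hat C_\alpha$ become simple arcs from the one interior point $Q$ to the boundary, meeting precisely those edges $\sigma_\alpha$ having exactly one endpoint at $P_1$---which is the definition of $\Alp'$---so these arcs separate the $P_1$-vertices of $\mathbf{P}$ from the vertices labelled $P_0$ and $P$. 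Hence $X\setminus\bigcup_{\alpha\in\Alp'}\hat C_\alpha$ is disconnected, and your orientation bookkeeping (with $\varepsilon(\alpha)\hat C_\alpha$ keeping $P_1$ on the left) then shows that $\sum_{\alpha\in\Alp'}\varepsilon(\alpha)\hat C_\alpha$ is the oriented boundary of the component containing $P_1$, hence zero in $H_1(X,\Z)$. Without this cut-and-separate step your outline asserts the conclusion rather than proving it. Finally, the third property of $I$ is not there to make contributions of $P$ ``cancel''; its role is to guarantee that each vertical side of each $R_\alpha$ carries exactly one of the points $P_0,P_1,P$, which is what makes the segments $\sigma_\alpha$, and hence the polygon $\mathbf{P}$, well defined.
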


\begin{proof}
We first show the family $\{C_\alpha, \, \alpha\in \Alp'\}$ can be replaced by another family $\{\hat{C}_\alpha, \, \alpha\in \Alp'\}$, where $\hat{C}_\alpha$ is a simple closed curve homologous to $C_\alpha$ such that  all the curves $\hat{C}_\alpha$ have unique common point. To see this, let $R$ be a rectangle $I\times[0,\delta]$, with $\delta>0$ small enough so that $R$ can be embedded into $X$. Let $\hat{I}$ denote the upper side of $R$ (which is $=I\times\{\delta\}$), and $Q$ denote the center of $R$. For each $\alpha\in \Alp$, let $\hat{x}_\alpha$ denote the intersection of the vertical leaf through $x_\alpha$ with $\hat{I}$, and $s_\alpha$ denote the path in $R$ from $\hat{x}_\alpha$ to $T(x_\alpha)$ which is the union of the two segments joining $\hat{x}_\alpha$ and $T(x_\alpha)$ to $Q$. Finally, let $\hat{C}_\alpha$ be the simple closed curve which is the union of $s_\alpha$ and the vertical leaf from $T(x_\alpha)$ to $\hat{x}_\alpha$. By construction, $\hat{C}_\alpha$ is homologous to $C_\alpha$, and the family $\{\hat{C}_\alpha, \; \alpha\in \Alp'\}$ satisfies the condition stated above. \medskip

We now claim that $\displaystyle{\hat{X}= X\setminus (\cup_{\alpha\in \Alp'} \hat{C}_\alpha)}$ is not connected, and $(\cup_{\alpha\in \Alp'} \hat{C}_\alpha)$ is the oriented boundary of the component containing $P_1$. To see that $\hat{X}$ is not connected, we cut the rectangles $R_\alpha, \alpha \in \Alp$ along $\sigma_\alpha$, and reglue the pieces to get a polygon $\mathbf{P}$ in $\R^2$ whose vertices correspond
to the zeros $P_0,\ P_1$ and the rightmost endpoint $P$ of the interval $I$.

The set  $\bigcup_{\alpha\in \Alp'} \hat{C}_\alpha$ is then represented by a union of simple arcs joining a unique common point in the interior of ${\bf P}$ to  points in the boundary of $\mathbf{P}$. By definition, these arcs separate the set of vertices corresponding to $P_1$ from the set of vertices corresponding to $P_0$ and $P$, from which we deduce that $\hat{X}$ is disconnected. 

Now, for $\alpha \in \Alp'$, from the choice of the orientation of $\hat{C}_\alpha$, and the value of 
$\varepsilon(\alpha)$, we see that as one moves along $\varepsilon(\alpha)\hat{C}_\alpha$, 
the point $P_1$ is always located on the left. Thus we can conclude that $\displaystyle{\cup_{\alpha\in \Alp'} \varepsilon(\alpha) \hat{C}_\alpha}$ is the boundary of the component of $\hat{X}$ that contains $P_1$. Therefore 
$$\displaystyle{\sum_{\alpha\in \Alp'} \varepsilon(\alpha)  \hat{C}_\alpha =0\in H_1(X,\Z)}.$$
\noindent The lemma is then proven. 
\end{proof}

\subsubsection{Proof of Proposition~\ref{prop:SAF:unchanged}: case $\omega$ has three zeros}
The proof is similar to the previous case. Let $P_1, P_2$ be zeros permuted by $\inv$.
If $c$ is a segment from $P_0$ to $P_1$ then $\inv(c)$ is a segment from $P_0$ to $P_2$
and $\omega(\inv(c))=-\omega(c)$ (since $\inv^*\omega=-\omega$). We define the subset $\Alp'$ 
of $\Alp$ as follows: $\alpha \in \Alp'$ if and only if $\sigma_{\alpha}$ connects $P$ and one of the zeros $\{P_1,P_2\}$,
or two distinct zeros of $\omega$. As above, we define several subsets of $\Alp'$:
$$
\begin{array}{lcl}
\alpha \in \Alp'_{++} & \Leftrightarrow & \hbox{$\sigma_\alpha$ joins $P_1$ to $P_2$,}\\
\alpha \in \Alp'_{+}  & \Leftrightarrow & \hbox{$\sigma_\alpha$ joins $P_1$ to either $P_0$ or $P$, or $\sigma_\alpha$ joins either $P_0$ or $P$ to $P_2$},\\
\alpha \in \Alp'_{-}  & \Leftrightarrow & \hbox{$\sigma_\alpha$ joins either $P_0$ or $P$ to $P_1$, or $\sigma_\alpha$ joins $P_2$ to either $P_0$ or $P$},\\
\alpha \in \Alp'_{--} & \Leftrightarrow & \hbox{$\sigma_\alpha$ joins $P_2$ to $P_1$},\\
\end{array}
$$

\noindent and the associated function $\varepsilon: \Alp' \rightarrow \{\pm 1, \pm 2\}$ by
$$\varepsilon(\alpha)=\left\{
\begin{array}{rl}
2 & \text{ if } \alpha \in \Alp'_{++},\\
1 & \text{ if } \alpha \in \Alp'_{+},\\
-1 & \text{ if } \alpha \in \Alp'_{-},\\
-2 & \text{ if } \alpha \in \Alp'_{--}.\\
\end{array}\right.
$$

\noindent The reformulation of the definition of the kernel foliation, using above notations, gives

\begin{Lemma}\label{lm:kernel:SAF:3zeros}
$$
SAF(T') - SAF(T) = {\rm Re}(w)\wedge_{\Q}(\sum_{\alpha\in \Alp'} \varepsilon(\alpha)t_\alpha).
$$
\end{Lemma}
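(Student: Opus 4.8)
The plan is to mirror the computation of the two-zero case (Lemma~\ref{lm:SAF:formula}), the only new feature being that the kernel-foliation move now displaces two of the three zeros. First I would apply Lemma~\ref{lm:IET:stable}: for $|w|$ sufficiently small the surface $(X,\omega)+w$ is a suspension of an interval exchange $T'$ on the same interval $I$ with the same combinatorial data $(\Alp,\pi)$ as $T$. Since surfaces in one leaf of the kernel foliation share the same absolute periods, and each translation length satisfies $t_\alpha=\vr(C_\alpha)$ with $C_\alpha$ a closed curve, hence an absolute cycle, the translation lengths are unchanged: $t'_\alpha=t_\alpha$ for all $\alpha\in\Alp$. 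Consequently the entire difference $SAF(T')-SAF(T)$ is governed by the variation of the lengths $\lbd_\alpha=\vr(\sigma_\alpha)$.

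Next I would evaluate $\lbd'_\alpha-\lbd_\alpha$ segment by segment, reading off the period variation from the kernel-foliation rules stated after Figure~\ref{fig:move:zero}. Here $P_0$ is the zero fixed by $\inv$, so it (and the endpoint $P$) stay put, while $P_1$ and $P_2$ are displaced by opposite vectors $\pm w/2$. For a segment $\sigma_\alpha$ oriented from its left endpoint to its right endpoint, the variation $\omega'(\sigma'_\alpha)-\omega(\sigma_\alpha)$ equals the displacement of the right endpoint minus that of the left; sorting the segments by which pair of points among $\{P_0,P,P_1,P_2\}$ they join reproduces exactly the partition $\Alp'=\Alp'_{++}\sqcup\Alp'_{+}\sqcup\Alp'_{-}\sqcup\Alp'_{--}$ together with the sign-and-multiplicity weight $\varepsilon(\alpha)$. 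A segment joining $P_1$ to $P_2$ accumulates both displacements (weight $\pm2$), one joining a moving zero to $P_0$ or $P$ accumulates a single displacement (weight $\pm1$), and a segment whose endpoints both lie in $\{P_0,P\}$ contributes nothing, so $\alpha\notin\Alp'$ gives $\lbd'_\alpha=\lbd_\alpha$. Taking real parts, the variation of the length is $\varepsilon(\alpha)\,\mathrm{Re}(w)$ for $\alpha\in\Alp'$ and zero otherwise, the weights $\pm1,\pm2$ encoding respectively whether one or both endpoints move, and the sign encoding the orientation of $\sigma_\alpha$ relative to the displacement.

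Finally I would substitute into $SAF(T')=\sum_{\alpha\in\Alp}\lbd'_\alpha\wedge_\Q t'_\alpha$, replace $t'_\alpha$ by $t_\alpha$, and use $\Q$-bilinearity of $\wedge_\Q$ to factor out $\mathrm{Re}(w)$, obtaining
\[
SAF(T')-SAF(T)=\mathrm{Re}(w)\wedge_\Q\Bigl(\sum_{\alpha\in\Alp'}\varepsilon(\alpha)\,t_\alpha\Bigr).
\]
The computation itself is routine; the only delicate point, exactly as in the two-zero case, is the orientation bookkeeping that fixes the sign $\varepsilon(\alpha)$ of each segment, together with the observation that the fixed pair $\{P_0,P\}$ and the exchanged pair $\{P_1,P_2\}$ force the four-way split of $\Alp'$ rather than the two-way split encountered when $\omega$ has only two zeros. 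This lemma is the precise analogue of Lemma~\ref{lm:SAF:formula}; the subsequent and genuinely substantive step will be to show, as in Lemma~\ref{lm:nul:homology:2zeros}, that the homology class $\sum_{\alpha\in\Alp'}\varepsilon(\alpha)C_\alpha$ vanishes in $H_1(X,\Z)$, so that the difference above is in fact zero and $SAF$ is invariant along the kernel-foliation leaf.
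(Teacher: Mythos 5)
Your proposal is correct and takes essentially the same route as the paper: the authors' own proof of this lemma is a one-line remark that it is ``completely similar'' to the proof of Lemma~\ref{lm:SAF:formula}, which is precisely the computation you spell out (translation lengths $t_\alpha$ are unchanged because absolute periods are constant along kernel-foliation leaves, and the length variations are sorted by endpoint type into the four classes weighted by $\varepsilon(\alpha)$). One minor bookkeeping nuance: with the paper's normalization, where $P_1,P_2$ move by $\pm w/2$, the per-segment length variation is $\varepsilon(\alpha)\,\mathrm{Re}(w)/2$ rather than $\varepsilon(\alpha)\,\mathrm{Re}(w)$, but this rational factor (present in the paper's stated formula as well) is harmless since $\R\wedge_\Q\R$ is a $\Q$-vector space and Lemma~\ref{lm:nul:hom:3zeros} makes the right-hand side vanish in any case.
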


The proof of Lemma~\ref{lm:kernel:SAF:3zeros} is completely similar to the proof of Lemma~\ref{lm:SAF:formula}. Hence for proving Proposition~\ref{prop:SAF:unchanged}, we are left to show

\begin{Lemma}\label{lm:nul:hom:3zeros}
$$
\sum_{\alpha\in \Alp'}\varepsilon(\alpha)C_\alpha =0 \in H_1(X,\Z).
$$
\end{Lemma}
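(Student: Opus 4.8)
The plan is to imitate the two-zero argument of Lemma~\ref{lm:nul:homology:2zeros}, the one new feature being that here two zeros $P_1,P_2$ are displaced (by $w/2$ and $-w/2$), so that the weight $\varepsilon$ naturally splits into a $P_1$-contribution and a $P_2$-contribution. Concretely, I would set $\varepsilon=\varepsilon_1+\varepsilon_2$ on $\Alp'$, where
$$
\varepsilon_1(\alpha)=\begin{cases} +1 & \text{if } P_1 \text{ is the left endpoint of } \sigma_\alpha,\\ -1 & \text{if } P_1 \text{ is the right endpoint of } \sigma_\alpha,\\ 0 & \text{otherwise,}\end{cases}
\qquad
\varepsilon_2(\alpha)=\begin{cases} +1 & \text{if } P_2 \text{ is the right endpoint of } \sigma_\alpha,\\ -1 & \text{if } P_2 \text{ is the left endpoint of } \sigma_\alpha,\\ 0 & \text{otherwise.}\end{cases}
$$
Inspecting the four sets $\Alp'_{++},\Alp'_{+},\Alp'_{-},\Alp'_{--}$ shows that indeed $\varepsilon=\varepsilon_1+\varepsilon_2$: a segment joining $P_1$ and $P_2$ is counted once by each of $\varepsilon_1,\varepsilon_2$ with matching signs (whence the weight $\pm2$), whereas a segment joining one of $\{P_1,P_2\}$ to one of $\{P_0,P\}$ is counted by exactly one of them (whence the weight $\pm1$). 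Since every $\alpha\in\Alp'$ has $P_1$ or $P_2$ as an endpoint, both $\varepsilon_1$ and $\varepsilon_2$ are supported on $\Alp'$, and it suffices to prove separately that
$$
\sum_{\alpha\in\Alp'}\varepsilon_1(\alpha)C_\alpha=0 \quad\text{and}\quad \sum_{\alpha\in\Alp'}\varepsilon_2(\alpha)C_\alpha=0 \quad\text{in } H_1(X,\Z).
$$

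For the first identity I would rerun the proof of Lemma~\ref{lm:nul:homology:2zeros} verbatim, with $P_1$ in the role of the distinguished zero and $\{P_0,P,P_2\}$ in the role of the remaining marked points. That is: replace each $C_\alpha$ by the homologous simple closed curve $\hat C_\alpha$ meeting the others only at the common point $Q$; cut the rectangles $R_\alpha$ along the $\sigma_\alpha$ and reassemble them into the planar polygon $\mathbf P$ whose vertices are the copies of $P_0,P_1,P_2,P$; and note that the arcs representing the $\hat C_\alpha$ with $\varepsilon_1(\alpha)\neq0$ are exactly those crossing the edges of $\mathbf P$ incident to a $P_1$-vertex (including the saddle connections from $P_1$ to $P_2$). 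These arcs separate the $P_1$-vertices from all the others, so $\hat X_1=X\setminus\bigcup_{\varepsilon_1(\alpha)\neq0}\hat C_\alpha$ is disconnected, and the signs $\varepsilon_1(\alpha)$ are arranged so that $\sum_\alpha\varepsilon_1(\alpha)\hat C_\alpha$ is, up to a global orientation, the boundary of the component of $\hat X_1$ containing $P_1$; in either case it is null-homologous. The second identity follows identically with $P_2$ in place of $P_1$ (alternatively it is the image of the first under the Prym involution $\inv$, which exchanges $P_1$ and $P_2$). Adding the two yields Lemma~\ref{lm:nul:hom:3zeros}.

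The step deserving the most care is the assertion that the $P_1$-incident segments \emph{alone} cut out a region containing only $P_1$, i.e.\ that the mechanism of Lemma~\ref{lm:nul:homology:2zeros} is genuinely insensitive to the presence of a second displaced zero. The only real subtlety lies in the saddle connections joining $P_1$ to $P_2$: such a $\sigma_\alpha$ is incident to both a $P_1$-vertex and a $P_2$-vertex, hence belongs to both subfamilies, and one must check that it enters $\varepsilon_1$ and $\varepsilon_2$ with the signs forced by the left/right positions of $P_1$ and of $P_2$ respectively — which is precisely what makes $\varepsilon_1+\varepsilon_2$ reproduce the weight $\pm2$ on $\Alp'_{++}$ and $\Alp'_{--}$. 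Once this sign bookkeeping is pinned down, the separation of the $P_1$-vertices is a purely topological feature of the cut polygon $\mathbf P$, unaffected by which marked points sit on the complementary side, so the two-zero argument transfers without modification.
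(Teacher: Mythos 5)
Your proof is correct, but it takes a genuinely different route from the paper's. The paper does \emph{not} reduce to Lemma~\ref{lm:nul:homology:2zeros}: it removes the whole family $\{\hat{C}_\alpha,\ \alpha\in\Alp'\}$ at once, obtaining three regions $X_1\ni P_1$, $X_2\ni P_2$, $X_0\supset\{P_0,P\}$, introduces the interface cycles $C=\partial\overline{X}_1\cap\partial\overline{X}_2$, $C_1=\partial\overline{X}_1\cap\partial\overline{X}_0$, $C_2=\partial\overline{X}_2\cap\partial\overline{X}_0$, reads off $C_1-C_2=-C-C_1=C+C_2=0$ in $H_1(X,\Z)$ from the three region boundaries, and identifies the weighted sum as $2C+C_1+C_2=2(C+C_2)=0$, the coefficients $\pm2$ on $\Alp'_{++}\cup\Alp'_{--}$ arising because a $P_1$--$P_2$ curve lies in $C$ and is seen from both $X_1$ and $X_2$. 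You instead split the weight as $\varepsilon=\varepsilon_1+\varepsilon_2$ and run the two-zero separation argument twice, once per displaced zero. Your sign table is the right one: the convention for $P_1$ must be opposite to that for $P_2$ (reflecting the displacements $+w/2$ and $-w/2$), and this is exactly what makes $\varepsilon_1+\varepsilon_2$ produce $\pm2$ rather than $0$ on $\Alp'_{++}\cup\Alp'_{--}$. The separation step does transfer: for the $\varepsilon_1$-family the marked sides of $\mathbf{P}$ are those with \emph{exactly one} $P_1$-endpoint, every transition between a $P_1$-vertex and another vertex along $\partial\mathbf{P}$ crosses a marked side, unmarked sides glue $P_1$-sectors to $P_1$-sectors and non-$P_1$-sectors to non-$P_1$-sectors, and each marked arc has the $P_1$-region on exactly one side along its whole length, so $\sum_\alpha\varepsilon_1(\alpha)\hat{C}_\alpha$ is, up to a global sign, the boundary of the $2$-chain of $P_1$-sectors, hence null-homologous. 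Your modular route buys a cleaner reduction (the three-zero lemma becomes two instances of the two-zero mechanism, with no three-region bookkeeping); the paper's single global computation buys the finer relations among $C$, $C_1$, $C_2$, of which only nullity is used.

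Two small repairs. First, the parenthetical deriving the $\varepsilon_2$-identity from the $\varepsilon_1$-identity via the Prym involution does not work as stated: $\inv$ sends the transversal $I$ to $\inv(I)$, so it sends the family $\{\hat{C}_\alpha\}$ to a \emph{different} family of curves, and what you get is $\sum_\alpha\varepsilon_1(\alpha)\,\inv_*[\hat{C}_\alpha]=0$, not the desired identity; your primary argument (rerunning the separation with $P_2$ distinguished) is the correct one, so simply delete the alternative. Second, $\varepsilon_1$ as written is ambiguous on a segment $\sigma_\alpha$ with both endpoints at $P_1$; such $\alpha$ are excluded from $\Alp'$, but you should set $\varepsilon_1=0$ there explicitly and say ``incident to exactly one $P_1$-vertex'' rather than ``incident to a $P_1$-vertex'', since these uncut $P_1$--$P_1$ sides are precisely the ones that must remain unmarked for the sector-gluing argument.
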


\begin{proof}
The argument follows the same lines as  the proof of Lemma~\ref{lm:nul:homology:2zeros}.
We can replace the family $\{C_\alpha, \alpha\in \Alp'\}$ by another family $\{\hat{C}_\alpha, \; \alpha \in \Alp'\}$ of simple closed curves, where $\hat{C}_\alpha$ is homologous to $C_\alpha$, such that the surface $\displaystyle{\hat{X}=X\setminus(\cup_{\alpha\in \Alp'}\hat{C}_\alpha)}$ is disconnected.  Let $X_1,X_2$ be the components of $X$ that contain $P_1$ and $P_2$ respectively, and $X_0$ be the complement of $X_1\cup X_2$ in $\hat{X}$. Observe that the curves $\{\hat{C}_\alpha, \, \alpha \in \Alp'\}$ separate $P_1$ from $P_2$, and $\{P_1,P_2\}$ from $\{P_0,P\}$. Thus we have $X_1\neq X_2$ and $\{P_0,P\}\subset X_0 \neq X_i, \; i=1,2$. Note that $X_0$ may be disconnected, in which case it has two components, one contains $P_0$, the other contains $P$. Set

$$C=\partial \overline{X}_1\cap \partial\overline{X}_2, \quad C_1=\partial \overline{X}_1\cap \partial\overline{X}_0, \quad C_2=\partial\overline{X}_2\cap \partial \overline{X}_0.$$

\noindent We choose the orientation of $C$ and $C_2$ to be the orientation induced from the orientation of $X_2$, and for $C_1$ the orientation induced from the orientation of $X_0$. Consequently we can write

$$\partial \overline{X}_0=C_1-C_2, \quad \partial \overline{X}_1=-C-C_1, \quad \partial \overline{X}_2=C+C_2.$$

\noindent Thus

$$C_1-C_2=-C-C_1=C+C_2=0 \in H_1(X,\Z).$$

\noindent Now, from the values of $\varepsilon(\alpha)$, and the orientation of $\hat{C}_\alpha$,  we have

$$
\sum_{\alpha\in\Alp'}\varepsilon(\alpha)\hat{C}_\alpha=2C+C_1+C_2=2(C+C_2)=0\in H_1(X,\Z).
$$
This concludes the proof of Proposition~\ref{prop:SAF:unchanged}.
\end{proof}
\section{Interval exchange transformations and linear involutions}
\label{sec:LI}

\subsection{Linear involutions}

The first return map of the  vertical flow on a translation surface $(X,\omega)$ to
an interval  $I$ defines an  interval exchange transformation (see Section~\ref{sect:IET:SAF}).  Such a
map is  encoded by  a partition  of $I$ into  $d$ subintervals that we
label by letters in some finite alphabet $\Alp$, and a permutation $\pi$ of $\Alp$. 
The length of these intervals is recorded by vector $\lbd$ with positive entries. 
The vector $\lbd$  is called the continuous datum  of $T$ and $\pi$
is called the combinatorial datum (we will write $T=(\pi,\lbd)$). 
We usually represent $\pi$ by a table of two lines (here $\mathcal A = \{1,\dots,d\}$):
$$
\pi=\left(\begin{array}{cccc}
1&2&\ldots&d \\ \pi^{-1}(1)&\pi^{-1}(2)&\ldots&\pi^{-1}(d)
\end{array}\right).
$$
When the  measured foliation is  not oriented, the  above construction
does not make sense. Nevertheless a  generalization of interval
exchange maps for any  measured foliation on  a surface  (oriented or
not) was introduced by Danthony and Nogueira~\cite{Danthony1988}. The 
generalizations  (\emph{linear involutions})  corresponding  to oriented
flat surfaces with $\Z/2\Z$ linear  holonomy were studied in detail by
Boissy and Lanneau~\cite{Lanneau:rauzy} (see also
Avila-Resende~\cite{Avila2008} for a similar construction). 
We  briefly  recall the objects here. \medskip
 
Roughly   speaking,  a  linear   involution  encodes   the  successive
intersections of  the foliation with some transversal  interval $I$.
We choose $I$ and a positive vertical direction (equivalently, a choice of left and right ends of $I$)
that intersect every vertical geodesics. The first return map $T_0:I\rightarrow I$ of vertical geodesics 
in the positive direction is well defined, outside a finite number of points (called
singular points) that correspond to vertical geodesics that stop at a singularity before 
intersecting again the interval $I$.  This equips $I$ with is a finite open partition $(I_\alpha)$
so that $T_0(x) = \pm x+t_\alpha$.

However the map $T_0$ alone does not properly correspond to the dynamics of  vertical geodesics
since when $T_0(x)=-x+t_\alpha$ on the interval $I_\alpha$, then $T_0^2(x)=x$,
and $(x,T_0(x), T_0^2(x))$ does not correspond to the successive
intersections of a vertical geodesic with $I$ starting from $x$. To fix
this problem, we have to consider $T_1$ the first return map of the
vertical geodesics starting from $I$  in the negative direction. Now if $T_0(x)=-x+c_i$ then the
successive intersections with $I$ of the vertical geodesic starting
from $x$ will be $x,T_0(x), T_1(T_0(x))$\ldots

\begin{Definition}
Let  $f$  be  the   involution  of  $I\times\{0,1\}$  given  by  $f(x,
\varepsilon)=(x,1-\varepsilon)$. A  \emph{linear involution} is  a map
$T$, from $I\times\{0,1\}$ into itself, of the form $f\circ \tilde T$,
where $\tilde  T$ is an  involution of $I\times\{0,1\}$  without fixed
point,  continuous except  on a  finite set  of point  $\Sigma_T$, and
which  preserves the  Lebesgue measure.  In  this paper  we will  only
consider linear  involutions with the  following additional condition:
the  derivative  of  $\tilde   T$  is  $1$  at  $(x,\varepsilon)$  if
$(x,\varepsilon)$ and $T(x,\varepsilon)$  belong to the same connected
component, and $-1$ otherwise.
\end{Definition}

\begin{Remark}
A linear  involution $T$ that preserves $I\times \{0\}$ corresponds precisely 
to an interval  exchange transformation map $T_0$ (by restricting  $T$ to
$I\times \{0\}$).
Therefore,  we can  identify the  set of interval exchange maps with a subset of the linear involutions.
\end{Remark}

As for interval exchange maps, a linear  involution $T$  is  encoded  by a  combinatorial  datum  called
\emph{generalized permutation} and by continuous data.  
This is done in  the following way: $I\times \{0\}\backslash \Sigma_T$
is a union of $l$  open intervals $I_1\sqcup \ldots \sqcup I_l$, where
we assume by  convention that $I_i$ is the interval  at the place $i$,
when  counted  from  the   left  to  the  right.  Similarly,  $I\times
\{1\}\backslash   \Sigma_T$  is   a  union   of  $m$   open  intervals
$I_{l+1}\sqcup \ldots \sqcup I_{l+m}$. For all $i$, the image of $I_i$
by  the map $\tilde{T}$  is a  interval $I_j$,  with $i\neq  j$, hence
$\tilde  T$ induces  an involution  without  fixed points  on the  set
$\{1,\ldots, l+m\}$.  To encode this involution, we  attribute to each
interval $I_i$ a symbol such  that $I_i$ and $\tilde T(I_i)$ share the
same symbol. Choosing the set of symbol to be $\mathcal A$, we get
a  two-to-one  map  $\pi:\{1,\ldots,l+m\}\rightarrow  \mathcal A$,
with $d:=|\Alp|=\frac{l+m}{2}$. Note that $\pi$ is not uniquely defined by $T$
since  we  can   compose  it  on  the  left   by  any  permutation  of the 
alphabet $\Alp$. The continuous data of $T$ is a real vector $\lbd=(\lbd_\alpha)_{\alpha\in \Alp}$ with positive entries, which records the lengths of the permuted intervals.

\begin{Definition}
A \emph{generalized permutation} of type  $(l,m)$, with $l+m=2d$, is a
two-to-one map  $\pi:\{1,\ldots,2d\}\rightarrow \mathcal A$ to some finite 
alphabet $\mathcal A$. We will usually  represent such generalized permutation by  a table of
two lines of symbols, with each symbol appearing exactly two times.
$$
\pi=\left(\begin{array}{ccc}
\pi(1) & \dots& \pi(l) \\
\pi(l+1) & \dots & \pi(l+m)
\end{array}\right). 
$$
We will call \emph{top} (respectively \emph{bottom}) the restriction of a generalized permutation 
$\pi$ to $\{1,\ldots,l\}$ (respectively $\{l+1,\ldots,l+m\}$) where $(l,m)$ is the type of $\pi$.
\end{Definition}

In  the table representation  of a  generalized permutation,  a symbol
might  appear  two  times in  the same line (see Example~\ref{ex:gp} below). Therefore, we do not necessarily have $l=m$. \medskip

\noindent {\bf Convention.} We will use the terminology generalized permutations for 
permutations that are {\it not} ``true'' permutations.

\subsection{Irreducibility}

We say that a linear involution $T=(\pi,\lbd)$ admits a {\em suspension } if there exists a quadratic differential $(X,q)$ such that $T$ is the linear involution defined by the first return map of the vertical foliation to a horizontal segment in $X$. If $\pi$ is a ``true'' permutation defined over $d$ letters, it is well known~\cite{Veech1982} that $T$ admits a suspension if and only if $\pi$  is irreducible, {\em i.e.} $\pi(\{1,\dots , k\}) \neq \{1, \dots , k\},\quad 1\le k \le d - 1$. It turns out that a similar characterization exists for  generalized permutations. For the convenience of the reader, we state this criterion here. 

\begin{Definition}
\label{def:irred}
We will say that $\pi$ is {\em reducible} if $\pi$ admits a decomposition
\begin{equation*}\quad
\left(\begin{array}{c|c|c}A\cup B & *** & D\cup B \\\hline A\cup C &
  *** & D \cup C  \end{array}\right), \ A,B,C,D \textrm{
  disjoint subsets of } \mathcal{A},
\end{equation*}
where the subsets $A,B,C,D$ are not all empty and 
one of the following statements holds
\begin{enumerate}
\item[i-] No corner is empty.
\item[ii-] Exactly one corner is empty and it is on the left.
\item[iii-] Exactly two corners are empty and they are both on the same side.
\end{enumerate}
A permutation that is not reducible is {\em irreducible}.
\end{Definition}

For example of irreducible and reducible permutations, see Claim~\ref{claim:1}.

\begin{Theorem}[\cite{Lanneau:rauzy} Theorem 3.2]
\label{thm:gen:perm:irrd}
Let $T=(\pi,\lbd)$ be a linear involution. Then $T$ admits a suspension if and only if the underlying generalized permutation $\pi$ is irreducible.
\end{Theorem}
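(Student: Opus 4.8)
The plan is to recast the assertion ``$T$ admits a suspension'' as a nonemptiness statement about \emph{suspension data}, and then establish the two implications of Theorem~\ref{thm:gen:perm:irrd} separately. To a linear involution $T=(\pi,\lbd)$ of type $(l,m)$ I would attach a real vector $\tau=(\tau_\alpha)_{\alpha\in\Alp}$ and form the complex vectors $\zeta_\alpha=\lbd_\alpha+\imath\tau_\alpha$. Reading the top row $\pi(1),\dots,\pi(l)$ and the bottom row $\pi(l+1),\dots,\pi(l+m)$ and concatenating the vectors $\pm\zeta_\alpha$, where the sign records whether the two occurrences of $\alpha$ impose an orientation reversal coming from the $\Z/2\Z$ holonomy, produces two broken lines issued from a common origin. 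A quadratic differential realizing $T$ is exactly the flat surface obtained by gluing the polygon bounded by these two broken lines along the pairs of sides labelled by the same letter; conversely the holonomy vectors of the vertical separatrices of any suspension of $T$ furnish such a $\tau$. Hence ``$T$ admits a suspension'' is equivalent to the existence of a datum $\tau$ for which the partial sums along the top stay in the closed upper half-plane and those along the bottom stay in the closed lower half-plane. The theorem thereby reduces to the purely combinatorial claim that this cone of admissible $\tau$ is nonempty if and only if $\pi$ is irreducible.

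For the necessity direction I would argue the contrapositive: a reducible $\pi$ admits no suspension. Given a decomposition as in Definition~\ref{def:irred}, the blocks $A\cup B$ and $A\cup C$ on the left, together with the emptiness pattern of corners dictated by cases i--iii, force the left portion of the two broken lines to close up on its own. Concretely, each of the three configurations produces a nontrivial linear relation among the $\tau_\alpha$ that is incompatible with the strict positivity required on one side, or equivalently yields a vertical saddle connection separating the left block; the putative surface would then be disconnected or carry a spurious marked separatrix, contradicting that it is a genuine quadratic differential whose vertical foliation has first-return map $T$. The three cases of Definition~\ref{def:irred} are exactly the three ways such a self-closing of the left block can occur, so ruling them out is routine case analysis.

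The substantive direction is sufficiency: if $\pi$ is irreducible, the cone of admissible data is nonempty. Here I would proceed by induction on $d=|\Alp|$, peeling off one step of Rauzy--Veech induction (which is defined on irreducible generalized permutations in \cite{Lanneau:rauzy} and preserves irreducibility) and lifting an admissible datum for the reduced permutation back to one for $\pi$; the base cases $d\le 2$ are checked by hand. Alternatively one can exhibit a point of the cone directly, solving the system of partial-sum inequalities and using irreducibility precisely to guarantee that none of the defining inequalities degenerates to an equality that would empty the cone. Either way the polygon built from the resulting $\zeta_\alpha$ is embedded, and the quotient by the side pairings is the desired $(X,q)$; that its vertical return map is $T$ is immediate from the construction.

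The main obstacle, and the feature genuinely absent from Veech's original argument \cite{Veech1982} for true permutations, is the bookkeeping of the orientation-reversing signs: because a letter may occur twice on the same row, the positivity conditions are not the naive monotone ones of the orientable case, and the correct description of the admissible cone --- hence the correct notion of reducibility singled out in Definition~\ref{def:irred}, with its asymmetric treatment of empty corners --- must be pinned down before the induction can be run. Getting this sign convention right, so that the combinatorial nonemptiness criterion matches Definition~\ref{def:irred} exactly, is where the real work lies.
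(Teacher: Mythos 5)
You should first note that the paper does not prove this statement at all: it is quoted verbatim from \cite{Lanneau:rauzy} (Theorem 3.2) ``for the convenience of the reader'', so your proposal has to be measured against the proof given there. Your overall architecture --- attach suspension data $\tau$, show that admitting a suspension is equivalent to nonemptiness of the cone of admissible $\tau$, and show that this cone is nonempty exactly when $\pi$ is irreducible in the sense of Definition~\ref{def:irred} --- is indeed the architecture of Boissy--Lanneau's argument. But the two steps you treat as routine are exactly where that proof does its work, and as written both have genuine gaps. The first is geometric: the dictionary ``suspension $\Leftrightarrow$ admissible $\tau$'' is \emph{not} the easy polygon construction of the orientable case \cite{Veech1982}. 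For a genuine generalized permutation a letter occurring twice in the top row contributes $2\tau_\alpha$ to the top total and nothing to the bottom one, so the two broken lines end at points with the same real part but in general \emph{different} heights: there is no ``polygon bounded by these two broken lines'' to glue, and one must decide what the leftover vertical segment on the right means on the surface. Moreover, even with all partial-sum inequalities satisfied, the two broken lines can intersect; your closing claim that ``the polygon built from the resulting $\zeta_\alpha$ is embedded'' is simply false in general, and the proof in \cite{Lanneau:rauzy} devotes its main geometric effort to an auxiliary construction for precisely these degenerate configurations. Relatedly, your sign convention is off: one does not concatenate $\pm\zeta_\alpha$; both occurrences of a letter are traversed as $+\zeta_\alpha$ and same-row pairs are glued by a central symmetry $z\mapsto -z+c$, which is what produces the $\Z/2\Z$ holonomy (and the intermediate partial sums must be \emph{strictly} positive, resp.\ negative, not merely in closed half-planes).

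The second gap is combinatorial, and you flag it yourself: the equivalence between nonemptiness of the cone and the exact reducibility patterns of Definition~\ref{def:irred} --- in particular why exactly one empty corner \emph{on the left}, or two empty corners \emph{on the same side}, obstruct a suspension, while one empty corner on the right or two diagonally placed empty corners do not --- is asserted (``routine case analysis'', ``solving the system of partial-sum inequalities'') rather than proved. Saying that getting this matching right ``is where the real work lies'' concedes that the distinctive content of the theorem is deferred, not established; for instance $\left(\begin{smallmatrix} A & A \\ B & B \end{smallmatrix}\right)$ falls under case (iii) and admits no suspension even though the naive partial-sum inequalities $\tau_A>0>\tau_B$ are perfectly solvable, which shows the cone description itself must be set up with more care than your sketch allows. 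Finally, the alternative inductive route via Rauzy--Veech needs justification you do not supply: for generalized permutations the induction is not always defined, and its interaction with irreducibility is itself a nontrivial point of \cite{Lanneau:rauzy}, not a free ingredient. So the proposal identifies the correct framework, but both the surface construction and the combinatorial equivalence --- i.e.\ the entire substance of the cited proof --- remain unestablished.
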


\subsection{Rauzy induction}

The \emph{Rauzy induction} $\mathcal R(T)$ of a linear involution $T$ is
the first  return map of $T$ to a smaller interval $I'\times  \{0,1\}$, where
$I'\subsetneq I$. More precisely, if $T=(\pi,\lbd)$ and $(l,m)$ is 
the  type of  $\pi$, we  identify $I$  with the  interval  $[0,1)$. If
$\lbd_{\pi(l)}\neq  \lbd_{\pi(l+m)}$, then  the  Rauzy induction
$\mathcal R(T)$ of $T$ is  the linear involution obtained by the first
return map of $T$ to
$$
\bigl(0,\max\{1-\lbd_{\pi(l)}, 1-\lbd_{\pi(l+m)}\}\bigr) \times
\{0,1\}.
$$
It is easy to see that this is again a linear involution, defined on $d$ letters.

The combinatorial  data of the  new linear involution depends  only on
the      combinatorial      data      of     $T$      and      whether
$\lbd_{\pi(l)}>\lbd_{\pi(l+m)}$       or      $\lbd_{\pi(l)}<
\lbd_{\pi(l+m)}$.  We  say  that  $T$  has type  $0$  or  type  $1$
respectively. The combinatorial data of $\mathcal{R}(T)$ only
depends on $\pi$ and on the type of $T$. This defines two operations
$\mathcal{R}_0$ and $\mathcal{R}_1$ by
$\mathcal{R}(T)=(\mathcal{R}_\varepsilon(\pi), \lbd^{\prime})$,
with  $\varepsilon$  the  type  of $T$  (see~\cite{Lanneau:rauzy}  for
details). We will not use these operations in this paper. \medskip

We stress that the Rauzy-Veech induction is well defined if and only if the two
rightmost intervals do not have  the same length {\em i.e.} $\lbd_{\pi(l)}\neq  \lbd_{\pi(l+m)}$.
However, when  these intervals do have  the same length,  we can still
consider the first return map of $T$ to
$$
\bigl(0,1-\lbd_{\pi(l)}\bigr) \times \{0,1\}.
$$
This is again a linear involution, denoted by $\mathcal R_{sing}(T)$, 
defined over $d-1$ letters. The combinatorics of $\mathcal R_{sing}(T)$ can 
be defined as follows: we apply the top operation of the Rauzy induction 
and then we erase the last letter of the top. Equivalently, we apply the bottom 
operation of the Rauzy induction and then we erase the last letter of the bottom.

\begin{Example}
\label{ex:gp}
Let $T=(\pi,\lbd)$ with 
$\pi = \left(\begin{smallmatrix} A & A & B &C \\ D& C &B&D  \\ \end{smallmatrix}\right)$.
Then the combinatorial datum of the Rauzy induction $\mathcal R(T)$ of $T$ is:
$$
\begin{array}{lll}
\left(\begin{smallmatrix}  A & A & B &C \\ D& C &D&B   \\ \end{smallmatrix}\right) & \textrm{if} & \lbd_{C} > \lbd_{D} \\ \\
\left(\begin{smallmatrix}  A & A & B \\ C& D& C &B&D   \\ \end{smallmatrix}\right) & \textrm{if} & \lbd_{C} < \lbd_{D} \\ \\
\left(\begin{smallmatrix}  A & A & B \\ D& D &B   \\ \end{smallmatrix}\right) & \textrm{if} & \lbd_{C} = \lbd_{D}
\end{array}
$$
\end{Example}

We can formally define the converse of the (singular) Rauzy Veech operations. We proceed as 
follows: given some permutation $\pi'$ defined over an alphabet $\Alp$, and a letter $\alpha\not\in \Alp$,
we put $\alpha$ at the end of the 
top or bottom line of $\pi'$. Then we choose some letter $\beta\in \Alp$. We 
replace $\beta$ by $\alpha$ and we put the letter $\beta$ at the end of the bottom line of $\pi'$.
The new permutation $\pi$ we have constructed is defined over the alphabet  $\Alp \sqcup \{\alpha\}$
and satisfies $\mathcal R_{sing}(\pi) = \pi'$. It turns out that all the permutations of 
$\mathcal R^{-1}_{sing}(\pi')$ are constructed as above with one exception: the one given by putting at the 
end of the top and bottom line the letter $\alpha$.

\begin{Example}
\label{ex:rauzy:inverse}
Let $\pi'=\left(\begin{smallmatrix} A& B& C& D \\B& A &D& C   \\ \end{smallmatrix}\right)$.
For instance if we choose the letter $\beta=B$ and we put $\alpha$ at the end of the top line, 
we get $\left(\begin{smallmatrix} A & \alpha & C& D & \alpha \\ B& A & D & C & B       \\ \end{smallmatrix}\right)$
or $\left(\begin{smallmatrix} A& B& C& D & \alpha \\ \alpha & A &D & C & B   \\ \end{smallmatrix}\right)$.
More precisely, if $\beta$ ranges over all letters of $\Alp$ we get 
(up to a permutation of the letters of the alphabet $\Alp \sqcup \{\alpha\}$):
\begin{multline*}
\mathcal R^{-1}_{sing} \left(\begin{smallmatrix} A& B& C& D \\B& A &D& C   
\\ \end{smallmatrix}\right) = \{  
\left(\begin{smallmatrix} A& B& C& D & \alpha \\ \alpha & A &D & C & B   \\ \end{smallmatrix}\right),
\left(\begin{smallmatrix} A& B& C& D & \alpha \\ B& \alpha &D& C & A      \\ \end{smallmatrix}\right),
\left(\begin{smallmatrix} A& B& C& D & \alpha \\ B& A & \alpha &D& C       \\ \end{smallmatrix}\right),
\left(\begin{smallmatrix} A& B& C& D & \alpha \\ B& A & D & \alpha & C       \\ \end{smallmatrix}\right),
\left(\begin{smallmatrix} A& B& C& D & \alpha \\ B& A & D & C & \alpha       \\ \end{smallmatrix}\right),
\left(\begin{smallmatrix} A& B& C& \alpha& D \\ B& A & D & C & \alpha       \\ \end{smallmatrix}\right), \\
\left(\begin{smallmatrix} \alpha & B& C& D & \alpha \\ B& A & D & C & A       \\ \end{smallmatrix}\right),
\left(\begin{smallmatrix} A & \alpha & C& D & \alpha \\ B& A & D & C & B       \\ \end{smallmatrix}\right),
\left(\begin{smallmatrix} A & B & \alpha & D & \alpha \\ B& A & D & C & C       \\ \end{smallmatrix}\right),
\left(\begin{smallmatrix} A & B & C & \alpha & \alpha \\ B& A & D & C & D       \\ \end{smallmatrix}\right)
\}.
\end{multline*}

\end{Example}

We end this section with the following easy lemma that will be useful for the sequel.

\begin{Lemma}
\label{lm:rauzy:sing}
If $\pi$ is an irreducible generalized permutation then $\mathcal R_{sing}(\pi)$
is also a generalized permutation.
\end{Lemma}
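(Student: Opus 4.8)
The plan is to argue geometrically, exploiting the suspension criterion of Theorem~\ref{thm:gen:perm:irrd}. Recall that, by the Convention and by the Remark following the definition of a linear involution, a two-to-one map is a \emph{true} permutation exactly when it preserves $I\times\{0\}$, i.e. when the vertical foliation of any suspension is orientable; a generalized permutation in the sense of this paper is one that is \emph{not} true, and such a map encodes a non-orientable foliation, equivalently a quadratic differential with non-trivial $\Z/2\Z$ holonomy. Since $\mathcal R_{sing}(\pi)$ depends only on $\pi$, proving the lemma amounts to showing that $\mathcal R_{sing}(\pi)$ is not a true permutation.

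First I would fix the irreducible $\pi$ and invoke Theorem~\ref{thm:gen:perm:irrd}: choosing any admissible length vector $\lbd$ with $\lbd_{\pi(l)}=\lbd_{\pi(l+m)}$ (so that $\mathcal R_{sing}$ is the operation at hand), the linear involution $T=(\pi,\lbd)$ admits a suspension, i.e. there is a quadratic differential $(X,q)$ whose vertical foliation induces $T$ on a full transversal interval $I$. Because $\pi$ is a generalized permutation, this foliation is non-orientable, so $(X,q)$ is not the square of an Abelian differential; equivalently the $\Z/2\Z$ holonomy of $q$ is non-trivial, which is a property of the connected surface $(X,q)$ that does not depend on the chosen transversal.

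The key step is to read $\mathcal R_{sing}(\pi)$ off this same surface. By definition $\mathcal R_{sing}(T)$ is the first return map of the \emph{same} vertical foliation of $(X,q)$ to the sub-interval $I'=\bigl(0,1-\lbd_{\pi(l)}\bigr)\times\{0,1\}$; here the right endpoint of $I'$ happens to be a singular point (the two rightmost separatrices reach a zero of $q$ simultaneously, since $\lbd_{\pi(l)}=\lbd_{\pi(l+m)}$), but the underlying surface $(X,q)$ is unchanged and $I'$ stays a full transversal. Consequently $\mathcal R_{sing}(\pi)$ is the combinatorial datum of a first return map of a non-orientable foliation to a full transversal, and such a map can never be a true permutation. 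Hence $\mathcal R_{sing}(\pi)$ is a generalized permutation, as claimed.

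The main obstacle is the bookkeeping in this key step: one must check that in the singular case $I'$ really is a full transversal for the same surface, so that no leaf is lost when one cuts exactly at a separatrix, and that the resulting vertical saddle connection does not accidentally orient the foliation — which it cannot, since orientability is governed by the holonomy representation $\pi_1\to\Z/2\Z$ and is untouched by such a cut. A purely combinatorial alternative avoids suspensions altogether: arguing by contraposition, one assumes $\mathcal R_{sing}(\pi)$ is a true permutation $\pi'$ and lists the elements of $\mathcal R_{sing}^{-1}(\pi')$ as in Example~\ref{ex:rauzy:inverse}; the preimages that fail to be true permutations are precisely those in which the adjoined letter is repeated in a single line, and for each of these one exhibits directly a decomposition as in Definition~\ref{def:irred}, showing $\pi$ is reducible. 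I expect the geometric argument to be the shorter of the two, the combinatorial one requiring a brief but genuine case check.
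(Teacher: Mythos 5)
Your geometric argument has a hidden assumption that actually fails: you ``choose an admissible length vector $\lbd$ with $\lbd_{\pi(l)}=\lbd_{\pi(l+m)}$'', but for a generalized permutation the admissible $\lbd$ are constrained by the balance relation (the top lengths and the bottom lengths must both sum to $|I|$), and this relation can make the equality locus empty. Concretely, take
$\pi=\left(\begin{smallmatrix} a&a&d&b&b\\ c&d&c \end{smallmatrix}\right)$:
the balance relation reads $2\lbd_a+\lbd_d+2\lbd_b=2\lbd_c+\lbd_d$, i.e.\ $\lbd_a+\lbd_b=\lbd_c$, so imposing $\lbd_{\pi(l)}=\lbd_b=\lbd_c=\lbd_{\pi(l+m)}$ forces $\lbd_a=0$, which is not allowed. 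This $\pi$ is irreducible: running through Definition~\ref{def:irred}, every candidate decomposition forces both top corners to be empty (one empty corner on the left, one on the right), which none of the conditions i--iii permits; alternatively one exhibits suspension data directly (e.g.\ $\mathrm{Im}\,\zeta_a=1$, $\mathrm{Im}\,\zeta_b=-4$, $\mathrm{Im}\,\zeta_c=-3$, $\mathrm{Im}\,\zeta_d=5/2$ satisfies all the half-plane inequalities), so Theorem~\ref{thm:gen:perm:irrd} certifies irreducibility. For such a $\pi$ no suspension ever carries the vertical saddle connection your proof needs, and $\mathcal R_{sing}(\pi)$ exists only as the combinatorial operation of the paper (here $\mathcal R_{sing}(\pi)=\left(\begin{smallmatrix} a&a&d&c\\ c&d\end{smallmatrix}\right)$, which is indeed generalized) --- but your argument cannot reach it. So the geometric proof establishes the lemma only for those irreducible $\pi$ whose singular locus $\{\lbd_{\pi(l)}=\lbd_{\pi(l+m)}\}$ meets the admissible cone. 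That does cover every invocation of $\mathcal R_{sing}$ in this paper, where the operation arises dynamically from an actual length coincidence, but it does not prove the lemma as stated.

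For the record, the paper offers no proof at all (the lemma is announced as ``easy''), so there is no argument of the authors to match yours against; but the surrounding material (the description of $\mathcal R_{sing}^{-1}$, Example~\ref{ex:rauzy:inverse}, and the way the sets $\mathcal E_i$ are handled later) makes clear that the intended proof is precisely the combinatorial contraposition you relegate to an alternative: if $\mathcal R_{sing}(\pi)=\pi'$ is a true permutation, then $\pi$ is a preimage in which the adjoined letter $\alpha$ is doubled on one line (with one occurrence at its end) and the displaced letter $\beta$ is doubled on the other (with one occurrence at its end), and each such $\pi$ admits a decomposition as in Definition~\ref{def:irred} --- sometimes with all four corners nonempty, as for $\left(\begin{smallmatrix} \alpha&B&C&D&\alpha\\ B&A&D&C&A\end{smallmatrix}\right)=\left(\begin{smallmatrix} \alpha\,B&|&C\,D&|&\alpha\\ B\,A&|&D\,C&|&A\end{smallmatrix}\right)$, sometimes with the two right corners empty, as for $\left(\begin{smallmatrix} A&B&\alpha&D&\alpha\\ B&A&D&C&C\end{smallmatrix}\right)=\left(\begin{smallmatrix} A\,B&|&\alpha\,D\,\alpha\\ B\,A&|&D\,C\,C\end{smallmatrix}\right)$. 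That case check, short as it is, is the actual content of the lemma and should be carried out as the main proof rather than sketched as a backup; your geometric picture is then best kept as motivation, or as a proof valid in the dynamical situations where the lemma is applied.
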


\subsection{Rauzy induction and SAF-invariant}

We can naturally  extend $SAF(.)$ to linear involutions 
by $SAF(T) := SAF(\hat T)$  where the transformation $\hat T$ is the
double of $T$ (see~\cite{Avila2008} for details). As for interval exchange maps,
if $T$ is periodic then $SAF(T)=0$. The  converse is true  if $|\mathcal  A|=d\leq
3$ (see Lemma~\ref{lm:d=3} for a proof in case $d=3$).

\begin{Proposition}
\label{prop:saf:n=2}
A linear involution $T$ defined over $d \leq 2$ intervals is completely periodic if and 
only if $SAF(T)=0$.
\end{Proposition}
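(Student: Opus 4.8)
The plan is to prove that for a linear involution $T$ defined over $d \leq 2$ letters, vanishing of the SAF-invariant forces complete periodicity. Since one direction (periodic implies $SAF = 0$) is already noted in the text, the content is the converse. I would first dispose of the trivial case $d = 1$, and then enumerate the irreducible generalized permutations on $d = 2$ letters. By Theorem~\ref{thm:gen:perm:irrd}, only irreducible $\pi$ admit a suspension, so up to relabeling the alphabet and up to the top-bottom symmetry there are only finitely many combinatorial types to consider. For each surviving type I would write down the associated transformation $\hat T$ (the double of $T$) explicitly in terms of the two length parameters $\lbd_A, \lbd_B$ and whatever the single translation parameter is, and compute $SAF(\hat T) = \sum_\alpha \lbd_\alpha \wedge_\Q t_\alpha$ directly from the definition in Section~\ref{sect:IET:SAF}.

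The key step is then to extract geometric consequences from the algebraic equation $SAF(T) = 0$. For $d = 2$ the wedge-product identity $\lbd_A \wedge_\Q t_A + \lbd_B \wedge_\Q t_B = 0$ in $\R \wedge_\Q \R$ is very rigid: it forces a $\Q$-linear dependence among the lengths and translation amounts. Concretely, I would argue that the vanishing of this single wedge relation, combined with the constraint coming from the combinatorics of $\pi$ (which relates $t_A, t_B$ to $\lbd_A, \lbd_B$), implies that the lengths are commensurable, i.e.\ $\lbd_A/\lbd_B \in \Q$. Commensurability of the lengths of a two-interval system is exactly the condition under which all orbits close up, so $T$ is periodic. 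In the genuinely non-orientable (linear involution, not true IET) cases, I would check that the foliation likewise decomposes into closed leaves and saddle connections once the relevant lengths are rationally related, so that complete periodicity holds on the associated surface.

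The main obstacle I anticipate is the bookkeeping of the non-orientable cases: for a linear involution the relevant transformation $\hat T$ on $I \times \{0,1\}$ has branches with derivative $-1$, so the naive ``rotation number'' argument available for a true two-interval IET (where vanishing SAF is essentially equivalent to rationality of the rotation, and a two-interval IET is just a rotation) does not apply verbatim. I would need to verify case by case that a flip branch does not create a minimal component even when the lengths are commensurable, and conversely that an irrational length ratio produces a nonzero SAF-invariant. The cleanest route is probably to identify each $d \leq 2$ linear involution with a genuine flat surface in the smallest possible stratum (a torus with marked points, a sphere with a quadratic differential having few singularities), where the geometry makes complete periodicity transparent, and then read off that $SAF = 0$ exactly on the rational locus.

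A cleaner alternative, which I would try first and which avoids most of the casework, is to observe that any $T$ on $d \leq 2$ letters suspends to a surface of very small complexity whose stratum is completely understood: the foliations in such strata are automatically either periodic or have nonzero SAF-invariant because the surface is a torus or a sphere quotient, where the SAF-invariant coincides (up to the projection $J_{xx}$) with a single wedge $\lbd \wedge_\Q (\text{rotation number})$ that vanishes precisely in the rational/periodic case. Either way, the entire argument is a finite verification, so the honest difficulty is not conceptual depth but making sure the enumeration of irreducible generalized permutations of type $d \leq 2$ is exhaustive and that each flip case is handled correctly.
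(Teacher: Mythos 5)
Your plan is essentially the paper's proof: enumerate the finitely many combinatorial types on two letters, namely $\left(\begin{smallmatrix} A & A \\ B & B \end{smallmatrix}\right)$, $\left(\begin{smallmatrix} A & B \\ A & B \end{smallmatrix}\right)$, $\left(\begin{smallmatrix} A & B \\ B & A \end{smallmatrix}\right)$, note that the first two are completely periodic for every choice of lengths, and reduce the last to a rotation of angle $\theta$, where the direct computation gives $SAF(T)=2\wedge_\Q\theta$, so $SAF(T)=0$ forces $\theta\in\Q$ and hence periodicity. One small correction to your anticipated casework: in the flip cases your expectation that an irrational length ratio produces a nonzero SAF-invariant is wrong — there $T$ is completely periodic (and $SAF(T)=0$) no matter what the lengths are, so the equivalence holds trivially and no commensurability is needed; the rotation type is the only case in which the SAF-invariant carries any information, and your detour through irreducibility and suspensions is unnecessary since the three types can be checked directly.
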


\begin{proof}
We  fix  some  alphabet  $\mathcal  A  =  \{A,B\}$.  There  are  three
possibilities  depending  on   the  combinatorics  of  the  associated
permutation: 
$$
\left(\begin{smallmatrix} A &A \\ B &B \\ \end{smallmatrix}\right),\
\left(\begin{smallmatrix} A &B \\ A &B \\ \end{smallmatrix}\right),\
\left(\begin{smallmatrix} A &B \\ B &A \\ \end{smallmatrix}\right).
$$
In the first two cases, clearly $T$ is completely periodic (no matter what $SAF(T)$ is).
In the last case $T$ is a rotation of $[0,1)$ of some angle $\theta$:
$$
T(x) = \left\{ \begin{array}{ll}
x + \theta & \textrm{if }  0<x<1-\theta, \\
x + \theta-1 & \textrm{if }  1-\theta<x<1.
\end{array} \right.
$$
Direct computation shows $SAF(T) = 2\wedge_\Q\theta$; hence $SAF(T)=0$ implies $\theta\in \Q$ and 
$T$ is completely periodic.
\end{proof}
Since the  SAF-invariant is a scissors congruence invariant, it is preserved by the Rauzy operations.
\begin{Proposition}
\label{prop:rauzy:cp}
Let $T$ be a linear involution. If the Rauzy induction is well defined then $T$ then 
$$
SAF(\mathcal R(T)) = SAF(T).
$$
\noindent Otherwise (if the Rauzy induction is not well defined), then
$$
SAF(\mathcal R_{sing}(T)) = SAF(T).
$$

\noindent Moreover, $T$ is completely periodic if and only if $\mathcal R(T)$ or ${\mathcal R}_{sing}(T)$ is completely periodic. 
\end{Proposition}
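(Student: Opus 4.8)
The plan is to derive both assertions from a single principle: the SAF-invariant, and the property of being completely periodic, are invariants of the underlying suspension and are therefore insensitive to the particular full transversal on which the first-return map is read. First I would reduce from linear involutions to interval exchange maps. By definition $SAF(T)=SAF(\hat T)$, where $\hat T$ is the double of $T$, an honest IET whose suspension is the orientation double cover $(\tilde X,\tilde\omega)$ of the half-translation surface suspending $T$. The Rauzy operation $\mathcal R$ and its singular variant $\mathcal R_{sing}$ are, by their very definition, first-return maps of $T$ to a shorter interval ($\bigl(0,\max\{1-\lbd_{\pi(l)},1-\lbd_{\pi(l+m)}\}\bigr)\times\{0,1\}$ in the regular case, and $\bigl(0,1-\lbd_{\pi(l)}\bigr)\times\{0,1\}$ in the singular case). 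On the double these become first-return maps of $\hat T$ to a shorter doubled interval, so $\widehat{\mathcal R(T)}$ and $\widehat{\mathcal R_{sing}(T)}$ are induced IETs of $\hat T$ on a still-full transversal of the \emph{same} surface $(\tilde X,\tilde\omega)$.

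For the two SAF equalities I would then invoke the identity $SAF(T)=J_{xx}(\tilde X,\tilde\omega)$ from Section~\ref{subsec:J-inv}, together with the fact recalled there that $J_{xx}(X,\omega)$ does not depend on the choice of the full transversal, as long as it meets every vertical leaf. The crux is that Rauzy--Veech induction leaves the suspension surface unchanged: $\mathcal R(T)$ (resp. $\mathcal R_{sing}(T)$) is, under the above identification, the first-return map of the very same $(\tilde X,\tilde\omega)$ to a smaller horizontal segment that is again a full transversal. Hence $SAF(\mathcal R(T))=J_{xx}(\tilde X,\tilde\omega)=SAF(T)$, and likewise in the singular case. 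To make the singular step legitimate I would use Lemma~\ref{lm:rauzy:sing} and Theorem~\ref{thm:gen:perm:irrd} to guarantee that $\mathcal R_{sing}(\pi)$ is again an irreducible generalized permutation, so that the reduced segment genuinely carries a suspension of the same surface and the ``full transversal'' hypothesis is met. A purely combinatorial check is also available --- a single induction step changes only the length of the losing letter, decreasing it by the winning length while rearranging the translation lengths in a controlled way, and $\Q$-bilinearity of $\wedge_\Q$ shows that $\sum_{\alpha}\lbd_\alpha\wedge_\Q t_\alpha$ is unchanged --- but I would prefer the suspension argument, which is shorter and bypasses the case analysis.

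Finally, for the ``Moreover'' clause I would observe that complete periodicity is again a property of the foliation alone: $T$ is completely periodic exactly when the vertical foliation of $(\tilde X,\tilde\omega)$ has no minimal component, i.e. is a finite union of periodic cylinders and saddle connections, a condition that makes no reference to the transversal. Consequently $T$ is completely periodic if and only if its first return to the smaller interval --- that is, whichever of $\mathcal R(T)$ or $\mathcal R_{sing}(T)$ is defined --- is completely periodic; concretely, each $T$-orbit decomposes into finitely many return orbits of the induced map and conversely, so finiteness of all orbits passes freely between the two. The one point demanding care, and the main obstacle I anticipate, is precisely the bookkeeping in the singular case: one must confirm that passing to $\mathcal R_{sing}(T)$ keeps the transversal full and the permutation suspendable, so that the suspension is literally unchanged, which is exactly the role played by Lemma~\ref{lm:rauzy:sing} and Theorem~\ref{thm:gen:perm:irrd}.
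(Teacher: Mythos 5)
Your preferred route has a genuine gap: the identity $SAF(T)=J_{xx}(\tilde X,\tilde\omega)$ and the whole ``same suspension, smaller full transversal'' argument presuppose that $T$ admits a suspension, and by Theorem~\ref{thm:gen:perm:irrd} this happens \emph{if and only if} $\pi$ is irreducible --- whereas the proposition is stated for an arbitrary linear involution, with no irreducibility hypothesis. (Reducible data genuinely occur in the paper's induction scheme: the decomposed maps $T=T_1\comp T_2$ of Section~\ref{subsec:keane} are exactly of this kind, and the proposition is applied in contexts where one cannot assume a suspension exists at every stage.) Compounding this, you misquote Lemma~\ref{lm:rauzy:sing}: it asserts only that $\mathcal R_{sing}(\pi)$ is again a \emph{generalized} permutation (i.e.\ not a true permutation), not that it is irreducible; indeed the lists computed in Claims~\ref{claim:1} and~\ref{claim:2} show that $\mathcal R_{sing}^{-1}$ mixes reducible and irreducible permutations, so irreducibility is not the content of that lemma. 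When $T$ does arise as a cross-section on a surface, the induced map on the smaller interval is again a cross-section of the same foliation and so admits a suspension directly --- you do not need Lemma~\ref{lm:rauzy:sing} for that --- but as a proof of the proposition \emph{as stated} the suspension argument is incomplete.

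The fix is the argument you relegate to a parenthetical: the purely combinatorial scissors-congruence computation. That is in fact the paper's entire proof --- the proposition is prefaced by the remark ``Since the SAF-invariant is a scissors congruence invariant, it is preserved by the Rauzy operations,'' and no suspension is invoked. Concretely, $\mathcal R(T)$ and $\mathcal R_{sing}(T)$ are first-return maps of $T$ to a subinterval that every orbit meets (the excised piece returns to the retained interval after one application of $T$), and SAF is invariant under such induction by Arnoux's scissors-congruence property; equivalently, your bilinearity check on $\sum_\alpha \lbd_\alpha\wedge_\Q t_\alpha$ carries it out by hand. Your treatment of the ``Moreover'' clause is salvageable for the same reason: the orbit-correspondence argument (each $T$-orbit decomposes into finitely many return orbits and conversely) is correct and makes no reference to a suspension, so you should discard the reformulation via minimal components of the vertical foliation of $(\tilde X,\tilde\omega)$, which again assumes a suspension. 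In short: promote the combinatorial argument from fallback to main proof, and the statement is established in full generality.
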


\subsection{Rauzy induction and Keane property}
\label{subsec:keane}

We will say that $T=(\pi,\lbd)$ is {\it decomposed} if there exists $\ \mathcal A' \subsetneq A$
such that both following conditions hold:
$$
\left\{ \begin{array}{l}
\pi = \left(\begin{smallmatrix} \alpha_1&\ldots&\alpha_{i_0}&|& * * * \\
\beta_1&\ldots &\beta_{j_0} & | & * * *  \\ \end{smallmatrix}\right), \textrm{ where }
\{\alpha_1,\ldots,\beta_{j_0}\}=\mathcal A'' \sqcup \mathcal A'', \\
\sum_{i=1}^{i_0}\lbd_{\pi(i)}=\sum_{j=1}^{j_0}\lbd_{\pi(j)}, 
\textrm{for some } 1\leq i_0 < l$ and $1\leq j_0 < m.
\end{array}
\right.
$$
This means exactly that $T$ splits into two linear involutions: $T=T_{1}\comp T_{2}$.
Since the SAF-invariant is additive we have 
\begin{equation}
\label{eq:saf}
SAF(T) = SAF(T_{1})+SAF(T_{2}).
\end{equation}

\begin{Definition}
A linear involution has a connection (of length $r$) if there exist $(x,\varepsilon)\in
I\times \{0,1\}$ and $r\geq 0$ such that
\begin{itemize}
\item $(x,\varepsilon)$ is a singularity for $T^{-1}$.
\item $T^r(x,\varepsilon)$ is a singularity for $T$.
\end{itemize}
A linear involution with no connection is said to have the {\em Keane property}
(also called the infinite distinct orbit condition or i.d.o.c. property).
\end{Definition}

An instance of a linear involution with a connection of length $1$ is when $T$ is decomposed.

If $T=(\pi,\lbd)$ is a linear involution, we will use the notation
$(\pi^{(n)},\lbd^{(n)}):=\mathcal R^{(n)}(T)$ if the $n$-th iteration
of $T$ by $\mathcal R$ is well defined, and $\lbd_\alpha^{(n)}$
for the length of the interval associated to the symbol $\alpha\in
\mathcal A$. The next proposition is a slightly more precise statement of 
Proposition~$4.2.$ of~\cite{Lanneau:rauzy}:

\begin{Proposition}
\label{prop:BL}
The following statements are equivalent.
\begin{enumerate}
\item $T$ satisfies the Keane property.
\item $\mathcal R^{(n)}(T)$ is well defined for any $n\geq 0$ and the lengths 
of the intervals $\lbd^{(n)}$ tends to $0$ as $n$ tends to infinity.
\item The transformation $T$ is minimal
\end{enumerate}
\noindent More precisely, if $T$ has a connection and if the Rauzy induction $\mathcal R^{(n)}(T)$ 
is well defined for every $n\geq 0$, then there exists $n_{0}>0$ such that 
$\mathcal R^{(n_{0})}(T)$ is decomposed.
\end{Proposition}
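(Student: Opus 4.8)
The plan is to import the equivalence of (1), (2) and (3) directly from Proposition~$4.2$ of~\cite{Lanneau:rauzy}, so that only the final ``more precise'' assertion needs a separate argument, which I would extract from the same Rauzy--Veech machinery. Assume then that $T$ has a connection and that $\mathcal R^{(n)}(T)$ is well defined for every $n\geq 0$. Having a connection is the negation of the Keane property, so statement (1) fails; by the cited equivalence (2) fails as well, and since the well-definedness half of (2) holds by hypothesis, it is the length half that must fail, i.e. $\lbd^{(n)}\not\to 0$. As $|I^{(n)}|=\sum_{\alpha\in\Alp}\lbd^{(n)}_\alpha$ is non-increasing and does not tend to $0$, the nested intervals $I^{(n)}\times\{0,1\}$ decrease to a subinterval of positive length $\ell_\infty>0$, and the amount $|I^{(n-1)}|-|I^{(n)}|$ removed at each step tends to $0$.

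The core of the refinement is to follow the connection along the induction. A connection of $T$ is a finite orbit segment $(x,\varepsilon),T(x,\varepsilon),\dots,T^{r}(x,\varepsilon)$ joining a singularity of $T^{-1}$ to a singularity of $T$, and I would set $r_n$ to be the length of a shortest connection of $\mathcal R^{(n)}(T)$. The key combinatorial lemma to establish is that a well-defined Rauzy step never increases this length: passing to the first return map on the slightly smaller interval either leaves the connecting orbit untouched, in which case $r_n$ is unchanged, or deletes a point of it lying in the removed rightmost subinterval, in which case $r_n$ drops. Hence $(r_n)_{n\geq 0}$ is a non-increasing sequence of non-negative integers and stabilises from some index $n_0$ on. Stabilisation forces the removed rightmost pieces, which accumulate at $\ell_\infty$, to avoid the connecting orbit for all $n\geq n_0$; thus the (minimal) connecting orbit stays in $[0,\ell_\infty)$, and its two endpoints become permanent discontinuities of $\mathcal R^{(n_0)}(T)$ and of its inverse. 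Matching these endpoints across the two rows of the generalized permutation then produces a common splitting position $1\leq i_0<l$, $1\leq j_0<m$ with $\sum_{i=1}^{i_0}\lbd_{\pi(i)}=\sum_{j=1}^{j_0}\lbd_{\pi(l+j)}$, i.e. a decomposition $\mathcal R^{(n_0)}(T)=T_{1}\comp T_{2}$.

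The one remaining alternative must be excluded: the connection length could in principle refuse to drop because the two ends of the shortest connection sit on the two rightmost intervals with $\lbd_{\pi(l)}=\lbd_{\pi(l+m)}$, but this is exactly the configuration in which $\mathcal R$ is \emph{not} well defined, contrary to hypothesis; if instead those two lengths differ, the next step is well defined and shortens the connection, contradicting stabilisation. I expect the main obstacle to be the bookkeeping of the middle paragraph, namely proving the non-increase lemma and then identifying a stabilised, unshortenable connection with a genuine decomposition. This requires tracking carefully how the singular points of $T$ and of $T^{-1}$ are relabelled under the two branches $\mathcal R_0$ and $\mathcal R_1$ in the generalized-permutation formalism, and checking the claim in each combinatorial case; the presence of the two sheets $I\times\{0,1\}$ and of orientation-reversing branches of $\tilde T$ is precisely what makes this more delicate than the classical argument for honest interval exchange maps.
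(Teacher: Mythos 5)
Your outer strategy matches what the paper actually does: the paper offers no proof of this proposition at all, presenting it as ``a slightly more precise statement of Proposition~4.2 of~\cite{Lanneau:rauzy}'', so importing the equivalence of (1)--(3) from Boissy--Lanneau and isolating the final assertion is exactly the right reading, and Boissy--Lanneau's own argument does proceed by following connections through the Rauzy induction. Your first two steps are also sound in substance: the shortest connection length $r_n$ does not increase under a well-defined induction step (new singularities created by the induction can only create shorter connections, and a connection whose points meet the removed piece gets strictly shorter), so $r_n$ stabilises, and from the failure of (2) the surviving intervals decrease to $[0,\ell_\infty)$ with $\ell_\infty>0$, trapping the stabilised connecting orbit there.

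The genuine gap is the sentence ``Matching these endpoints across the two rows of the generalized permutation then produces a common splitting position''. Being decomposed, in the sense of Section~\ref{subsec:keane}, requires two things at once: a \emph{single} point at equal distance $s$ from the left end that is a subdivision point of the top row \emph{and} of the bottom row, i.e.\ $\sum_{i\leq i_0}\lbd_{\pi(i)}=\sum_{j\leq j_0}\lbd_{\pi(l+j)}$, and the combinatorial condition that the letters filling the two left blocks pair up among themselves (the $\mathcal A''\sqcup\mathcal A''$ condition), so that $[0,s)\times\{0,1\}$ is genuinely invariant. Your stabilised connection yields neither: its endpoints $z_0$ (singular for $T^{-1}$) and $z_r$ (singular for $T$) are in general two distinct points, each a one-row condition, and nothing in the argument makes partial sums on the two rows coincide, let alone forces the left-block letters to pair up --- even a length-zero connection, a single point singular for both $T$ and $T^{-1}$, lives on one row and does not split the generalized permutation. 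The missing content is a freezing argument: since the widths removed tend to $0$ while not all lengths do, the letters whose lengths stay bounded away from zero must eventually stop visiting the right end and settle into left blocks through which the induction never passes; it is this freezing, not the mere persistence of one connection, that produces both the common split point and the pairing. Relatedly, your closing paragraph contradicts your middle one: after stabilisation a well-defined step need \emph{not} shorten the shortest connection (the orbit simply avoids the removed rightmost piece, which is the whole point of $\ell_\infty>0$), so the dichotomy ``either $\lbd_{\pi(l)}=\lbd_{\pi(l+m)}$ and the induction halts, or the next step shortens the connection'' is false, and that exclusion step collapses.
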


The following proposition relates connections with vanishing SAF-invariant.

\begin{Proposition}
\label{prop:connection}
Let $T$ be  a linear involution such that  the lengths of  the exchanged intervals belong to 
a $2$-dimensional space over $\Q$. If $SAF(T)=0$ then $T$ has a connection.
\end{Proposition}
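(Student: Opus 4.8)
The plan is to argue by contrapositive: assuming $T$ has no connection I will show $SAF(T)\neq 0$. Let $V\subset\R$ be the $\Q$-vector space of dimension at most two that contains all the lengths $\lbd_\alpha$, and fix a $\Q$-basis $\{u,v\}$ of $V$, so that $\lbd_\alpha=a_\alpha u+b_\alpha v$ with $a_\alpha,b_\alpha\in\Q$. Each translation length is an integral combination $t_\alpha=\sum_\beta\Omega_{\alpha\beta}\lbd_\beta$ of the lengths, where $\Omega$ is the incidence matrix recording the combinatorics of $(\pi,\lbd)$; in particular $t_\alpha\in V$, say $t_\alpha=c_\alpha u+d_\alpha v$. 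Since $\wedge^2_\Q V$ is one-dimensional, generated by $u\wedge v$, a direct expansion gives
$$SAF(T)=\sum_{\alpha\in\Alp}\lbd_\alpha\wedge_{\Q}t_\alpha=\Big(\sum_{\alpha\in\Alp}(a_\alpha d_\alpha-b_\alpha c_\alpha)\Big)\,u\wedge v=:Q\cdot(u\wedge v).$$
As $u\wedge v\neq0$, the vanishing of $SAF(T)$ is equivalent to the vanishing of the rational number $Q=\sum_\alpha\det(\lbd_\alpha,t_\alpha)$, the sum of the coordinate determinants of the pairs $(\lbd_\alpha,t_\alpha)$ read in the plane $V$.

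Next I would set up the contradiction with the Rauzy machinery. Suppose $T$ has no connection. By Proposition~\ref{prop:BL} the induction $\mathcal R^{(n)}(T)$ is then defined for every $n\ge0$, the lengths $\lbd^{(n)}$ tend to $0$, and $T$ is minimal. The Rauzy moves only subtract one length from another, so all the data of $\mathcal R^{(n)}(T)$ still lie in $V$; moreover, by Proposition~\ref{prop:rauzy:cp} the $SAF$-invariant is preserved, hence the associated quantity satisfies $Q^{(n)}=Q$ along the whole orbit. Thus if $SAF(T)=0$ then $Q^{(n)}=0$ for all $n$.

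The geometric heart of the argument is to read $Q$ as an area. For reals $u',v'$ set $\tau_\alpha=a_\alpha u'+b_\alpha v'$ and consider the complex period assignment $\zeta_\alpha=\lbd_\alpha+\imath\tau_\alpha$ over the combinatorics $\pi$; both the real and the imaginary holonomy then lie in the coordinate plane of $V$. Using the cancellation $\sum_{\alpha,\beta}\Omega_{\alpha\beta}a_\alpha a_\beta=\sum_{\alpha,\beta}\Omega_{\alpha\beta}b_\alpha b_\beta=0$, a direct computation shows that the area $\sum_{\alpha,\beta}\Omega_{\alpha\beta}\,\mathrm{Re}(\zeta_\alpha)\mathrm{Im}(\zeta_\beta)$ of the half-translation surface suspended from $(\pi,\zeta)$ is a nonzero rational multiple of $(uv'-u'v)\,Q$. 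Choosing $(u',v')$ not proportional to $(u,v)$ makes the scalar $uv'-u'v$ nonzero, so a genuine (positive-area) suspension would force $Q\neq0$, contradicting $Q=0$; this contradiction is exactly the connection we seek. That such a $\zeta$ can in principle be completed to admissible zippered-rectangle data uses the irreducibility of $\pi$ (Theorem~\ref{thm:gen:perm:irrd}), which guarantees a nonempty open cone of admissible suspension data.

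The main obstacle is precisely the positivity needed for this realization: the second coordinate vector $(b_\alpha)_\alpha$ need not define admissible heights for the given $\pi$, so the surface naively built from $(a_\alpha,b_\alpha)$ may be degenerate rather than genuine, in which case its ``area'' carries no sign information. The plan to overcome this is to exploit the infinite induction produced by the no-connection hypothesis: under $\mathcal R^{(n)}$ the two-plane $P=\mathrm{span}_\R((a_\alpha),(b_\alpha))\subset\R^{\Alp}$ is transported by the expanding Rauzy cocycle while always containing the positive vector $\lbd^{(n)}$, so for $n$ large one expects $P$ to be pushed into the admissible suspension cone of $\pi^{(n)}$; at such a step one obtains a genuine positive-area surface with $Q^{(n)}=0$, the desired contradiction. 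The degenerate alternative, in which $P$ fails to meet the admissible cone for every $n$, would then have to be analysed separately and shown to itself force a coincidence among the singular orbits, i.e.\ a connection, completing the proof.
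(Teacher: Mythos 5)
Your algebraic preliminaries are sound: writing $\lbd_\alpha=a_\alpha u+b_\alpha v$ and $t_\alpha=\sum_\beta\Omega_{\alpha\beta}\lbd_\beta$ with $\Omega$ antisymmetric (for a linear involution one should really work with the double $\hat T$, but that is a technicality), one gets $SAF(T)=Q\,(u\wedge_\Q v)$ with $Q=2\sum_{\alpha,\beta}\Omega_{\alpha\beta}a_\alpha b_\beta\in\Q$, and since $u\wedge_\Q v\neq 0$ for $\Q$-independent $u,v$, indeed $SAF(T)=0$ if and only if $Q=0$; the area identity $\sum_{\alpha,\beta}\Omega_{\alpha\beta}\lbd_\alpha\tau_\beta=\tfrac{1}{2}(uv'-u'v)\,Q$ for $\tau$ in the plane $P$ is also correct. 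But the proof is not complete, and you say so yourself: all of the difficulty of the proposition is concentrated in the final claim that absence of connections forces some $\mathcal R^{(n)}(T)$ to admit suspension data inside the transported plane $P^{(n)}$ (``one expects $P$ to be pushed into the admissible cone''), with the ``degenerate alternative'' left unanalysed. This claim does not follow from irreducibility, positivity of $\lbd^{(n)}$, and expansion of the cocycle, and it cannot hold for soft reasons: your scheme uses $\dim_\Q V=2$ only to make $SAF$ a single rational scalar, and run verbatim in rank three it would show that every IET with vanishing $SAF$ and lengths spanning a $3$-dimensional $\Q$-vector space has a connection --- which is false, since the Arnoux--Yoccoz IET is minimal (hence connection-free) and has $SAF=0$. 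In that example the degenerate alternative genuinely occurs: the rational-coefficient subspace spanned by the Galois conjugates of the length vector never meets the suspension cone (the genuine suspension data are eigenvectors of the renormalization cocycle for reciprocal eigenvalues, which are not Galois conjugates of the expansion factor). So ruling out the degenerate alternative must use the rank-two hypothesis in an essential dynamical way, and nothing in your sketch does.

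For comparison, the paper sidesteps the suspension construction entirely with a short ergodic-theoretic argument: pass to the double $\hat T$; by Arnoux's thesis \cite{Ar1}, vanishing of $SAF(\hat T)$ implies $\hat T$ is not ergodic; if $T$ had no connection it would satisfy the Keane property and hence be minimal by Proposition~\ref{prop:BL}, so $\hat T$ would be minimal, and then Boshernitzan's theorem on rank-two interval exchanges (\cite{Boshernitzan1988}, Theorem~1.1) would make it uniquely ergodic --- a contradiction. Note where rank two enters there: through Boshernitzan's minimal-implies-uniquely-ergodic theorem. Any completion of your cone-entry step would have to supply an input of comparable depth (in effect extracting from the plane $P$ an obstruction to unique ergodicity of a minimal rank-two IET), so the honest repair of your proposal is to invoke these two results rather than the unproved dynamical claim.
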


\begin{proof}
Let $\hat T$ be the double of $T$. Since the interval exchange map $\hat T$ 
has vanishing SAF, $\hat T$ is not ergodic (see Arnoux's thesis~\cite{Ar1}). But
by a result of Boshernitzan (\cite{Boshernitzan1988}, Theorem~1.1), $\hat T$ is neither 
minimal (otherwise $\hat T$ would be uniquely ergodic).

\noindent If $T$ has no connection then it satisfies Keane property
and Proposition~\ref{prop:BL} implies $T$ would be minimal. So that 
$\hat T$ would also be minimal that is a contradiction.
\end{proof}

\section{Complete periodicity of linear involution up to $5$ intervals}
\label{subsec:cp} 

In this section we specialize the analysis of complete periodicity to 
linear involutions. In  the sequel,  $T$ will  be a  linear involution  defined  over $d$
intervals. We prove several lemmas depending on
the values of $d\in \{3,\dots,6\}$.  Section~\ref{subsec:cp} is devoted to
the case $d\leq 5$; as a corollary we will draw Theorem~\ref{thm:CAP:to:CP}.
In Section~\ref{sec:d=6} we will consider the case $d=6$ and 
deduce Theorem~\ref{theo:cp:stronger}. \medskip

Since we proceed by induction on $d$, let us start with the case $d=3$.

\begin{Lemma}[$d=3$]
\label{lm:d=3}
If $T$ is a linear involution defined over $3$ intervals with $SAF(T)=0$ then
$T$ is completely periodic.
\end{Lemma}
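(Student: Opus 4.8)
The plan is to reduce the number of intervals through the Rauzy induction, taking the case $d\le 2$ (Proposition~\ref{prop:saf:n=2}) as the base of an induction. So assume $T$ is a linear involution over $3$ intervals with $SAF(T)=0$, and run the Rauzy induction $\mathcal{R}$, distinguishing two branches according to whether it is always well defined. In the easy branch, if at some stage the two rightmost intervals have equal length, then $\mathcal{R}_{sing}$ applies and produces a linear involution over $2$ intervals; by Proposition~\ref{prop:rauzy:cp} its SAF is still $0$, so Proposition~\ref{prop:saf:n=2} shows it is completely periodic, and the ``moreover'' part of Proposition~\ref{prop:rauzy:cp} lifts complete periodicity back to $T$.

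The substantial branch is when $\mathcal{R}^{(n)}(T)$ is well defined for all $n$. Here I must exclude that $T$ has the Keane property, since by Proposition~\ref{prop:BL} that would make $T$ minimal, hence not completely periodic; equivalently I must produce a connection. This is exactly where $SAF(T)=0$ enters, through Proposition~\ref{prop:connection}: it suffices to check that the lengths $\lambda_\alpha$ span a $\Q$-vector space of dimension at most $2$. I would establish this dimension bound by splitting on the type of $\pi$. If $\pi$ is a genuine generalized permutation, then some letter occurs twice in the same row, and equating the total lengths of the top and bottom rows (both equal to $|I|$) gives a nontrivial rational relation $\sum_\alpha (n_\alpha-1)\lambda_\alpha=0$, where $n_\alpha\in\{0,1,2\}$ counts the occurrences of $\alpha$ in the top row; hence the dimension is at most $2$, independently of the SAF. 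If instead $\pi$ is a true permutation, so that $T$ is a $3$-IET, then a direct computation of $SAF(T)$ for the finitely many irreducible permutations on three letters shows that $SAF(T)=0$ itself forces a nontrivial rational relation among the three lengths, again bounding the dimension by $2$. In both cases Proposition~\ref{prop:connection} yields a connection, so $T$ fails the Keane property.

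With a connection in hand and $\mathcal{R}$ well defined for all $n$, the ``more precisely'' clause of Proposition~\ref{prop:BL} provides an $n_0$ such that $\mathcal{R}^{(n_0)}(T)$ is decomposed, say $\mathcal{R}^{(n_0)}(T)=T_1\star T_2$. Since the Rauzy induction preserves the number of intervals, the two factors are linear involutions over $d_1$ and $d_2$ intervals with $d_1+d_2=3$ and $d_i\ge 1$, so one factor, say $T_1$, has a single interval; its combinatorial datum is necessarily $\left(\begin{smallmatrix} A \\ A \end{smallmatrix}\right)$, which is visibly completely periodic and therefore has $SAF(T_1)=0$. By additivity of the SAF, Equation~\eqref{eq:saf}, together with $SAF(\mathcal{R}^{(n_0)}(T))=SAF(T)=0$, we deduce $SAF(T_2)=0$, and since $T_2$ is defined over $2$ intervals Proposition~\ref{prop:saf:n=2} makes it completely periodic. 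Hence $\mathcal{R}^{(n_0)}(T)$ is completely periodic, and applying the ``moreover'' part of Proposition~\ref{prop:rauzy:cp} along the $n_0$ induction steps shows that $T$ is completely periodic.

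The main obstacle I expect is the dimension-bound step, and within it the $3$-IET case, where one genuinely needs the vanishing of the SAF (rather than a combinatorial coincidence) to produce the rational relation among the lengths; the generalized-permutation case is automatic. A secondary point requiring care is the decomposition analysis: one must verify that the factor $T_1\star T_2$ really splits off a one-interval piece that is completely periodic irrespective of its own SAF, so that additivity can transfer the vanishing of the SAF onto the remaining two-interval factor, where the base case applies.
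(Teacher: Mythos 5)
Your proposal is correct and follows the paper's own proof almost line for line: $SAF(T)=0$ plus the rank-two condition on the lengths gives a connection via Proposition~\ref{prop:connection}, and Proposition~\ref{prop:BL} then yields exactly the paper's dichotomy, with the degenerate-induction branch handled by $\mathcal R_{sing}$ and the $d\le 2$ base case (Propositions~\ref{prop:saf:n=2} and~\ref{prop:rauzy:cp}), and the decomposed branch $T_1\comp T_2$ handled by splitting off the one-interval factor $\left(\begin{smallmatrix} A \\ A \end{smallmatrix}\right)$ and using additivity of the SAF. The only substantive difference is that you actually verify the rank bound --- which the paper simply asserts --- via the top/bottom length relation for genuine generalized permutations and a direct SAF computation for irreducible true permutations; just add one remark that a \emph{reducible} true permutation (for which the bound can fail, e.g.\ the identity with three rationally independent lengths) is decomposed from the outset, so your case-(b) additivity argument disposes of it directly with $n_0=0$.
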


\begin{proof}
The condition $SAF(T)=0$ implies that the lengths of the intervals exchanged by $T$ span a $2$-dimensional space over $\Q$. It follows from 
Proposition~\ref{prop:connection} $T$ has a connection.  Hence from
Proposition~\ref{prop:BL}, two possibilities can occur:
\begin{itemize}
\item[(a)] either the Rauzy induction $\mathcal R^{(n_{0})}(T)$ is not well defined for some $n_{0}> 0$, or
\item[(b)] there exists $n_{0}>0$ such that the permutation $\mathcal R^{(n_{0})}(T)$ is 
decomposed.
\end{itemize}

Let us first consider case (a). Since the Rauzy induction is not well defined 
on $T' := \mathcal R^{(n_{0})}(T)$, there is a relation: $\lbd_{\pi^{(n_{0})}(l)} =  \lbd_{\pi^{(n_{0})}(l+m)}$.
We can consider the first return map of $T'$ to 
$$
\bigl(0,1-\lbd_{\pi^{(n_{0})}(l)}\bigr) \times \{0,1\}.
$$
We get a new $T'' = \mathcal R_{sing}(T')$ defined over $2$ intervals. Since
$0=SAF(T)=SAF(T')=SAF(T'')$ we get that $T''$ has vanishing SAF and hence
$T''$  is completely periodic.
We conclude by Proposition~\ref{prop:rauzy:cp} that $T'$ and $T$ are also completely 
periodic. \medskip 

Let  us now consider case  (b). By assumption, $T$  splits as $T=T_{1}
\comp T_{2}$. We will denote the alphabet by $\mathcal A = \{A,B,C\}$.
The decomposition of the permutation $\pi^{(n_{0})}$ involves the following 
decomposition (up to permutation of the letters and $T_{i}$):
$$
\left(\begin{array}{c|c}A& \alpha_{1} \ \alpha_{2} \\ A & \beta_{1} \ \beta_{2} 
\end{array}\right), \textrm{ where } \{\alpha_{1},\dots,\beta_{2} \} = \{B,C\} \sqcup\{B,C\}.
$$
Here $T_{1}= \left( \left(\begin{array}{c}A \\ A \end{array}\right), \lbd_{A}\right)$.
Thus obviously $SAF(T_{1})=0$. Reporting into Equation~(\ref{eq:saf}):
$$
0=SAF(T) = SAF(T_{1})+SAF(T_{2}),
$$
we draw $SAF(T_{2})=0$. Since $T_{2}$ is a linear involution defined over $2$ letters, 
we again conclude that $T_2$ is completely periodic. This proves the lemma.
\end{proof}

In the next lemma, we continue this induction process. The idea is to consider the inverse of the singular Rauzy induction. Thus the number of 
intervals increases, and we need to avoid `` bad '' cases.

\begin{Lemma}[$d=4$]
\label{lm:d=4}
If $T=(\pi,\lbd)$ is a linear involution defined over $4$ intervals with $SAF(T)=0$,
and if $\pi \not = \left(\begin{smallmatrix} A& B& C& D \\B& A &D& C   \\ \end{smallmatrix}\right)$
up to a permutation of the letters, then $T$ is completely periodic.
\end{Lemma}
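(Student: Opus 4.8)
The plan is to imitate the inductive scheme of Lemma~\ref{lm:d=3}: produce a connection, then feed the Boissy--Lanneau dichotomy of Proposition~\ref{prop:BL} into either a dimension drop (handled by Lemma~\ref{lm:d=3}) or a decomposition (handled by hand). First I would secure a connection. Proposition~\ref{prop:connection} delivers one as soon as the exchanged lengths $\lbd_\alpha$ span a $\Q$-vector space of dimension at most $2$; this is the setting of interest, since the lengths of the linear involutions coming from our eigenforms lie in a real quadratic field and hence span at most two dimensions over $\Q$. Granting a connection, $T$ fails the Keane property, so Proposition~\ref{prop:BL} leaves exactly two cases to treat.

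In the first case the Rauzy induction $\mathcal R^{(n)}(T)$ stops being defined at some finite stage, i.e. for $T':=\mathcal R^{(n_0)}(T)$ the two rightmost intervals have equal length. Then $\mathcal R_{sing}$ applies and produces a linear involution $T'':=\mathcal R_{sing}(T')$ on $3$ intervals. By Proposition~\ref{prop:rauzy:cp} the SAF-invariant is preserved along this reduction, so $SAF(T'')=SAF(T)=0$, and Lemma~\ref{lm:d=3} makes $T''$ completely periodic. Propagating complete periodicity backwards through Proposition~\ref{prop:rauzy:cp} shows that $T$ is completely periodic.

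In the second case some iterate is decomposed, say $\mathcal R^{(n_0)}(T)=T_{1}\comp T_{2}$, and additivity~(\ref{eq:saf}) gives $SAF(T_{1})+SAF(T_{2})=0$. The two blocks have sizes $(1,3)$ or $(2,2)$. If one block has size $1$ it is a single flip, with vanishing SAF, so the complementary block has size $3$ and $SAF=0$, and Lemma~\ref{lm:d=3} finishes. If both blocks have size $2$, I would invoke Proposition~\ref{prop:saf:n=2}: the only $2$-interval involution that need not be completely periodic is the rotation $\left(\begin{smallmatrix} X & Y \\ Y & X\end{smallmatrix}\right)$, the other two combinatorial types being automatically periodic with vanishing SAF. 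Hence, unless both blocks are rotations, at least one block already contributes $SAF=0$, which forces the SAF of the remaining block to vanish as well, and Proposition~\ref{prop:saf:n=2} concludes. The single surviving configuration is therefore that $\mathcal R^{(n_0)}(T)$ is a pair of rotations, i.e. (up to relabelling) the excluded permutation $\left(\begin{smallmatrix} A& B& C& D \\B& A &D& C \end{smallmatrix}\right)$.

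The main obstacle is to rule this last configuration out under the hypothesis $\pi\neq\left(\begin{smallmatrix} A& B& C& D \\B& A &D& C \end{smallmatrix}\right)$. For this I would show that the property \emph{``the generalized permutation splits into two $2$-letter rotation blocks''} is invariant under the (invertible) regular Rauzy moves: the two rightmost letters always lie in the right rotation block, so a Rauzy step acts only on that block and keeps it a $2$-letter rotation while leaving the left block untouched. Consequently $\pi$ is a pair of rotations if and only if $\mathcal R^{(n_0)}(T)$ is, so a non-excluded $\pi$ can never reach the obstruction and the decomposition analysis always terminates successfully. I expect this Rauzy-invariance bookkeeping, together with checking on the finitely many irreducible generalized permutations on four letters that the exceptional permutation indeed exhausts the obstructions, to be the only genuinely delicate point.
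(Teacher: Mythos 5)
The decisive gap is at your very first step. You invoke Proposition~\ref{prop:connection} to produce a connection, but you discharge its rank hypothesis by appeal to ``the setting of interest'' (lengths lying in a real quadratic field, hence spanning at most two dimensions over $\Q$). The lemma carries no such hypothesis: it concerns an arbitrary linear involution on four intervals with $SAF(T)=0$, and it is later applied in exactly that generality --- in case~(a) of the proof of Lemma~\ref{lm:d=5} and in case~(b) of the proof of Theorem~\ref{thm:CAP:to:CP}, the four-letter involutions produced by $\mathcal R_{sing}$ or by a decomposition come with no a priori control on the $\Q$-span of their lengths. Accordingly, the paper's proof spends its entire first half \emph{deriving} the rank-two condition from $SAF(T)=0$: after reducing, up to Rauzy moves, to the two irreducible four-letter models --- the true permutation $\left(\begin{smallmatrix} A& B& C& D\\ D& C& B& A\end{smallmatrix}\right)$, and, for genuinely generalized permutations, $\left(\begin{smallmatrix} A& A& B& C\\ C& B& D& D\end{smallmatrix}\right)$, pinned down by the suspension criterion (Theorem~\ref{thm:gen:perm:irrd}) plus the dimension count forcing the stratum $\QQQ(-1,-1,2)$ --- one writes the translation lengths $t_\alpha$ in terms of the $\lbd_\alpha$ and extracts from $SAF(T)=0$ the wedge identities $\lbd_A\wedge\lbd_B\wedge\lbd_C\wedge\lbd_D=0$ and $\lbd_A\wedge\lbd_B\wedge(\lbd_C+\lbd_D)=0$, which show that the lengths have linear rank two over $\Q$; only then does Proposition~\ref{prop:connection} apply. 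As written, your argument proves a strictly weaker statement (the lemma with an added rank hypothesis), which would not suffice for the downstream uses of Lemma~\ref{lm:d=4}.

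Everything after that point does track the paper: the dichotomy of Proposition~\ref{prop:BL}, the drop to three letters via $\mathcal R_{sing}$ and Lemma~\ref{lm:d=3} with $SAF$ preserved by Proposition~\ref{prop:rauzy:cp}, and the decomposition analysis via Proposition~\ref{prop:saf:n=2} isolating the pair of rotations $\left(\begin{smallmatrix} A& B& C& D\\ B& A& D& C\end{smallmatrix}\right)$ as the only obstruction. One small repair is still needed in your final bookkeeping: forward invariance of the ``two rotation blocks'' shape under Rauzy moves shows that the excluded permutation stays excluded, but what the argument requires is the \emph{backward} statement, that the only $\pi$ reaching $\pi^{(n_0)}=\left(\begin{smallmatrix} A& B& C& D\\ B& A& D& C\end{smallmatrix}\right)$ is that permutation itself. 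This follows because the excluded permutation is fixed by both combinatorial operations $\mathcal{R}_0$ and $\mathcal{R}_1$ and these are injective on (generalized) permutations, so its unique type-$0$ and type-$1$ predecessors are itself --- the same point the paper compresses into ``it is not hard to see that $\pi=\pi^{(n_0)}$''. That is a minor fix; the missing derivation of the rank-two condition from $SAF(T)=0$ is the substantive hole.
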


\begin{proof}
We first show that $T$ has a connection. We can assume that $\pi$ is irreducible and the 
Rauzy induction $\mathcal R^{(n)}(T)$ is well defined for any $n>0$
(otherwise $T$ would have a connection and we are done). 

If $\pi$ is a true permutation then, up to  Rauzy inductions, 
$\pi=\left(\begin{smallmatrix} A& B&C&D \\D&C&B & A \end{smallmatrix}\right)$.
For any $\alpha \in \Alp$, one can compute the translation lengths $t_\alpha$  in terms 
of the lengths of the subintervals, namely 
$$
(t_A,t_B,t_C,t_D) = (\lbd_B + \lbd_C + \lbd_D,\lbd_C + \lbd_D - \lbd_A,
\lbd_D - \lbd_A - \lbd_B,-\lbd_A - \lbd_B - \lbd_C).
$$
It follows
$$
0 = SAF(T) = \sum_{\alpha\in \Alp}\lbd_\alpha\wedge_\Q t_\alpha = \lbd_A \wedge (\lbd_B+\lbd_C+\lbd_D)
+ \lbd_B \wedge (\lbd_C+\lbd_D) + \lbd_C\wedge \lbd_D.
$$
From the relation $-\lbd_C\wedge \lbd_D = \lbd_A \wedge (\lbd_B+\lbd_C+\lbd_D) + \lbd_B \wedge (\lbd_C+\lbd_D)$
we draw the new one
\begin{equation}
\label{eq:1:wedge}
\lbd_A \wedge \lbd_B \wedge \lbd_C \wedge\lbd_D=0.
\end{equation}
Similarly, from $-\lbd_A \wedge \lbd_B = \lbd_A \wedge(\lbd_C+\lbd_D) + \lbd_B \wedge (\lbd_C+\lbd_D) + 
\lbd_C\wedge \lbd_D$, we get:
\begin{equation}
\label{eq:2:wedge}
\lbd_A\wedge \lbd_B \wedge (\lbd_C+\lbd_D)= 0.
\end{equation}
Hence Equations~\eqref{eq:1:wedge} and~\eqref{eq:2:wedge} show that the lengths of  the exchanged 
intervals have linear rank $2$ over $\Q$. By Proposition~\ref{prop:connection}, the linear involution
$T$ thus has a connection. \medskip

Now, if $\pi$ is a generalized permutation, since it is irreducible, there is a 
suspension $(Y,q)$ belonging to some stratum of quadratic differentials and inducing 
$\pi$. Since the number of intervals is $4$, the dimension of the stratum is $3$. Hence
the only possibility is $\QQQ(-1,-1,2)$ and, up to the Rauzy induction, the permutation is
$\pi=\left(\begin{smallmatrix} A& A&B&C \\C&B&D&D \end{smallmatrix}\right)$.
The same computation shows that the lengths of the exchanged intervals have linear rank 
$2$ over $\Q$ so that Proposition~\ref{prop:connection} applies and $T$ has a connection. \medskip

We now repeat the same strategy as the proof of the previous lemma. From Proposition~\ref{prop:BL} only two possibilities 
(a) and~(b) can occur (see the proof of Lemma~\ref{lm:d=3}). In case~(a), the Rauzy induction is not well defined and we can reduce the 
problem to some $T'=\mathcal R^{(n_{0})}(T)$ defined over $3$ letters. We then 
conclude using Lemma~\ref{lm:d=3}. 

\noindent Thus let us assume that there exists $n_{0}>0$ such that $\mathcal R^{(n_{0})}(T)$
breaks into two linear involutions $T_{1}$ and $T_{2}$ with $SAF(T_{1}) + SAF(T_{2})=0$. 
Again if $T_{1}$ or $T_{2}$ is defined over only $1$ interval then we are done 
(by the same argument as above). So assume that $T_{1}$ and $T_{2}$ are defined over $2$ intervals. 
If $SAF(T_{1})=0$ then we are done by Proposition~\ref{prop:saf:n=2}. Hence we will assume
that $SAF(T_{1}) = -SAF(T_{2}) \neq 0$ and we will get a contradiction. 
This can be achieved only if the permutation associated to $T_{1}$ has the form 
$\left(\begin{smallmatrix} A& B \\B & A \end{smallmatrix}\right)$.
The same is true for $T_{2}$: the permutation is $\left(\begin{smallmatrix} C& D \\D & C \end{smallmatrix}\right)$.
Hence $\pi^{(n_{0})} = \left(\begin{smallmatrix} A& B& C& D \\B& A &D& C   \\ \end{smallmatrix}\right)$.
From this observation, it is not hard to see that $\pi = \pi^{(n_{0})}$, that is the desired contradiction.
The lemma is proven.
\end{proof}

For the case $d=5$, again new pathological cases appear as shown in the next lemma:

\begin{Lemma}[$d=5$]
\label{lm:d=5}
Let $T=(\pi,\lbd)$ be a linear involution defined over $5$ intervals.
We will assume that $\pi$ does not belong to one of the following sets 
(up to permutation of the letters of $\mathcal A$):
$$
\begin{array}{l}
\mathcal E_{1}=\left\{  
\left(\begin{smallmatrix} A& B& C& D & \alpha \\ \alpha & A &D & C & B   \\ \end{smallmatrix}\right),
\left(\begin{smallmatrix} A& B& C& D & \alpha \\ B& \alpha &D& C & A      \\ \end{smallmatrix}\right)
\right\}, \qquad
\mathcal E_{2}=\left\{
\left(\begin{smallmatrix} \alpha & B& C& D & \alpha \\ B& A & D & C & A       \\ \end{smallmatrix}\right),
\left(\begin{smallmatrix} A & \alpha & C& D & \alpha \\ B& A & D & C & B       \\ \end{smallmatrix}\right)
\right\}, \\
\mathcal E_{3}=\left\{
\left(\begin{smallmatrix}
\begin{smallmatrix}A\ B \\  B\ A  \end{smallmatrix} &\ \pi'
\end{smallmatrix}\right),
\left(\begin{smallmatrix}
\ \pi' & \begin{smallmatrix}A\ B \\  B\ A  \end{smallmatrix}
\end{smallmatrix}\right), \textrm{ $\pi'$ permutation defined over $3$ letters}
\right\}.
\end{array}
$$
If $SAF(T)=0$, and $T$ has a connection, then $T$ is completely periodic.
\end{Lemma}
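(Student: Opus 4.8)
The plan is to carry the induction one step further, exactly as in Lemmas~\ref{lm:d=3} and~\ref{lm:d=4}: starting from the $5$-interval involution $T$, I peel off one interval and invoke the cases already settled. Note that here I cannot produce the connection from $SAF(T)=0$ via Proposition~\ref{prop:connection} (for $5$ letters the lengths need not span only a $2$-dimensional $\Q$-vector space), which is precisely why the connection is taken as a hypothesis. Feeding $T$ into Proposition~\ref{prop:BL} then leaves two cases: either (a) the Rauzy induction $\mathcal R^{(n)}(T)$ ceases to be well defined at some first step $n_0$ (the two rightmost intervals of $\mathcal R^{(n_0)}(T)$ have equal length), or (b) $\mathcal R^{(n)}(T)$ is well defined for all $n$ and some iterate $\mathcal R^{(n_0)}(T)$ is decomposed.

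In case (a) I pass to $T':=\mathcal R_{sing}(\mathcal R^{(n_0)}(T))$, a linear involution on $4$ letters; by Lemma~\ref{lm:rauzy:sing} its permutation $\pi'$ is again a legitimate (generalized) permutation, and by Proposition~\ref{prop:rauzy:cp} one has $SAF(T')=SAF(T)=0$ with $T$ completely periodic if and only if $T'$ is. Lemma~\ref{lm:d=4} then finishes the argument unless $\pi'$ is, up to relabeling, the forbidden permutation $\left(\begin{smallmatrix} A& B& C& D \\ B& A &D& C \end{smallmatrix}\right)$. In that single exceptional case $\pi^{(n_0)}$ must lie in $\mathcal R^{-1}_{sing}\left(\begin{smallmatrix} A& B& C& D \\ B& A &D& C \end{smallmatrix}\right)$, i.e. in the explicit list of ten permutations computed in Example~\ref{ex:rauzy:inverse}. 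The key combinatorial observation is that every one of these ten lies, up to relabeling, in $\mathcal E_1\cup\mathcal E_2\cup\mathcal E_3$: the first two and the seventh and eighth are literally the elements of $\mathcal E_1$ and $\mathcal E_2$, while the remaining six are reducible with a closed $\left(\begin{smallmatrix}A&B\\ B&A\end{smallmatrix}\right)$-block at one end and hence belong to $\mathcal E_3$. Since $\pi$ and $\pi^{(n_0)}$ lie in the same Rauzy class, this forces $\pi\in\mathcal E_1\cup\mathcal E_2\cup\mathcal E_3$, contradicting the hypothesis; so the exceptional case never occurs and $T$ is completely periodic.

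In case (b) I split $\mathcal R^{(n_0)}(T)$ maximally into indecomposable pieces $S_1\comp\cdots\comp S_k$ of sizes $n_1,\dots,n_k$ with $\sum n_i=5$, using additivity of the SAF-invariant (Equation~\eqref{eq:saf}) to get $\sum_i SAF(S_i)=0$. If every $SAF(S_i)=0$, each piece is completely periodic: by Proposition~\ref{prop:saf:n=2} when $n_i\le 2$, by Lemma~\ref{lm:d=3} when $n_i=3$, and by Lemma~\ref{lm:d=4} when $n_i=4$ (an indecomposable piece is never the forbidden $4$-letter permutation, since that permutation splits into two rotations); hence $T$ is completely periodic. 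If instead some $SAF(S_i)\neq0$, I examine the partitions of $5$: among indecomposable pieces only a $2$-letter rotation or a $3$-letter piece can carry nonzero SAF, and since a $4$-letter piece would force its $1$-letter complement and thus $SAF=0$, the only possibilities are $\{2,2,1\}$ and $\{3,2\}$. In both a rotation sits at one end of the decomposition and the complementary three letters form a single permutation block, so $\pi^{(n_0)}\in\mathcal E_3$; again by Rauzy-invariance $\pi\in\mathcal E_3$, a contradiction.

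The dynamical input (Propositions~\ref{prop:BL}, \ref{prop:rauzy:cp}, \ref{prop:saf:n=2} and Lemmas~\ref{lm:d=3}, \ref{lm:d=4}) is already available, so the real work---and the main obstacle---is the combinatorial bookkeeping that makes the three exceptional families fit together perfectly: one must check that $\mathcal E_1\cup\mathcal E_2\cup\mathcal E_3$ is invariant under Rauzy induction (so that a pathology discovered at step $n_0$ can be transported back to the original $\pi$), verify the corner conditions of Definition~\ref{def:irred} to see that six of the ten preimages in Example~\ref{ex:rauzy:inverse} are reducible of the $\mathcal E_3$ type, and confirm that no further exceptional permutation escapes this list. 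Once this verification is in place, cases (a) and (b) close the induction.
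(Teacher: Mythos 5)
Your proposal follows the paper's proof almost line for line: the same appeal to Proposition~\ref{prop:BL} to reduce, after Rauzy iteration, to the dichotomy (a) singular induction versus (b) decomposition; in case (a) the same reduction via $\mathcal R_{sing}$ to Lemma~\ref{lm:d=4} and the same use of the ten preimages listed in Example~\ref{ex:rauzy:inverse}, whose classification you state correctly (two in $\mathcal E_1$, two in $\mathcal E_2$, the remaining six carrying a closed $\left(\begin{smallmatrix}A& B\\ B& A\end{smallmatrix}\right)$-block and hence lying in $\mathcal E_3$); in case (b) the same SAF-additivity bookkeeping over the partitions of $5$ (your maximal splitting into indecomposable pieces is a harmless, slightly cleaner variant of the paper's two-piece analysis, and your observation that an indecomposable $4$-letter piece is never the forbidden permutation is correct).

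The genuine gap is the transport step you add to close both cases: ``since $\pi$ and $\pi^{(n_0)}$ lie in the same Rauzy class, this forces $\pi\in\mathcal E_1\cup\mathcal E_2\cup\mathcal E_3$.'' Membership in a Rauzy class transfers nothing of the sort (a Rauzy class is enormous, the $\mathcal E_i$ are finite lists), and the invariance you defer to ``combinatorial bookkeeping'' is in fact \emph{false}: applying the type-$0$ move (the convention of Example~\ref{ex:gp}: remove the last bottom letter and reinsert it after the other occurrence of the last top letter) to the symmetric permutation $\left(\begin{smallmatrix}A& B& C& D& \alpha\\ \alpha& D& C& B& A\end{smallmatrix}\right)$, which is irreducible and belongs to none of the $\mathcal E_i$ up to relabeling, yields exactly $\left(\begin{smallmatrix}A& B& C& D& \alpha\\ \alpha& A& D& C& B\end{smallmatrix}\right)\in\mathcal E_1$. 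So exceptionality can be \emph{created} along a Rauzy path, and it cannot be transported back to the original $\pi$. Worse, this is not repairable by more careful bookkeeping: with $\lbd_A=\lbd_D=1$, $\lbd_B=\lbd_C=\sqrt{2}$, $\lbd_\alpha=1+\sqrt{2}$ the symmetric example has $SAF=0$, the induction performs one type-$0$ step into $\mathcal E_1$ and then halts (so a connection is present), and $\mathcal R_{sing}$ lands on $\left(\begin{smallmatrix}A& B& C& D\\ B& A& D& C\end{smallmatrix}\right)$ split into two irrational rotations with opposite SAF. This is why the paper does not attempt your transport: its reduction ``up to replacing $T$ by $\mathcal R^{(n)}(T)$'' applies the non-membership hypothesis to the permutation at the moment of degeneration, and in every downstream use of the lemma (Theorem~\ref{thm:CAP:to:CP}, Theorem~\ref{theo:d=6}) that condition is verified directly on the induced permutation, using the geometric input (generalized, irreducible) available there. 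The analogous transport in Lemma~\ref{lm:d=4} was legitimate only because of the special fact proved there that $\left(\begin{smallmatrix}A& B& C& D\\ B& A& D& C\end{smallmatrix}\right)$ is fixed by both Rauzy moves and is its own unique preimage, forcing $\pi=\pi^{(n_0)}$; no analogue holds for the five-letter exceptional families, as the example above shows.
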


\begin{proof}[Proof of Lemma~\ref{lm:d=5}]
Imitating the proofs of the two preceding lemmas, up to replacing $T$ by $\mathcal R^{(n)}(T)$
for some suitable $n$, we can assume that either
\begin{itemize}
\item[(a)] the Rauzy induction is not well defined for $T$, or
\item[(b)] $T$ is decomposed. 
\end{itemize}
In case~(a), we get new linear involution $T'=(\pi',\lbd')=\mathcal R_{sing}(T)$
defined over $4$ intervals with vanishing SAF-invariant. If 
$\pi' \not = \left(\begin{smallmatrix} A& B& C& D \\B& A &D& C   \\ \end{smallmatrix}\right)$
then we are done by preceding Lemma~\ref{lm:d=4}. Hence let us assume that
$\mathcal R_{sing}(\pi) = \left(\begin{smallmatrix} A& B& C& D \\B& A &D& C   
\\ \end{smallmatrix}\right)$, and we will get a contradiction. But by Example~\ref{ex:rauzy:inverse}
permutations in $\mathcal R^{-1}_{sing} \left(\begin{smallmatrix} A& B& C& D \\B& A &D& C   
\\ \end{smallmatrix}\right)$ are exactly those lying in 
$\mathcal E_{1} \cup \mathcal E_{2} \cup \mathcal E_{3}$. \medskip

\noindent  In case~(b),  $T=T_{1}\comp T_{2}$  is decomposed  into two
linear  involutions, where $T_i$ is defined  over $d_i$  intervals with $d_1+d_2=5$,
and satisfy $SAF(T_{1}) + SAF(T_{2})=0$. There are two possible 
partitions for $\{d_1,d_2\}$, namely $\{1,4\}$ or $\{2,3\}$. In the first situation
$T_1$ (resp. $T_2$) is completely periodic as any linear involution 
defined over only $1$ interval is periodic. Hence $SAF(T_2)=0$ (resp. 
$SAF(T_1)=0$). One wants to use Lemma~\ref{lm:d=4}. For that we need 
to avoid the case $\pi_i = \left(\begin{smallmatrix} A& B& C& D \\B& A &D& C   \\ \end{smallmatrix}\right)$.
But in the latter situation one would have
$$
\pi = \left(\begin{smallmatrix}
E & A & B & C & D \\
E & B & A & D & C
\end{smallmatrix}\right) \qquad \textrm{respectively,} \qquad
\pi = \left(\begin{smallmatrix}
A & B & C & D & E \\
B & A & D & C & E
\end{smallmatrix}\right).
$$
Those permutations belong to the set $\mathcal E_{3}$ and we are done.

\noindent For the case $\{d_1,d_2\}=\{2,3\}$. Assume that $d_1=2$. If $SAF(T_{1})=0$ then we are done. 
On the other hand $SAF(T_{1})\not =0$ implies $\pi_1=\left(\begin{smallmatrix} A
& B \\ B & A  \end{smallmatrix}\right)$, that is $\pi \in \mathcal E_{3}$. The same is true
if $T_{2}$ is the linear involution defined over $2$ letters. The lemma is proved.
\end{proof}

\section{Complete Algebraic Periodicity implies Complete Periodicity}
\label{sec:prf:CAP:CP}
We begin with the following simple lemma.

\begin{Lemma}
\label{lm:periodic:orbit}
Let $T=(\pi,\lbd)$ be a linear involution defined over $3$ letters. We assume 
that $\pi$ is a generalized permutation. If $T$ has a periodic orbit then $T$ is 
completely periodic.
\end{Lemma}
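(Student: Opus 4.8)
The plan is to follow the architecture of the proof of Lemma~\ref{lm:d=3}, but with two modifications: the hypothesis $SAF(T)=0$ is replaced by the weaker input that $T$ has a periodic orbit, and the role played there by $SAF(T)=0$ will instead be played by the assumption that $\pi$ is a genuine generalized permutation (not a true permutation). The first step is to extract a connection. A periodic orbit prevents $T$ from being minimal, so by the equivalence of minimality and the Keane property in Proposition~\ref{prop:BL}, the map $T$ fails the Keane property, that is, $T$ has a connection. We may also assume from the start that $\pi$ is irreducible: otherwise Definition~\ref{def:irred} exhibits $T$ as a juxtaposition of linear involutions on strictly fewer letters, each defined over at most two intervals, and the statement is then checked on each block directly.

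Next I would invoke the ``more precise'' part of Proposition~\ref{prop:BL}: since $T$ has a connection, after finitely many Rauzy steps one of two situations occurs, namely (a) the induction $\mathcal R^{(n_0)}(T)$ is not well defined for some $n_0$, or (b) some $\mathcal R^{(n_0)}(T)$ is decomposed. Throughout this reduction, Proposition~\ref{prop:rauzy:cp} ensures that complete periodicity is insensitive to Rauzy moves (so it suffices to establish it for the reduced object), while the generalized--permutation character is preserved both by $\mathcal R$ and by $\mathcal R_{sing}$ (Lemma~\ref{lm:rauzy:sing}), and the number of letters stays equal to $3$ under $\mathcal R$ and drops to $2$ under $\mathcal R_{sing}$.

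In case (a) I would apply $\mathcal R_{sing}$ to pass to a linear involution $T''$ defined over two letters. By Lemma~\ref{lm:rauzy:sing}, $T''$ is again a generalized permutation; since a two-letter linear involution with positive lengths must have type $(2,2)$ (equality of the total top and bottom lengths excludes the types $(1,3)$ and $(3,1)$), the only possibility up to relabelling is $\pi''=\left(\begin{smallmatrix}A&A\\ B&B\end{smallmatrix}\right)$, which is completely periodic irrespective of its lengths, exactly as in the proof of Proposition~\ref{prop:saf:n=2}. Proposition~\ref{prop:rauzy:cp} then transfers complete periodicity back to $T$. In case (b) we have $\mathcal R^{(n_0)}(T)=T_1\comp T_2$ with $d_1+d_2=3$, hence $\{d_1,d_2\}=\{1,2\}$; a one-letter block $\left(\begin{smallmatrix}C\\ C\end{smallmatrix}\right)$ is trivially completely periodic, and since the orbits of $T_1\comp T_2$ partition into those of $T_1$ and those of $T_2$, it remains to treat the two-letter block $T_2$.

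The step I expect to carry the whole argument is precisely the analysis of this two-letter block, which is also where the subtlety lies. The danger is that the reduced two-letter data lands on the rotation $\left(\begin{smallmatrix}A&B\\ B&A\end{smallmatrix}\right)$, which is completely periodic only when its $SAF$ vanishes and which we can no longer control, having dropped the hypothesis $SAF(T)=0$ used in Lemma~\ref{lm:d=3}. This is exactly where being a generalized permutation intervenes: the one-letter block $\left(\begin{smallmatrix}C\\ C\end{smallmatrix}\right)$ has its symbol appearing once on each line, so it cannot account for the repetition that distinguishes $\pi$ from a true permutation; that repetition must therefore lie in $T_2$, forcing $T_2=\left(\begin{smallmatrix}A&A\\ B&B\end{smallmatrix}\right)$, which is completely periodic. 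The rotation is a \emph{true} permutation and is thus excluded by the preservation of the generalized character under Rauzy moves, so it never appears. Combining the factors, $\mathcal R^{(n_0)}(T)$ and hence $T$ is completely periodic.
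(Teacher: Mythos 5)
Your case~(b) matches the paper's argument exactly (the one-letter block is forced to be $\left(\begin{smallmatrix} C \\ C \end{smallmatrix}\right)$, so the repeated letter must sit in the two-letter block, which is then $\left(\begin{smallmatrix} A\ A \\ B\ B \end{smallmatrix}\right)$ since the $(1,3)$-type blocks are excluded by positivity of lengths). But your case~(a) has a genuine gap, and it sits precisely at the step you flag as ``carrying the whole argument''. Your exclusion of the rotation rests on Lemma~\ref{lm:rauzy:sing}, whose hypothesis is \emph{irreducibility} of $\pi$, and your opening reduction to the irreducible case is false: reducibility in the sense of Definition~\ref{def:irred} is a combinatorial obstruction to the existence of a suspension (Theorem~\ref{thm:gen:perm:irrd}); it does \emph{not} exhibit $T$ as a juxtaposition $T_1\comp T_2$ — being decomposed is a condition on the lengths $\lbd$ as well, and plenty of reducible generalized permutations are not decomposed for any $\lbd$. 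Concretely, take $\pi=\left(\begin{smallmatrix} C & B & C \\ B & A & A \end{smallmatrix}\right)$: it is a generalized permutation on $3$ letters; it is reducible (equality of total top and bottom lengths forces $\zeta_C=\zeta_A$ for any suspension datum, and then the prefix inequalities $\mathrm{Im}(\zeta_C+\zeta_B)>0>\mathrm{Im}(\zeta_B+\zeta_A)$ contradict each other); it is not decomposed for any choice of $\lbd$ (no cut point matches the letter pairing); the relation $\lbd_C=\lbd_A$ is automatic, so the Rauzy induction is never defined and one must apply $\mathcal R_{sing}$ immediately — and $\mathcal R_{sing}(\pi)=\left(\begin{smallmatrix} A & B \\ B & A \end{smallmatrix}\right)$, a \emph{true} permutation. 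So the rotation does appear, the generalized character is not preserved by $\mathcal R_{sing}$ without irreducibility, and your claim ``it never appears'' fails exactly in the case your WLOG was supposed to dispose of.

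The repair is the move you explicitly declined to make, and it is what the paper's proof does: transport the \emph{periodic orbit}, not the generalized character, down to the two-letter stage. Every orbit of $T$ enters the inducing subinterval (points of the removed piece leave it in one step), so the reduced linear involution over $2$ intervals still has a periodic orbit; and a two-letter linear involution with a periodic orbit is completely periodic in all three combinatorial cases — including the rotation, since a rotation with a periodic orbit is rational — after which Proposition~\ref{prop:rauzy:cp} carries complete periodicity back to $T$. (In the example above this is exactly what happens: whenever that $T$ has a periodic orbit, the induced rotation is rational.) With this substitution your case~(a) goes through with no irreducibility assumption at all, and the resulting proof coincides with the paper's; your case~(b) can stand as written, granting the (true, and easily checked on the Rauzy rules) fact that the plain induction $\mathcal R$ preserves the generalized character, which the paper also uses implicitly.
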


We postpone the proof of the lemma to the end of this section and show Theorem~\ref{thm:CAP:to:CP}.

\begin{proof}[Proof of Theorem~\ref{thm:CAP:to:CP}]
Let $\theta$ be a direction of a cylinder in $X$. The core curve of this cylinder represents an element of $H_1(X,\Z)$,  hence by assumption, the SAF-invariant of the foliation $\mathcal{F}_\theta$ vanishes. 
As usual one assumes that $\theta$ is the vertical direction. We want to show that the flow in the vertical 
direction is periodic. Let $T=(\pi,\lbd)$ be the linear involution given by the cross section of the vertical foliation
to some full transversal interval. By assumption $T$ is defined over $6$ intervals and has a periodic orbit.
Moreover $\pi$ is an {\em irreducible} generalized permutation. \medskip

Obviously proving complete periodicity for the vertical foliation or for $T$ is the same.
Since $T$ has a periodic orbit Proposition~\ref{prop:BL} implies that only two cases can occur 
(up to replacing $T$ by $\mathcal R^{(n)}(T)$ for some suitable $n$), either
\begin{itemize}
\item[(a)] the Rauzy induction $\mathcal R(T)$ is not well defined, or
\item[(b)] $\mathcal R(T)$ is decomposed.
\end{itemize}
\noindent {\bf Case (a).} Since the Rauzy induction is not well defined $T'=\mathcal R_{sing}(T)=(\pi',\lbd')$
is a linear involution defined over $5$ letters, with a periodic orbit and vanishing SAF-invariant. 
In view of applying Lemma~\ref{lm:d=5},
if $\pi'\not \in \mathcal E_{i}$ for all $i$ then $T'$ is completely periodic and we are done. 
Hence let us assume $\pi' \in \mathcal E_{1}\sqcup\mathcal E_{2}\sqcup\mathcal E_{3}$.
Recall that $\pi'$ is a generalized permutation (by Lemma~\ref{lm:rauzy:sing}) thus $\pi'\not \in \mathcal E_1$.
If $\pi'\in \mathcal E_2$ then 
$\pi' = \left(\begin{smallmatrix} \alpha & B& C& D & \alpha \\ B& A & D & C & A  \end{smallmatrix}\right)$. 
Observe that $\lbd'_\alpha = \lbd'_A$. So that applying once more the singular Rauzy induction we get:
$$
\mathcal R_{sing}(T') =T''= (\pi'',\lbd''), \qquad \textrm{where} \qquad \pi'' = 
\left(\begin{smallmatrix}  A& B& C& D \\ B& A & D & C  \end{smallmatrix}\right).
$$
In particular $T''$ is decomposed. Since it has a periodic orbit, we see that it is completely 
periodic.

Now if $\pi'\in \mathcal E_3$ then $T'$ splits into two linear involutions $T'_1$ and $T'_2$ where 
(up to permuting the indices) $\pi'_1 = \left( \begin{smallmatrix}A& B \\  B& A  \end{smallmatrix}\right)$
and $\pi'_2$ is a {\it generalized} permutation defined over $3$ letters. Since $T$ has a periodic 
orbit, it follows that one of the $T_i$ has a periodic orbit.  We then conclude by using Lemma~\ref{lm:periodic:orbit}. \medskip

\noindent {\bf Case (b).} In this situation, $T=(\pi,\lbd)=T_{1}\comp T_{2}$  is decomposed  into two
linear  involutions, each defined  over $d_i$  intervals with $d_1+d_2=6$,
with opposite $SAF$. There are three possible (unordered) partitions for $\{d_1,d_2\}$, namely 
$\{1,5\}$, $\{2,4\}$ or $\{3,3\}$. In the first situation $\pi$ is reducible that is a contradiction.
In the second situation since $\pi$ is irreducible, we necessarily have 
$\pi_1 = \left(\begin{smallmatrix} A\ A \\  B\ B  \end{smallmatrix} \right)$. Hence $T_1$ is completely 
periodic, and $SAF(T_2)=0$. We conclude with Lemma~\ref{lm:d=4} ($\pi_2$ is not 
$\left(\begin{smallmatrix} C& D& E& F \\D& C &F& E   \\ \end{smallmatrix}\right)$ otherwise $\pi$
would be reducible). In the last case $T_1$ or $T_2$ has a closed orbit, say $T_1$. Again, 
by the irreducibility of $\pi$, the two permutations $\pi_1$ and $\pi_2$ are generalized permutations.
Then Lemma~\ref{lm:periodic:orbit} implies that $T_1$ is completely periodic. Hence $SAF(T_2)=0$
and we conclude by Lemma~\ref{lm:d=3}.

Thus the vertical flow on $(X,\omega)$ is periodic, and Theorem~\ref{thm:CAP:to:CP} is proven.
\end{proof}

\begin{proof}[Proof of Lemma~\ref{lm:periodic:orbit}]
We very briefly describe the argument (based on the same strategy as above). Since $T$ is not 
minimal, up to replacing $T$ by some of its iterates under the Rauzy induction, 
either $\mathcal R(T)$ is not well defined, or $T$ is decomposed. In the 
first case the problem reduces to some $T'$ (defined over $2$ intervals) with a periodic orbit,
hence we are done. In the latter case $T$ decomposes as two linear involutions $T_i$. 
Since $\pi$ is a generalized permutation by assumption, one of the permutations $\pi_i$ is of the form 
$\left(\begin{smallmatrix} A\ A \\  B\ B \end{smallmatrix} \right)$. Hence the corresponding $T_i$
is completely periodic and we are done.
\end{proof}

\section{Complete algebraic periodicity implies real multiplication}
\label{sec:prf:CAP:to:eigen}

The aim of this section is to prove the converse of Theorem~\ref{thm:Alg:Periodic}, namely 
Theorem~\ref{thm:CAP:eigen}. Our proof is based on the following theorems.

\begin{Theorem}[McMullen, \cite{Mc6} Theorem~3.5]
\label{theo:hyp:Prym:eig}
Let $(X,\omega)$  be a Prym form with $\dim_\C{\rm Prym}(X)=2$, and assume here is a hyperbolic element $A$ in $\SL(X,\omega)$, where $\SL(X,\omega)$ denotes the Veech group of $(X,\omega)$. Then  $(X,\omega)$ is a Prym eigenform in $\Omega E_D$, for some discriminant $D$ satisfying $\Q(\sqrt{D})=\Q({\rm Tr}(A))$.
\end{Theorem}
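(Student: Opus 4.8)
The plan is to manufacture the real multiplication directly from the affine automorphism $\phi$ underlying $A$, by building a self-adjoint integral endomorphism of the anti-invariant cohomology that acts as a scalar on the plane spanned by $\omega$. The first step is to check that $\phi$ normalizes the Prym structure, i.e. that it commutes with the Prym involution $\inv$. Since $\inv^*\omega=-\omega$, the involution $\inv$ is affine with derivative $-\Id$; hence $\phi\inv\phi^{-1}$ is again an affine involution, with derivative $A(-\Id)A^{-1}=-\Id$. Using that the Prym involution is the unique affine involution with derivative $-\Id$ on $(X,\omega)$, one concludes $\phi\inv\phi^{-1}=\inv$. Consequently $\phi^*$ preserves the integral lattice $H^1(X,\Z)^-$, which is symplectic of rank $4$ because $\dim_\C\Prym(X,\inv)=2$, and $\phi^*$ preserves the cup product.

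Next I would record the action on the Hodge plane. Set $S=\R\cdot[\mathrm{Re}(\omega)]\oplus\R\cdot[\mathrm{Im}(\omega)]\subset H^1(X,\R)^-$; since $[\mathrm{Re}(\omega)]\cup[\mathrm{Im}(\omega)]=\mathrm{Area}(X)\neq 0$, the plane $S$ is symplectic. Because $\phi$ is affine with derivative $A$, the map $\phi^*$ leaves $S$ invariant and acts there, under the usual identification of $S$ with $\R^2$, as $A$, whose eigenvalues are $\lambda^{\pm1}$ with $\lambda>1$ the dilatation of $\phi$. The central construction is then the endomorphism $T:=\phi^*+(\phi^*)^{-1}\in\mathrm{End}(H^1(X,\Z)^-)$. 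As $\phi^*$ is symplectic, $(\phi^*)^{-1}$ is its adjoint for the cup product, so $T$ is self-adjoint, and it is plainly integral. Applying Cayley--Hamilton to $A$ (which has determinant $1$) gives $A+A^{-1}=\mathrm{Tr}(A)\,\Id$, so $T$ acts on $S$ as the scalar $\mathrm{Tr}(A)$; in particular $\omega$ is an eigenvector of $T$.

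Finally I would identify the ring $\Z[T]$. The eigenvalues of $\phi^*$ fall into the reciprocal pairs $\{\lambda,\lambda^{-1}\}$ on $S$ and $\{\mu,\mu^{-1}\}$ on the symplectically orthogonal, $\phi^*$-invariant complement $S'=S^{\perp}$. Hence $T$ has the two eigenvalues $\mathrm{Tr}(A)=\lambda+\lambda^{-1}$ and $\mu+\mu^{-1}$, each of multiplicity two. Since $T$ is an integer operator, its minimal polynomial is monic with integer coefficients; it is the quadratic $(t-(\lambda+\lambda^{-1}))(t-(\mu+\mu^{-1}))$, so $\mu+\mu^{-1}$ is the Galois conjugate of $\mathrm{Tr}(A)$ and $\Z[T]$ is an order in $\Q(\mathrm{Tr}(A))$. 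Passing, if necessary, to the proper (saturated) order $\Ord_D$ containing $\Z[T]$ in this field, one obtains a self-adjoint, proper embedding $\Ord_D\hookrightarrow\mathrm{End}(\Prym(X,\inv))$ for which $\omega$ is an eigenform, i.e. $(X,\omega)\in\PrD$, and by construction $\Q(\sqrt{D})=\Q(\mathrm{Tr}(A))$.

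The main obstacle I expect is verifying that this embedding is genuine real multiplication in the sense of the Definition, rather than a degenerate endomorphism. Concretely, one must confirm that $S'$ really is $T$-invariant and symplectically orthogonal to $S$, that the order $\Ord_D$ can be taken proper while keeping $\omega$ an eigenform, and that the two eigenvalues $\mathrm{Tr}(A)$ and $\mu+\mu^{-1}$ behave as expected (both real, with the square-discriminant case $\Q(\mathrm{Tr}(A))=\Q$, corresponding to torus covers, handled on its own). The commutation $\phi\inv\phi^{-1}=\inv$, which rests on the uniqueness of the Prym involution in each of the loci of Table~\ref{tab:strata:list}, is the other point that must be justified carefully.
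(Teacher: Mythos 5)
Your proposal is correct and is essentially the paper's own proof: the same key endomorphism $T=\phi_*+\phi_*^{-1}$ of $H_1(X,\Z)^-$ (you work dually on $H^1$), self-adjoint because $\phi_*$ is symplectic, acting as $\mathrm{Tr}(A)\,\mathrm{Id}$ on the plane of $\omega$ by Cayley--Hamilton, with $\Z[T]$ generating a quadratic order in $\Q(\mathrm{Tr}(A))$ that gives the real multiplication for which $\omega$ is an eigenform. The one divergence is the commutation $\phi\inv\phi^{-1}=\inv$: where you invoke uniqueness of the Prym involution (which is delicate precisely in the way you flag, since it can fail when $X$ carries nontrivial translation automorphisms), the paper instead replaces $\phi$ by a suitable power --- legitimate because the affine automorphisms with derivative $\pm\mathrm{Id}$ form a finite group, so conjugation by $\phi$ permutes the finitely many Prym involutions, and $\Q(\mathrm{Tr}(A^n))=\Q(\mathrm{Tr}(A))$ by invariance of the trace field --- a device you may prefer to adopt to close that step.
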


\begin{proof}[Sketch of proof]
Let  $\phi   :  X  \rightarrow  X$   be  a pseudo-Anosov affine with respect to  the flat metric given by $\omega
\in \Omega(X,\inv)^-$ ({\em e.g.} given by Thurston's construction~\cite{Thurston1988}). 
By replacing $\phi$ by  one of its powers if necessary, we can assume that $\phi$ commutes with $\inv$. It follows that $\phi$ induces an isomorphism of $H_1(X,\Z)^-$ preserving the intersection form. Therefore

$$ T = \phi_{\ast}  + \phi_{\ast}^{-1} : H_1(X,\Z)^{-} \longrightarrow
H_1(X,\Z)^{-},$$

\noindent is a self-adjoint endomorphism of $\Prym(X,\inv)$.  Observe  that $T$ preserves the complex line
$S$   in    $(\Omega(X,\inv)^{-})^*$   spanned   by    the   dual   of
$\rm{Re}(\omega)$ and $\rm{Im}(\omega)$, and the restriction of $T$ to
this vector space is $\rm{Tr}(D\phi)\cdot \textrm{id}_{S}$. Since $\dim_\C
\Omega(X,\inv)^{-} = 2$, one has $\dim_\C S^{\perp} = 1$. But $T$ preserves  
the  splitting $(\Omega(X,\inv)^{-})^*  = S\oplus S^{\perp}$,
 and acts by real scalar multiplication on each line, hence  $T$ is  $\C$-linear,  {\em i.e.}  $T\in \mathrm{End}(\Prym(X,\inv))$.  This
equips $\Prym(X,\inv)$  with the real multiplication  by $\Z[T] \simeq
\mathcal O_{D}$ for a convenient discriminant $D$. Since $T^\ast\omega
= \rm{Tr}(D\phi)\omega$, the form $\omega$ becomes an eigenform for this
real multiplication. Observe that $\Q(\sqrt{D}) = \Q(\lbd+\lbd^{-1})$ where 
$\lbd$ being the expanding factor of the map $\phi$. Note that the fact $T\not\in \Z\mathrm{Id}$ follows from basic results in the theory of pseudo-Anosov homeomorphisms.
\end{proof}


\begin{Theorem}[Calta~\cite{Calta04}]
\label{theo:Cal:eq}
Fix a real quadratic field $K\subset \R$. Let $(X,\omega)$ be a completely algebraically periodic translation surface such that all the periods (both relative and absolute) of $\omega$ belong to $K(\imath)$.  Suppose that $(X,\omega)$ cannot be normalized  by $\GL^+(2,K)$ such that all the absolute periods of $\omega$ belong to $\Q(\imath)$.
Then if $(X,\omega)$ admits a decomposition into $k$ cylinders in the horizontal direction, then the following equality holds
\begin{equation}\label{eq:Calta1}
\sum_{i=1}^k w_ih'_i=0
\end{equation}

\noindent where $w_i,h_i$ are respectively the width and the height of the $i$-th cylinder, and $h'_i$ is the Galois conjugate of $h_i$ in $K$.
\end{Theorem}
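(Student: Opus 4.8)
The plan is to turn the statement into a single polynomial identity in the slope, obtained by evaluating the $SAF$-invariant in every homological direction through the $J$-invariant. Recall that the $SAF$-invariant in the direction of slope $s$ is $J_{xx}(g_s\cdot(X,\omega))$, where $g_s\in\GL^+(2,\R)$ sends the line of slope $s$ to the vertical. Since $J_{xx}$ of $g_s\cdot(X,\omega)$ only involves the new $x$-coordinate, and the new $x$-coordinate of a vector $(x,y)$ is $\ell_s(x,y)=-sx+y$ (the first row of $g_s$), the quantity $\mathrm{SAF}_s$ depends only on $s$ and may be computed on \emph{any} cellular decomposition. First I would cut $(X,\omega)$ along the horizontal cylinder decomposition: each cylinder $C_i$ gives a parallelogram $\mathbf{P}_i$ with sides $u_i=(w_i,0)$ (core) and $v_i=(t_i,h_i)$ (crossing curve, $t_i$ the twist), so that $J(\mathbf{P}_i)=2\,u_i\wedge v_i$ and, up to the harmless factor $2$,
\begin{equation*}
\mathrm{SAF}_s=\sum_i \ell_s(u_i)\wedge_\Q \ell_s(v_i)=\sum_i(-s\,w_i)\wedge_\Q(h_i-s\,t_i).
\end{equation*}

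Next I would use that all periods lie in $K(\imath)$, so every coordinate above lies in $K=\Q(\sqrt f)$. Writing $w_i=a_i+b_i\sqrt f$, $h_i=c_i+d_i\sqrt f$ and identifying $K\wedge_\Q K$ with $\Q$ via $p\wedge q\mapsto (p'q-pq')/(2\sqrt f)$, a direct expansion using $N(s)=ss'\in\Q$ gives, for $s=u+v\sqrt f$,
\begin{equation*}
\mathrm{SAF}_s=(u^2-f\,v^2)\,S_0+C_u\,u+C_v\,v,
\end{equation*}
where $S_0=\sum_i\big(w_i\wedge t_i\big)$ is the coefficient of the vertical $SAF$, $C_u=-\sum_i(a_id_i-b_ic_i)$ and $C_v=\sum_i(a_ic_i-b_id_i f)$. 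The point of the computation is the elementary identity
\begin{equation*}
\sum_i w_i h_i'=C_v+C_u\sqrt f,
\end{equation*}
so that the desired conclusion $\sum_i w_ih_i'=0$ is \emph{exactly} the vanishing of the coefficients $C_u$ and $C_v$ in the formula for $\mathrm{SAF}_s$.

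Finally I would invoke complete algebraic periodicity: $\mathrm{SAF}_s=0$ for every homological slope $s$. This is where the hypothesis that $(X,\omega)$ cannot be renormalized by $\GL^+(2,K)$ into $\Q(\imath)$ enters, as it forces the homological directions to fill $K\P^1$; were they confined to a single conic $u^2-fv^2=\alpha u+\beta v$, the real and imaginary parts of the periods would each span only a $\Q$-line of $K$ and the surface would renormalize into $\Q(\imath)$. Consequently the degree-two polynomial $s\mapsto \mathrm{SAF}_s$ vanishes for all $(u,v)\in\Q^2$, hence identically, and matching coefficients yields $S_0=0$ together with $C_u=C_v=0$, that is $\sum_i w_ih_i'=0$.

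The step I expect to be the crux is precisely the passage "$\mathrm{SAF}_s=0$ on homological slopes $\Rightarrow \mathrm{SAF}_s\equiv0$", i.e. producing enough homological directions. I would make this robust without the full equality ``homological directions $=K\P^1$'' as follows: fixing one class $\delta$ with $\int_\delta\omega=(p,q)$, $q\neq0$, the classes $\delta+n\gamma_i$ (with $\gamma_i$ the $i$-th core curve) have holonomy $(p+nw_i,q)$ and slopes $q/(p+nw_i)$, $n\in\Z$, all homological; clearing denominators in $\mathrm{SAF}_{q/(p+nw_i)}=0$ turns it into an affine relation in $n$, whose constant and linear parts give two equations in $C_u$ and $C_v$. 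Running this over the cylinders and using the non-degeneracy supplied by the standing hypothesis to ensure the resulting linear system is of full rank would finish the argument; checking the exact constants ($J_{xy}$-conventions, the factor $2$) is routine and irrelevant to the vanishing.
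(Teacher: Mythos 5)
Your proposal is correct and takes essentially the same route as the paper's sketch: both identify the cylinders with parallelograms spanned by $(w_i,0)$ and $(t_i,h_i)$, express $J_{xx}$ of the surface renormalized to each slope $s\in K$ (which vanishes by complete algebraic periodicity, the non-normalizability into $\Q(\imath)$ hypothesis serving, as in Calta, to guarantee every direction of $K\P^1$ is algebraically periodic), and extract $\sum_i w_ih_i'=0$ by Galois-conjugation bookkeeping — your matching of the coefficients $S_0$, $C_u$, $C_v$ in $(u^2-fv^2)S_0+C_u u+C_v v$ being exactly equivalent to the paper's step of subtracting the vertical relation $\sum_i w_i\wedge t_i=0$ and evaluating $\sum_i w_i\wedge sh_i=0$ at $s=1$ and $s=\sqrt f$. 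Your closing fallback via the classes $\delta+n\gamma_i$ is not needed for this (and its full-rank claim is doubtful when all the $w_i$ are pairwise commensurable), but the main argument already matches the paper's.
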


\begin{Remark}
This statement is slightly more general than the statements~\cite[Proposition~4.1, and Lemma~4.2]{Calta04}
but its proof is essentially the same. One can also remark that Equation~\eqref{eq:Calta1} is the same as the one in 
Corollary~\ref{cor:mod:relation}.
\end{Remark}

\begin{proof}[Sketch of proof]
We have $K=\Q(\sqrt{f})$, where $f$ is a square-free positive integer. Recall that if $a,b \in \Q$ then 
$(a+b\sqrt{f})'=a-b\sqrt{f}$. Let $w,h\in K$.
\begin{eqnarray*}
4w\wedge h &=& (w+w'+w-w')\wedge (h+h'+h-h') \\
            &=& \frac{1}{\sqrt{f}}((w+w')(h-h')-(w-w')(h+h'))1\wedge\sqrt{f} \\
            &=& \frac{2}{\sqrt{f}}(w'h-wh')1\wedge\sqrt{f}.\\
\end{eqnarray*}
Let $C_i, \; i=1,\dots,k$, denote the cylinders in the horizontal direction. We identify each $C_i$ with a 
parallelogram $P_i$ in $\R^2$ which is constructed from the pair of vectors $\{(w_i,0),(t_i,h_i)\}$ in $K^2$. 
We have (see Section~\ref{subsec:J-inv}):
$$
J(X,\omega)=2\sum_{i=1}^k\left(\begin{array}{c} w_i \\ 0 \end{array}\right) \wedge \left(\begin{array}{c} t_i \\ h_i \end{array}\right).
$$
By assumption, the vertical direction $(0:1)$ is algebraically periodic. Hence
$$
J_{xx}(X,\omega)= \sum_{i=1}^k w_i\wedge t_i=0.
$$
Let $\overrightarrow{v}_q=(1,q)$ with $q\in K$, and $A_q= \left(\begin{array}{cc}1 & -1/q \\ 0 & 1/q \end{array}\right)$
so that $A_q\cdot\overrightarrow{v}_q=(0,1)$. Thus 
$$
J_{xx}(A_q\cdot(X,\omega))= \sum_{i=1}^k w_i\wedge(t_i-\frac{1}{q}h_i)=0.
$$
It follows that
$$\sum_{i=1}^kw_i\wedge sh_i=0, \; \forall s\in K,
$$
which implies
$$
\sum_{i=1}^k w'_ish_i-w_is'h'_i=0, \; \forall s\in K.
$$
By evaluating the last equality for $s=1$ and $s=\sqrt{f}$, we get
$$
\sum_{i=1}^k w_ih'_i=\sum_{i=1}^k w'_ih_i=-\sum_{i=1}^k w'_ih_i.
$$

\noindent Theorem~\ref{theo:Cal:eq} is then proved.
\end{proof}

\begin{proof}[Proof of Theorem~\ref{thm:CAP:eigen}]
We first observe that both properties of being completely algebraically periodic and being a Prym eigenform is invariant 
along the leaves of the kernel foliation in the Prym loci given in Table~\ref{tab:strata:list}. In view of Theorem~\ref{theo:hyp:Prym:eig}, 
we will show that there exists in the leaf of the kernel foliation through $(X,\omega)$ a surface whose Veech group contains a hyperbolic element.

By normalizing using $\GL^+(2,\R)$ and moving in the kernel foliation leaf, we can suppose that all  the periods of 
$(X,\omega)$ belong to $K(\imath)$. If $K=\Q$, then the $\GL^+(2,\R)$-orbit of 
$(X,\omega)$ has a square-tiled surface. Thus the Veech group of $(X,\omega)$ contains a hyperbolic element, and we are done.

Now assume that $K$ is a real quadratic field. By Theorem~\ref{thm:CAP:to:CP}, we know that $(X,\omega)$ is completely periodic. We can assume that the horizontal and vertical directions are periodic. We want to find a suitable vector $v=(s,t) \in K^2$ such that the Veech group of $(X,\omega)+v$ has two parabolic elements, one preserves the horizontal direction, the other preserves the vertical direction, a product of some powers of such elements provides us with a hyperbolic element in $\SL(X,\omega)$.

Let $C_i, i=1,\dots,k$,  denote the horizontal cylinders, the width, height and modulus of $C_i$ are denoted by 
$w_i,h_i$, and $\mu_i$ respectively. Let $n$ be the number of cylinders up to involution, we choose the numbering of cylinders such that for every $i=1,\dots,n$, if $C_i$ and $C_j$ are permuted by the Prym involution then either $j=i$, or $j>n$. Theorem~\ref{theo:Cal:eq} implies

\begin{equation}\label{eq:CAP:mod:rel}
\sum_{i=1}^{k}w'_ih_i=\sum_{i=1}^{n}\alpha_i\mu_iN(w_i)=0, 
\end{equation}

\noindent where $\alpha_i=1$ if $C_i$ is preserved by the Prym involution, $\alpha_i=2$ otherwise, and $N(w_i)=w_iw'_i\in \Q$.  Remark that for all the Prym loci in Table~\ref{tab:strata:list}, we have $n\leq 3$. By Lemma~\ref{lm:max:nb:cyl}, if this number is maximal ({\em i.e.} equal to three) then the cylinder decomposition is stable ({\em i.e.} every saddle connection in this direction connects a zero to itself). In the case  $n\leq 2$, Equation (\ref{eq:CAP:mod:rel}) implies that all the cylinders are commensurable, therefore there exists a parabolic element in $\SL(X,\omega)$ that fixes the vector $(1,0)$.

Assume that $n=3$, since the cylinder decomposition is stable, in each cylinder the upper (resp. lower) boundary contains only one zero of $\omega$.
For $t \in \R$ such that $|t|$ small enough, the surface $(X,\omega)+(0,t)$ also admits a cylinder decomposition in the horizontal direction with the same topological properties as the decomposition of $(X,\omega)$. Let $C_i^t$ denote the cylinder in $(X_t,\omega_t)=(X,\omega)+(0,t)$ corresponding to $C_i=C^0_i$. Note that $w(C_i^t)=w(C_i)=w_i$ for any $t$, but in general $h_i(t)=h(C_i^t)$ is a non-constant function of $t$. Namely, if the zeros in the upper and lower boundaries of $C_i$ are the same then $h_i(t)=h_i, \; \forall t$, otherwise, either $h_i(t)=h_i \pm t$, or $h_i(t)=\pm t/2$.  

\noindent In particular, we see that $h_i(t)=h_i+\alpha_it$, where $\alpha_i\in \{-1,-1/2,0,1/2,1\}$. There always exist two cylinders $C_i,C_j$ such that  $\alpha_i\neq \alpha_j$.  Set $R_{ij}(t):=\mu(C^t_i)/\mu(C^t_j)$. We have

\begin{equation*}
R_{ij}(t)\in \left\{
\frac{w_j(h_i+t)}{w_ih_j}, \frac{w_j(h_i+t)}{w_i(h_j-t)}, \frac{w_j(h_i+t)}{w_i(h_j+t/2)}, \frac{w_j(h_i+t)}{w_i(h_j-t/2)}, \frac{w_j(h_i+t/2)}{w_ih_j}, \frac{w_j(h_i+t/2)}{w_i(h_j-t/2)} \right\}.
\end{equation*}

\noindent One can easily see that there always exists $t\in K$ such that $R_{ij}(t) \in \Q$.  For $t$ small enough, the surface $(X,\omega)+(0,t)$ is also decomposed into $k$ cylinders in the horizontal direction, and Equation (\ref{eq:CAP:mod:rel}) holds, thus the condition $R_{ij}(t)\in \Q$ implies that all the horizontal cylinders of $(X,\omega)+(0,t)$ are commensurable, which means that  $\SL((X,\omega)+(0,t))$ contains a parabolic element preserving the vector $(1,0)$.

Observe that the horizontal direction on $(X,\omega)+(s,t)$ (for small $s$) is still a periodic direction. 
Thus by the same arguments, we can conclude that there exists a vector $v=(s,t)\in K^2$ such that $\SL((X,\omega)+v)$ contains a hyperbolic element. It follows from Theorem~\ref{theo:hyp:Prym:eig} that $(X,\omega)+v$ is a Prym eigenform, and so is $(X,\omega)$. Theorem~\ref{thm:CAP:eigen} is then proven.
\end{proof}

\section{Complete periodicity of quadratic differentials with periods in a quadratic field}
\label{sec:d=6}

In this section, we will concentrate on cases $(4)-(5)-(6)-(7)-(8)$ of Table~\ref{tab:strata:list}.
Let $\mathcal{Q}(\kappa)$ be one of the following strata 
$$
\QQQ(\kappa) \in \{\QQQ(-1^{4},4),\ \QQQ(-1^3,1,2),\ \QQQ(-1,2,3),\ \QQQ(8),\ \QQQ(-1,1,4)\}.
$$

\noindent We will deduce Theorem~\ref{theo:cp:stronger} from the following:

\begin{Theorem}[$d=6$]
\label{theo:d=6}
Let $T=(\pi,\lbd)$ be a linear involution defined over $6$ intervals which is obtained 
by the cross section of the vertical foliation on a quadratic differential $(Y,q) \in \QQQ(\kappa)$. We assume that the lengths of the intervals exchanged by $T$ belong to a vector space of rank two over $\Q$, and $SAF(T)=0$.
\begin{enumerate}
\item If $(Y,q)\in\QQQ(-1,2,3)$ or $\QQQ(8)$ then $T$ is completely periodic,
\item Otherwise, if $T$ is not completely periodic then:
\begin{enumerate}
\item if $(Y,q) \in \QQQ(-1^3,1,2)$ or $\QQQ(-1^4,4)$ then $(Y,q)$ is the connected sum of a 
flat torus and a flat sphere, irrationally foliated with opposite SAF-invariants.
\item if $(Y,q) \in \QQQ(-1,1,4)$ then $(Y,q)$ is the connected sum of
two flat tori, irrationally foliated with opposite SAF-invariants.
\end{enumerate}
\end{enumerate}
\end{Theorem}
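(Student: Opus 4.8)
The plan is to run the same inductive Rauzy--Veech reduction as in the proof of Theorem~\ref{thm:CAP:to:CP}, but now to keep track of the stratum $\QQQ(\kappa)$ of the suspension throughout, so that when the reduction terminates in a genuinely decomposed linear involution we can read off the topological type of the two pieces. Since the lengths of the intervals exchanged by $T$ span a space of rank two over $\Q$ and $SAF(T)=0$, Proposition~\ref{prop:connection} gives that $T$ has a connection. By Proposition~\ref{prop:BL}, after replacing $T$ by $\mathcal R^{(n)}(T)$ for a suitable $n$ (which preserves $SAF=0$ by Proposition~\ref{prop:rauzy:cp}, preserves irreducibility since the surface stays in $\QQQ(\kappa)$, and does not enlarge the rank of the lengths), I reach a situation where either the Rauzy induction is singular, or $T$ is decomposed as $T=T_1\comp T_2$.

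Next I would reduce both situations to a short list of terminal decomposed permutations, exactly along the lines of Theorem~\ref{thm:CAP:to:CP}. In the singular case $T'=\mathcal R_{sing}(T)$ is defined over $5$ intervals with $SAF(T')=0$ and a connection, and by Lemma~\ref{lm:rauzy:sing} $\pi'$ is a generalized permutation; Lemma~\ref{lm:d=5} then yields complete periodicity unless $\pi'\in\mathcal E_1\cup\mathcal E_2\cup\mathcal E_3$. Being generalized, $\pi'$ avoids $\mathcal E_1$; the case $\pi'\in\mathcal E_2$ reduces, after one further singular step, to the decomposed four-letter pattern $\left(\begin{smallmatrix} A& B& C& D \\ B& A & D & C \end{smallmatrix}\right)$, i.e. two rotation blocks; and $\pi'\in\mathcal E_3$ is already a rotation block $\left(\begin{smallmatrix} A& B \\ B& A \end{smallmatrix}\right)$ together with a generalized three-letter block. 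In the decomposed case $T=T_1\comp T_2$ with $d_1+d_2=6$, the partition $\{1,5\}$ and any rotation block in a $\{2,4\}$ partition force $\pi$ to be reducible, hence are excluded; whenever one factor has vanishing $SAF$ the relevant small-$d$ lemma (Lemma~\ref{lm:d=3}, Lemma~\ref{lm:d=4}, or Lemma~\ref{lm:periodic:orbit}) forces complete periodicity, giving conclusion~(1). The only surviving possibility is a terminal decomposition $T=T_1\comp T_2$ with $SAF(T_1)=-SAF(T_2)\neq0$, each factor being either a two-interval rotation (a flat torus) or an irreducible generalized permutation over three intervals (a flat sphere $\QQQ(-1^4)$), both minimally (hence irrationally) foliated.

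The decisive step is the topological identification of these terminal decompositions inside each stratum. Such a decomposition exhibits the vertical foliation on the original $(Y,q)\in\QQQ(\kappa)$ as having exactly two irrationally foliated components with opposite nonzero $SAF$, and I would realize $Y$ as the connected sum $Y=Y_1\# Y_2$ of the subsurfaces carrying these components, glued along the separating vertical connection (together with the connections collapsed during any singular steps). Genus and singularity orders are then essentially additive across the gluing, and I would match this data against Table~\ref{tab:strata:list}: for $\QQQ(-1^4,4)$ and $\QQQ(-1^3,1,2)$, where $g(Y)=1$, the only compatible terminal decomposition is a flat torus glued to a flat sphere, giving case~(2a); for $\QQQ(-1,1,4)$, where $g(Y)=2$, it is two flat tori, giving case~(2b); a decomposition into two flat spheres is ruled out at once, since it would force $g(Y)=0$. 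For $\QQQ(8)$ and $\QQQ(-1,2,3)$ I would show instead that no separating vertical connection can split off such a factor, so the terminal decomposition never occurs and $T$ is completely periodic, giving case~(1).

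I expect this last step to be the main obstacle. It requires enumerating, via the inverse singular Rauzy moves (Example~\ref{ex:rauzy:inverse}) within each Rauzy class, precisely which decomposed permutations are reachable from a suspension in $\QQQ(\kappa)$, and then tracking how the orders of the zeros and poles redistribute when the surface is cut along the connection. The exclusion for $\QQQ(8)$ and $\QQQ(-1,2,3)$ is the most delicate point: the very restricted singularity data---a single zero of order $8$, respectively zeros of orders $2,3$ with one simple pole---leaves no way to write the stratum as a connected sum with a flat torus or a flat sphere, because the cut-open singularity cannot be distributed so that one piece is a smooth torus or a four-pole sphere carrying a minimal foliation. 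Establishing this incompatibility cleanly, rather than by a brute-force permutation search, is where the real work lies.
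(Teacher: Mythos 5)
Your reduction skeleton is exactly the paper's: Proposition~\ref{prop:connection} gives a connection, Proposition~\ref{prop:BL} gives the dichotomy (singular induction versus decomposition), Lemma~\ref{lm:rauzy:sing} rules out $\mathcal E_1$, and the problem is pushed into $\mathcal E_2\cup\mathcal E_3$ and the $\{2,4\}$, $\{3,3\}$ splittings. But the step you yourself flag as ``where the real work lies'' is not merely left open --- the strategy you propose for it is wrong for $\QQQ(-1,2,3)$. You plan to show that for $\QQQ(8)$ and $\QQQ(-1,2,3)$ \emph{no separating vertical connection can split off such a factor}, so that the terminal decomposition never occurs. For $\QQQ(-1,2,3)$ this nonexistence claim is false: $g(Y)=2$ there, so your genus count does not exclude a two-tori splitting, and in fact the combinatorial reduction does reach a decomposed involution in this stratum --- the paper's Claim~\ref{claim:2} shows the singular step forces $\mathcal R_{sing}\pi=\left(\begin{smallmatrix} C & D & C & A & B \\ E & D & E & B & A \end{smallmatrix}\right)$, which is decomposed. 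What saves complete periodicity is dynamical, not topological: the forced block $\left(\begin{smallmatrix} C & D & C \\ E & D & E \end{smallmatrix}\right)$ is completely periodic for \emph{every} length vector, hence has vanishing SAF, which forces the complementary rotation block to have vanishing SAF and be periodic too. This is precisely where your assertion that each terminal factor is ``minimally (hence irrationally) foliated'' breaks down: a three-letter generalized block need not be minimal, and the whole point in $\QQQ(-1,2,3)$ is that the only block the stratum permits is never minimal.

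Two further cautions. First, your appeal to ``genus and singularity orders are essentially additive across the gluing'' is unreliable in this setting, because the splittings produced by a decomposed linear involution occur along \emph{pairs of homologous} saddle connections (see Example~\ref{ex:H22} and Figure~\ref{fig:decomposition:2}), not along a single separating slit, so naive additivity can fail; the paper sidesteps this by reading the topology of $Y$ directly off the explicit terminal permutations. Second, the stratum identifications you hope to get ``cleanly, rather than by a brute-force permutation search'' are in fact obtained in the paper exactly by that finite search: one enumerates $\mathcal R^{-1}_{sing}(\mathcal E_2)$ and $\mathcal R^{-1}_{sing}(\mathcal E_3)$ (as in Example~\ref{ex:rauzy:inverse}), discards the reducible entries, and computes the stratum of each irreducible survivor --- this is how one learns that the $\mathcal E_2$ route forces $(Y,q)\in\QQQ(-1,1,4)$, that $\QQQ(8)$ admits no terminal decomposition at all, and that $\QQQ(-1,2,3)$ admits only the automatically periodic one. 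The enumeration is finite and short; there is no evidence that a purely singularity-theoretic exclusion of the kind you sketch exists, and for $\QQQ(-1,2,3)$ it provably cannot, since the decomposition is realized. To repair your proposal, replace the nonexistence claim by the two-part statement: (i) finite enumeration of the terminal permutations per stratum, and (ii) the SAF-cancellation argument using the always-periodic block in the $\QQQ(-1,2,3)$ case.
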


We emphasize that point~$(1)$ of Theorem~\ref{theo:d=6} is false in general.
For instance, for $(Y,q)\in \QQQ(-1^{4},4)$ or $(Y,q)\in \QQQ(-1^{3},1,2)$, one 
may have $SAF(T)=0$ whereas $T$ is not completely periodic, as shown in Figure~\ref{fig:decomposition}.
Indeed, for instance on the left figure, since there is one vertical saddle connection
$(\alpha)$, the permutation associated to $T$ is 
$\left(\begin{smallmatrix} 0& 0 & 1 & 1 & 3 & 4 \\ 2 & 2 & & & 4 & 3 \end{smallmatrix}\right)$.
Clearly one can arrange lengths of the intervals so that $(Y_1,q_1)$ is the connected sum of a 
flat torus (together with a slit) and a flat sphere (with a marked point), 
irrationally foliated with opposite SAF-invariant. The same is true for $(Y_2,q_2)$ (one can 
also give examples for the stratum $\QQQ(-1,1,4)$).
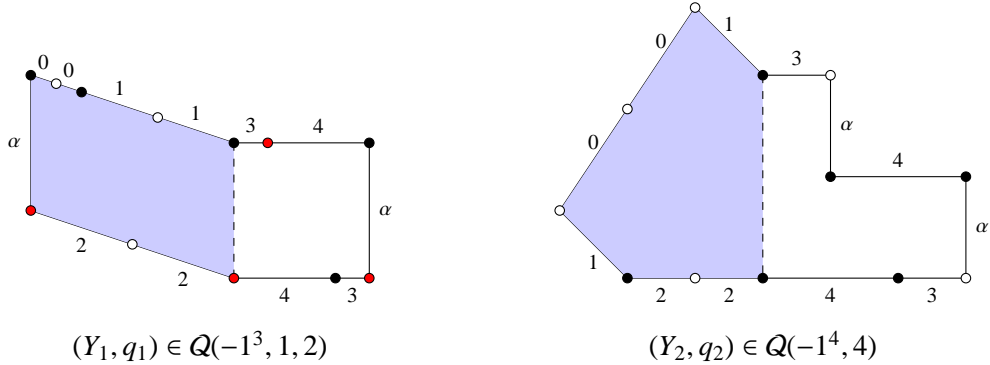
\begin{figure}[htbp]
\begin{minipage}[t]{0.23\linewidth}
\centering
\begin{tikzpicture}[scale=0.9, >=angle 60]
\draw (0,0) -- (0,2) -- (3,1) -- (5,1) -- (5,-1) -- (3,-1) -- (0,0);
\fill[fill=blue!20] (0,0) -- (0,2) -- (3,1) -- (3,-1) -- (0,0);
\draw[-, dashed] (3,1) -- (3,-1);
\foreach \t in {1/2} \filldraw[fill=white] (0,0) + (3*\t,-\t) circle(2pt);
\foreach \t in {1/8,5/8} \filldraw[fill=white] (0,2) + (3*\t,-\t) circle(2pt);
\foreach \t in {0,2/8,1} \filldraw[fill=black] (0,2) + (3*\t,-\t) circle(2pt);
\foreach \x in {(5,1), (4.5,-1)} \filldraw[fill=black] \x circle(2pt);
\foreach \t in {0,1} \filldraw[fill=red] (0,0) + (3*\t,-\t) circle(2pt);
\foreach \x in {(5,-1), (3.5,1)} \filldraw[fill=red] \x circle(2pt);
\draw (0,1) node[left] {$\scriptstyle \alpha$};
\draw (5,0) node[right] {$\scriptstyle \alpha$};

\foreach \t in {1/16,3/16} \draw (0,2) + (3*\t,-\t) node[above] {$\scriptstyle 0$};
\foreach \t in {7/16,13/16} \draw (0,2) + (3*\t,-\t) node[above] {$\scriptstyle 1$};
\foreach \t in {1/4,3/4} \draw (0,0) + (3*\t,-\t) node[below] {$\scriptstyle 2$};

\draw (3.25,1) node[above] {$\scriptstyle 3$};
\draw (4.75,-1) node[below] {$\scriptstyle 3$};
\draw (4.25,1) node[above] {$\scriptstyle 4$};
\draw (3.75,-1) node[below] {$\scriptstyle 4$};

\draw (2.5,-2) node {$(Y_1,q_1) \in \QQQ(-1^3,1,2)$};
\end{tikzpicture}
\end{minipage}
\hskip 20mm
\begin{minipage}[t]{0.6\linewidth}
\centering
\begin{tikzpicture}[scale=0.9, >=angle 60]
\draw (0,0) -- (2,3) -- (3,2) -- (4,2) -- (4,1/2) -- (6,1/2) -- (6,-1) -- (1,-1) -- (0,0);
\fill[fill=blue!20] (0,0) -- (2,3) -- (3,2) -- (3,-1) -- (1,-1) -- (0,0);
\draw[-, dashed] (3,2) -- (3,-1);
\foreach \t in {0,1/2,1} \filldraw[fill=white] (0,0) + (2*\t,3*\t) circle(2pt);
\foreach \x in {(6,-1),(4,2),(2,-1)} \filldraw[fill=white] \x circle(2pt);
\foreach \x in {(1,-1),(3,2),(4,1/2),(6,1/2),(3,-1),(5,-1)} \filldraw[fill=black] \x circle(2pt);

\foreach \x in {(4,1.25),(6,-0.25)} \draw \x node[right] {$\scriptstyle \alpha$};
\foreach \t in {1/4,3/4} \draw (0,0) + (2*\t,3*\t) node[above] {$\scriptstyle 0$};
\foreach \t in {1/2} \draw (2,3) + (\t,-\t) node[above] {$\scriptstyle 1$};
\foreach \t in {1/2} \draw (\t,-\t) node[below] {$\scriptstyle 1$};
\foreach \x in {(1.5,-1),(2.5,-1)} \draw \x node[below] {$\scriptstyle 2$};

\draw (3.5,2) node[above] {$\scriptstyle 3$};
\draw (5.5,-1) node[below] {$\scriptstyle 3$};
\draw (5,1/2) node[above] {$\scriptstyle 4$};
\draw (4,-1) node[below] {$\scriptstyle 4$};
\draw (3,-2) node {$(Y_2,q_2) \in \QQQ(-1^4,4)$};
\end{tikzpicture}
\end{minipage}

\caption{Decompositions of $(Y_i,q_i)$ in a connected sum of a 
flat torus and a flat sphere (colored in blue).
}
\label{fig:decomposition}
\end{figure}

\begin{proof}[Proof of Theorem~\ref{theo:d=6}]
We will assume that $T$ is not completely periodic. Proposition~\ref{prop:connection} implies $T$ has a connection, and by 
Proposition~\ref{prop:BL} only two possibilities can occur. Namely, up to replacing 
$T$ by $\mathcal R^{(n)}(T)$ for some suitable $n$, one has either
\begin{itemize}
\item[(a)] the Rauzy induction is not well defined for $T$, or
\item[(b)] $T$ is decomposed. 
\end{itemize}
First assume case~(a) holds. We get new linear involution $T'=(\pi',\lbd')=\mathcal R_{sing}(T)$
defined over $5$ intervals with zero SAF-invariant. One wants to apply Lemma~\ref{lm:d=5}.
For that, we need to rule out the cases $\pi'\in\mathcal E_{i}$. We begin 
by the following observation. Since $\pi$ is geometrically irreducible,
$\pi'$ is a generalized permutation (and not a ``true'' permutation). 
In particular $\pi' \not \in \mathcal E_{1}$. 
Hence, in view of Lemma~\ref{lm:d=5}, we assume that
$\pi' \in \mathcal E_{2}\cup \mathcal E_{3}$.
\begin{Claim}
\label{claim:1}
If $\pi\in \mathcal R^{-1}_{sing} \mathcal E_{2}$ then $(Y,q)\in \QQQ(-1,1,4)$. 
Moreover if $T$ is not completely periodic then $(Y,q)$ is the connected sum of 
two flat tori, irrationally foliated with opposite SAF-invariant.

\end{Claim}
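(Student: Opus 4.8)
The plan is to separate the two assertions: the identification of the stratum is a finite combinatorial check, whereas the connected-sum description follows quickly from the reduction already used for $\mathcal E_2$ in the proof of Theorem~\ref{thm:CAP:to:CP}, once one feeds in the hypothesis that $T$ is not completely periodic.

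\textit{(Identifying the stratum.)} Since $(Y,q)$ is a genuine suspension, $\pi$ is irreducible, so it suffices to list the irreducible elements of $\mathcal R^{-1}_{sing}(\mathcal E_2)$ and to read off the singularity data of each of their suspensions. First I would write these preimages out explicitly by means of the inverse singular Rauzy construction recalled just before Example~\ref{ex:rauzy:inverse}; as in that example, each of the two permutations of $\mathcal E_2$ has at most ten preimages, and most of them are reducible in the sense of Definition~\ref{def:irred} and are discarded. For each surviving irreducible $\pi$ I would then apply the standard rule that reads the orders of the zeros and poles of the suspension off a generalized permutation (following the successive endpoints of the subintervals around a suspension polygon). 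I expect every case to return the singularity data $(-1,1,4)$, and none of the other dimension-$5$ strata of Table~\ref{tab:strata:list}, which gives $(Y,q)\in\QQQ(-1,1,4)$.

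\textit{(The connected-sum description.)} We are in case~(a), so $\pi'=\mathcal R_{sing}(\pi)\in\mathcal E_2$; the two elements of $\mathcal E_2$ being symmetric, I treat $\pi'=\left(\begin{smallmatrix}\alpha&B&C&D&\alpha\\ B&A&D&C&A\end{smallmatrix}\right)$. Because the top and the bottom of a linear involution have the same total length, the combinatorics of $\pi'$ forces $2\lbd'_\alpha=2\lbd'_A$, i.e.\ $\lbd'_\alpha=\lbd'_A$; the two rightmost intervals are then equal, so $\mathcal R$ is again not well defined and a second singular step yields $\mathcal R_{sing}(T')=T''=(\pi'',\lbd'')$ over four letters with $\pi''=\left(\begin{smallmatrix}A&B&C&D\\ B&A&D&C\end{smallmatrix}\right)$. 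This permutation is decomposed, $T''=T_1\comp T_2$, each $T_i$ being a two-letter linear involution of rotation type, so each $T_i$ suspends to a flat torus. By Proposition~\ref{prop:rauzy:cp} and the additivity~\eqref{eq:saf} of the SAF-invariant,
$$
0=SAF(T)=SAF(T'')=SAF(T_1)+SAF(T_2).
$$
Now assume $T$ is not completely periodic; then $T''$ is not either (Proposition~\ref{prop:rauzy:cp}), so some $T_i$, say $T_1$, is non-periodic, and by Proposition~\ref{prop:saf:n=2} this means $SAF(T_1)\neq0$. Hence $SAF(T_2)=-SAF(T_1)\neq0$ and $T_2$ is non-periodic as well: both rotations are irrational, with opposite SAF-invariants. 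Interpreting the two singular Rauzy steps geometrically, they collapse two vertical saddle connections whose union separates $(Y,q)$ into the two flat tori suspending $T_1$ and $T_2$; thus $(Y,q)$ is the connected sum of two flat tori, irrationally foliated, with opposite SAF-invariants.

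The main obstacle is the combinatorial step: carrying out the inverse singular Rauzy induction on both permutations of $\mathcal E_2$, correctly eliminating the reducible preimages, and verifying case by case that every irreducible one lands in $\QQQ(-1,1,4)$ and in no other dimension-$5$ stratum. Once this is settled, the dynamical conclusion is immediate from the behaviour of two-letter involutions together with the invariance and additivity of the SAF-invariant.
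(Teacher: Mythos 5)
Your proposal is correct and follows essentially the same route as the paper: the stratum identification is exactly the paper's enumeration of $\mathcal R^{-1}_{sing}(\mathcal E_{2})$ with the reducible preimages discarded (irreducibility being forced by the existence of the suspension), and your connected-sum argument --- the forced equality $\lbd'_\alpha=\lbd'_A$, a second singular Rauzy step to the decomposed permutation $\left(\begin{smallmatrix} A&B&C&D\\ B&A&D&C\end{smallmatrix}\right)$, and the SAF bookkeeping on the two rotation pieces via Propositions~\ref{prop:saf:n=2} and~\ref{prop:rauzy:cp} --- is precisely the mechanism the paper itself uses for $\mathcal E_{2}$ in the proof of Theorem~\ref{thm:CAP:to:CP}, here making explicit the paper's terse remark that ``the structure of the permutation gives the topological decomposition.'' The only immaterial slips are the preimage count (the paper lists eleven, not ten) and the loose phrasing about ``collapsing'' saddle connections, since the singular induction merely shrinks the transversal interval while $(Y,q)$ splits along the vertical saddle connections.
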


\begin{proof}
Up to permutation of the letters of the alphabet $\{A,B,C,D,\alpha,\beta\}$ we have:
\begin{multline*}
\mathcal R^{-1}_{sing} \left(\begin{smallmatrix} \alpha & B& C& D & \alpha \\ B& A & D & C & A  \end{smallmatrix}\right)
= \{
\boxed{\left(\begin{smallmatrix} \alpha & B& C& D & \alpha & \beta \\ \beta& A & D & C & A & B\end{smallmatrix}\right)}, 
\boxed{\left(\begin{smallmatrix} \alpha & B& C& D & \alpha & \beta \\ B& \beta & D & C & A & A\end{smallmatrix}\right)}, 
\left(\begin{smallmatrix} \alpha & B&|& C& D & \alpha & \beta \\ B& A&| & \beta & C & A &D \end{smallmatrix}\right), 
\left(\begin{smallmatrix} \alpha & B&|& C& D & \alpha & \beta \\ B& A&| & D & \beta & A &C \end{smallmatrix}\right), 
\left(\begin{smallmatrix} \alpha & B&|& C& D & \alpha & \beta \\ B& A&| & D & C & \beta &A \end{smallmatrix}\right),\\
\left(\begin{smallmatrix} \alpha & B&|& C& D & \alpha & \beta \\ B& A&| & D & C & A &\beta \end{smallmatrix}\right), 
\left(\begin{smallmatrix} \beta & B&|& C& D & \alpha & \beta \\ B& A&| & D & C & A & \alpha \end{smallmatrix}\right), 
\left(\begin{smallmatrix} \alpha & \beta&|& C& D&| & \alpha & \beta \\ B& A&| & D & C&| & A & B \end{smallmatrix}\right), 
\left(\begin{smallmatrix} \alpha & B& \beta& D &|& \alpha & \beta \\ B& A & D & C &|& A &C \end{smallmatrix}\right), 
\left(\begin{smallmatrix} \alpha & B& C& \beta &|& \alpha & \beta \\ B& A & D & C &|& A & D \end{smallmatrix}\right), 
\boxed{\left(\begin{smallmatrix} \alpha & B& C& D & \beta & \beta \\ B& A & D & C & A & \alpha \end{smallmatrix}\right)}
\}.
\end{multline*}
The boxed permutations correspond exactly to irreducible permutations. We also indicate the decomposition
when the permutation is not irreducible (see Definition~\ref{def:irred}). For the above three 
irreducible permutations, the corresponding suspension $(Y,q)$ belongs to the stratum $\QQQ(-1,1,4)$. Moreover, 
if the flow is not completely periodic then the structure of the permutation gives the topological 
decomposition of the surface $Y$. Then claim is proved.
\end{proof}
\begin{Claim}
\label{claim:2}
If $\pi\in \mathcal R^{-1}_{sing} \mathcal E_{3}$ then the followings hold:
\begin{enumerate}
\item $(Y,q) \not \in \QQQ(8)$.
\item If $(Y,q) \in \QQQ(-1,2,3)$ then $\mathcal R_{sing}\pi = \left(\begin{smallmatrix}
C & D & C & A & B \\ E & D & E & B & A \end{smallmatrix}\right)$.
\item If $(Y,q) \in \QQQ(-1^3,1,2) \sqcup \QQQ(-1^4,4)$ then $(Y,q)$ is the connected sum of a 
flat torus and a flat sphere, irrationally foliated with opposite SAF-invariants.
\item If $(Y,q) \in \QQQ(-1,1,4)$ then $(Y,q)$ is the connected sum of
two flat tori, irrationally foliated with opposite SAF-invariants.
\end{enumerate}
\end{Claim}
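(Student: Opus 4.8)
The plan is to mimic the proof of Claim~\ref{claim:1}: reduce everything to a finite combinatorial enumeration governed by the inverse singular Rauzy operation, and then read off the stratum and the topological type of each surviving suspension. First I would compute the set $\mathcal R^{-1}_{sing}(\mathcal E_{3})$ explicitly. Each element of $\mathcal E_{3}$ is a generalized permutation on five letters that splits as a rotation block $\left(\begin{smallmatrix} A & B \\ B & A \end{smallmatrix}\right)$ together with a generalized permutation $\pi'$ on three letters; since there are only finitely many such $\pi'$, the set $\mathcal E_{3}$ is finite, and applying the procedure of Example~\ref{ex:rauzy:inverse} to each of its elements produces a finite list of six-letter preimages. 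Two filters then cut this list down drastically: by Lemma~\ref{lm:rauzy:sing} the permutation $\pi'=\mathcal R_{sing}(\pi)$ must remain a \emph{generalized} permutation, and since $\pi$ must admit the suspension $(Y,q)$ it must be irreducible in the sense of Definition~\ref{def:irred}. I would keep only the irreducible six-letter generalized permutations, boxing them as in Claim~\ref{claim:1}.

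Next, for each surviving irreducible $\pi$ I would determine the stratum $\QQQ(\kappa')$ of its suspension by reading the orders of the zeros and poles of $q$ directly off the combinatorial data (the standard count of singularities attached to a generalized permutation). Tabulating these data yields points~$(1)$ and~$(2)$ at once: no permutation in the list has a suspension in $\QQQ(8)$, and the unique permutation whose suspension lies in $\QQQ(-1,2,3)$ is the one with $\mathcal R_{sing}\pi = \left(\begin{smallmatrix} C & D & C & A & B \\ E & D & E & B & A \end{smallmatrix}\right)$.

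For points~$(3)$ and~$(4)$ I would exploit the decomposition already built into $\mathcal E_{3}$. Since $\pi'=\mathcal R_{sing}(\pi)\in\mathcal E_{3}$, the induced linear involution $T'=\mathcal R_{sing}(T)$ is decomposed, $T'=T'_{1}\comp T'_{2}$, with $\pi'_{1}=\left(\begin{smallmatrix} A & B \\ B & A \end{smallmatrix}\right)$ a rotation (whose suspension is a flat torus) and $\pi'_{2}$ a three-letter generalized permutation. Geometrically this splits $(Y,q)$ along a vertical saddle connection into two flat pieces glued together; the singularity data computed above tells me that the $\pi'_{2}$-piece is a flat sphere when $(Y,q)\in\QQQ(-1^{3},1,2)\sqcup\QQQ(-1^{4},4)$ and a flat torus when $(Y,q)\in\QQQ(-1,1,4)$ (these are exactly the genus counts $4g-4=0$ and $4g-4=4$). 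Finally, since $SAF(T)=SAF(T'_{1})+SAF(T'_{2})=0$ by~(\ref{eq:saf}) and $T$ is assumed not completely periodic, neither factor can be periodic, so both pieces are irrationally foliated and $SAF(T'_{1})=-SAF(T'_{2})\neq 0$; this is precisely the asserted ``opposite SAF-invariants'' conclusion.

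The main obstacle is the bookkeeping in the first two steps: the enumeration of $\mathcal R^{-1}_{sing}(\mathcal E_{3})$ branches over every element of $\mathcal E_{3}$ and every choice in Example~\ref{ex:rauzy:inverse}, and for each surviving permutation one must correctly and reproducibly extract the singularity type of the suspension. The genuinely delicate point is not the algebra but making sure the irreducibility filter and the torus-versus-sphere dichotomy for the $\pi'_{2}$-piece are applied without error, since these are exactly what separate $\QQQ(8)$ from the rest and sort the remaining strata into cases~$(3)$ and~$(4)$.
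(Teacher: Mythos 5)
Your proposal is correct and follows essentially the same route as the paper: the paper likewise enumerates $\mathcal R^{-1}_{sing}(\mathcal E_{3})$ up to relabelling of letters and top/bottom lines, discards the reducible permutations (Definition~\ref{def:irred}), reads the stratum of the suspension off each surviving irreducible permutation to obtain points~(1)--(2), and derives points~(3)--(4) from the decomposition $T'=T'_{1}\comp T'_{2}$ together with additivity of the SAF-invariant. The only cosmetic difference is that you spell out the small-$d$ step (Proposition~\ref{prop:saf:n=2} and Lemma~\ref{lm:d=3}) needed to conclude that neither factor can be periodic, which the paper leaves implicit in its ``straightforward to check'' case analysis.
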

\begin{proof}
Recall that $\mathcal E_3 = 
\left\{
\left(\begin{smallmatrix}
\begin{smallmatrix}A\ B \\  B\ A  \end{smallmatrix} &\ \pi''
\end{smallmatrix}\right),
\left(\begin{smallmatrix}
\ \pi'' & \begin{smallmatrix}A\ B \\  B\ A  \end{smallmatrix}
\end{smallmatrix}\right)
\right\}$ where $\pi''$ is a generalized permutation defined over $3$ letters. We used the 
same convention as in the proof of Claim~\ref{claim:1}. 

If $\mathcal R_{sing}(\pi)=\left(\begin{smallmatrix} \begin{smallmatrix}A\ B \\  B\ A  \end{smallmatrix} &\ \pi'' \end{smallmatrix}\right)$
then
$$
\pi\in \left\{
\left(\begin{smallmatrix}
\begin{smallmatrix} A \ B &| \\  B\ A &| \end{smallmatrix} &\ \dots  
\end{smallmatrix}\right),
\left(\begin{smallmatrix}
\begin{smallmatrix}\alpha \ B &| \\  B\ A &| \end{smallmatrix} &\ \pi'' & \begin{smallmatrix} | & \alpha \\ |& A  \end{smallmatrix}
\end{smallmatrix}\right),
\boxed{
\left(\begin{smallmatrix}
\begin{smallmatrix}  A \ B \\   B\ \alpha \end{smallmatrix} &\ \pi'' & \begin{smallmatrix} \alpha \\  A  \end{smallmatrix}
\end{smallmatrix}\right)
}
\ \right\}
$$
(up to permutation of the letters of the alphabet $\Alp \sqcup \{\alpha\}$ and of the top/bottom lines). It is 
straightforward to check the conclusions of the claim, for all the possible $\pi''$. 

Now if $\mathcal R_{sing}(\pi)=\left(\begin{smallmatrix} \ \pi'' & \begin{smallmatrix}A\ B \\  B\ A  \end{smallmatrix} 
\end{smallmatrix}\right)$ the proof follows the same lines as the arguments above.
\end{proof}
We can now finish case~(a). Indeed, the only thing remains to prove is when 
$(Y,q) \in \QQQ(-1,2,3)$. But the last claim ensures that $\pi'=\mathcal R_{sing}\pi = 
\left(\begin{smallmatrix} C & D & C & A & B \\ E & D & E & B & A \end{smallmatrix}\right)$.
Thus the only possibility to have $SAF(T')=0$ is $SAF(T')=SAF(T'_1)+SAF(T'_2)=0$ where 
$T'_i=(\pi'_i,\lbd'_i)$ and
$$
\pi'_1 = \left(\begin{smallmatrix} C & D & C \\ E & D & E \end{smallmatrix}\right),
\qquad
\pi'_2 = \left(\begin{smallmatrix} A & B \\ B & A \end{smallmatrix}\right).
$$
But obviously $T'_1=(\pi'_1,\lbd'_1)$ is completely periodic for any $\lbd'_1$.
Hence $SAF(T'_1)$ vanishes and so does $SAF(T'_2)$, which implies that $T'_2$ is completely periodic. Therefore 
$T$ is periodic. \medskip

\noindent In case~(b), $T=T_{1}\comp T_{2}$, where $T_{i}$ is a linear involution
defined over $d_{i}$ intervals with $d_{1}+d_{2}=6$. Since $\pi$ is irreducible the only possible 
partitions for $\{d_{1},d_2\}$ are $\{2,4\}$ or $\{3,3\}$. 
Since $SAF(T_{1}) + SAF(T_{2})=0$ we easily rule out the first case (otherwise $T$ is completely periodic).
In the second case $\pi = (\pi_1,\pi_2)$, where $\pi_1$ and $\pi_2$ are generalized permutations, each defined 
over $3$ letters. \medskip

We first claim that $(Y,q) \not \in \QQQ(8)  \sqcup \QQQ(-1,2,3) \sqcup \QQQ(-1^3,1,2)$. The proof is 
straightforward (for each possible permutation, we calculate the stratum corresponding to $\pi$). \medskip

Now if $(Y,q) \in \QQQ(-1^{4},4) \sqcup \QQQ(-1,1,4)$, since $T$ is decomposed, we have a decomposition of the surface $(Y,q)$. Note that if $T$ is not periodic then $SAF(T_{1}) = - SAF(T_{2}) \not = 0$.
Then necessarily $(Y,q)$ is the connected sum of a flat torus and a flat sphere
(respectively, two flat tori) irrationally foliated with opposite SAF-invariant (see Figure~\ref{fig:decomposition:2} below).
This ends the proof of Theorem~\ref{theo:d=6}.
\end{proof}
\begin{Example}
\label{ex:H22}
In Figure~\ref{fig:decomposition:2} below the surface $(Y,q)$ decomposes
along the saddle connection $\gamma$ into a connected sum of a 
flat torus and a flat sphere, as we can notice by the underlying permutation
$\left(\begin{smallmatrix} 0 &0&1&3&4&3&4 \\ 1 &2&2&5&5 \end{smallmatrix}\right)$.
One can arrange the parameters so that $SAF(T)=0$ and $T$ is not completely
periodic. Moreover, there exists a regular fixed point of the Prym involution in the spine of the double cover $(X,\omega)$. Namely $(X,\omega)$ decomposes
into two permuted tori and one invariant tori along the lifts of $\gamma,\gamma'$.
\begin{figure}[htbp]
\begin{minipage}[t]{0.8\linewidth}
\centering
\begin{tikzpicture}[scale=0.9, >=angle 60]
\draw (0,0) -- (2,3) -- (3,2) -- (4.5,2) -- (5.5,1/2) -- (7,1/2) -- (8,-1) -- (1,-1) -- (0,0);
\fill[fill=blue!20] (0,0) -- (2,3) -- (3,2) -- (3,-1) -- (1,-1) -- (0,0);
\draw[-, dashed] (3,2) -- (3,-1);
\draw[-, dashed] (5.5,1/2) -- (5.5,-1);
\foreach \t in {0,1/2,1} \filldraw[fill=white] (0,0) + (2*\t,3*\t) circle(2pt);
\filldraw[fill=white] (2,-1) circle(2pt);
\filldraw[fill=white] (5.5,-1) circle(2pt);
\foreach \x in {(1,-1),(3,2),(4.5,2),(5.5,1/2),(7,1/2),(8,-1),(3,-1)} \filldraw[fill=black] \x circle(2pt);

\foreach \x in {(5,1.25),(7.5,-0.25)} \draw \x node[right] {$\scriptstyle \alpha$};
\foreach \t in {1/4,3/4} \draw (0,0) + (2*\t,3*\t) node[above] {$\scriptstyle 0$};
\foreach \t in {1/2} \draw (2,3) + (\t,-\t) node[above] {$\scriptstyle 1$};
\foreach \t in {1/2} \draw (\t,-\t) node[below] {$\scriptstyle 1$};
\foreach \x in {(1.5,-1),(2.5,-1)} \draw \x node[below] {$\scriptstyle 2$};

\draw (3.75,2) node[above] {$\scriptstyle 3$};
\draw (6.75,-1) node[below] {$\scriptstyle 4$};
\draw (6.25,1/2) node[above] {$\scriptstyle 3$};
\draw (4.25,-1) node[below] {$\scriptstyle 4$};
\draw (3,0.5) node[right] {$\scriptstyle \gamma$};
\draw (5.5,-0.25) node[right] {$\scriptstyle \gamma'$};
\draw (4.5,-2) node {$(Y,q) \in \QQQ(-1^4,4)$};
\end{tikzpicture}
\end{minipage}
\caption{Decompositions of $(Y,q)$ into a connected sum of a 
flat torus and a flat sphere (colored in blue).
}
\label{fig:decomposition:2}
\end{figure}
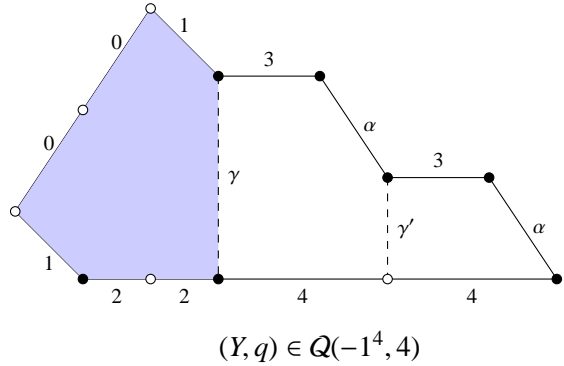
\end{Example}
In view of proving Theorems~\ref{theo:cp:stronger} and~\ref{theo:infinite:limitset} 
we reformulate Theorem~\ref{theo:d=6} in terms of Prym eigenforms.
\begin{Corollary}
\label{cor:reformulation:d=6}
Let $(X,\omega)\in \Omega E_D(\kappa)$ be a Prym eigenform and assume that $\omega$ has 
absolute and relative periods in $K(\imath)$, where $K=\Q(\sqrt{D})$. Let $\theta$ be a direction in $K\P^1$.
\begin{enumerate}
\item If $(X,\omega)\in \PrD(1,1,4) \sqcup  \PrD(4,4)$ then $\mathcal{F}_\theta$ is completely periodic.
\item 
\label{pt:2}
If $(X,\omega)\in \PrD(1,1,2)$ and the spine of the foliation in direction $\theta$
contains a regular fixed point of the Prym involution then $\mathcal{F}_\theta$ is completely periodic.
\end{enumerate}
\end{Corollary}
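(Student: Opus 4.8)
The plan is to recast the statement in the language of Theorem~\ref{theo:d=6} and then read off both conclusions. First I would normalize the direction: since $\theta\in K\P^1$, there is $g_\theta\in\GL^+(2,K)$ carrying $\theta$ to the vertical direction; $g_\theta$ keeps all periods of $\omega$ in $K(\imath)$, preserves the property of being a Prym eigenform, commutes with $\inv$ (so it maps fixed points to fixed points and the spine to the spine), and does not change whether $\mathcal F_\theta$ is completely periodic. Hence I may assume $\theta$ is vertical. Passing to the quotient $(Y,q)=(X,\omega)/\langle\inv\rangle$, which lies in the stratum dual to $\Prym(\kappa)$ under Table~\ref{tab:strata:list}, let $T$ be the linear involution obtained as the cross section of the vertical foliation of $(Y,q)$ to a full transversal interval. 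Since $d=\dim_\C\Prym(\kappa)+1=6$, the map $T$ is defined over $6$ intervals, and the two hypotheses of Theorem~\ref{theo:d=6} are met: the interval lengths are real parts of periods of $\omega$, hence lie in $K$, a rank-two space over $\Q$; and $SAF(T)=0$, because by Corollary~\ref{cor:eigen:rel:CAP} the surface is completely algebraically periodic, so its SAF-invariant vanishes in every direction of $K\P^1$, and $SAF(T)=SAF(\hat T)$ equals the SAF-invariant of the vertical flow on $(X,\omega)$.

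Part~(1) is then immediate: under Table~\ref{tab:strata:list} one has $\PrD(1,1,4)\leftrightarrow\QQQ(-1,2,3)$ and $\PrD(4,4)\leftrightarrow\QQQ(8)$, so $(Y,q)\in\QQQ(-1,2,3)\sqcup\QQQ(8)$, and case~(1) of Theorem~\ref{theo:d=6} gives that $T$, hence $\mathcal F_\theta$, is completely periodic.

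For Part~(2) I would argue by contraposition. Assume $\mathcal F_\theta$ is not completely periodic. Since $\PrD(1,1,2)\leftrightarrow\QQQ(-1^3,1,2)$, case~(2a) of Theorem~\ref{theo:d=6} applies, so $(Y,q)$ is the connected sum, along a vertical saddle connection $\gamma$, of a flat torus and a flat sphere, each irrationally (hence minimally) foliated in the vertical direction. I would then locate the fixed points of $\inv$. A Riemann--Hurwitz count gives exactly four fixed points; three of them lie over the three simple poles of $q$ and are regular points of $\omega$ (a simple pole of $q$ lifts to a regular point, since $\pi^*q=\omega^2$), while the fourth lies over the simple zero of $q$ and is the double zero of $\omega$. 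Thus the \emph{regular} fixed points of $\inv$ are exactly the three points lying over the poles, and these poles sit in the interior of the minimally foliated flat sphere. As the connected sum only \emph{adds} cone angle, the endpoints of $\gamma$ are zeros (they have positive angle excess) and never poles; hence no pole is an endpoint of $\gamma$. In a minimal direction the separatrix issuing from each pole is therefore a dense, infinite leaf rather than a saddle connection, so no pole lies on a saddle connection of $(Y,q)$. Lifting through $\pi$, no regular fixed point of $\inv$ lies on the spine of $\mathcal F_\theta$ on $X$, which contradicts the hypothesis. Consequently $\mathcal F_\theta$ is completely periodic.

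The step I expect to be the crux is locating the regular fixed points relative to the decomposition in Part~(2): one must determine the precise topological model of the torus--sphere connected sum arising for $\Prym(1,1,2)$ and verify that all three regular fixed points sit in the interior of the minimally foliated piece, equivalently that the separatrices from the poles are not saddle connections. This is exactly where $\Prym(1,1,2)$ departs from $\Prym(2,2)^{\rm odd}$: for the latter, Example~\ref{ex:H22} produces a non-periodic decomposition with a regular fixed point \emph{on} the spine, so excluding the analogous configuration here genuinely requires the careful inspection of the topological models of cylinder decompositions in $\Prym(1,1,2)$.
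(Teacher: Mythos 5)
Your normalization, the passage to $(Y,q)=(X,\omega)/\langle\inv\rangle$, the verification of the two hypotheses of Theorem~\ref{theo:d=6} (rank-two lengths and $SAF(T)=0$, via complete algebraic periodicity), and all of Part~(1) are correct and essentially identical to the paper's proof.

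Part~(2), however, has a genuine gap at precisely the step you yourself flag as the crux, and the justification you offer does not close it. Your claim that ``the connected sum only adds cone angle, so the endpoints of $\gamma$ are zeros and never poles'' tacitly assumes that each endpoint of the slit is shared by both pieces, so that its cone angle is $\theta_{Y_0}+\theta_{Y_1}\geq \pi+2\pi$ (the torus side contributing exactly $2\pi$ since a flat torus is smooth). But $\gamma$ and $\gamma'$ are only \^homologous saddle connections, and in such configurations one of them may terminate at a pole whose \emph{entire} angle $\pi$ lies on one side of the cut --- the slit ``doubles back'' at that endpoint, only one piece touches it, and the pole then lies on $\gamma\cup\gamma'$, hence on the spine, while the closed-up pieces are still a flat torus and a flat sphere. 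This is not hypothetical: it is exactly the configuration of Example~\ref{ex:H22} and the remark following the corollary in the paper, for $\QQQ(-1^4,4)$ --- a stratum to which your angle-excess argument applies verbatim, and where the conclusion ``no regular fixed point lies on the spine'' is false. So your angle count proves too much and cannot be what distinguishes $\QQQ(-1^3,1,2)$ from $\QQQ(-1^4,4)$. What actually rules out a pole on $\gamma\cup\gamma'$ for $\QQQ(-1^3,1,2)$ is the combinatorial enumeration inside the proof of Theorem~\ref{theo:d=6} (Claims~\ref{claim:1} and~\ref{claim:2}): for this stratum the decomposition arises only from the explicitly listed permutations (cf.\ the left part of Figure~\ref{fig:decomposition}), in which the three letters doubled on a single line --- the poles --- all sit in the sphere block, and the boundary saddle connections join the two zeros of $q$. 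This enumeration is what underlies the paper's unadorned ``Observe that the three poles of $q$ belong to $Y_0$,'' and it must be invoked (or redone) in your proof; without it, Part~(2) is unproven.

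Once that point is supplied, your contrapositive does go through, though it is worth noting the paper finishes by a different mechanism: instead of arguing downstairs on $Y$ that no regular fixed point can meet the spine, it lifts the decomposition to $X$ ($Y_0$ lifts to a torus $X_0$ fixed by $\inv$, $Y_1$ to two permuted tori $X_{1,j}$), uses the hypothesis to force $\mathcal{F}_\theta|_{X_0}$ to be periodic, and then concludes from $SAF(X_0)=-2\,SAF(X_{1,1})$ and the torus dichotomy that the whole foliation is periodic. Both routes are viable, but both stand or fall on the same unestablished localization of the poles.
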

\noindent Again we emphasize that assertion~(\ref{pt:2}) of above corollary is false in general 
if $(X,\omega)\in \PrD(2,2)$, as shown Example~\ref{ex:H22} (see also Figure~\ref{fig:decomposition:2}).

\begin{proof}[Proof of the corollary]
As usual we will assume that $\theta$ is vertical. We begin by observing that 
Theorem~\ref{thm:Alg:Periodic} implies $SAF(X,\omega)=0$. Let $T$ be the 
cross section of the vertical foliation to some full transversal interval on the 
quotient $(Y,q)=(X,\omega)/<\inv>$. Hence $SAF(T)=0$. Complete periodicity 
of $\mathcal{F}_\theta$ is equivalent to complete periodicity of $T$. \medskip

\noindent Since $(X,\omega)\in \Prym(1,1,4)$ (respectively, $(X,\omega)\in \Prym(4,4)$) 
is equivalent to $(Y,q)\in \QQQ(-1,2,3)$ (respectively, 
$(Y,q)\in \QQQ(8)$), assertion~$(1)$ is a reformulation of Theorem~\ref{theo:d=6}. \medskip

\noindent Let us prove~$(2)$. Again $(X,\omega)\in \Prym(1,1,2)$
is equivalent to  $(Y,q)\in \QQQ(-1^3,1,2)$. If $T$ is not completely periodic then by 
Theorem~\ref{theo:d=6} $(Y,q)$ is the connected sum of a flat torus and a flat 
sphere, irrationally foliated with opposite SAF-invariants. Hence there exists two 
(\^homologous) saddle connections $\gamma$ and $\gamma'$ such that 
$Y\setminus (\gamma \cup \gamma') = Y_0 \cup Y_1$, where $Y_0$ is a flat 
sphere and $Y_1$ a flat torus (with boundary). Observe that the three poles of $q$
belong to $Y_0$. Since $Y\in \QQQ(-1^3,1,2)$ the component $Y_0$ lifts  
to a fixed torus $X_0$ and $Y_1$ to two permuted tori $X_{1,j}, \, j=1,2,$ in $X$. One has 
$SAF(X_0) = -2SAF(X_{0,1})$. By assumption the spine of the foliation on $X_0$
contains a fixed point of the Prym involution; hence $\mathcal {F_\theta}_{|X_0}$ is periodic. Thus 
$SAF(X_0)=0$ and we conclude that $\mathcal {F_\theta}$ is periodic on $X$.
\end{proof}

\begin{Remark}
We note that this proof does not work if $(Y,q)\in \QQQ(-1^4,4)$. Indeed the decomposition 
into three tori $T_0$ and $T_{0,j}$ still holds but it could happen that the pole on the 
spine of the foliation on $Y$ lies in $\gamma \cup \gamma'$ (see Figure~\ref{fig:decomposition:2}).
In this case, the foliation on $Y_0$ may not be periodic, for instance, in Example~\ref{ex:H22}, if we choose the lengths $\lbd_\alpha,\lbd_3$
of the intervals labelled by $\alpha$ and $3$ so that $\frac{\lbd_\alpha}{\lbd_3}\not \in\Q$ .
\end{Remark}

\begin{proof}[Proof of Theorem~\ref{theo:cp:stronger}]
It clearly follows from  assertion~(1) of Corollary~\ref{cor:reformulation:d=6}.
\end{proof}

\section{Limit set of Veech groups}
\label{sec:limit:sets}

In this section, we prove the result on the limit sets of Veech groups
of Prym eigenforms {\em i.e.} Theorem~\ref{theo:infinite:limitset}. In the sequel
we fix a form $(X,\omega)\in \Omega E_D(4,4)^{\rm even} \sqcup \Omega E_D(1,1,4) \sqcup \Omega E_D(1,1,2)$.
A periodic direction is said to be {\em stable} if 
there is no saddle connection in this direction that connects two different zeros, 
it is said to be {\em unstable} otherwise.
\begin{Lemma}\label{lm:max:nb:cyl}
Any direction $\theta$ that decomposes $(X,\omega)\in \H(\kappa)$ into $g+|\kappa|-1$ cylinders, where $g$ is the genus of $X$, is stable.
\end{Lemma}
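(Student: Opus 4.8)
The plan is to bound the number $m$ of cylinders of a periodic direction by an Euler-characteristic computation and to read off the geometric meaning of the extremal case. First I would fix notation: let $C_1,\dots,C_m$ be the cylinders of the decomposition in direction $\theta$ (so that $m=g+|\kappa|-1$ by hypothesis), and let $G\subset X$ be the union of all saddle connections in direction $\theta$, viewed as an embedded graph. Since every zero of $\omega$ is an endpoint of separatrices, and in a periodic direction every separatrix extends to a saddle connection, the vertex set of $G$ is \emph{exactly} the set of zeros of $\omega$; in particular $G$ has precisely $|\kappa|$ vertices, hence at most $|\kappa|$ connected components. The complement $X\setminus G$ is the disjoint union of the open cylinders $C_i$.

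The main step is to cut $X$ along the $m$ core curves $\gamma_1,\dots,\gamma_m$ of the cylinders, producing a compact surface $X'$ with $2m$ boundary circles. Cutting along disjoint simple closed curves preserves the Euler characteristic, so $\chi(X')=\chi(X)=2-2g$. The key observation is that $X'$ deformation retracts onto $G$: each cylinder minus its core is a union of two half-open annuli, each of which retracts onto the boundary circle of $C_i$ that lies in $G$. Consequently the connected components of $X'$ are in bijection with those of $G$; let $p$ denote their number, so that $p\le |\kappa|$ by the previous paragraph.

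Writing $g_1,\dots,g_p\ge 0$ for the genera of the components of $X'$ and $b_1,\dots,b_p$ for their numbers of boundary circles (so $\sum_j b_j=2m$), I would then compute
$$
2-2g=\chi(X')=\sum_{j=1}^{p}\bigl(2-2g_j-b_j\bigr)=2p-2\sum_{j=1}^{p}g_j-2m,
$$
which rearranges to $m=g+(p-1)-\sum_{j}g_j\le g+|\kappa|-1$, using $p\le|\kappa|$ and $g_j\ge 0$. The hypothesis $m=g+|\kappa|-1$ now forces equality everywhere: $p=|\kappa|$ and every $g_j=0$. In particular $G$ has $|\kappa|$ vertices distributed among $|\kappa|$ connected components, so each component of $G$ contains exactly one zero. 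Every edge of $G$ therefore has both endpoints equal to that single zero, i.e.\ every saddle connection in direction $\theta$ is a loop based at one zero. Hence no saddle connection joins two distinct zeros, which is exactly the assertion that $\theta$ is stable.

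The step I expect to require the most care is the topological bookkeeping around $X'$: checking that cutting along the $m$ cores yields precisely $2m$ boundary circles, that $X'$ genuinely retracts onto $G$ so that the two component counts agree, and that the vertex set of $G$ really consists of all $|\kappa|$ zeros. Once these points are secured, the inequality $m\le g+|\kappa|-1$ and the analysis of its equality case are immediate.
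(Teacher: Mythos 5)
Your proof is correct, but it takes a genuinely different route from the paper's. The paper's proof is a two-line degeneration argument: it quotes Masur's bound that a periodic direction decomposes the surface into \emph{at most} $g+|\kappa|-1$ cylinders, and then argues by contradiction — if $\theta$ were unstable, one could collapse a saddle connection in direction $\theta$ joining two distinct zeros without destroying any cylinder, producing a surface in a stratum $\H(\kappa')$ of the same genus with $|\kappa'|<|\kappa|$ but still $g+|\kappa|-1$ cylinders, violating the same bound on the degenerated surface. You instead reprove the bound from scratch: cutting along the $m$ core curves, retracting the cut surface onto the graph $G$ of saddle connections, and computing Euler characteristics gives $m=g+(p-1)-\sum_j g_j$ with $p\le|\kappa|$, and you read stability directly off the equality case ($p=|\kappa|$ forces each component of $G$ to carry a single zero, so every saddle connection in direction $\theta$ is a loop). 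Each approach buys something: the paper's is shorter, outsourcing all topology to the cited bound, but its one delicate step — that the collapse can be performed inside a stratum ``without destroying any cylinder,'' e.g.\ that no cylinder's width is entirely carried by the collapsed saddle connection — is left implicit; your version is self-contained, needs no surgery or citation, and yields extra structure in the extremal case (all complementary pieces of the core curves have genus zero, and components of $G$ biject with the zeros), essentially recovering the standard proof of Masur's bound together with its equality analysis. The points you flagged as needing care — the $2m$ boundary circles, the deformation retraction of the cut surface onto $G$ (each half-cylinder is attached to $G$ along its bottom circuit, so the adjunction space retracts onto $G$), and the fact that every zero is a vertex of $G$ because in a periodic direction every separatrix is a saddle connection — all check out.
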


\begin{proof}
We begin by observing that any periodic direction decomposes the surface $X$ into {\it at most}
$g+|\kappa|-1$ cylinders~\cite{Ma}. Now if the direction $\theta$ is not stable then there
exists necessarily a saddle connection between two different zeros that we can
collapse to a point (in direction $\theta$) without destroying any cylinder. But in this way we get a surface $(X',\omega')\in \H(\kappa')$
of genus $g$ where $|\kappa'| < |\kappa|$, and having $g+|\kappa|-1$ cylinders . This is a contradiction.
\end{proof}
We now prove the following proposition about unstable periodic directions on Prym eigenforms. 
\begin{Proposition}
\label{prop:cp:unstab:decomp}
Let $(X,\omega)\in \Omega E_D(4,4)^{\rm even} \sqcup \Omega E_D(1,1,4) \sqcup \Omega E_D(1,1,2) $  
be a Prym eigenform with relative and absolute periods in $K(\imath)$, where $K=\Q(\sqrt{D})$. Then any unstable periodic direction 
$\theta$ decomposes the surface into cylinders with commensurable moduli. 
As a consequence, $\SL(X,\omega)$ contains a parabolic element fixing $\theta$.
\end{Proposition}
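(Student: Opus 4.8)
The plan is to read off the cylinder decomposition from the vanishing of the complex flux, bound the number of cylinders in an unstable direction, and then produce the parabolic by the standard multitwist construction. First I would dispose of the case $K=\Q$: if $D$ is a square then $(X,\omega)$ is a torus cover, every periodic direction decomposes it into cylinders of rational (hence commensurable) moduli, and the multitwist already yields a parabolic fixing $\theta$. So assume $K=\Q(\sqrt D)$ is a genuine real quadratic field, and normalize by $\GL^+(2,\R)$ so that $\theta$ is the horizontal direction. Since $\omega$ is a Prym eigenform with $[\omega]\in H^1(X,K(\imath))$, Theorem~\ref{thm:zero:flux} gives $\int_X\omega\wedge\omega'=0$ and $\Fl(\omega)=0$, so Corollary~\ref{cor:mod:relation} applies to the horizontal decomposition.

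Labelling the horizontal cylinders up to the Prym involution by $C_1,\dots,C_n$, with widths $w_i$, moduli $\mu_i$, and $\alpha_i\in\{1,2\}$ according as $C_i$ is fixed or swapped by $\inv$, this relation is exactly Equation~\eqref{eq:CAP:mod:rel}:
\[
\sum_{i=1}^{n}\alpha_i\mu_i N(w_i)=0,\qquad N(w_i)=w_iw_i'\in\Q^{\times}.
\]
The case $n=1$ is impossible, since it would force $N(w_1)=0$ while $w_1\neq 0$. The heart of the matter is to show $n\le 2$ for an unstable direction. Here I would use Lemma~\ref{lm:max:nb:cyl}: a decomposition into the maximal number $g+|\kappa|-1$ of cylinders is necessarily stable. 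As $\theta$ is unstable its decomposition is not maximal, and a case-by-case inspection of the topological models of the cylinder decompositions in each of the three loci — tracking which cylinders are $\inv$-fixed (each contributing $\alpha_i=1$ and one cylinder) and which are swapped in pairs (each contributing $\alpha_i=2$ and two cylinders) — shows that $n=3$ occurs only together with a maximal, hence stable, decomposition.

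This inspection is the step I expect to be the main obstacle, and it is precisely the \emph{careful inspection of topological models} mentioned in the outline. It is cleanest for $\Prym(4,4)^{\rm even}$ and $\Prym(1,1,4)$, where $g+|\kappa|-1=6$ and the $\inv$-orbits of cylinders must essentially be pairs, but most delicate for $\Prym(1,1,2)$: the $\inv$-fixed zero of order $2$ permits $\inv$-fixed cylinders, so the total cylinder count on $X$ is decoupled from $2n$, and one must check directly that no unstable configuration attains $n=3$.

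Granting $n\le 2$, the single relation $\alpha_1\mu_1 N(w_1)+\alpha_2\mu_2 N(w_2)=0$ with both $N(w_i)\in\Q^{\times}$ forces $\mu_1/\mu_2\in\Q$, so all the horizontal cylinders have commensurable moduli. To conclude I would invoke the Thurston–Veech multitwist: choosing integers $m_i$ with $m_i\mu_i$ independent of $i$, the product of the $m_i$-fold Dehn twists in the $C_i$ is an affine automorphism of $(X,\omega)$ whose derivative is a single parabolic matrix fixing $\theta$. Since the decomposition is $\inv$-invariant the twists can be performed $\inv$-equivariantly, so the automorphism commutes with $\inv$ and its derivative lies in $\SL(X,\omega)$, which gives the desired parabolic element.
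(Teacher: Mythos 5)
Your skeleton coincides with the paper's: dispose of $K=\Q$, normalize $\theta$ to the horizontal, invoke Theorem~\ref{thm:zero:flux} and Corollary~\ref{cor:mod:relation} to get the rational relation $\sum_{i=1}^{n}\alpha_i\mu_i N(w_i)=0$ on the moduli counted up to the Prym involution, reduce everything to the bound $n\le 2$, and finish with commensurability and a multitwist. But at the decisive step --- showing that an unstable direction forces $n\le 2$ --- you defer to an unspecified ``case-by-case inspection of the topological models'', which you yourself flag as the main obstacle. That is a genuine gap: nothing in your text actually rules out $n=3$ for a non-maximal decomposition, because the passage from the total cylinder count (bounded via Lemma~\ref{lm:max:nb:cyl}) to the count $n$ up to $\inv$ hinges on how many cylinders can be $\inv$-invariant, and this is precisely what you leave open --- especially, as you note, in $\Prym(1,1,2)$, where invariant cylinders do occur.

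The paper closes this with a short counting argument that makes any enumeration of models unnecessary. An $\inv$-invariant cylinder contains two \emph{regular} fixed points of the Prym involution (the half-turn on a flat cylinder has exactly two interior fixed points), and regular fixed points project to simple poles of the quotient quadratic differential $(Y,q)$. For $\Prym(4,4)^{\rm even}$ and $\Prym(1,1,4)$ the quotients lie in $\QQQ(8)$ and $\QQQ(-1,2,3)$, which have at most one pole, so \emph{no} cylinder is $\inv$-invariant; hence the total number of horizontal cylinders is even, and instability (total $\le 5$, since a decomposition into $g+|\kappa|-1=6$ cylinders is stable by Lemma~\ref{lm:max:nb:cyl}) forces at most $4$ cylinders, i.e.\ $n\le 2$ pairs. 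For $\Prym(1,1,2)$ the quotient lies in $\QQQ(-1^3,1,2)$, so $\inv$ has exactly three regular fixed points and at most one cylinder can be invariant; with at most $4$ cylinders in an unstable direction this again yields $n\le 2$. Your remaining steps are fine and agree with the paper: $n=1$ is impossible since $N(w_1)\neq 0$, and $n=2$ gives $\mu_1/\mu_2\in\Q$ and the Thurston--Veech parabolic (where, incidentally, the $\inv$-equivariance you arrange for the twists is not needed --- any affine automorphism with parabolic derivative already contributes to $\SL(X,\omega)$).
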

\begin{proof}
We only need to consider the case where $K$ is a real quadratic field. Assume that $(X,\omega)\in \Omega E_D(4,4)^{\rm even} \sqcup \Omega E_D(1,1,4)$ 
then the decomposition in direction $\theta$ has at most $6$ cylinders by Lemma~\ref{lm:max:nb:cyl}.  Since the direction $\theta$ is not stable and none of the cylinders is fixed by the Prym involution (otherwise the quotient $(Y,q)$ by the Prym involution would have at least $2$ poles) one has $n\in \{2,4\}$. We denote by $C_i, i=1,\dots,2r=n,$ the permuted cylinders so that $C_{i+r}=\inv(C_i)$.
By Corollary~\ref{cor:mod:relation} the moduli of the cylinders satisfies
$$
0=\mathrm{Im}(\Fl(\omega)) = 2 \sum_{i=1}^{r} h(C_{i}) w(C_{i})' = 2 \sum_{i=1}^{r} k_{i}\cdot \mu(C_{i}).
$$
where $k_{i}=w(C_{i})w(C_{i})'\in\Q$. Hence
$$
\sum_{i=1}^{r} k_{i}\cdot \mu(C_{i}) = 0.
$$
But $r\leq 2$ thus above equality implies that $\mu(C_i)$ are commensurable. The
direction is parabolic and a suitable product of Dehn twist in each cylinder gives rise to an affine automorphism
with parabolic derivative fixing $\theta$.

The case $(X,\omega)\in \Omega E_D(1,1,2)$ follows from similar arguments since 
the decomposition in direction $\theta$ has at most $5$ cylinders.
\end{proof}

\begin{Corollary}
\label{cor:P112:2zero:connect}
Let $(X,\omega)\in  \Omega E_D(1,1,2)$  be a  Prym eigenform
with relative and absolute periods in $K(\imath)$.
If $\theta \in K\P^1$ is the direction of a saddle connection between the two simple zeros that is
invariant under the Prym involution, then $\SL(X,\omega)$ contains a parabolic element fixing $\theta$.
\end{Corollary}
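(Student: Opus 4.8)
The plan is to show that $\theta$ is a completely periodic, \emph{unstable} direction, and then to invoke Proposition~\ref{prop:cp:unstab:decomp}, which already produces a parabolic element of $\SL(X,\omega)$ fixing any unstable periodic direction. All the hypotheses of that proposition (a Prym eigenform in $\Omega E_D(1,1,2)$ with relative and absolute periods in $K(\imath)$) are part of the present assumptions, so the entire difficulty is concentrated in establishing periodicity of $\mathcal{F}_\theta$. Unstability will then be immediate: by hypothesis $\theta$ is the direction of a saddle connection joining the two \emph{distinct} simple zeros, which is exactly the definition of an unstable direction.

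To obtain periodicity I would apply assertion~(2) of Corollary~\ref{cor:reformulation:d=6}, whose hypotheses on $(X,\omega)$ and on $\theta\in K\P^1$ are again satisfied here, so that it only remains to verify its geometric condition: that the spine of $\mathcal{F}_\theta$ contains a regular fixed point of the Prym involution. This is the key step. Recall that for $(X,\omega)\in\Prym(1,1,2)$ the involution $\inv$ exchanges the two simple zeros $P_1,P_2$. Let $\gamma$ be the $\inv$-invariant saddle connection from $P_1$ to $P_2$ in direction $\theta$. Since $\inv(P_1)=P_2$, $\inv(P_2)=P_1$ and $\inv(\gamma)=\gamma$ as a set, the restriction of $\inv$ to $\gamma$ is an orientation-reversing involution of the arc, hence fixes its midpoint $m=\gamma(1/2)$. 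As $m$ lies in the interior of a saddle connection it is a regular point of $\omega$, so $m$ is a regular fixed point of $\inv$; moreover $m$ lies on $\gamma$, which is a singular leaf of $\mathcal{F}_\theta$ and thus belongs to its spine. (Consistently, the fixed-point set of $\inv$ consists of the four branch points of $X\to Y$, of which three are regular, lying above the three simple poles of $q$; the point $m$ is necessarily one of these three.)

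With the geometric condition verified, Corollary~\ref{cor:reformulation:d=6}(2) gives that $\mathcal{F}_\theta$ is completely periodic, and combined with the unstability noted above, Proposition~\ref{prop:cp:unstab:decomp} then yields a parabolic element of $\SL(X,\omega)$ fixing $\theta$, which completes the proof. The only genuine content is the midpoint argument of the previous paragraph; everything else is an assembly of Corollary~\ref{cor:reformulation:d=6} and Proposition~\ref{prop:cp:unstab:decomp}. The main point to be careful about is that the notion of spine used in Corollary~\ref{cor:reformulation:d=6} indeed includes saddle connections between the two simple zeros, so that $m$ genuinely qualifies as a regular fixed point of $\inv$ lying on the spine of $\mathcal{F}_\theta$.
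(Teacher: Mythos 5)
Your proposal is correct and follows essentially the same route as the paper's own proof: reduce to Proposition~\ref{prop:cp:unstab:decomp} by showing $\theta$ is an unstable periodic direction, with periodicity supplied by assertion~(2) of Corollary~\ref{cor:reformulation:d=6} once the spine is known to contain a regular fixed point of $\inv$. The only difference is that the paper simply asserts the invariant saddle connection ``necessarily contains a regular fixed point of the Prym involution,'' whereas you spell out the midpoint argument (the isometry $\inv$ preserves $\gamma$ and swaps its endpoints, hence fixes its interior midpoint), which is exactly the intended justification.
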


\begin{proof}[Proof of Corollary~\ref{cor:P112:2zero:connect}]
In view of the previous proposition it suffices to show that $\theta$ is an unstable periodic direction.
Since $\theta$ is the direction of a saddle connection, we have $\theta\in K\P^1$. Necessarily the saddle connection contains a regular fixed point of the Prym involution. By Corollary~\ref{cor:reformulation:d=6}, assertion~(2), the flow $\mathcal F_\theta$ is completely periodic
(the spine contains a regular fixed point of the Prym involution).  Since there is a saddle connection connecting two different zeros, this periodic direction is unstable, and the corollary follows.
\end{proof}

\subsection{Proof of Theorem~\ref{theo:infinite:limitset},  Case $\Prym(4,4)^{\rm even}\sqcup\Prym(1,1,4)$}

\begin{proof}
If the limit set has at least two points then there is a hyperbolic element in $\SL(X,\omega)$ represented by an
affine pseudo-Anosov homeomorphism $\phi$. By a result of McMullen (\cite{Mc2}, Theorem 9.4) we can assume that all the periods of $\omega$ belong to $K(\imath)$.

By Theorem~\ref{theo:cp:stronger} and Proposition~\ref{prop:cp:unstab:decomp}, any linear foliation on $(X,\omega)$ in the direction $\theta$ of  a saddle connection between two different zeros is fixed by a parabolic element of $\SL(X,\omega)$.  It remains to show that those directions fill out a dense subset of $\R\P^{1}$, which implies  that the limit set is the full circle at infinity.

Let $\theta_0\in\R\P^1$ and fix $\varepsilon>0$. By Theorem~\ref{theo:cp:stronger}, one can find $\theta\in K\P^1$ so that foliation $\mathcal F_\theta$
is completely periodic and $|\theta-\theta_0|<\varepsilon/2$. If the direction $\theta$ is not stable then by Proposition~\ref{prop:cp:unstab:decomp} we are done. Otherwise $X$ is decomposed into $6$ cylinders in direction $\theta$. Since
$X$ is a connected surface, we claim that there exists a cylinder $C$ such that the top boundary of $C$ is made
of saddle connections between one zero $P$ and the bottom boundary is made
of saddle connections between one other zero $Q \neq P$. By a suitable Dehn twist,
it is easy to find a new direction $\theta'$ satisfying $|\theta-\theta'|< \varepsilon/2$ such that there is
a saddle connection contained in $C$ between $P$ and $Q$ in direction $\theta'$. This is the desired direction.
\end{proof}

\subsection{Proof of Theorem~\ref{theo:infinite:limitset}, Case  $\Prym(1,1,2)$}
\begin{proof}
We now show the result for $(X,\omega)\in  \Omega E_D(1,1,2)$. By a result of Masur~\cite{Ma}, we know  that the set $\Theta$ of directions $\theta\in \R\P^1$ such that $\theta$ is  the direction of a regular closed geodesic is dense in $\R\P^1$. Thus, by using Proposition~\ref{prop:cp:unstab:decomp}, it suffices to show that any  direction $\theta\in \Theta$  is contained in the closure of the set of unstable periodic directions.

Let $\theta_0$ be a direction in $\Theta$. By Theorem~\ref{thm:CAP:to:CP}, we know that $X$ is decomposed into cylinders in direction $\theta_0$. We can assume that $\theta_0$ is the horizontal direction.  Obviously, we only need to consider the case where $\theta_0$ is a stable periodic direction. Note that in this case $X$ is decomposed into $5$ cylinders in direction $\theta_0$. 

If $\gamma$ is a geodesic segment connecting a regular fixed point of $X$ to one simple zero of $\omega$, then $\gamma\cup\inv(\gamma)$ is a saddle connection joining two simple zeros and invariant under $\inv$. Following Corollary~\ref{cor:P112:2zero:connect}, the direction of $\gamma$ is an unstable periodic direction. We claim that there exist such geodesic segments whose direction is arbitrarily close to $\theta_0$.

We begin by observing that in a cylinder decomposition of $X$, 
only one cylinder (denoted by $C_0$) is invariant under $\inv$.  Recall that 
$\inv$ has three regular fixed points, two of which are contained in $C_0$, the third one is the midpoint of a saddle connection 
contained in the boundaries of two exchanged cylinders. We can divide those decompositions into three types:

\begin{itemize}
\item[(a)] The boundary of the $C_0$ contains the simple zeros, or
\item[(b)] The boundary of $C_0 $ only contains the double zero, and $C_0$ is a simple cylinder, or
\item[(c)] The boundary of $C_0 $ only contains the double zero, and $C_0$ is not a simple cylinder.
\end{itemize}
(a cylinder is simple if each of its boundary component consists of  exactly one saddle connection). \medskip

In Case (a), there exist saddle connections contained in $C_0$ and invariant under $\inv$  which connect the two simple zeros 
whose direction is arbitrarily close to $\theta_0$.

In Case (b), there exist two cylinders $C_1,C_2$ exchanged by $\inv$ such that the bottom boundary of $C_1$ contains a 
regular fixed point of $\inv$, and the top boundary of $C_1$ contains a simple zero. Therefore, there  exist saddle connections 
contained in $C_1\cup C_2$ joining the simple zeros and invariant under $\inv$ whose direction is arbitrarily close to $\theta_0$. 

In Case (c), the only topological model is presented in Figure~\ref{fig:P112:Veech:case:c} below.
\begin{figure}[htbp]
\centering
\begin{tikzpicture}[scale=0.3]
\fill[fill=yellow!80!black!20] (0,1) -- (0,-1) -- (9,-1) -- (9,1) -- cycle;
\draw (0,1) -- (0,-1) -- (3,-1) -- (3,-6) -- (6,-6) -- (6,-4) -- (9,-4) -- (9,6) -- (6,6) -- (6,4) -- (3,4) -- (3,1) -- cycle; 
\draw (3,1) -- (9,1) (3,-1) -- (9,-1) (6,4) -- (9,4) (3,-4) -- (6,-4);
\draw[red, dashed] (3,-4) -- (9,4);
\foreach \x in {(0,1), (0,-1), (3,1) ,(3,-1), (9,1), (9,-1)} \filldraw[fill=white] \x circle (5pt); 
\foreach \x in {(3,4), (3,-6), (6,-6), (6,4), (9,4)} \filldraw[fill=gray] \x circle (5pt);
\foreach \x in {(3,-4), (6,-4), (6,6), (9,-4), (9,6)} \filldraw[fill=black] \x circle (5pt);
\filldraw[red] (6,0) circle (2pt);  
\end{tikzpicture}
\caption{Stable cylinder decomposition in $\Prym(1,1,2)$, the double zero is colored in white (the ``gray'' cylinder is the unique cylinder invariant under $\inv$).}
\label{fig:P112:Veech:case:c}
\end{figure}
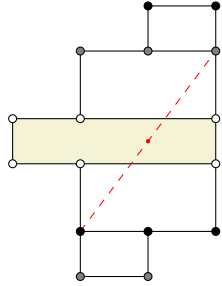
One can easily see that there always exists a geodesic segment from a fixed point of $\inv$ in the interior of $C_0$ to a 
simple zero in the boundary of a cylinder adjacent to $C_0$. Using Dehn twists, we see that there exist infinitely many such 
segments whose direction can be made arbitrarily close to $\theta_0$. The theorem is then  proved for this case.
\end{proof}

\begin{Remark}
\label{rk:alternative}
Actually we also proved a slightly different result: the limit set of the Veech group of 
any $(X,\omega)\in \PrD(4,4)^{\rm even}\sqcup\PrD(1,1,4)\sqcup\PrD(1,1,2)$, having all periods in 
$K(\imath)$, is the full circle at infinity.
\end{Remark}

\subsection{Infinitely generated Veech groups}
We end with the proof of the last claim in Theorem~\ref{theo:infinite:limitset} {\em i.e.}

\begin{Corollary}
There exist infinitely many surfaces in $\Prym(4,4)^{\rm even}\sqcup\Prym(1,1,4)\sqcup\Prym(1,1,2)$ 
whose Veech group is infinitely generated and of the first kind.
\end{Corollary}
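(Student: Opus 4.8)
The plan is to combine the first-kind conclusion of Theorem~\ref{theo:infinite:limitset} with a classical fact about Fuchsian groups: a non-elementary, finitely generated Fuchsian group of the first kind is a lattice. Hence, if we exhibit a surface $(X,\omega)$ in one of these Prym loci whose Veech group $\SL(X,\omega)$ is of the first kind but is \emph{not} a lattice, then $\SL(X,\omega)$ is automatically infinitely generated (it is non-elementary, since its limit set is the whole circle). Because $(X,\omega)$ has a lattice Veech group precisely when it is a Veech surface, the task reduces to producing \emph{infinitely many} non-Veech Prym eigenforms with all periods in $K(\imath)$.

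First I would isolate a non-Veechness criterion adapted to our setting. For a Veech surface the Veech dichotomy guarantees that every completely periodic direction is a cusp, so its cylinders have pairwise commensurable moduli. Contrapositively, if $(X,\omega)$ admits a \emph{completely periodic} direction whose cylinder moduli $\mu_1,\dots,\mu_k\in K$ are not all rational multiples of one another, then $(X,\omega)$ is not a Veech surface. By Corollary~\ref{cor:eig:form:CP} every Prym eigenform is completely periodic, so the issue is only to arrange \emph{incommensurable} moduli in a \emph{stable} direction, where Proposition~\ref{prop:cp:unstab:decomp} does not force commensurability.

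Next I would build the family using the kernel foliation. Start from a Prym eigenform $(X_0,\omega_0)\in\PrD(\kappa)$ stabilized by an affine pseudo-Anosov (these exist by Theorem~\ref{theo:hyp:Prym:eig}); it is a Veech surface and, by \cite{Mc2} Theorem~9.4, all its periods lie in $K(\imath)$. Fix a stable periodic direction, say the horizontal one, with cylinders $C_1,\dots,C_k$, widths $w_i\in K$, heights $h_i\in K$ and moduli $\mu_i\in K$, and perturb vertically: set $(X_t,\omega_t)=(X_0,\omega_0)+(0,t)$ with $t\in K$, $|t|$ small. Exactly as in the proof of Theorem~\ref{thm:CAP:eigen}, this stays inside $\PrD(\kappa)$ (the kernel foliation preserves the eigenform locus), keeps all periods in $K(\imath)$ (absolute periods are unchanged, relative ones shift by elements of $K$), and preserves the horizontal decomposition for small $t$, with heights $h_i(t)=h_i+\alpha_i t$ and $\alpha_i\in\{-1,-1/2,0,1/2,1\}$ not all equal. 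Choosing a pair $C_i,C_j$ with $\alpha_i\neq\alpha_j$, the ratio $\mu_i(t)/\mu_j(t)=\frac{w_j(h_i+\alpha_i t)}{w_i(h_j+\alpha_j t)}$ is a non-constant Möbius function of $t$ with coefficients in $K$, so it takes irrational values on a dense, hence infinite, set of arbitrarily small $t\in K$. For each such $t$ the horizontal moduli of $(X_t,\omega_t)$ fail to be commensurable, so $(X_t,\omega_t)$ is a Prym eigenform with periods in $K(\imath)$ — thus of the first kind by Theorem~\ref{theo:infinite:limitset} and Remark~\ref{rk:alternative} — carrying a completely periodic direction that is not parabolic. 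By the criterion above it is non-Veech, hence its Veech group is infinitely generated; since distinct small values of $t$ yield surfaces with distinct relative periods, one obtains infinitely many pairwise non-equivalent examples.

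The main obstacle is the non-Veech criterion: one must be certain that incommensurability of the moduli in a \emph{single} completely periodic direction genuinely obstructs the lattice property, i.e. that a Veech surface cannot carry a completely periodic direction failing to be a cusp. This is precisely the content of the Veech dichotomy for lattice surfaces, and it is what lets the computation above upgrade ``incommensurable moduli in one stable direction'' into ``infinitely generated of the first kind.''
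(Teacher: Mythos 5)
Your overall architecture coincides with the paper's: the proof in the paper also starts from the fact (quoted from Katok) that a Fuchsian group of the first kind is either a lattice or infinitely generated, invokes Remark~\ref{rk:alternative} to know the limit set is the full circle for every eigenform with periods in $K(\imath)$, and then reduces everything to exhibiting non-lattice examples via Veech's theorem that on a lattice surface every non-uniquely-ergodic direction is parabolic --- exactly your criterion ``a completely periodic direction with incommensurable moduli kills the lattice property.'' Where you genuinely diverge is in how the examples are produced. The paper writes down an explicit prototype $(X,\omega)\in\Prym(1,1,2)$ (Figure~\ref{fig:prototype}) depending on integers $(w,h,e)$ and $t\in\Q(\sqrt D)$, certifies membership in $\PrD(1,1,2)$ by exhibiting the self-adjoint endomorphism $T$ with $T^*\omega=\lbd\omega$, and computes the vertical moduli $t/\lbd$, $(\lbd-2t)/(\lbd+h/2)$, $(w/2-(\lbd-2t))/(h/2)$, choosing $t/\lbd\in\Q$ to make the first two incommensurable. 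You instead perturb an abstract pseudo-Anosov-stabilized eigenform along the kernel foliation and recycle the computation $h_i(t)=h_i+\alpha_i t$ from the proof of Theorem~\ref{thm:CAP:eigen}. Your route is more conceptual (it would show non-Veech eigenforms accumulate on any eigenform carrying a suitable direction), while the paper's explicit family is self-contained and requires no existence input.

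That existence input is precisely the gap: the sentence ``Fix a stable periodic direction'' is not justified, and it is exactly what your perturbation needs. For the M\"obius-function argument to have any traction you need a periodic direction that is stable with the maximal number of cylinders ($n=3$ classes up to the Prym involution) and a pair with $\alpha_i\neq\alpha_j$ (and moreover $h_i\alpha_j\neq h_j\alpha_i$, so that $R_{ij}$ is genuinely nonconstant --- $\alpha_i\neq\alpha_j$ alone does not exclude a constant ratio). In every other configuration the moduli are forced to be commensurable for \emph{every} eigenform with periods in $K(\imath)$: for an unstable direction this is Proposition~\ref{prop:cp:unstab:decomp}, and for $n\leq 2$ it is the flux relation~(\ref{eq:CAP:mod:rel}) (Corollary~\ref{cor:mod:relation}), which persists along the whole kernel-foliation leaf, so no choice of $t$ can break commensurability there. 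Nothing in the paper guarantees that an arbitrary pseudo-Anosov-stabilized eigenform carries such a direction; the paper's explicit prototype is what supplies one. Two smaller points: your parenthetical ``it is a Veech surface'' is false in general --- a hyperbolic element in $\SL(X_0,\omega_0)$ does not make the group a lattice --- though you never use it; and the claim that the nonconstant $R_{ij}$ takes irrational values at infinitely many small $t\in K$ deserves a line (e.g.\ $R_{ij}(t)\in\Q$ for $t\in\Q$ is a nontrivial quadratic condition with at most two roots unless $R_{ij}$ has rational coefficients after scaling, in which case injectivity gives $R_{ij}(t)\notin\Q$ for all $t\in K\setminus\Q$); this is at the same ``one easily sees'' level as the paper, so I would not count it as a gap.
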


\begin{proof}
Recall that a Fuchsian group is said to be of the first kind if its limit set is the full circle 
at infinity. Such a group is either a lattice, or infinitely generated 
(see {\em e.g.}~\cite{Katok}). Hence, in view of Remark~\ref{rk:alternative}, 
it suffices to exhibit Prym eigenforms (with relative periods in $K(\imath)$) whose the Veech group is not a lattice. 
But a theorem of Veech~\cite{Veech1989} asserts that in the lattice case
the directional flow  $\mathcal F_\theta$ is either uniquely ergodic or parabolic ({\em i.e.} 
the surface is decomposed into cylinders of commensurable moduli in direction $\theta$). Hence it suffices to give examples where
\begin{enumerate}
\item $\mathcal F_\theta$ is neither periodic nor dense, or 
\item $\mathcal F_\theta$ is periodic, but with incommensurable cylinders.
\end{enumerate}
\noindent In~\cite{Lanneau:Mahn:ratner}, we will prove a sufficient condition for a surface in $\Omega E_D(1,1,2)$ to be non-Veech. It follows that 
for any quadratic discriminant $D$ where $\PrD(1,1,2) \not = \varnothing$ there 
are infinitely may surfaces $(X,\omega)\in\PrD(1,1,2)$ which are non-Veech. For the sake of completeness we give here explicit examples, which satisfy $(2)$. We only focus on the locus 
$\Prym(1,1,2)$  since similar constructions work for the two other loci.

We begin by choosing a tuple $(w,h,e)$ of integers such that:
$$
\left\{\begin{array}{l}   w>0,   \   h>0,\\  
e+2h<w,\\  
\gcd(w,h,e)=1,  \textrm{  and } D=e^2+8w h.\\
\end{array}
\right.
$$ 
Let $\lbd:=\frac{e+\sqrt{D}}{2} >0$ (remark that $\lbd < w$). We also choose $t\in\Q(\sqrt{D})$
so that $0< t < \lbd$.  Let $(X,\omega)$ be the surface represented in Figure~\ref{fig:prototype} having the following coordinates
$$
\left\{ \begin{array}{ll}
  \omega(\alpha_1) = (\lbd,0) &  \omega(\beta_1) = (0,\lbd) \\
   \omega(\alpha_{2,1}) =    \omega(\alpha_{2,2}) = (w/2,0) &    \omega(\beta_{2,1}) = \omega(\beta_{2,1}) = (0,h/2) \\
   \omega(\eta) = (t,0)
\end{array}
\right. 
$$
\begin{figure}[htbp]
\centering \subfloat{
\begin{tikzpicture}[scale=0.7]
\fill[fill=yellow!80!black!20,even  odd rule]  (0,0)  rectangle (3,3);
\draw (-2,0) -- (-2,-1) -- (1,-1) -- (1,0) -- (3,0)  -- (3,3)  -- (5,3) -- 
(5,4) -- (2,4) -- (2,3) -- (0,3) -- (0,0) -- cycle; 
\draw (0,0) -- (1,0) (2,3) -- (3,3);
\filldraw[fill=white,  draw=black]  (0,0)  circle (2pt)  (0,3)  circle
(2pt) (0,-1) circle (2pt)  (3,0) circle (2pt) (3,3) circle (2pt) (3,4) circle (2pt);
\filldraw[fill=black,  draw=black]  (1,0)  circle (2pt)  (1,3)  circle
(2pt) (1,-1) circle (2pt)  (-2,0) circle (2pt) (-2,-1) circle (2pt) ;
\filldraw[fill=gray,  draw=black]  (2,0)  circle (2pt)  (2,3)  circle
(2pt) (2,4) circle (2pt)  (5,3) circle (2pt) (5,4) circle (2pt) ;
\draw[->,>= angle 45, thin,  dashed] (0,1.5) -- (3,1.5); 
\draw[->,>= angle 45, thin,  dashed] (1.5,0) -- (1.5,3); 
\draw[->,>= angle 45, thin,  dashed] (2,3.5) -- (5,3.5); 
\draw[->,>= angle 45, thin,  dashed] (4,3) -- (4,4); 
\draw[->,>= angle 45, thin,  dashed] (-2,-0.5) -- (1,-0.5); 
\draw[->,>= angle 45, thin,  dashed] (-1,-1) -- (-1,0); 
\draw[thick,  dashed, ->,  >=angle 45] (1,0)  .. controls (1.5,1)  and (1.5,1) ..   (2,0); 
\draw (0.5,1.5) node[above]  {$\scriptstyle \alpha_1$}
  (1.25,2.25) node  {$\scriptstyle \beta_1$}  (2, 3.5) node[left] {$\scriptstyle
        \alpha_{2,1}$} (4,3) node[below] {$\scriptstyle \beta_{2,1}$}
      (-2,-0.5)  node[left]  {$\scriptstyle \alpha_{2,2}$}  (-1,0)
      node[above] {$\scriptstyle \beta_{2,2}$}
    (1.75,0.5) node[right]  {$\scriptstyle \eta$};
\end{tikzpicture}
}
\caption{A translation surface $(X,\omega)\in \Prym(1,1,2)$. The double zero is represented 
in white color (the fixed  cylinder  is colored  in  grey).  The identifications of the sides are 
the ``obvious'' identifications.
}
\label {fig:prototype}
\end{figure}
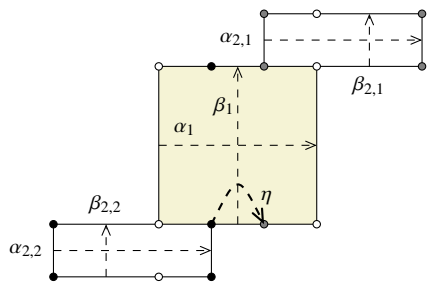
Letting $\alpha_2:=\alpha_{2,1}+\alpha_{2,2}$  and $\beta_2:=\beta_{2,1}+\beta_{2,2}$, 
the set $\{\alpha_i,\beta_i\}_{i=1,2}$ is a  symplectic  basis of $H_{1}(X,\Z)^{-}$. Moreover, in these coordinates 
the   restriction    of   the intersection form is given by the matrix $\displaystyle{\left(%
\begin{smallmatrix}
  J & 0 \\ 0 & 2J \\
\end{smallmatrix}%
\right)}$. In particular it is straightforward to check that the endomorphism $ T=\left(%
\begin{smallmatrix}  e & 0 & w & 0 \\ 0 & e & 0 & h \\ 2h & 0 & 0 & 0 \\ 0 & 2w & 0 & 0 \end{smallmatrix}%
\right)$ (in the basis $(\alpha_i,\beta_i)_{i=1,2}$) is self-adjoint and satisfies $T^2=eT+2w h\textrm{Id}_{\R^{4}}$ and $T^*(\omega)=\lbd\omega$. 
Hence $(X,\omega)\in\PrD(1,1,2)$. 
A straightforward computation shows that the moduli of the cylinders in the vertical direction are given by
$$
\cfrac{t}{\lbd}, \qquad \cfrac{\lbd-2t}{\lbd+h/2} \qquad \textrm{and} \qquad \cfrac{w/2-(\lbd-2t)}{h/2}.
$$
\noindent One can easily see that if $t/\lbd \in \Q$, then the first two moduli are
incommensurable. Hence the Veech group of the corresponding surface $(X,\omega)$ is infinitely generated. This finishes the proof of the corollary.
\end{proof}


\end{document}